\newtheorem{theorem}{Theorem}[section]
\newtheorem{claim}[theorem]{Claim}
\newtheorem{trichotomy}[theorem]{The BB Trichotomy Theorem}
\newtheorem{smain claim}[theorem]{The Section Main Claim}
\theoremstyle{definition}
\newtheorem{definition}[theorem]{Definition}
\newtheorem{fact}[theorem]{Fact}
\newtheorem{question}[theorem]{Question}
\newtheorem{observation}[theorem]{Observation}
\newtheorem{obs}[theorem]{Observation}
\theoremstyle{remark}
\newtheorem{remark}[theorem]{Remark}
\newtheorem{discussion}[theorem]{Discussion}
\newtheorem{conclusion}[theorem]{Conclusion}
\newtheorem{conjecture}[theorem]{Conjecture}
\newtheorem{notation}[theorem]{Notation}
\def\mathunderaccent#1#2 {\let\theaccent#1\skewfactor#2
\mathpalette\putaccentunder}
\def\putaccentunder#1#2{\oalign{$#1#2$\crcr\hidewidth
\vbox to.2ex{\hbox{$#1\skew\skewfactor\theaccent{}$}\vss}\hidewidth}}
\def\smallbox#1{\leavevmode\thinspace\hbox{\vrule\vtop{\vbox
   {\hrule\kern1pt\hbox{\vphantom{\tt/}\thinspace{\tt#1}\thinspace}}
   \kern1pt\hrule}\vrule}\thinspace}
\newcommand{\rest}{{\restriction}}
\newcommand{\dom}{{\rm dom}} 
\newcommand{\nst}{{\rm nst}} 
\newcommand{\Hom}{{\rm Hom}} 
\newcommand{\Sep}{{\rm Sep}} 
\newcommand{\spp}{{\rm sp}} 
\newcommand{\Sp}{{\rm Sp}} 
\newcommand{\SP}{{\rm SP}} 
\newcommand{\RK}{{\rm RK}} 
\newcommand{\TDU}{{\rm TDU}} 
\newcommand{\Sol}{{\rm Sol}} 
\newcommand{\Reg}{{\rm Reg}} 
\newcommand{\otp}{{\rm otp}} 
\newcommand{\cov}{{\rm cov}} 
\newcommand{\Ord}{{\rm Ord}} 
\newcommand{\BB}{{\rm BB}} 
\newcommand{\tr}{{\rm tr}} 
\newcommand{\tcf}{{\rm tcf}} 
\newcommand{\bd}{{\rm bd}} 
\newcommand{\ged}{{\rm gd}} 
\newcommand{\cd}{{\rm cd}} 
\newcommand{\pp}{{\rm pp}} 
\newcommand{\issp}{{\rm issp}} 
\newcommand{\ussp}{{\rm ussp}} 
\newcommand{\cf}{{\rm cf}}
\newcommand{\Rang}{{\rm Rang}}
\newcommand{\comp}{{\rm comp}}
\newcommand{\wilog}{{\rm without loss of generality}}
\newcommand{\Wilog}{{\rm Without loss of generality}}
\newcommand{\then}{{\underline{then}}}
\newcommand{\when}{{\underline{when}}}
\newcommand{\Then}{{\underline{Then}}}
\newcommand{\Iff}{{\underline{Iff}}}
\newcommand{\mn}{{\medskip\noindent}}
\newcommand{\sn}{{\smallskip\noindent}}
\newcommand{\bn}{{\bigskip\noindent}}
\newcommand{\cW}{{\mathcal W}}
\newcommand{\gd}{{\mathfrak d\/}} 
\newcommand{\cH}{{\mathcal H}}
\newcommand{\cF}{{\mathcal F}}
\newcommand{\cG}{{\mathcal G}}
\newcommand{\cP}{{\mathcal P}}
\newcommand{\bbQ}{{\mathbb Q}}
\newcommand{\cT}{{\mathcal T}}
\newcommand{\cU}{{\mathcal U}}
\newcommand{\pr}{{\rm pr}}
\def\mathunderaccent#1#2 {\let\theaccent#1\skewfactor#2
\mathpalette\putaccentunder}
\def\putaccentunder#1#2{\oalign{$#1#2$\crcr\hidewidth
\vbox to.2ex{\hbox{$#1\skew\skewfactor\theaccent{}$}\vss}\hidewidth}}
\newcommand{\Dom}{{\rm Dom}}
\newcommand{\wolog}{{without loss of generality}}
\newcommand{\bbZ}{\Bbb Z}
\newcommand{\mat}{\mathcal}
\newenvironment{PROOF}[2][\proofname.]
   {\begin{proof}[#1]}
   {\end{proof}}
\begin{document}

\title{pcf and abelian groups}
\author{Saharon Shelah}
\address{Institute of Mathematics
The Hebrew University of Jerusalem
Einstein Institute of Mathematics \\
Edmond J. Safra Campus, Givat Ram \\
 Jerusalem 91904, Israel,
 and:  Department of Mathematics
 Hill Center-Busch Campus \\
 Rutgers, The State University of New Jersey
 110 Frelinghuysen Road, Piscataway,
 New Brunswick, NJ 08854, USA}
\email{shelah@math.huji.ac.il}
\urladdr{http://www.math.rutgeb rs.edu/\char`\~shelah}
\thanks{First typed: 06.Dec.06 - The author would like to thank the
  Israel Science Foundation for partial support of this research
  (Grant No. 1053/11, 710/07).  Research also supported by 
German-Israeli Foundation for Scientific Research and Development. Paper 898}

%Previous version - 2013/Oct/12

\date{December 1, 2013}

\subjclass[2010]{Primary 03E04, 03E75; Secondary: 20K20, 20K30}

\keywords {cardinal arithmetic, pcf, black box,
negative partition relations, trivial dual conjecture, trivial
endomorphism conjecture}  

%\begin{document}
\let\labeloriginal\label
\let\reforiginal\ref

\begin{abstract}
We deal with some pcf (possible cofinality theory) investigations
mostly motivated by questions in abelian group theory.  We concentrate
on applications to test problems but we expect the combinatorics will
have reasonably wide applications.  The main test problem is the
``trivial dual conjecture" which says that there is a quite free abelian
group with trivial dual.  The ``quite free" stands
for ``$\mu$-free" for a suitable cardinal $\mu$, the first open case is
$\mu = \aleph_\omega$.
We almost always answer it positively, that is, prove 
the existence of $\aleph_\omega$-free Abelian groups 
with trivial dual, i.e., with no non-trivial homomorphisms to the integers.  
Combinatorially, we prove that ``almost
always" there are ${\mat F}\subseteq {}^\kappa  \lambda$ which are 
quite free and have a relevant black box.  
The qualification ``almost always" 
means except when we have strong restrictions on cardinal
arithmetic, in fact restrictions which hold ``everywhere".
The nicest combinatorial result is probably the so called 
``Black Box Trichotomy Theorem" proved in ZFC.
Also we may replace abelian groups by $R$-modules.  Part of our
motivation (in dealing with modules) is that in some sense the improvement
 over earlier results becomes clearer in this context.
\end{abstract}

\maketitle
\numberwithin{equation}{section}
\setcounter{section}{-1}

% start document here:
\newpage

%\input amstex
%\input mathdefs
%\localtags
%\verbatimtags
%\NoBlackBoxes
%\define\mr{\medskip\roster}
%\define\sn{\smallskip\noindent}
%\newcommand{\sn}{\smallskip\noindent}
%\define\mn{\medskip\noindent}
%\newcommand{\mn}{\medskip\noindent}
%\newcommand{\bn}{\bigskip\noindent}
%\define\bn{\bigskip\noindent}
%\define\ub{\underbar}
%\define\wilog{\text{without loss of generality}}
%\define\ermn{\endroster\medskip\noindent}
%\define\dbca{\dsize\bigcap}
%\define\dbcu{\dsize\bigcup}
%\define\red{RED STARTS HERE}
%\newcommand{\red}{RED STARTS HERE}
%\define\endred{RED STOPS HERE}
%\define\blue{BLUE STARTS HERE}
%\newcommand{\blue}{BLUE STARTS HERE}
%\define\endblue{BLUE STOPS HERE}
%\define \nl{\newline}
%\magnification=\magstep 1
%\documentstyle {amsppt}
%\pageheight{8.5truein}
%\pageheight{48.5pc}
%\topmatter

%kotert
\bigskip

\centerline {Annotated Content} 
\medskip
\par \noindent 
\S0 \quad Introduction, pg. \pageref{Introduction}
\begin{enumerate}
\item[${{}}$]  [We formulate the trivial dual conjecture for $\mu$, 
TDU$_\mu$, and relate it to pcf statements and black box principles.  
Similarly we state the trivial endomorphism conjecture for $\mu$, 
TED$_\mu$, but postpone its treatment.]
\end{enumerate}
\medskip

\par \noindent 
\S1 \quad Preliminaries, pg. \pageref{Preliminaries}
\begin{enumerate}
\item[${{}}$]  [We quote some definitions and results we shall use
and state a major conclusion of this work: the Black Box Trichotomy Theorem.]
\end{enumerate}
\medskip

\par \noindent 
\S2 \quad Cases of weak G.C.H., pg. \pageref{Cases}
\begin{enumerate}
\item[${{}}$]  [Assume $\mu \in {\bold C}_\kappa,\mu < \lambda <
2^\mu < 2^\lambda$ moreover $\lambda = \min\{\chi:2^\chi > 2^\mu\}$.
\Then \, for any 
$\theta < \mu$, a black box called BB$(\lambda,\mu^+,\theta,\kappa)$
holds, which for our purpose is very satisfactory.]
\end{enumerate}
\medskip

\par \noindent 
\S3 \quad Getting large $\mu^+$-free ${\cF} \subseteq 
{}^\kappa \mu$, pg. \pageref{Getting}
\begin{enumerate}
\item[${{}}$]  [The point is to give sufficient conditions
for BB: see \ref{2b.98}(2).
Let $\mu \in {\bold C}_\kappa$ and $\lambda = 2^\mu$. We
give sufficient conditions for the existence of $\mu^+$-free 
${\mat F}\subseteq {}^\kappa \mu$ of cardinality $\lambda$, which is quite
helpful for our purposes, as it implies the existence of suitable black
boxes.  One such condition is (see
\ref{1f.21}): the existence of $\theta < \kappa$ and $\chi <
\lambda$ such that $\chi^\theta = \lambda$.  
Recall that by \S2 assuming $\lambda < \lambda^{< \lambda}$ suffices (for the
black box).  Now assuming there is no $\theta$ as above so
$\lambda = \lambda^{< \lambda}$, by older results if $\theta =
\cf(\theta) < \kappa \wedge \chi < \lambda \Rightarrow 
\chi^{<\kappa>_{\tr}} < \lambda$ then
$(D\ell)^*_{S^\lambda_\theta}$, hence
$(D \ell)_S$ for every stationary $S\subseteq S^\lambda_\kappa$. 

In \ref{1f.7} we consider $\theta \in (\kappa,\mu) \cap
\text{ Reg}$ and $\chi \in (\mu,\lambda)$ such that
$\chi^{<\theta>_{\tr}}=\lambda$.  Here the results are less
sharp.  Also if 
$\lambda = \chi^+$, where $\chi$ is regular, then this holds; see
\ref{2b.91}.  We finish by indicating some obvious connections.
\end{enumerate}
\medskip

\noindent 
\S4 \quad On the $\mu$-free trivial dual conjecture for 
$R$-modules, pg. \pageref{On}
\begin{enumerate}
\item[${{}}$] [We deduce what we can on the conjecture TDU$_\mu$.]
\end{enumerate}
\newpage

\section{Introduction} \label{Introduction}

\subsection {Background} \label{Background}\
\bigskip

We prove some black boxes, most notably the Black Box Trichotomy
Theorem.  Our original question is whether provably in ZFC the conjecture
TDU$_{\aleph_\omega}$ holds and even whether TED$_{\aleph_\omega}$
holds where:

\begin{definition}
\label{0p.7}
1) Let TDU$_\mu$, the trivial dual
conjecture for $\mu > \aleph_0$, mean: 
\newline
there is a $\mu$-free abelian group $G$, necessarily of cardinality
$\ge \mu$, such that $G$ has a trivial
dual (i.e., Hom$(G,\Bbb Z) = \{0\}$).
\newline
2) Let TED$_\mu$, the trivial endomorphism conjecture for $\mu$ mean:
there is a $\mu$-free abelian group with no non-trivial endomorphism,
i.e., End$(G)$ is trivial (that is, End$(G)$ $\cong \Bbb Z$).
\end{definition}
\smallskip

Much is known for $\mu = \aleph_1$ (see, e.g., \cite{GbTl06}).  
Note that each of the cases of \ref{0p.7} implies that $G$ is
$\aleph_1$-free, not free, and much is known on the existence of 
$\mu$-free, non-free abelian groups of cardinality $\mu$ 
(see , e.g., \cite{EM02}).
Also, positive answers are known for arbitrary $\mu$ under, 
e.g., ${\bold V} = {\bold L}$, 
see pg. 461 of \cite{GbTl06}.

Note that by singular compactness, for singular $\mu$ there are no
counterexamples of cardinality $\mu$.

By \cite{Sh:883}, if $\mu = \aleph_n$, then the answer to TDU$_\mu$
is yes, for the cardinality $\lambda = \beth_n$.  It was hoped that 
the method would apply to many other
related problems and to some extent this has been vindicated by
G\"obel-Shelah \cite{GbSh:920}; G\"obel-Shelah-Str\"ungman
\cite{GbShSm:981} and (on TED$_\mu,\mu = \aleph_n$) by 
G\"obel-Herden-Shelah \cite{GbHeSh:970}.
But we do not know the answer for $\mu = \aleph_\omega$.  Note that
even if we succeed this will not cover the results of \cite{Sh:883},
\cite{GbSh:920}, \cite{GbShSm:981}, 
\cite{GbHeSh:970}; e.g. because there the cardinality
of $G$ is $< \beth_\omega$ when $\mu < \aleph_\omega$ 
and probably even more so when we deal with larger cardinals.

A natural approach is to prove in ZFC appropriate set-theoretic
principles, and this is the method we try here.  This raises
combinatorial questions which seem interesting in their own right; our
main result in this direction is the Black Box Trichotomy Theorem
\ref{h.7}.  But the original algebraic question has 
bothered me and the results are irritating: it is ``very hard" not 
to answer yes in the following sense (later we say more on the set
theory involved):
\mn
\begin{enumerate}
\item[$(a)$]  Failure implies strong demands on cardinal arithmetic
in many $\beth_\delta$, (e.g. if cf$(\delta) = \aleph_1$ then
$\beth_{\delta +1} = \text{ cf}(\beth_{\delta +1}) = (\beth_{\delta
+1})^{< \beth_{\delta +1}}$ and $\chi < \beth_{\delta +1} \Rightarrow
\chi^{\aleph_0} < \beth_{\delta +1}$ - see details below),
\sn
\item[$(b)$]  If we weaken ``$\aleph_\omega$-freeness" to 
(so called ``stability" or ``softness" and even) ``$\aleph_1$-free
and constructible from a ladder system $\langle C_\delta:\delta \in S
\subseteq S^\lambda_{\aleph_0}\rangle$", then we can prove existence,
\sn
\item[$(c)$]  Replacing abelian groups by $R$-modules, the parallel
question depends on a set of regular cardinals related to the ring, 
sp$(R)$, see 
Definition \ref{0f.13} (so the case of abelian groups is $R = \bbZ$).  
If sp$(R)$ is empty, there is nothing to be done.
By \cite{Sh:775}, if sp$(R)$ is unbounded below some strong limit 
singular cardinal $\mu$ of cofinality $\aleph_0$ then TDU$_{\mu^+}$, 
see \ref{5e.47}.  Moreover, by \cite{Sh:829}, if sp$(R)$ is infinite, say 
$\kappa_n < \kappa_{n+1} \in \text{ sp}(R)$ then by \ref{5e.47} again
$\TDU_\mu$ for every $\mu$ 
(by the quotation \ref{1.3.25}).  Furthermore: (see \ref{1f.53}), we
prove that: 
if $\aleph_0,\aleph_1,\aleph_2 \in \text{ sp}(R)$ then the answer for
$R$-modules is positive.
\sn
\item[$(d)$]  Even if the negation of TDU$_{\aleph_\omega}$ is consistent
with ZFC its consistency strength is large, to some extent this
follows by clause (a) above but by \S2 we have more.
\end{enumerate}

Obviously, e.g. clause (c) clearly seems informative 
for abelian groups;  at first sight it seems helpful
that for every $n$ there is an
$\aleph_n$-free non-free abelian group of cardinality $\aleph_n$, but
this is not enough.  More specifically this method does not at present
resolve the problem  because for $R = \bbZ$ we only know 
that sp$(R)$ includes $\{\aleph_0,\aleph_1\}$, (and under MA it has no
other member $< 2^{\aleph_0}$).

Still we get some information: a reasonably striking set-theoretic
result is the Black Box Trichotomy Theorem \ref{h.7} below; some abelian group 
theory consequences are given in \S4.

A sufficient condition (see \ref{5e.32}) for a positive answer to
TDU$_\mu$ is :
\mn
\begin{enumerate}
\item[$\circledast_0$]   TDU$_\mu$ if $\BB(\lambda,\mu,\theta,J)$ 
when $J$ is $J^{\bd}_{\aleph_0}$  or $J^{\bd}_{\aleph_1 \times
\aleph_0}$, see \ref{0p.6}, $\cf(\lambda) > \aleph_0$ and $\theta = \beth_4$.
\end{enumerate}
\mn
This work will be continued in \cite{Sh:F1200} and also in
\cite{Sh:1017} which originally was part of the present paper.

Before we state the results we give some basic definitions.
\bigskip

\subsection {Basic Definitions} \label{Basic}\
\bigskip

\noindent
Recall that
\begin{definition}
\label{0p.4}
$\chi^{<\partial>_{\text{tr}}}$ is the
$\partial$-tree power of $\chi$, i.e., the supremum of the number of
$\partial$-branches of a tree with $\le \chi$ nodes and $\partial$
levels.
\end{definition}

\begin{notation}
\label{0p.6}
1) For a set $S$ of ordinals with no greatest member (e.g. a limit ordinal
   $\delta$) let $J^{\bd}_S$ be the ideal $\{u:u$ is a
bounded subset of $S\}$. 

\noindent
2) For limit ordinals $\delta_1,\delta_2$ let $J^{\bd}_{\delta_1
   \times \delta_2} = \{u \subseteq \delta_1 \times \delta_2:\{\alpha
   < \delta_1:\{\beta < \delta_2:(\alpha,\beta) \in u\} \notin
   J^{\bd}_{\delta_2}\} \in J^{\bd}_{\delta_1}\}$.

\noindent
3) For limit ordinals $\delta_1,\delta_2$ let $\delta_3 = \delta_2
   \cdot \delta_1$ and $J^{\bd}_{\delta_1 * \delta_2}$ be the
   following ideal on $\delta_3:\{u \subseteq
\delta_3:\{(\alpha,\beta) \in \delta_1 \times \delta_2:\delta_2 \cdot
\alpha + \beta \in u\} \in J^{\bd}_{\delta_1 \times \delta_2}\}$.
\end{notation}

\begin{definition}
\label{0p.9}
1) A sequence of non-empty 
sets $\bar C = \langle C_\alpha:\alpha \in S\rangle$ is
$\mu$-free if for every $u \in [S]^{< \mu}$ there exists $\bar A =
\langle A_\alpha \subseteq C_\alpha:\alpha \in u\rangle$ 
so that the sets $\langle
C_\alpha \backslash A_\alpha:\alpha \in u\rangle$ are pairwise
disjoint and each $A_\alpha$ is bounded in $C_\alpha$ with respect to
a given order on $C_\alpha$; in the default case ``every $C_\alpha$ is a
set of ordinals with the natural order".

\noindent
2) We may replace $\mu$ by a pair $(\mu,\bar J)$, where $\bar J =
   \langle J_\alpha:\alpha \in S\rangle$ and $J_\alpha$ is an ideal on
$\otp(C_\alpha)$ so now ``$A_\alpha$ bounded" is replaced by
``$\{\otp(\varepsilon \cap C_\alpha):\varepsilon \in A_\alpha\} \in
   J_\alpha$".  If $C_\alpha$ is a set of ordinals of a fixed 
order type $\gamma(*)$ and $J_\alpha = J$ for every
   $\alpha \in S$ where $J$ is an ideal on $\gamma(*)$ \then \, we may
replace the pair $(\mu,\bar J)$ by the pair $(\mu,J)$.  In other words,
instead of the demand ``$A_\alpha$ is 
bounded in $C_\alpha$" we require $A'_\alpha :=
 \{\otp(C_\alpha \cap \gamma):\gamma \in A_\alpha\} \in J$.
\end{definition}

\noindent
The definition of the assertion {\rm BB}$(\lambda,\mu,\theta,J)$ is as
follows. (BB stands for black box).
The following is a relative of \cite{Sh:775} (and see on the history there).
\begin{definition}
\label{0p.14}
Assume we are given a quadruple
$(\lambda,\mu,\theta,\kappa)$ of cardinals [but we may replace $\lambda$ by an
ideal $I$ on $S \subseteq \lambda = \sup(S)$ so writing $\lambda$
means $S = \lambda$; also we may replace
$\kappa$ by an ideal $J$ on $\kappa$ and writing $\kappa$ means 
$J = J^{\bd}_\kappa$]. Let
BB$^-(\lambda,\mu,\theta,\kappa)$ mean that some pair
$(\bar C,\bar{\bold c})$ satisfies the clauses (A) and (B) below;
we call the pair $(\bar C,\bar{\bold c})$ a witness for
BB$^-(\lambda,\mu,\theta,\kappa)$.  
Let BB$(\lambda,\mu,\theta,\kappa)$ mean that some witness $(\bar
C,\bar{\bold c})$ satisfies clause (A) below and for some sequence
$\langle S_i:i < \lambda\rangle$ of pairwise disjoint subsets of
$\lambda$ (\underline{or} of $S$), each $(\bar C \restriction S_i,
{\bar{\bold c} \restriction S_i})$ satisfies clause (B) below, (thus replacing
$S,\bar{\bold c}$ by $S_i,\bar{\bold c} \rest S_i$) where:
\mn
\begin{enumerate}
\item[$(A)$]  $(a) \quad \bar C = \langle C_\alpha:\alpha \in
S\rangle$ and $S = S(\bar C) \subseteq \lambda = \sup(S)$
\sn
\item[${{}}$]  $(b) \quad C_\alpha \subseteq \alpha$ has order
type $\kappa$
\sn
\item[${{}}$]  $(c) \quad \bar C$ is $\mu$-free (see \ref{0p.9}): 

\hskip25pt [\underline{but} when we replace $\kappa$ by $J$ then we
say ``$\bar C$ is $(\mu,J)$-free"]
\sn
\item[$(B)$]  $(d) \quad \bar{\bold c} = \langle {\bold c}_\alpha:\alpha \in
S \rangle$
\sn
 \item[${{}}$]  $(e) \quad {\bold c}_\alpha$ is a function from
$C_\alpha$ to $\theta$
\sn
 \item[${{}}$]  $(f) \quad$ if ${\bold c}:\bigcup\limits_{\alpha \in S}
C_\alpha \rightarrow \theta$, then ${\bold c}_\alpha
= {\bold c} \restriction C_\alpha$ for some $\alpha  \in S$ 

\hskip25pt [\underline{but} when we replace
$\lambda$ by $I$ an ideal on $S$, then we demand that 

\hskip25pt  the set $\{\alpha \in S:\bold c_\alpha = \bold c \rest C_\alpha\}$
is not in $I$].
\end{enumerate}
\end{definition}

\begin{remark}  The reader may recall that if $S$ is a stationary subset
of $\{\delta < \lambda:\text{cf}(\delta) = \kappa\}$ for a regular
cardinal $\lambda$ and $S$ is non-reflecting and $\bar C = \langle
C_\alpha:\alpha \in S\rangle$ satisfies $C_\delta \subseteq \delta =
\sup(C_\delta)$, otp$(C_\delta) = \kappa$, \underline{then}
$\diamondsuit_S$ implies BB$(\lambda,\lambda,\lambda,\kappa)$.  So if
$\bold V = \bold L$ then for every regular $\kappa < \lambda,\lambda$ a
non-weakly compact cardinal we have BB$(\lambda,\lambda,\lambda,\kappa)$.

So the consistency of (more than) having many cases of BB is known, 
\underline{but} we prefer to get results in ZFC, when possible.
\end{remark}  

Variants are:

\begin{definition}
\label{0p.15}
In Definition \ref{0p.14}:

\noindent
1) We may replace $\theta$ by $(\chi,\theta)$ which means there are
$S,\bar C$ satisfying clause (A) of Definition \ref{0p.14} and
\mn
\begin{enumerate}
\item[$(B)'$]   if $\bar{\bold F} = \langle {\bold F}_\alpha:\alpha \in
S\rangle$ and ${\bold F}_\alpha$ is a function from
${}^{(C_\alpha)}\chi$ to $\theta$, \then \, for some $\bar{\bold c}$ we
have:
\sn
\begin{enumerate}
\item[$(d)$]  $\bar{\bold c} = \langle {\bold c}_\alpha:\alpha \in
S\rangle$
\sn
\item[$(e)$]  ${\bold c}_\alpha < \theta$
\sn
\item[$(f)$]   if ${\bold c}:\lambda \rightarrow \chi$, \underline{then}
${\bold c}_\alpha = {\bold F}_\alpha({\bold c} \restriction C_\alpha)$ for
some $\alpha \in S$ [\underline{or} if we replace $\lambda$ by $I$ then the set
$\{\alpha \in S:{\bold c}_\alpha = {\bold F}_\alpha({\bold c} \restriction
C_\alpha)\}$ does not belong to the ideal $I$].
\end{enumerate}
\end{enumerate}
\mn
2) Replacing $(\chi,\theta)$ by $(\chi,1/\theta)$ abusing notation or
$\langle \chi,\theta\rangle$,
means that in clause (f) we replace ``${\bold c}_\alpha = {\bold
F}_\alpha({\bold c} \restriction C_\alpha)$" by ``${\bold c}_\alpha \ne
{\bold F}_\alpha({\bold c} \restriction C_\alpha)"$.

\noindent
3) We may replace $\mu$ by $\bar C$ and thus waive the freeness demand,
   i.e. $\bar C$ is not necessarily $\mu$-free.  Alternatively, we may
   replace $\mu$ by a set $\cF$ of one-to-one functions from $\kappa$
   to $\lambda$ when $\bar C$ lists $\{\text{Rang}(f):f \in \cF\}$.

\noindent
4) Replacing $\kappa$ by ``$< \kappa_1$" means that in (A)(b) we require just
$C_\alpha \subseteq \alpha \wedge |C_\alpha| < \kappa_1$ 
(and not necessarily otp$(C_\alpha) =\kappa$).  Replacing $\kappa$ by
$*$ means ``$< \lambda$".

\noindent
5) We may replace $\theta$ by ``$< \theta_1$" meaning ``for every
   $\theta < \theta_1$".
\end{definition}

\begin{remark}  
1) Note that BB$(\lambda,\mu,\theta,\kappa)$ is somewhat
related to NPT$(\lambda,\kappa)$ from \cite[Ch.II]{Sh:g},
i.e. BB$(\lambda,\lambda,\theta,\kappa) \Rightarrow 
\text{ NPT}(\lambda,\kappa)$, but NPT has no ``predictive" part.

\noindent
2) We shall use freely the obvious implications concerning the black
   boxes, e.g.
\mn
\begin{enumerate}
\item[$(*)$]  $BB^-(\lambda_1,\mu_1,\theta_1,\kappa_1)$ implies
  $BB^-(\lambda_2,\mu_2,\theta_2,\kappa_2)$ when $\lambda_2 =
  \lambda_1,\mu_2 \le \mu_1, \theta_2 \le \theta_1,\kappa_2 = \kappa_1$.
\end{enumerate}
\end{remark}

\noindent
Of course
\begin{obs}
\label{2b.98}
1) If $\bar C = \langle C_\alpha:\alpha \in [\mu,\lambda)\rangle,
C_\alpha \subseteq \mu$ non-empty and $2^\mu = \lambda$
(e.g. $\lambda = \mu^\kappa \wedge \mu \in {\bold C}_\kappa)$, \underline{then}
BB$(\lambda,\bar C,\lambda,*)$, see \ref{0p.15}(4).

\noindent
2) If in addition {\rm otp}$(C_\alpha) = \kappa$ and $\bar C$ is
$\mu_1$-free, then BB$(\lambda,\mu_1,\lambda,\kappa)$.
\end{obs}
\bigskip

\begin{PROOF}{\ref{2b.98}}
The proof is easy, but we shall give details.

\noindent
1) Let $S = [\mu,\lambda)$ and let $\langle S_\varepsilon:\varepsilon <
\lambda\rangle$ be a partition of $S$ into sets each of cardinality
$\lambda$, each stationary if $\lambda$ is regular.  
Recalling Definitions \ref{0p.14}, \ref{0p.15} it 
suffices to prove BB$(\lambda,\bar C \rest S_\varepsilon,\lambda,*)$ 
for each $\varepsilon < \lambda$; fix $\varepsilon$ now.  Clause
(A) in Definition \ref{0p.14} is obvious, so we shall 
prove clause (B)$'$, so let $\langle \bold F_\alpha:\alpha \in
S_\varepsilon\rangle$ and $\bold F_\alpha:{}^{(C_\alpha)}\lambda \rightarrow
\lambda$ be given and we should choose $\bar c \in {}^{(S_\varepsilon)}\theta$.

Let $\bar f = \langle f_\alpha:\alpha \in S_\varepsilon\rangle$ list
${}^\mu \lambda$, each appearing unboundedly often (and even
stationarily often if $\lambda$ is regular), and choose
$c_\alpha := \bold F_\alpha(f_\alpha \restriction C_\alpha)$.  Now
check. 

\noindent
2) Look at the definitions.
\end{PROOF}  

\begin{discussion}
\label{2b.99}
We use \ref{2b.98}, e.g. in \ref{h.9d}.
\end{discussion}

\noindent
Recall:
\begin{definition}
\label{0p.17}
1) If $\le_*$ is a partial order on a set $I$ let $\lambda =
\tcf(I,<_*)$ mean that $\lambda$ is a regular cardinal and there is an
$<_*$-increasing sequence $\langle t_\alpha:\alpha <
\lambda\rangle$ which is cofinal, that is 
$(\forall s \in I)(\exists i < \lambda)[s \le_* t]$.

\noindent
2) For $I,<_*$ as above let 
$\cf(I,<_*) = \min\{|\cP|:\cP \subseteq I$ is cofinal$\}$.
\end{definition}

\begin{definition}
\label{0p.19}
Assume $\mu > \theta \ge \sigma = \cf(\sigma) \ge \cf(\mu)$.

For $J$ an ideal on $\theta$ (or just on a set $A_*$ of cardinality
$\theta$) such that there is a $\subseteq$-increasing sequence of
members of $J$ of length $\cf(\mu)$ with union $\theta$ (or $A_*$). 

\noindent
1) We define $\pp_J(\mu) = \sup\{\tcf(\prod\limits_{i < \theta}
   \lambda_i,<_J):\lambda_i = \cf(\lambda_i) \in (\theta,\mu)$ for $i
   < \theta$ and $\mu = \lim_J\langle \lambda_i:i < \theta\rangle\}$
   \underline{where} $\mu = \lim_J\langle \lambda_i:i < \theta\rangle$
   means that $\mu_i < \mu \Rightarrow \{i < \theta:\lambda_i \notin
   [\mu_i,\mu]\} \in J$.

\noindent
2) We define $\pp_{\theta,\sigma}(\mu) = \sup\{\tcf(\prod\limits_{i <
   \theta} \lambda_i,<_J):J$ a $\sigma$-complete ideal on $\theta$ with
   $\lambda_i = \cf(\lambda_i) \in (\theta,\mu)$ such that $\mu =
   \lim_J\langle \lambda_i:i < \theta\rangle$.

\noindent
3) Let $\pp_J(\mu) =^+ \chi$ mean that $\pp_J(\mu) =
   \chi$ and $\chi$ is regular and in the supremum in part (1) is
attained; similarly in parts (2),(3).

\noindent
4) Let $\pp^+_J(\mu)$ be $(\pp_J(\mu))^+$ if $\pp_J(\mu)$ is regular
   and the supremum in part (1) is obtained and be $\pp_J(\mu)$ otherwise.
\end{definition}

\begin{definition}
\label{0p.21}
For cardinals $\lambda \ge \mu \ge \theta \ge \sigma$ let
$\cov(\lambda,\mu,\theta,\sigma) = \min\{|\cP|:\cP \subseteq
[\lambda]^{< \mu}$ and every $u \in [\lambda]^{< \theta}$ is included
in the  union of $< \sigma$ members of $\cP\}$.
\end{definition}
\bigskip

\subsection {What is Done} \
\bigskip

In this work we shall show that it is ``hard" for ${\bold V}$ not to
give a positive answer (i.e. existence) for \ref{0p.7}
 via a case of \ref{0p.14} or variants; we
review below the ``evidence" for this assertion.  By \ref{5e.32}(1) 
we know that (actually $2^{(2^{\aleph_1})}$ can be weakened):
\mn
\begin{enumerate}
\item[$\odot_0$]   a sufficient condition for
TDU$_\mu$ is, e.g., BB$(\lambda,\mu,2^{(2^{\aleph_1})^+},J)$,
where $\cf(\lambda) > \aleph_0$ and $J$ is $J^{\bd}_{\aleph_0}$ 
or $J^{\bd}_{\aleph_1 \times \aleph_0}$ 
(hence also $J = J^{\bd}_{\aleph_1}$ suffices; noting that here
$\kappa$ is $\aleph_0$ or $\aleph_1$ together
$BB(\lambda,\mu,2^{(2^{\aleph_1})^+},\kappa)$ suffice).
\end{enumerate}
\mn
Recall that ${\bold C}_\kappa$ is the class of strong limit singular
cardinals of cofinality $\kappa$ when
$\kappa > \aleph_0$, and ``most" of them
when $\kappa = \aleph_0$ (see Definition \ref{1.3.1} and Claim \ref{1.3.3}).

Now the first piece of the evidence given here that a
failure of G.C.H. near $\mu \in {\bold C}_\kappa$ helps is the following fact:
\mn
\begin{enumerate}
\item[$\circledast_1$]  BB$(\lambda,\mu^+,\theta,\kappa)$ if
$\theta < \mu \in {\bold C}_\kappa$ and $\mu < \lambda < 2^\mu <
2^\lambda$.
\end{enumerate}
\mn
[Why?  By Conclusion \ref{d.11}(1); it is a consequence of the Black
Box Trichotomy Theorem \ref{h.7}.]

Note: another formulation is
\mn
\begin{enumerate}
\item[$\boxdot_1$]  if $\theta < \mu \in {\bold C}_\kappa$ but
BB$(\lambda,\mu^+,\theta,\kappa)$ fails \then \,
$(2^\mu)^{< 2^\mu} = 2^\mu$.
\end{enumerate}
\mn
[Why?  Let $\lambda_1 = \text{ min}\{\chi:2^\chi > 2^\mu\}$, so
necessarily $\mu < \lambda_1$; if $\lambda_1 < 2^\mu$ then
BB$(\lambda_1,\mu^+,\theta,\kappa)$ holds by $\circledast_1$, so by
our assumption $\lambda_1 = 2^\mu$, so $\mu \le \chi < 2^\mu
\Rightarrow 2^\chi = 2^\mu \Rightarrow (2^\mu)^\chi = 2^{\mu \cdot \chi}
= 2^\chi = 2^\mu$, but this means $(2^\mu)^{< 2^\mu} = 2^\mu$, as
stated.] 

So by $\odot_0 + \boxdot_1$
\begin{enumerate}
\item[$\odot_1$]  if TDU$_{\aleph_\omega}$ fails, \underline{then}
\begin{enumerate}
\item[$(a)$]   a large class of cardinals satisfies
a weak form of G.C.H.
 \item[$(b)$]  more specifically, $(\mu \in {\bold C}_{\aleph_0}
\cup {\bold C}_{\aleph_1}) \wedge \lambda = 2^\mu \Rightarrow \lambda
= \lambda^{< \lambda}$.
\end{enumerate}
\end{enumerate}
\mn
For $\cT \subseteq {}^{\sigma >}\chi$ a tree with $\le \chi$
nodes and $\le \sigma$ levels we let $\lim_\sigma(\cT) = \{\eta \in
{}^\sigma \chi:(\forall \varepsilon < \sigma)(\eta \rest \varepsilon
\in \cT)\}$, and recall that the tree power $\chi^{<\sigma>_{\tr}}$ is
$\sup\{|\lim_\sigma(\cT)|:\cT \subseteq {}^{\sigma >}\chi$ is a tree
with $\le \chi$ nodes and $\le \sigma$ levels$\}$.

We have:
\mn
\begin{enumerate}
 \item[$\circledast_2$]   BB$(2^\mu,\kappa^{+ \omega+1},
\theta,J^{\bd}_{\kappa^+ \times \kappa})$ if
$\theta < \mu \in {\bold C}_\kappa$ and $(\forall \chi)(\chi < 2^\mu
\Rightarrow \chi^{<\kappa^+>_{\text{tr}}} < 2^\mu)$.
\end{enumerate}
\mn
[Why?  See \ref{h16}.]

So we have
\begin{enumerate}
\item[$\odot_2$]  if TDU$_{\aleph_\omega}$ fails, \then \, for every
$\mu \in {\bold C}_{\aleph_0}$ there is $\chi$ such that $\mu < \chi <
\chi^{<\aleph_1>_{\tr}} = 2^\mu$ (see Definition \ref{0p.4}), 
hence $\mu < \chi < 2^\mu$
and \wolog \, cf$(\chi) = \aleph_1$, hence $\mu^{+\omega_1} \le \chi <
2^\mu$, and so G.C.H. fails quite strongly (putting us in some sense in the
opposite direction to $\odot_1$)
\end{enumerate}
\mn
and also
\mn
\begin{enumerate}
 \item[$\circledast_3$]  if $\mu \in {\bold C}_\kappa,\theta < \mu,
\lambda = 2^\mu$ and some set ${\mat F} \subseteq {}^\kappa \mu$ is
$\mu_1$-free of cardinality $2^\mu \, (= \mu^\kappa)$, \underline{then}
BB$(\lambda,\mu_1,\theta,\kappa)$.
\end{enumerate}
\mn
[Why?  See \ref{2b.98}(2).]

In \S3 we shall give various sufficient conditions for the
satisfaction of the hypotheses of $\circledast_3$.  
Another piece of evidence is
\mn
\begin{enumerate}
\item[$\circledast_4$]  BB$(\lambda,\mu_1,\theta,J)$ \underline{when}:
\sn
\begin{enumerate}
\item[$(a)$]  $\theta < \mu \in {\bold C}_\kappa$ and $\lambda = 2^\mu
= \lambda^{< \lambda}$ and $\partial < \mu$,
\sn
 \item[$(b)$]  $J$ is an ideal on $\partial = \text{ cf}(\partial)$
extending $J^{\text{bd}}_\partial$, and $S \subseteq S^\lambda_\partial$
(see \ref{0p.31}(3)),
$\bar C = \langle C_\delta:\delta \in S\rangle$ are such that $\delta \in S
\Rightarrow C_\delta \subseteq \delta = \sup(C_\delta) \wedge
\kappa = \otp(C_\delta)$,
\sn
 \item[$(c)$]  $\bar C$ is $\mu_1$-free, $\mu_1 < \lambda$, see Definition
\ref{1.3.14}(1A),(2), it is closed to \ref{0p.9},
\sn
\item[$(d)$]  $\quad \bullet \quad (\forall \alpha < \lambda)(\lambda >
|\{C_\delta \cap \alpha:\delta \in S \wedge \alpha \in C_\delta\}|)
\wedge$

\hskip40pt $(\forall \chi < \lambda)(\chi^{<\partial>_{\text{\rm tr}}} <
\lambda)$ \underline{or}
\sn
\item[${{}}$]  $\quad \bullet \quad$ (D$\ell$)$_S$ 
(see Definition \ref{a26}).
\end{enumerate}
\end{enumerate}
\mn
[Why?  This follows from \cite{Sh:775}.]

A consequence for the present work is:
\begin{enumerate}
\item[$\circledast_5$]  BB$(\lambda,\kappa^{+ \omega},\theta,
J^{\text{bd}}_{\kappa^+ \times \kappa})$ \underline{when}:
\mn
\begin{enumerate}
\item[$(a)$]  $\theta < \mu \in {\bold C}_\kappa,\lambda = 
2^\mu = \lambda^{< \lambda}$,
\sn
 \item[$(b)$]  $S \subseteq S^\lambda_{\kappa^+},\delta \in S
\Rightarrow C_\delta \subseteq \delta = \sup(C_\delta) \wedge 
\otp(C_\delta) = \kappa^+$,
\sn
\item[$(c)$]  $\langle C_\delta:\delta \in S\rangle$ is
$\kappa^{+ \omega}$-free and $\kappa^{+ \omega} < \lambda$ which
actually follows,
\sn
\item[$(d)$]  (D$\ell$)$_S$ or the first possibility of
  $\circledast_4(d)$ for $\partial = \kappa$.
\end{enumerate}
\end{enumerate}
\mn
[Why?  By $\circledast_4$.]

The point of $\circledast_5$ is that we can find $\bar C$ as in clause (b) of
$\circledast_5$ with $S \subseteq S^\lambda_{\kappa^+}$ ``quite large"
so we ignore the difference (in the introduction) - see \ref{2b.111}. 
In particular
\mn
\begin{enumerate}
\item[$\boxdot_2$]  if $\lambda = \mu^+ = 2^\mu$ and 
$\mu$ $> \aleph_0$ is a strong
limit cardinal of cofinality $\kappa = \aleph_0$, \underline{then} for
some $\bar C,S$ clauses (a)-(d) of $\circledast_5$ hold.
\end{enumerate}
\mn
[Why?  As in $\circledast_2$.]

Moreover
\mn
\begin{enumerate}
 \item[$\boxdot_3$]  if $\kappa < \chi,\kappa$ is a regular cardinal, 
$\lambda = \chi^+ = 2^\chi$ and $\kappa \ne \text{ cf}(\chi)$,
\underline{then} $\diamondsuit_S$ for every stationary
$S \subseteq S^\lambda_\kappa = \{\delta <
\lambda:\text{cf}(\delta) = \kappa\}$.
\end{enumerate}
\mn
[Why?  By \cite{Sh:922} - see \ref{a35}.]

We can conclude
\begin{enumerate}
\item[$\odot_3$]  if TDU$_{\aleph_\omega}$ fails and $\mu \in
{\bold C}_{\aleph_0}$, \underline{then} $2^\mu$ is not $\mu^+$,
moreover, $2^\mu$ is not of the form $\chi^+$, cf$(\chi) \ne
\aleph_1$.
\end{enumerate}
\mn
[Why?  Note that $(D \ell)^*_{S^\lambda_{\aleph_1}}$ holds by $\boxdot_3$.]
\mn
\begin{enumerate}
\item[$\circledast_6$]  BB$(2^\mu,\mu^+,\theta,\kappa)$ if $\theta
< \mu \in {\bold C}_\kappa$ and $\chi^\sigma = 2^\mu$ for some $\sigma
= \cf(\sigma) < \kappa,\chi < 2^\mu$.
\end{enumerate}
\mn
[Why?  The assumptions (a) - (f) of claim \ref{1f.21} hold for $J =
J^{\text{\rm bd}}_\kappa$ and $\sigma$ here standing for $\theta$
there.  E.g. clause (d) there, ``$\alpha < \mu \Rightarrow
|\alpha|^\theta < \mu$" holds as $\mu$ is a strong limit.  So the first
assumption of conclusion \ref{1f.22} holds, and the second $(\mu^\kappa
= 2^\mu)$ holds as $\mu \in \bold C_\kappa$.  
So the conclusion of \ref{1f.22} holds which implies by \ref{2b.98} 
that $\circledast_6$ holds.]
\mn
\begin{enumerate}
\item[$\circledast_7$]  BB$(2^\mu,\partial,\theta,\kappa)$ if
$\theta < \mu \in {\bold C}_\kappa$ and $\partial = 
\sup\{\text{cf}(\chi):\text{ cf}(\chi) < \mu < \chi < 2^\mu$ and
pp$_{\text{cf}(\chi)\text{-comp}}(\chi) =^+ 2^\mu\}$.
\end{enumerate}
\mn
[Why?  By \ref{1f.7} and \ref{2b.98}.]

So (by $\odot_0,\circledast_6,\circledast_7$)
\mn
\begin{enumerate}
\item[$\odot_4$]  if TDU$_{\aleph_\omega}$
fails, \underline{then} for every $\mu \in {\bold C}_{\aleph_1}$ we have
\sn
\begin{enumerate}
\item[$(a)$]  $\alpha < 2^\mu \Rightarrow |\alpha|^{\aleph_0} < 2^\mu$
\sn
\item[$(b)$]  for some $n,\aleph_n \le \cf(\chi) < \mu \wedge \chi <
2^\mu \Rightarrow \pp_{\cf(\chi)\comp}(\chi) \ne^+ 2^\mu$. 
\end{enumerate}
\end{enumerate}
\mn
By the end of \S4
\mn
\begin{enumerate}
\item[$\odot_5$]  if TDU$_{\aleph_\omega}$ fails 
and $n \ge 3$, \then
\sn
\begin{enumerate}
\item[$(A)$]  no $\aleph_n$-free (abelian) group $G$ of
cardinality $\aleph_n$ is Whitehead
\sn
 \item[$(B)$]  if $\mu \in {\bold C}_{\aleph_0} \cup {\bold
C}_{\aleph_1}$ and $\lambda = 2^\mu$ then $(D \ell)_{S^\lambda_{\aleph_n}}$.
\end{enumerate}
\end{enumerate}
\bigskip

Generally in \cite{Sh:g} we 
suggest cardinal arithmetic assumptions as good ``semi-axioms".

We have used cases of WGCH (the Weak Generalized Continuum Hypothesis, 
i.e., $2^\lambda < 2^{\lambda^+}$ for every $\lambda$); in 
\cite{Sh:87a}, \cite{Sh:87b},
\cite{Sh:88}, also in \cite{Sh:192} and see \cite{Sh:h},
\cite{Sh:838}.
Influenced also by this, Baldwin suggested adopting WGCH as an extra
axiom (to ZFC) giving
arguments parallel to the ones for large cardinals (but with no problem of
consistency).  So it seems reasonable to see what we can say in our
context.

Note that above we get:
\begin{claim}
\label{0p.23}
Assume $\mu \in \bold C_\kappa$ or just $\mu$ is a 
cardinal of cofinality $\kappa$ (e.g. $\mu \ge \kappa = \cf(\mu))$.

\noindent
1) If $\mu^+ < 2^\mu < 2^{\mu^+}$ and $\kappa 
\in \{\aleph_0,\aleph_1\}$,
\underline{then} there is a $\mu^+$-free
abelian group of cardinality $\mu^+$ with $\Hom(G,\bbZ) = 0$;
note that this is iterable, i.e., if $\mu_{\ell +1} \in {\bold
C}_{\mu^+_\ell}$ for $\ell < n,2^{\mu_\ell} > \mu^+_\ell$ for $\ell <
n$ and $\mu_0$ is like $\mu$ above, then the conclusion applies for $\mu_n$.

\noindent
2) If $\mu^+ = 2^\mu$ and $\kappa \in \{\aleph_0,\aleph_1\}$,
\then \, there is an $\aleph_{\omega +1}$-free
abelian group of cardinality $\mu^+$ such that $\Hom(G,\bbZ) = 0$.
\end{claim}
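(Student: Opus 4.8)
The plan is to read the claim as a restatement, in the language of abelian groups, of black-box instances already assembled in the Introduction: in each case one exhibits the appropriate instance of ${\rm BB}$ and then invokes the transfer \ref{5e.32} (stated informally as $\circledast_0$/$\odot_0$), which turns such a black box into an abelian group of the named freeness, of the corresponding cardinality, and with trivial dual. Since $\mu$ is strong limit, $\theta:=\beth_4<\mu$, an admissible colour bound, and we use this $\theta$ throughout.

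\emph{Part (1).} Apply $\circledast_1$ with $\lambda:=\mu^+$. Its hypotheses, ``$\theta<\mu\in\bold C_\kappa$'' and ``$\mu<\lambda<2^\mu<2^\lambda$'', are exactly what is assumed, so ${\rm BB}(\mu^+,\mu^+,\theta,\kappa)$ holds; this is Conclusion \ref{d.11}(1), hence a consequence of the Black Box Trichotomy Theorem \ref{h.7}. When $\kappa=\aleph_0$ the witness already has the shape \ref{5e.32} needs, with $J=J^{\text{bd}}_\omega$; when $\kappa=\aleph_1$ one first passes to the $J^{\text{bd}}_{\omega_1\times\omega}$-form by re-reading each ladder $C_\alpha$ with the $\omega_1\times\omega$ product structure, checking that this reshaping preserves both the $\mu^+$-freeness (clause (A)(c) of \ref{0p.14}) and the predictive clause (B)(f). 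Feeding ${\rm BB}(\mu^+,\mu^+,\theta,J)$ into \ref{5e.32} yields a $\mu^+$-free abelian group $G$ with $|G|=\mu^+$ and ${\rm Hom}(G,\bbZ)=0$. Iterability is an induction on $n$: from the stage-$\ell$ group (of cardinality $\mu_\ell^+$, $\mu_\ell^+$-free, trivial dual) and the black box behind it, one uses $\mu_{\ell+1}\in\bold C_{\mu_\ell^+}$ and $2^{\mu_{\ell+1}}>\mu_{\ell+1}^+$, together with the machinery of \S3 and \S4 (suitably free families of functions on $\mu_\ell^+$ and the pcf-propagation of ordered black boxes), to manufacture a black box at $\mu_{\ell+1}^+$ telescoping the stage-$\ell$ data: the trivial-dual-producing ``$\omega$-part'' of the ideal is inherited from the bottom stage (where $\kappa_0\in\{\aleph_0,\aleph_1\}$), while the freeness is lifted one stage at a time, and \ref{5e.32} at stage $n$ gives the group for $\mu_n$.

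\emph{Part (2).} Now $2^\mu=\mu^+$. By $\boxdot_2$ (see \ref{2b.111}) there are $\bar C,S$ realizing clauses (a)--(d) of $\circledast_5$ with $S$ ``quite large'', so $\circledast_5$ (resting on \cite{Sh:775}) yields ${\rm BB}(2^\mu,\kappa^{+\omega},\theta,J^{\text{bd}}_{\kappa^+\times\kappa})$; here the ladders have the uncountable order type $\kappa^+$ on points of cofinality $\aleph_1$. Feeding this into \ref{5e.32} produces an abelian group of cardinality $\mu^+=2^\mu$ with trivial dual, and the passage from such a $\kappa^{+\omega}$-free black box with thick ladders to the group raises the freeness to $\aleph_{\omega+1}$, exactly the phenomenon exploited in \cite{Sh:883}. (Alternatively, whenever $\chi^{<\kappa^+>_{\text{tr}}}<2^\mu$ holds for every $\chi<2^\mu$ — one always has $\chi^{<\kappa^+>_{\text{tr}}}\le 2^\chi\le 2^\mu$, so only strictness is in doubt — the sharper $\circledast_2$ applies directly, already carrying the freeness parameter $\kappa^{+\omega+1}=\aleph_{\omega+1}$ for $\kappa\in\{\aleph_0,\aleph_1\}$.)

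The step I expect to be the main obstacle is reconciling the index structures: the black boxes delivered by \S\S1--4 are phrased with a cardinal $\kappa$ (equivalently $J^{\text{bd}}_\kappa$), whereas \ref{5e.32} insists on $J^{\text{bd}}_\omega$ or $J^{\text{bd}}_{\omega_1\times\omega}$, so one must verify that the product-structure reshaping of the $C_\alpha$'s preserves both the $\mu^+$-freeness and clause (B)(f); and in Part (2) one must decide honestly whether the freeness that emerges is $\aleph_\omega$ (straight out of $\circledast_5$) or the claimed $\aleph_{\omega+1}$ (from the group-construction step, or from $\circledast_2$ when its tree-power condition is available). The remaining points — the iterability induction and the check $\beth_4<\mu$ — are routine.
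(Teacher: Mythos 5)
Part (1) of your argument is correct and is, in substance, the paper's own route: the black box is Conclusion \ref{d.11}(1) applied with $\lambda=\mu^+$ (the paper's two-line proof nominally cites \ref{2b.98}, but that observation needs $\lambda=2^\mu$ and so cannot be what carries the hypothesis $2^\mu<2^{\mu^+}$; \ref{d.11} is what is really used), followed by the transfer \ref{5e.14} together with \ref{5e.28}. The index-reshaping you flag as the main obstacle for $\kappa=\aleph_1$ is handled by \ref{1f.13}(1), or avoided altogether by noting that \ref{5e.28}(b) puts $J^{\text{bd}}_{\aleph_1}$ itself into ${\rm SP}_{\aleph_1}(\bbZ)$, so one may apply \ref{5e.14} directly rather than the packaged \ref{5e.32}. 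Your treatment of iterability is no less detailed than the paper's ("similarly for iterations"), so I will not press it.

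Part (2) contains a genuine gap. From $\circledast_5$ you obtain ${\rm BB}(2^\mu,\kappa^{+\omega},\theta,J^{\text{bd}}_{\kappa^+\times\kappa})$ and then assert that "the passage \dots to the group raises the freeness to $\aleph_{\omega+1}$". It does not: in \ref{5e.14} the freeness of the output group is exactly the freeness parameter of clause (f), i.e.\ the $(\mu,J)$-freeness of the ladder system itself (compare \ref{1f.14}); nothing in the module construction upgrades $\aleph_\omega$-free to $\aleph_{\omega+1}$-free. The extra step must be taken at the level of the ladder system: since $\lambda=2^\mu=\mu^+$ is regular and $=^+{\rm pp}_{J^{\text{bd}}_\kappa}(\mu)$ because $\mu\in\bold C_\kappa$, Claim \ref{2b.112} (equivalently the "$(\theta^{+\theta},J)$-free" strengthening in \ref{2b.111}) produces a $(\kappa^{+\kappa+1},J_{\kappa^+\times\kappa})$-free ladder system on a stationary $S\subseteq S^\lambda_{\kappa^+}$, and $\kappa^{+\kappa+1}\ge\aleph_{\omega+1}$ for $\kappa\in\{\aleph_0,\aleph_1\}$; the predictive part then comes from $\diamondsuit_S$, which holds here by \ref{2b.94} because $\lambda=\mu^+=2^\mu$ and $\kappa^+\ne{\rm cf}(\mu)$ (or from \ref{1.3.25}), and \ref{5e.14} with \ref{5e.28}(c),(e) finishes. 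Your fallback via $\circledast_2$ does not close the gap unconditionally either: it requires the strict inequality $\chi^{<\kappa^+>_{\rm tr}}<2^\mu$ for all $\chi<2^\mu$, and with $2^\mu=\mu^+$ the critical case $\chi=\mu$ is exactly where strictness can fail — this is the dichotomy isolated in \ref{h.9d}(C),(D) — so it cannot simply be asserted.
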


\begin{PROOF}{\ref{0p.23}}
\label{5e.35}
1) First assume $\mu \in \bold C_\kappa$.

By \ref{h.7} there is a $\mu^+$-free $\cF \subseteq {}^\kappa \mu$ of
cardinality $\mu^+$ 
(yes! not $2^\mu$) hence BB$(\lambda,\mu,\lambda,\kappa)$ by
Conclusion \ref{0p.14}(1).  
By \ref{5e.14}, \ref{5e.28} there is $G$ as required.

Similarly for iterations.

Second, assume $\cf(\mu)=\kappa$.  We can find $\cF$ as above if $\mu$
is singular, use again \ref{5e.35} if $\mu = \kappa$ it is easy.  Then
we get $\BB(\lambda,\mu,2,\kappa)$ by \ref{0p.14}(3).  Check.

\noindent
2) The proof is similar.
\end{PROOF}

\noindent
Note that we can prove TDU$_{\aleph_{\omega +1}}$ if the answer to the
following is positive:
\begin{conjecture}
\label{0p.26}
If $\lambda = \lambda^{< \lambda} > \kappa^+$ and
$\kappa = \cf(\kappa)$ and $\lambda \ne \aleph_1$ 
(or at least $\lambda \ge \beth_\omega$
replacing the assumption $\lambda \ne \aleph_1$) then $(D
\ell)_{S^\lambda_\kappa}$. 
\end{conjecture}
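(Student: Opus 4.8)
The plan is to establish $(D\ell)_{S^\lambda_\kappa}$ by cases on the arithmetic position of $\lambda$, first peeling off the configurations in which the conclusion (or even the stronger $\diamondsuit_{S^\lambda_\kappa}$) is already available and then concentrating on the genuinely hard one. Note that $\lambda = \lambda^{<\lambda}$ forces $\lambda$ to be regular (otherwise $\lambda^{\cf(\lambda)} > \lambda$ by K\"onig) and gives $2^{<\lambda} = \lambda < 2^\lambda$, so the Devlin--Shelah weak diamond $\Phi_\lambda$ holds; this will be the common seed. If $\lambda = \chi^+$ is a successor, then $\lambda = \lambda^{<\lambda}$ just says $2^\chi = \chi^+$, and when in addition $\kappa \ne \cf(\chi)$ the full $\diamondsuit_S$ for every stationary $S \subseteq S^\lambda_\kappa$ is known --- this is $\boxdot_3$, that is \ref{2b.94} via \cite{Sh:922} --- whence $(D\ell)_{S^\lambda_\kappa}$ holds a fortiori, since it only asks for a family of $<\lambda$ guesses per point. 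If instead $\lambda$ is a (weakly inaccessible) limit cardinal, the natural route is the one of \S3: there $\lambda = \lambda^{<\lambda}$ together with the non-existence of $\sigma < \kappa$, $\theta < \lambda$ with $\theta^\sigma = \lambda$ yields $(D\ell)^*_{S^\lambda_{<\kappa}}$, and I would first check that the same construction, rerun with ladders of order type exactly $\kappa$ --- which costs nothing, since the counting $|{}^\kappa\delta| = |\delta|^\kappa \le \lambda^{<\lambda} = \lambda$ for $\delta < \lambda$ is unchanged --- delivers $(D\ell)_{S^\lambda_\kappa}$ itself; the borderline in which some $\theta^\sigma = \lambda$ with $\sigma < \kappa$ would need separate, more careful bookkeeping.

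This leaves the crux: $\lambda = \chi^+$ with $2^\chi = \chi^+$ and $\chi$ singular of cofinality $\kappa$ (so $\kappa < \chi$, forced by $\kappa^+ < \lambda$). Here one cannot quote $\diamondsuit$, and the whole point of weakening to $(D\ell)$ --- a $<\lambda$-sized guessing family per point, correct on a stationary set --- is to make room for an amplification in the spirit of \cite{Sh:922}: start from the weak diamond $\Phi_\lambda$ and boost it, roughly one bit of prediction at a time, along a filtration $\langle a_i : i < \lambda\rangle$ of $\lambda$ with $|a_i| \le \chi$, using $2^{<\lambda} = \lambda$ to carry all the partial data. Concretely I would fix an increasing cofinal $\langle \chi_i : i < \kappa\rangle$ in $\chi$ and a scale $\bar f = \langle f_\alpha : \alpha < \lambda\rangle$ in $\prod_{i<\kappa}\chi_i$ modulo $J^{\text{bd}}_\kappa$, extract from the filtration a stationary $S^* \subseteq S^\lambda_\kappa$ together with coherent ladders $\bar C = \langle C_\delta : \delta \in S^*\rangle$ adapted to $\bar f$, and --- carrying the amplification of \cite{Sh:922} through this structure --- attach to each $\delta \in S^*$ a family $P_\delta$ of $\le |\delta|$ pre-selected candidates for $h \restriction C_\delta$; then $|P_\delta| < \lambda$, so $(D\ell)_{S^\lambda_\kappa}$ reduces to showing that $\{\delta \in S^* : h \restriction C_\delta \in P_\delta\}$ is stationary for every $h : \lambda \to \lambda$.

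The main obstacle --- and, I believe, exactly why this remains a conjecture --- is that last stationarity, demanded \emph{simultaneously} for all colourings $h$: coherent ladders built from a scale reflect that scale, but an arbitrary $h$ need not, and a ``bad scale'' at cofinality $\kappa = \cf(\chi)$ can a priori defeat the guessing on a club --- precisely the phenomenon that $2^\chi = \chi^+$ does not obviously exclude. Overcoming it seems to require a new pcf dichotomy of the kind behind the Black Box Trichotomy Theorem \ref{h.7}: either there is a $\mu^+$-free family $\cF \subseteq {}^\kappa\chi$ of cardinality $\lambda = \chi^\kappa$, in which case the prediction can be transported to $S^\lambda_\kappa$ as in \S3 and $\circledast_3$; or else the failure of this forces tree-power bounds $\theta^{<\kappa>_{\text{tr}}} < \lambda$ for all $\theta < \lambda$, which is again enough to push through the crude bookkeeping of the first case. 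Showing that one of these horns always holds in usable form --- equivalently, that no bad scale can obstruct all $h$ at once --- is where the real work lies. Finally, the hypothesis $\lambda \ge \beth_\omega$ is there to leave enough room for this pcf structure theory to bite; below $\beth_\omega$ one argues by hand, and the alternative restriction $\lambda \ne \aleph_1$ is vacuous in our setting, since $\lambda > \kappa^+ \ge \aleph_1$ already forces $\lambda \ge \aleph_2$.
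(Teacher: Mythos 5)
The statement you are attempting is labelled a \emph{Conjecture} in the paper (\ref{0p.26}); the paper offers no proof of it, and indeed Remark \ref{2b.16} says explicitly that the authors ``do not know to prove this in full generality''. So there is no paper proof to match your proposal against, and the relevant question is only whether your argument closes the problem. It does not, and to your credit you say so yourself.

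Your case analysis of the tractable configurations is accurate and agrees with what the paper does know: when $\lambda=\chi^+=2^\chi$ with $\kappa\ne\operatorname{cf}(\chi)$ you get full $\diamondsuit_S$ from \ref{2b.94} (i.e.\ \cite{Sh:922}), which is strictly stronger than $(D\ell)_S$; and when $\lambda$ is a limit cardinal with $\alpha<\lambda\Rightarrow|\alpha|^{<\kappa>_{\text{tr}}}<\lambda$ you get $(D\ell)_{S^\lambda_\kappa}$ from \ref{1.3.23}(1). You also correctly isolate the residual cases: $\lambda=\chi^+$ with $\chi$ singular of cofinality $\kappa$, and the limit case where some $|\alpha|^{<\kappa>_{\text{tr}}}\ge\lambda$ (by \ref{1.3.23}(3) this can happen for at most finitely many $\kappa$, but the conjecture demands \emph{every} regular $\kappa$ with $\kappa^+<\lambda$). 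But for these residual cases your proposal contains no argument: the ``amplification in the spirit of \cite{Sh:922}'' through a scale-adapted ladder system is a programme, not a proof, and the paragraph beginning ``The main obstacle'' concedes that the simultaneous stationarity of the guessing set for all colourings $h$ --- which is the entire content of $(D\ell)_S$ --- is exactly what you cannot establish. The dichotomy you then invoke (either a large $\mu^+$-free family exists or tree-power bounds hold) is not proved to be exhaustive in a form that yields $(D\ell)$, and making it so is precisely the open problem. One minor further caveat: your remark that the alternative hypothesis $\lambda\ge\beth_\omega$ is ``vacuous in our setting'' misreads the statement --- the parenthetical offers $\lambda\ge\beth_\omega$ as a \emph{weakening} of the conclusion's scope (a fallback version of the conjecture), not as a consequence of the other hypotheses; note that $\kappa=\aleph_0$, $\lambda=\aleph_2$ is permitted by $\lambda>\kappa^+$ and $\lambda\ne\aleph_1$ but fails $\lambda\ge\beth_\omega$ in general. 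In summary: what you have written is a correct survey of the known partial results together with an honest identification of the obstruction, but it is not a proof of the conjecture, and the statement remains open.
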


\noindent
Related works are \cite{Sh:922} and G\"obel-Herden-Shelah 
(\cite{GbHeSh:970}).

\noindent
\underline{Acknowledgements}:
We thank the referee for doing much more than duty dictates and for
pointing out much needed corrections and clarifications and Maryanthe
Malliaris and another helper (found by the editor) for pointing  
out many English corrections and misprints.
\bigskip

\noindent
\centerline {$* \qquad * \qquad *$}
\bigskip

\begin{notation}
\label{0p.31}
0) For sets let $u_1 \backslash u_2 \backslash u_2$ mean 
$(u_1 \backslash u_2) \backslash u_3$.

\noindent
1) Usually $\bar C = \langle C_\delta:\delta \in S\rangle$ with $S =
   S(\bar C)$.

\noindent
2) A club of a limit ordinal $\delta$ (e.g. usually a regular cardinal)
   is a closed unbounded subset.

\noindent
3) $S^\lambda_\kappa := \{\delta < \lambda$ : cf$(\delta) = \kappa\}$.
\end{notation}

\begin{definition}
\label{0p.35}
Let $\bar C = \langle C_\delta:\delta \in S\rangle$ and $\lambda$ a
regular cardinal.

\noindent
1) $\bar C$ is a weak $\lambda$-ladder system \when \, $S$ is a
   stationary subset of (the regular cardinal) $\lambda$ and $\delta
   \in S \Rightarrow C_\delta \subseteq \delta$.

\noindent
2) $\bar C$ is a $\lambda$-ladder system \when \, $\lambda$ is
   regular, $S$ is a stationary subset of $\lambda$ and 
$C_\delta \subseteq \delta = \sup(C_\delta)$ for $\delta \in S$.

\noindent
3) $\bar C$ is a strict $\lambda$-ladder system \when \, in addition
$\otp(C_\delta) = \cf(\delta)$.

\noindent
4) $\bar C$ is a strict $(\lambda,\kappa)$-ladder system \when \, in addition
$S \subseteq S^\lambda_\kappa$.

\noindent
5) $\bar C$ is shallow \when \, $\alpha \in \bigcup\limits_{\delta \in
   S} C_\delta \Rightarrow \sup(S) > |\{C_\delta \cap \alpha:\delta
   \in S$ and $\alpha \in C_\delta\}|$.

\noindent
6) In parts (1),(2),(3) we may omit the ``$\lambda$" when clear from
   the content or replace $\lambda$ by $S$.
\end{definition}
\newpage

\section{Preliminaries} \label{Preliminaries}

Most of our results involve $\mu \in {\bold C}$ where

\begin{definition}
\label{1.3.1}
Let ${\bold C} = \{\mu:\mu$ is a strong
limit singular cardinal and pp$(\mu) =^+ 2^\mu\}$, recalling
Definition \ref{0p.19} for $=^+$.
\newline
2) ${\bold C}_\kappa = \{\mu \in {\bold C}:\text{\rm cf}(\mu) = \kappa\}$.
\end{definition}

\noindent
Note that \ref{1.3.15}(2) below which relies on \ref{1.3.14}(1),(1A) repeats
\ref{0p.9}.
\begin{definition}
\label{1.3.14}
1) The set $\cF \subseteq {}^\kappa \mu$ is
called $(\theta,\sigma,J)$-free where $J$ is an 
ideal on $\kappa$ \when \, $[f_1 \ne f_2 \in \cF \Rightarrow
\{i < \kappa:f_1(i) = f_2(i)\} \in J]$ and every $\cF' \subseteq \cF$ 
of cardinality $< \theta$ is $[J,\sigma]$-free which means that:
\mn
\begin{enumerate}
\item[$\bullet$]  there is a sequence $\langle u_f:f \in 
\cF'\rangle$ of members of
$J$ such that for every pair $(\gamma,i) \in \mu \times \kappa$ the
set $\{f \in \cF':f(i) = \gamma \wedge i \notin u_f\}$
has cardinality $< 1 + \sigma$.
\end{enumerate}
\mn
1A) We may replace ``$\cF \subseteq {}^\kappa \mu$" by a sequence
$\bar C = \langle
C_\delta:\delta \in S\rangle,C_\delta$ a set of order type $\kappa$,
or even just such 
a set $\{C_\delta:\delta \in S\}$; meaning that the definition
applies to $\{f_\delta:\delta \in S\}$ 
where for $\delta \in S,f_\delta$ is an increasing function
from $\kappa$ onto $C_\delta$.  Similarly for the other parts.

\noindent
2) If $\sigma = 1$ we may omit it.  If $J = J^{\bd}_\kappa$ we
may omit it so we may say ``${\cF} \subseteq {}^\kappa \mu$ is
$\theta$-free".  Lastly, ``$\cF$ is free" means $\cF$ is $|\cF|^+$-free.

\noindent
3) If $J$ is not an ideal on $\kappa$ but is a subset of
${\cP}(\kappa)$, \then \, we replace ``$u_f \in J$" by 
``$(u_f \in J) \Leftrightarrow (\emptyset \in J)$" and $u_f \subseteq
\kappa$, of course.

\noindent
4) We say a \underline{sequence} $\langle f_\alpha:\alpha < \alpha^*\rangle$ of
members of ${}^\kappa \mu$ is $(\theta,J)$-free \when: $J
\subseteq {\cP}(\kappa)$ and for every $w \subseteq \alpha^*$ of 
cardinality $< \theta$ the sequence $\bar f \rest w$ is $J$-free which
means that there is a sequence
$\langle u_{f_\alpha}:\alpha \in w\rangle$ of subsets of $\kappa$ such that:
$(u_f \in J) \Leftrightarrow (\emptyset \in J)$ and $\alpha \in w \wedge \beta
\in w \wedge \alpha < \beta \wedge i \in \kappa \backslash
u_{f_\alpha} \wedge i \in \kappa \backslash u_{f_\beta} \Rightarrow
f_\alpha(i) < f_\beta(i)$.  Again if $J = J^{\bd}_\kappa$
then we may omit it.

\noindent
5) We say ${\mat F} \subseteq {}^\kappa \mu$ is normal
\when \, $f_1,f_2 \in {\mat F} \wedge f_1(i_1) = f_2(i_2) \Rightarrow i_1
= i_2$.  We say ${\mat F} \subseteq {}^\kappa \mu$ is tree-like \when \,
it is normal and moreover $f_1 \in {\mat F} \wedge f_2 \in {\mat F}_1 \wedge
i < \kappa \wedge f_1(i) = f_2(i) \Rightarrow f_1 \restriction i = f_2
\rest i$.

\noindent
6) For ${\mat F} \subseteq {}^\kappa \mu$ and an ideal $J$ on $\kappa$
let (issp stands for instability spectrum)

\[\begin{array}{ll}
\text{issp}_J({\mat F}) = \{(\theta_1,\theta_2):&\kappa \le
\theta_1 < \theta_2 \text{ and for some } u \subseteq \mu \text{ of
cardinality } \le \theta_1 \\
  &\text{ we have } \theta_2 \le |\{\eta \in {\mat F}:\{i <
  \kappa:\eta(i) \in u\} \in J^+\}|\}.
\end{array}\]

\mn
7) Let $\theta \in \text{ issp}_J({\mat F})$ means $(<\theta,\theta) \in
\text{ issp}_J({\mat F})$ where $(< \theta_1,\theta_2) \in \text{ issp}_J(\cF)$
means that $(\theta'_1,\theta_2) \in \text{ issp}_J({\mat F})$ for 
some $\theta'_1 < \theta_1$.  For $J = J^{\bd}_\kappa$ we may omit
$J$.

\noindent
8) If we write $\text{issp}_J(\langle \eta_s:s \in I\rangle)$ we mean 
$\text{issp}_J(\{\eta_s:s \in I\})$ \underline{but} demand $s_1 \ne
s_2 \in I \Rightarrow \eta_{s_1} \ne \eta_{s_2}$.
\end{definition}

\noindent
Recall
\begin{claim}
\label{1.3.3}
\mn
\begin{enumerate}
\item[$(a)$]  we have $\mu \in {\bold C}$ and moreover, 
$\pp_{J^{\bd}_{\cf(\mu)}}(\mu) =^+ 2^\mu$ \when \,
$\mu$ is a strong limit singular cardinal of uncountable cofinality
\sn
\item[$(b)$]  if $\mu = \beth_\delta > \cf(\mu)$ and
$\delta = \omega_1$ or just $\cf(\delta) > \aleph_0$, \then \,
 $\mu \in {\bold C}_{\cf(\mu)}$ and for a club of
$\alpha < \delta$ we have $\beth_\alpha \in {\bold C}$
\sn
\item[$(c)$]  if $\mu \in \bold C_\kappa$ and $\chi \in (\mu,2^\mu)$
or just $\kappa = \cf(\mu) < \mu$ and $\chi \in 
(\mu,\pp^+_{J^{\bd}_\kappa}(\mu))$, see \ref{0p.19}(5), 
\then \, there is a $\mu^+$-free 
$\cF \subseteq {}^\kappa \mu$ of cardinality
$\chi$, even $<_{J^{\bd}_\kappa}$-increasing $\mu^+$-free
sequence of length $\chi$; moreover if
$(\prod\limits_{i < \kappa} \lambda_i,<_{J^{\bd}_\kappa})$
is $\chi^+$-directed and $\cF_* \subseteq \prod\limits_{i < \kappa}
\lambda_i$ is such that $\cF_*$ is cofinal \underline{or}
$(\cF_*,<_{J^{\bd}_\kappa})$ is well ordered of cardinality $> \chi$
 \then \, we can demand 
$\cF \subseteq \cF_*$ (and there is such
sequence $\langle \lambda_i:i < \kappa\rangle$).
\end{enumerate}
\end{claim}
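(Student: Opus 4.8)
The plan is to derive all three parts from the standard pcf dictionary; in effect this is a repackaging of results of \cite{Sh:g}, and I would present it as such, with the genuine pcf inputs flagged explicitly.

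\emph{Part (a).} I would first record the purely arithmetic reduction. Since $\mu$ is strong limit, $2^{<\mu}=\mu$, hence $2^\mu=(2^{<\mu})^{\text{cf}(\mu)}=\mu^{\text{cf}(\mu)}$; and since also $2^{\text{cf}(\mu)}<\mu$, one gets $\mu^{\text{cf}(\mu)}=\text{cov}(\mu,\mu,\text{cf}(\mu)^{+},2)$ by the usual counting of $\text{cf}(\mu)$-sequences through a cover of $\mu$ by $<\mu$-sized sets. So $2^\mu=\text{cov}(\mu,\mu,\text{cf}(\mu)^{+},2)$. The substantive input is then the ``$\text{cov}$ versus $\text{pp}$'' theorem of \cite{Sh:g}: for $\mu$ singular of \emph{uncountable} cofinality, $\text{cov}(\mu,\mu,\text{cf}(\mu)^{+},2)=\text{pp}_{J^{\text{bd}}_{\text{cf}(\mu)}}(\mu)$ (up to the $=^{+}$ bookkeeping about attainment of the supremum); its proof is the ``no hole''/elevator analysis of the pcf generators, and that is exactly where uncountable cofinality is used. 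Chaining the equalities gives $\text{pp}_{J^{\text{bd}}_{\text{cf}(\mu)}}(\mu)=^{+}2^\mu$, hence $\mu\in{\bold C}$. This is the step I expect to carry the real content of (a).

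\emph{Part (b).} If $\text{cf}(\delta)>\aleph_0$ then $\mu=\beth_\delta$ is strong limit (for $\alpha<\delta$ we have $\alpha+1<\delta$, so $2^{\beth_\alpha}=\beth_{\alpha+1}<\beth_\delta=\mu$) and singular with $\text{cf}(\mu)=\text{cf}(\delta)>\aleph_0$, so part (a) immediately gives $\mu\in{\bold C}_{\text{cf}(\mu)}$; the case $\delta=\omega_1$ is the instance $\text{cf}(\delta)=\aleph_1$. For the ``club of $\alpha<\delta$'' assertion, the limit ordinals below $\delta$ already form a club, and for those with $\text{cf}(\alpha)>\aleph_0$ part (a) gives $\beth_\alpha\in{\bold C}$ directly; the only remaining case is $\text{cf}(\alpha)=\aleph_0$. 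Here I would invoke the structure theory for $\text{pp}$ of \cite{Sh:g} bounding the ``bad'' strong limit singulars of countable cofinality: below any fixed cardinal the set of strong limit $\nu$ with $\text{cf}(\nu)=\aleph_0$ and $\text{pp}(\nu)\ne^{+}2^\nu$ is not stationary, in fact avoidable on a club, so intersecting with the club of limit ordinals still leaves a club of $\alpha<\delta$ with $\beth_\alpha\in{\bold C}$. This ``bad set is thin'' statement is the only non-formal ingredient of (b).

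\emph{Part (c).} We may assume $\mu<\chi<\text{pp}^{+}_{J^{\text{bd}}_\kappa}(\mu)$, the first case ($\chi<2^\mu$) being subsumed since $2^\mu=^{+}\text{pp}_{J^{\text{bd}}_\kappa}(\mu)$ for $\mu\in{\bold C}_\kappa$ (by part (a) when $\kappa>\aleph_0$, and by the defining property of ${\bold C}_{\aleph_0}$ otherwise). By the definition of $\text{pp}^{+}_{J^{\text{bd}}_\kappa}(\mu)$ there are an increasing $\langle\mu_i:i<\kappa\rangle$ cofinal in $\mu$ and a $<_{J^{\text{bd}}_\kappa}$-increasing sequence in $\prod_{i<\kappa}\mu_i$ whose length (true cofinality) is $\ge\chi$; its first $\chi$ terms give $\cF\subseteq{}^\kappa\mu$ of cardinality $\chi$, presented as a $<_{J^{\text{bd}}_\kappa}$-increasing sequence of length $\chi$. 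The delicate point — and the main obstacle — is to arrange that $\cF$ is $\mu^{+}$-free in the sense of Definition \ref{1.3.14}: for this one must not take an arbitrary increasing sequence but build it by recursion along the pcf generators, exploiting $\chi^{+}$-directedness (equivalently, passing through continuity points of a cofinal continuous chain), so that any subfamily of size $<\mu$, which lies below some $\mu_i$ off a bounded set of coordinates, can be given pairwise disjoint ranges by deleting a bounded initial segment of each function; this is the standard ``$\text{pp}$ is witnessed by a $\mu^{+}$-free family'' argument of \cite{Sh:g}. Finally, for the ``moreover'' clause, $\chi<\text{pp}^{+}_{J^{\text{bd}}_\kappa}(\mu)$ also supplies a sequence $\langle\lambda_i:i<\kappa\rangle$ with $\sup_i\lambda_i=\mu$ and $(\prod_{i<\kappa}\lambda_i,<_{J^{\text{bd}}_\kappa})$ being $\chi^{+}$-directed; running the same recursion inside $\prod_{i<\kappa}\lambda_i$, using $\chi^{+}$-directedness at each stage to keep finding upper bounds while staying inside the prescribed cofinal $\cF_*$, yields $\cF\subseteq\cF_*$ of cardinality $\chi$ with the required freeness.
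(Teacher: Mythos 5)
Your proposal is correct and follows essentially the same route as the paper, whose entire proof of \ref{1.3.3} consists of citations: clause (a) to the cov--pp analysis of \cite[Ch.II,\S5; Ch.VII,\S1]{Sh:g}, clause (b) to \cite[Ch.IX,\S5]{Sh:g}, and clause (c) to the ``no hole'' conclusion \cite[Ch.II,2.3]{Sh:e}. You have simply identified the same three external inputs and correctly unpacked the routine reductions to them ($2^\mu=\mu^{\mathrm{cf}(\mu)}=\mathrm{cov}$ for (a), the ``bad set of countable-cofinality strong limits is non-stationary'' theorem for (b), and the recursive construction of a $\mu^+$-free increasing sequence inside a $\chi^+$-directed product for (c)).
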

\bigskip

\begin{PROOF}{\ref{1.3.3}}
Clause (a) holds by \cite[ChII,\S5]{Sh:g}, \cite[ChVII,\S1]{Sh:g}
and clause (b) by \cite[ChIX,\S5]{Sh:g} and clause (c) holds by 
\cite[ChII,2.3,pg.53 + 1.5A,pg.51]{Sh:g}.  
\end{PROOF}

\begin{obs}
\label{1.3.15}
1) If $J$ is a $\sigma$-complete ideal on $\kappa$ and
${\mat F} \subseteq {}^\kappa \mu$ and $\theta_0 < \theta_1 <
\theta_2,(\theta_1,\theta_2) \in \text{\rm issp}_J({\mat F})$ and
{\rm cov}$(\theta_1,\theta_0,\kappa^+,\sigma) < \text{\rm
cf}(\theta_2)$ recalling Definition \ref{0p.21}
(e.g. $\theta_1 < \theta^{+ \omega}_0,\theta_1 < 
\text{\rm cf}(\theta_2))$, \underline{then} $(<\theta_0,\theta_2) 
\in \text{\rm issp}_J({\mat F})$.

\noindent
2) If in addition ${\mat F}$ is tree-like, $J^{\bd}_\kappa \subseteq
J$ and $\kappa$ is regular,
\then \, {\rm cov}$(\theta_1,\theta_0,\kappa^+,\kappa) < 
\text{\rm cf}(\theta_2)$ suffices.

\noindent
3) Assume $J$ is an ideal on $\kappa$ and $\cF \subseteq {}^\kappa
   \mu$ is $(\theta,\sigma,J)$-free.  If $\sigma = 
\text{\rm cf}(\sigma)$ and $\kappa < \sigma$ 
\then \, for every $\cF' \subseteq \cF$ of cardinality $<
   \theta$ we can find $\langle u_f:f \in \cF'\rangle$ as in
   Definition \ref{1.3.14}(1) and a partition $\bar{\cF}' = \langle
 \cF'_\varepsilon:\varepsilon < \varepsilon(*) \le |\cF'|\rangle$
 of $\cF'$ into sets each of cardinality $< \sigma$ 
such that $\langle\{f(i)$: for some $i$ 
we have $f \in \cF'_\varepsilon,i \in \kappa \backslash u_f\}:
\varepsilon < \varepsilon(*)\rangle$ is a sequence
   of pairwise disjoint subsets of $\mu$.  If we waive ``$\kappa <
\sigma$" still for each $i <\kappa$ there is such an $\bar{\cF}^i$
   which can serve for this $i$.

\noindent
4) If $J$ is a $\kappa$-complete ideal on $\kappa$ and $\cF \subseteq
{}^\kappa \mu$ is $(\theta,\kappa^+,J)$-free hence $f_1 \ne f_2 \in
   \cF \Rightarrow \{i < \kappa:f_1(i) = f_2(i)\} \in J$ \then \, $\cF$ is
   $(\theta,J)$-free. 
\end{obs}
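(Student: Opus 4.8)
The plan is to prove the four clauses in turn. Clauses (1) and (2) are pcf covering arguments on the instability spectrum; clause (3) is a graph-colouring argument; clause (4) is a priority construction. In all of them the completeness of $J$ is the lever, and the place where it must be weighed against the other parameters is, I expect, the only genuine obstacle.

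\emph{Clause (1).} Fix a witness to $(\theta_1,\theta_2)\in\mathrm{issp}_J(\cF)$: a set $u\subseteq\mu$ with $|u|\le\theta_1$ such that $\cF_1:=\{\eta\in\cF:a_\eta\in J^+\}$ has size $\ge\theta_2$, where $a_\eta:=\{i<\kappa:\eta(i)\in u\}$ (so $|a_\eta|\le\kappa$). Identifying $u$ with an ordinal $\le\theta_1$, fix, by the covering-number hypothesis, a family $\mathcal{Q}\subseteq[\theta_1]^{<\theta_0}$ of size $<\mathrm{cf}(\theta_2)$ witnessing $\mathrm{cov}(\theta_1,\theta_0,\kappa^+,\sigma)$, so every $\le\kappa$-sized subset of $\theta_1$ is covered by $<\sigma$ members of $\mathcal{Q}$. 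For $\eta\in\cF_1$ cover the $\le\kappa$-sized set $\{\eta(i):i\in a_\eta\}\subseteq u$ by $<\sigma$ members of $\mathcal{Q}$; then $a_\eta$ is the union of the $<\sigma$ sets $\{i\in a_\eta:\eta(i)\in P\}$ over those $P\in\mathcal{Q}$, and since $a_\eta\in J^+$ and $J$ is $\sigma$-complete, one of them, say for $P=P(\eta)$, lies in $J^+$. As $\eta\mapsto P(\eta)$ takes $<\mathrm{cf}(\theta_2)$ values while $|\cF_1|\ge\theta_2$, some value $P^*$ has $|\{\eta\in\cF_1:P(\eta)=P^*\}|\ge\theta_2$; then $v:=P^*$ (as a subset of $\mu$) has $|v|<\theta_0$ and, after enlarging $v$ to size $\ge\kappa$ if need be, witnesses $(<\theta_0,\theta_2)\in\mathrm{issp}_J(\cF)$. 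The parenthetical special cases follow by inserting standard pcf estimates for covering numbers.

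\emph{Clauses (2) and (3).} For (2), rerun the argument noting that in a tree-like $\cF$ the assignment $f\restriction(i+1)\mapsto f(i)$ is a well-defined injection of the tree $\{f\restriction j:f\in\cF,\ j\le\kappa\}$ into $\mu$ (well-definedness from the tree-like clause, injectivity from normality), under which each $a_\eta$ becomes a chain of length $\le\kappa$; a chain of size $\le\kappa$ is covered by $<\kappa$ members of an appropriate family — exactly what $\mathrm{cov}(\theta_1,\theta_0,\kappa^+,\kappa)$ gives — and, $\kappa$ being regular, one of those $<\kappa$ sub-chains of $a_\eta$ is $J$-positive; the rest goes through verbatim. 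For (3), fix $\cF'$ of size $<\theta$ and a witness $\langle u_f:f\in\cF'\rangle\in J$ from $(\theta,\sigma,J)$-freeness, so every fibre $A_{\gamma,i}=\{f\in\cF':f(i)=\gamma,\ i\notin u_f\}$ has size $<\sigma$; join $f\ne g$ in $\cF'$ by an edge when $f(i)=g(j)$ for some $i\in\kappa\setminus u_f$, $j\in\kappa\setminus u_g$. Each vertex has $<\sigma$ neighbours: for each of the $\le\kappa$ indices $i\notin u_f$ the value $f(i)$ is realised outside exceptions by $<\sigma$ functions (a $\le\kappa$-union of fibres of size $<\sigma$), and $\kappa<\mathrm{cf}(\sigma)$ keeps the total $<\sigma$; so the connected components have size $\le\sigma$. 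Listing them as $\langle\cF'_\varepsilon:\varepsilon<\varepsilon(*)\rangle$ gives $\varepsilon(*)\le|\cF'|$, and any value lying in two of the sets $\{f(i):f\in\cF'_\varepsilon,\ i\notin u_f\}$ would be realised by two functions that are equal or adjacent, hence in one block. Waiving $\kappa<\sigma$, for a fixed $i$ one instead partitions $\cF'$ by the value $f(i)$ (collecting the $f$ with $i\in u_f$ into one block), which makes each $\{f(i):f\in\cF'_\varepsilon,\ i\notin u_f\}$ empty or a singleton.

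\emph{Clause (4), and the main obstacle.} Fix $\cF'$ of size $<\theta$ and a witness $\langle u_f\rangle\in J$ for $(\theta,\kappa^+,J)$-freeness, so every $A_{\gamma,i}$ has size $\le\kappa$; enumerate $\cF'=\langle f_\alpha:\alpha<\alpha^*\rangle$ and declare $i\in u'_{f_\alpha}$ iff $i\in u_{f_\alpha}$ or there is $\beta<\alpha$ with $i\notin u_{f_\beta}$ and $f_\beta(i)=f_\alpha(i)$. A first-comer-wins check (if $f_\alpha(i)=f_{\alpha'}(i)$ with $\alpha<\alpha'$ and $i\notin u'_{f_\alpha}$, then $\beta=\alpha$ forces $i\in u'_{f_{\alpha'}}$) shows every $\{f\in\cF':f(i)=\gamma,\ i\notin u'_f\}$ is at most a singleton, i.e.\ $\langle u'_f\rangle$ witnesses $(\theta,J)$-freeness (the case $\sigma=1$). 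The one nontrivial point — and the step I expect to be the crux — is that $u'_{f_\alpha}\in J$: a $\beta<\alpha$ contributes to $u'_{f_\alpha}\setminus u_{f_\alpha}$ only when $f_\beta\in\bigcup_{i<\kappa}A_{f_\alpha(i),i}$, a set of size $\le\kappa$ (a $\le\kappa$-indexed union of fibres of size $\le\kappa$), and each $\{i<\kappa:f_\beta(i)=f_\alpha(i)\}$ lies in $J$ by the first clause of Definition~\ref{1.3.14}, so $u'_{f_\alpha}\setminus u_{f_\alpha}$ is a union of $\le\kappa$ members of $J$; absorbing such a union is exactly where $\kappa$-completeness of $J$ is invoked, fed by the $\le\kappa$ bound on fibres that $(\theta,\kappa^+,J)$-freeness supplies, and the bookkeeping must be arranged so that genuinely no more than $\kappa$ indices $\beta$ ever intervene. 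The analogous delicate point in clauses (1)–(2) is the matching of the completeness of $J$ to the arity $\sigma$, resp.\ $\kappa$, of the covering number.
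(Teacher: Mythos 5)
Your clause (1) reproduces the paper's argument essentially verbatim (transfer $\cU$ into $\theta_1$, cover each $v_\eta$ by $<\sigma$ members of the covering family, use $\sigma$-completeness of $J$ to extract one $J$-positive piece, then pigeonhole over the $<\mathrm{cf}(\theta_2)$ possible values), and your clause (3) is a correct filling-in of what the paper dismisses as ``easy''.

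The genuine gap is in clause (4). The set $B_\alpha$ of earlier indices $\beta$ contributing to $u'_{f_\alpha}\setminus u_{f_\alpha}$ is a union over $\kappa$ many coordinates $i$ of fibres of size $\le\kappa$, so its size is bounded by $\kappa$ but can equal $\kappa$; consequently $u'_{f_\alpha}\setminus u_{f_\alpha}$ is a union of $\kappa$ many members of $J$, and $\kappa$-completeness (closure under unions of $<\kappa$ sets) does not absorb it. Already for $\kappa=\aleph_0$, where $\kappa$-completeness is vacuous, a countable union of $J$-small agreement sets can be cofinite: take $f_0=\mathrm{id}$, and for $n\ge 1$ let $f_n$ agree with $f_0$ exactly at $\{n\}$ and take otherwise unique values; enumerating $f_1,f_2,\dots$ before $f_0$ makes your $u'_{f_0}$ cofinite, hence not in $J=[\omega]^{<\aleph_0}$, even though the family is $(\theta,J)$-free. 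You flag this as the crux and ask that ``no more than $\kappa$ indices ever intervene'', but $\kappa$ indices is precisely the bad case --- you need $<\kappa$ --- and a global enumeration of $\cF'$ cannot deliver that. The repair is your own clause (3), applied with $\sigma=\kappa^+$: it splits $\cF'$ into blocks whose relevant value-sets are pairwise disjoint and which, being connected components of a graph of degree $\le\kappa$, have size $\le\kappa$. Cross-block collisions are then impossible, and within one block, enumerated in order type $\le\kappa$, each $f_\alpha$ has only $|\alpha|<\kappa$ predecessors, so the first-comer-wins union lies in $J$ by $\kappa$-completeness. This is exactly the paper's route (``by part (3) and \ref{a10}(1)''). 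Separately, in clause (2) the step ``$\kappa$ being regular, one of those $<\kappa$ sub-chains of $a_\eta$ is $J$-positive'' is a non sequitur for a merely $\sigma$-complete $J$ --- being a sub-chain confers no $J$-positivity --- though the paper's own one-line proof of (2) offers no more.
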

\bigskip

\begin{PROOF}{\ref{1.3.15}}
1) This should  be clear as in \cite[ChII,\S6]{Sh:g}, but we give
details.

Let $\cP$ exemplify cov$(\theta_1,\theta_0,\kappa^+,\sigma)$,
i.e. $\cP \subseteq [\theta_1]^{< \theta_0}$ has cardinality
cov$(\theta_1,\theta_0,\kappa^+,\sigma)$ and every $u \in
[\theta_1]^{\le \kappa}$ is included in the union of $< \sigma$
members of $\cP$.

By the assumption ``$(\theta_1,\theta_2) \in \text{\rm issp}_J(\cF)$"
there is $\cU \subseteq \mu$ which has cardinality $\le \theta_1$ such
that $\cF' = \cF'_{\cU} := \{\eta \in \cF:\{i < \kappa:\eta(i) \in
\cU\} \in J^+\}$ has cardinality $\ge \theta_2$.

Let $g$ be a one to one function from $\cU$ into $\theta_1$ and fix
for a while $\eta \in \cF'$.  Let 
$v_\eta := \{g(\eta(i)):i < \kappa$ and $\eta(i)
\in \cU\}$, clearly it is 
$\in [\theta_1]^{\le \kappa}$ hence there is $\cP_\eta
\subseteq \cP$ of cardinality $< \sigma$ such that $v_\eta \subseteq
\cup\{u:u \in \cP_\eta\}$.  So $\{\{i < \kappa:\eta(i) \in \cU$ and
$g(\eta(i)) \in u\}:u \in \cP_\eta\}$ is a family of $< \sigma$
subsets of $\kappa$ whose union belongs to $J^+$.  But $J$ is a
$\sigma$-complete ideal on $\kappa$ hence there is
\begin{enumerate}
\item[$\circledast$]  $u_\eta \in \cP_\eta$ such that $\{i <
\kappa:\eta(i) \in \cU$ and $g(\eta(i)) \in u_\eta\} \in J^+$.
\end{enumerate}
So $\langle u_\eta:\eta \in \cF'\rangle$ is well defined and $\eta \in
\cF' \Rightarrow u_\eta \in \cP$ but $|\cP| = \text{
cov}(\theta_1,\theta_0,\kappa^+,\sigma) < \text{ cf}(\theta_2)$ and
$\cF'$ was chosen such that $|\cF'| \ge \theta_2$, hence for some $u_2
\in \cP$ the family $\cF'' := \{\eta \in \cF':u_\eta = u_2\}$ has
cardinality $\ge \theta_2$.  But then letting $u_1 = \{\alpha \in
\cU:g(\alpha) \in u_2\}$ we have $\cF_* := \{\eta \in \cF:\{i < \kappa:\eta(i)
\in u_1\} \in J^+\} = \{\eta \in \cF:\{i < \kappa:g(\eta(i)) \in u_2\}
\in J^+\} \supseteq \cF''$ hence the subfamily $\cF''$ of $\cF$ has
cardinality $\ge |\cF''| \ge \theta_2$.  Also $|u_1| = |u_2| <
\theta_0$ by the choice of $(g,u_1)$ and as $u_2 \in \cP \subseteq
      [\theta_1]^{< \theta_1}$.

So $u_1$ exemplifies that $(< \theta_0,\theta_2) \in \text{\rm
issp}_J(\cF)$, the desired conclusion.

\noindent
2) As \wilog \, $J = J^{\text{\rm bd}}_\kappa$ and this ideal is
   $\kappa$-complete.

\noindent
3) Easy, too.  

\noindent
4) By part (3) and \ref{a10}(1).
\end{PROOF}

\begin{claim}
\label{a10}
Let $\cF \subseteq {}^\kappa \mu$ and $J$ an ideal on $\kappa$ be
such that $f_1 \ne f_2 \in \cF \Rightarrow \{i < \kappa:f_1(i) =
f_2(i)\} \in J$.

\noindent
1) $\cF$ is $(\theta^+,J)$-free if $J$ is $\theta$-complete.

\noindent
2) If $\kappa < \sigma < \lambda$ \then \,: $\cF$ is
$(\lambda,\sigma,J)$-free iff there are no regular $\partial \in
   [\sigma,\lambda)$ and pairwise distinct $f_\alpha \in \cF$ for
   $\alpha < \partial$ such that $S = \{\delta < \partial$: for some
   $\zeta \in [\delta,\partial)$ the set $\{i < \kappa:f_\zeta(i) \in
\{f_\varepsilon(i):\varepsilon < \delta\}$ belongs to $J^+\}$ is a
stationary subset of $\partial$.

\noindent
2A) In part (2), the two equivalent statements imply that for no 
$\theta \in [\sigma,\lambda),\theta \in \text{\rm issp}_J(\cF)$.

\noindent
3) Assume we are given a sequence $\bar f =
\langle f_\alpha:\alpha < \alpha_*\rangle$ of members of
${}^\kappa${\rm Ord} with no repetitions, 
and $\lambda = \text{\rm cf}(\lambda) > \kappa$
and $J$ is an ideal on $\kappa$.  

\noindent
\Then \, $\bar f$ is not $(\lambda,\lambda,J)$-free as a set iff there is an
increasing sequence $\langle \alpha_\varepsilon:\varepsilon <
\lambda\rangle$ of ordinals $<  \alpha_*$ such that 
the set $S = \{\varepsilon < \lambda$: {\rm cf}$(\varepsilon) \le \kappa$ and
$\{i < \kappa:(\exists \zeta < \varepsilon)(f_{\alpha_\varepsilon}(i) 
= f_{\alpha_\zeta}(i))\} \in J^+\}$ is
 a stationary subset of $\lambda$.

\noindent
4) In part (3) if in addition $\bar f$ is 
tree-like, i.e., $f_\alpha(\varepsilon)  =
f_\beta(\varepsilon) \Rightarrow f_\alpha \restriction \varepsilon
= f_\beta \restriction \varepsilon$ and $J^{\bd}_\kappa
\subseteq J$ then $S \subseteq S^\lambda_\kappa$.
\end{claim}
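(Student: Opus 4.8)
The plan is to show that membership $\varepsilon\in S$ forces $\text{cf}(\varepsilon)=\kappa$; since the definition of $S$ already restricts to $\text{cf}(\varepsilon)\le\kappa$, everything reduces to excluding $\text{cf}(\varepsilon)<\kappa$. So fix $\varepsilon\in S$ and put $w:=\{i<\kappa:(\exists\zeta<\varepsilon)(f_{\alpha_\varepsilon}(i)=f_{\alpha_\zeta}(i))\}\in J^+$. The first, purely bookkeeping, point is that since $J^{\text{bd}}_\kappa\subseteq J$ no member of $J^+$ can be bounded in $\kappa$; hence $w$ is unbounded in $\kappa$.

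Now I would cash in tree-likeness. For $\zeta<\varepsilon$ set $w_\zeta:=\{i<\kappa:f_{\alpha_\varepsilon}(i)=f_{\alpha_\zeta}(i)\}$. If $i\in w_\zeta$ then $f_{\alpha_\varepsilon}\restriction i=f_{\alpha_\zeta}\restriction i$, so $j\in w_\zeta$ for every $j<i$; thus each $w_\zeta$ is downward closed in $\kappa$, i.e. an ordinal $\gamma_\zeta\le\kappa$, with $\gamma_\zeta=\kappa$ exactly when $f_{\alpha_\varepsilon}=f_{\alpha_\zeta}$. We may assume, as in the pairwise-distinctness of part (2), that the $f_{\alpha_\varepsilon}$ are pairwise distinct (refining the witness if necessary; a repeated value only makes non-freeness easier to witness), so that $\gamma_\zeta<\kappa$ for all $\zeta<\varepsilon$. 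A union of initial segments of $\kappa$ is again an initial segment, so $w=\bigcup_{\zeta<\varepsilon}w_\zeta=[0,\sup_{\zeta<\varepsilon}\gamma_\zeta)$, and unboundedness of $w$ gives $\sup_{\zeta<\varepsilon}\gamma_\zeta=\kappa$.

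From here it is a cofinality count. Let $g(\delta):=\sup_{\zeta<\delta}\gamma_\zeta$ for $\delta<\varepsilon$; this is nondecreasing with $\sup_{\delta<\varepsilon}g(\delta)=\kappa$. If $g(\delta)<\kappa$ for every $\delta<\varepsilon$, then $g$ maps $\varepsilon$ monotonically and cofinally into $\kappa$, so any cofinal subset of $\varepsilon$ has $g$-image cofinal in $\kappa$; hence $\text{cf}(\varepsilon)\ge\text{cf}(\kappa)=\kappa$, and together with $\text{cf}(\varepsilon)\le\kappa$ this settles such $\varepsilon$ (this case is automatic if the witness of part (3) is built so that $\gamma_\zeta$ is nondecreasing in $\zeta$, which I would try to arrange). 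The only remaining possibility is a ``premature collision'': $g(\delta)=\kappa$ for some $\delta<\varepsilon$, meaning the branching levels of $f_{\alpha_\varepsilon}$ against the fixed family $\{f_{\alpha_\zeta}:\zeta<\delta\}$ are already cofinal in $\kappa$.

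To dispose of the premature-collision case I would argue globally. Suppose $S':=\{\varepsilon\in S:\text{cf}(\varepsilon)<\kappa\}$ is stationary; by the previous paragraph each $\varepsilon\in S'$ has a least $\delta(\varepsilon)<\varepsilon$ with $g_\varepsilon(\delta(\varepsilon))=\kappa$ (where $g_\varepsilon$ is the $g$ above formed with this $\varepsilon$), so Fodor's lemma yields a stationary $S''\subseteq S'$ and a fixed $\delta^*$ with $\delta(\varepsilon)=\delta^*$ on $S''$. For $\varepsilon\in S''$ the ordinals $\gamma_\zeta$ ($\zeta<\delta^*$) are cofinal in $\kappa$ and $f_{\alpha_\varepsilon}\restriction\gamma_\zeta=f_{\alpha_\zeta}\restriction\gamma_\zeta$, so by tree-likeness $f_{\alpha_\varepsilon}\restriction\gamma$ lies in $\{f_{\alpha_\zeta}\restriction\gamma:\zeta<\delta^*\}$ for every $\gamma<\kappa$, i.e. $f_{\alpha_\varepsilon}$ is a cofinal branch of the fixed tree $T:=\{f_{\alpha_\zeta}\restriction\gamma:\zeta<\delta^*,\ \gamma<\kappa\}$, which has at most $|\delta^*|$ nodes on each of its $\kappa$ levels. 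Since $\varepsilon\mapsto f_{\alpha_\varepsilon}$ is injective on the size-$\lambda$ set $S''$, this produces $\lambda$ distinct cofinal branches of $T$. The step I expect to be the real obstacle is precisely closing this off: turning ``a thin tree of height $\kappa$ with $<\lambda$ nodes with $\lambda$ cofinal branches'' into an outright contradiction needs either an ambient tree-power bound of the form $\chi^{<\kappa>_{\text{tr}}}<\lambda$ for all $\chi<\lambda$ (which is available in the contexts where this lemma is applied) or control on $|\delta^*|$ extracted from the construction of the witness in part (3). Granting that, $S'$ is nonstationary, so $S\cap S^\lambda_\kappa$ is still stationary and still a legitimate witness in the sense of part (3); in other words the witnessing $S$ may be taken with $S\subseteq S^\lambda_\kappa$, as claimed.
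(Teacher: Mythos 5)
Your argument for the main case is correct and is surely the intended content of the claim: tree-likeness makes each agreement set $w_\zeta=\{i<\kappa:f_{\alpha_\varepsilon}(i)=f_{\alpha_\zeta}(i)\}$ downward closed, hence an ordinal $\gamma_\zeta<\kappa$; the hypothesis $J^{\text{bd}}_\kappa\subseteq J$ converts $w\in J^+$ into unboundedness of $\sup_{\zeta<\varepsilon}\gamma_\zeta$; and the nondecreasing map $\delta\mapsto\sup_{\zeta<\delta}\gamma_\zeta$, when it is everywhere $<\kappa$, is a monotone cofinal map from $\varepsilon$ into the regular cardinal $\kappa$, forcing $\text{cf}(\varepsilon)=\kappa$. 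For comparison: the paper's "proof" of this part is the single phrase "like part (2)", so your write-up is already more than what the source provides.

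The gap you flag in the premature-collision case is genuine, not a failure of imagination on your part, and it cannot be closed from the stated hypotheses alone. Concretely, take $\kappa=\aleph_1$, $\lambda=\aleph_2$, $J=J^{\text{bd}}_{\aleph_1}$, and let $T$ be an $\aleph_1$-Kurepa tree coded so that its branches form a tree-like subset of ${}^{\omega_1}\text{Ord}$. Choose $\aleph_1$ branches passing through every node of $T$ and enumerate them as $\langle f_{\alpha_\zeta}:\zeta<\omega_1\rangle$, followed by $\aleph_2$ further distinct branches. Then for every $\varepsilon\in[\omega_1,\omega_2)$ and every $i<\omega_1$ there is $\zeta<\omega_1\le\varepsilon$ with $f_{\alpha_\varepsilon}\restriction(i+1)=f_{\alpha_\zeta}\restriction(i+1)$, so the agreement set is all of $\kappa$ and $S$ contains $S^{\aleph_2}_{\aleph_0}\setminus\omega_1$, a stationary set of points of cofinality $\aleph_0<\kappa$. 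This is exactly your "premature collision at a fixed $\delta^*$" scenario realized, and it shows that your reduction to "a tree of height $\kappa$ with $<\lambda$ nodes having $\lambda$ cofinal branches" really does require an ambient hypothesis such as $(\forall\chi<\lambda)(\chi^{<\kappa>_{\text{tr}}}<\lambda)$, or the nonexistence of the relevant Kurepa trees, neither of which is assumed in the claim. (The paper is implicitly aware of this tension: in Theorem \ref{h.9d}, clause (D), the existence of a $\kappa^+$-Kurepa tree appears as one of the alternatives precisely when such freeness arguments break down.) So the honest conclusion is that the literal statement of part (4) needs either an added tree-power hypothesis or a weaker reading (e.g., that the witness of part (3) may be \emph{chosen} with $S\subseteq S^\lambda_\kappa$ under such a hypothesis); your proof establishes it exactly in the cases where it is true, and your diagnosis of where and why it can fail is accurate.
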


\begin{PROOF}{\ref{a10}}
1) Easy and more is proved in the proof of \ref{1f.13} below.

\noindent
2) Proved in proving $\boxplus$ suffice in the proof of \ref{1f.10}.

\noindent
2A) Easy, see Definition \ref{1.3.14}(6).

\noindent
3) By \ref{1.3.15}.

\noindent
4)  Like part (2), see more in \ref{1f.41}.
\end{PROOF}

\begin{claim}
\label{1f.41}
Assume $\lambda > \mu \ge \kappa_2 \ge \kappa_1 = \theta = \cf(\theta)$.

\noindent
1) ${\cF} \subseteq {}^\theta${\Ord}
is $(\kappa_2,\kappa_1)$-free \underline{iff} ${\mat F}$ is 
$(\kappa^+,\kappa)$-free for every regular $\kappa \in
[\kappa_1,\kappa_2)$.

\noindent
2) There is a $(\kappa^{+ \omega+1},\kappa)$-free set ${\cF} 
\subseteq {}^\omega \mu$ of cardinality $\lambda$ 
\underline{iff} for every $n < \omega$ there is a 
$(\kappa^{+n},\kappa)$-free set ${\cF} 
\subseteq {}^\omega \mu$ of cardinality $\lambda$.

\noindent
3) Assume $\lambda > \mu \ge \kappa^{+ \omega},\mu > \sigma = 
\cf(\mu)$ and $(\forall \alpha < \mu)(|\alpha|^\chi < \mu)$.
If ${\cF}_\varepsilon \subseteq {}^\theta \mu$ has cardinality
   $\lambda$ for $\varepsilon < \chi$, \then \, we can find ${\cF}
\subseteq {}^\theta \mu$ of cardinality $\lambda$ such that:
\mn
\begin{enumerate}
\item[{}] if for some $\varepsilon,\cF_\varepsilon$ is
$(\kappa_2,\kappa_1)$-free, \then \, ${\cF}$ 
is $(\kappa_2,\kappa_1)$-free.
\end{enumerate}
\mn
4) In part (3); if $\chi = \theta$ then we can assume just $(\forall
\alpha < \mu)(|\alpha|^{<\chi} \le \mu)$.

\noindent
5) In (1)-(3) we can use an ideal $J$ on $\theta$.
\end{claim}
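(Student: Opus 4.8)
Here is how I would approach Claim~\ref{1f.41}.

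\emph{Part (1).} The forward direction is pure monotonicity: if $\langle u_f:f\in\cF'\rangle$ witnesses $(\kappa_2,\kappa_1)$-freeness for some $\cF'\subseteq\cF$ of size $<\kappa\le\kappa_2$, then since every fibre $\{f\in\cF':f(i)=\gamma,\ i\notin u_f\}$ has cardinality $<1+\kappa_1\le 1+\kappa$, the same sequence witnesses $(\kappa,\kappa)$-freeness. For the converse I would quote the stationary-set reformulation in \ref{a10}(2)--(4): the failure of $(\kappa_2,\kappa_1)$-freeness is equivalent to the existence of a \emph{regular} $\partial\in[\kappa_1,\kappa_2)$ together with pairwise distinct $f_\alpha\in\cF$ $(\alpha<\partial)$ whose agreement set $S=\{\delta<\partial:(\exists\zeta\in[\delta,\partial))\,(\{i:f_\zeta(i)\in\{f_\varepsilon(i):\varepsilon<\delta\}\}\in J^+)\}$ is a stationary subset of $\partial$; and for a regular $\partial$ this is, by \ref{a10}(3), exactly a witness to the failure of $(\partial,\partial)$-freeness. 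Hence ``$(\kappa,\kappa)$-free for every regular $\kappa\in[\kappa_1,\kappa_2]$'' forbids every such $\partial$ and so yields $(\kappa_2,\kappa_1)$-freeness. The closed end-points are harmless: when $\kappa_2$ is regular, $(\kappa_2,\kappa_2)$-freeness follows from $(\kappa_2,\kappa_1)$-freeness and is not needed for the converse (only $\partial<\kappa_2$ occur in \ref{a10}), and the bottom case $\kappa_1=\kappa$ is covered by the completeness of $J^{\text{bd}}_\kappa$, cf.~\ref{a10}(1).

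\emph{Part (2).} Apply part (1) with $\kappa_1=\kappa$ and $\kappa_2=\kappa^{+\omega+1}$. The regular cardinals of $[\kappa,\kappa^{+\omega+1}]$ are exactly $\{\kappa^{+n}:n<\omega\}$ (the case $n=0$ being $\kappa$) together with $\kappa^{+\omega+1}$, since $\kappa^{+\omega}$ is singular; so by part (1), ``$\cF$ is $(\kappa^{+\omega+1},\kappa)$-free'' is equivalent to ``$\cF$ is $(\kappa^{+n},\kappa^{+n})$-free for all $n<\omega$ and $(\kappa^{+\omega+1},\kappa^{+\omega+1})$-free''. It therefore remains to see that, for $\cF\subseteq{}^\omega\mu$, the last clause is implied by the others -- a singular-compactness phenomenon at the singular cardinal $\kappa^{+\omega}$ of countable cofinality. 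Indeed $(\kappa^{+\omega+1},\kappa^{+\omega+1})$-freeness adds only the requirement that subfamilies of size exactly $\kappa^{+\omega}$ be witnessed; such a subfamily is the increasing union of $\omega$ subfamilies of sizes $\kappa^{+n}$, each witnessed by the respective $(\kappa^{+n+1},\kappa^{+n+1})$-freeness, and the singular-compactness argument -- equivalently, running an elementary-submodel/filtration argument against a putative bad $\kappa^{+\omega+1}$-sequence, whose agreement set by \ref{a10}(3)--(4) concentrates on cofinality $\omega$ -- then produces the desired global witness. I expect this to be the main content of part (2).

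\emph{Part (3).} Fix an increasing continuous $\langle\mu_i:i<\sigma\rangle$ with supremum $\mu$, and note $\chi<\mu$ (else $2^\chi\ge 2^\mu>\mu$ would contradict $|\alpha|^\chi<\mu$). The plan is a diagonal coding: choose enumerations $\cF_\varepsilon=\langle f^\varepsilon_\alpha:\alpha<\lambda\rangle$ arranged so that for each coordinate $i<\sigma$ the set of ``columns'' $\{\langle f^\varepsilon_\alpha(i):\varepsilon<\chi\rangle:\alpha<\lambda\}$ has size $\le\mu$, fix for each $i$ an injection $c_i$ of this set into $\mu$, and put $\cF=\{g_\alpha:\alpha<\lambda\}$ with $g_\alpha(i)=c_i(\langle f^\varepsilon_\alpha(i):\varepsilon<\chi\rangle)$. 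Then $|\cF|=\lambda$, $\cF\subseteq{}^\sigma\mu$, and for each $\varepsilon<\chi$ the bijection $g_\alpha\mapsto f^\varepsilon_\alpha$ is agreement non-decreasing, i.e.\ $g_\alpha(i)=g_\beta(i)\Rightarrow f^\varepsilon_\alpha(i)=f^\varepsilon_\beta(i)$. Consequently any configuration witnessing, via \ref{a10}(2)--(3) (equivalently via part (1)), the failure of $(\kappa_2,\kappa_1)$-freeness of $\cF$ pushes forward -- with the same regular $\partial$ and the same, still stationary, agreement set $S$, because the push-forward can only enlarge the relevant subsets of $\sigma$ and a superset of a $J^+$-set is in $J^+$ -- to such a configuration in $\cF_\varepsilon$ for \emph{every} $\varepsilon<\chi$. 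Contrapositively, if some $\cF_\varepsilon$ is $(\kappa_2,\kappa_1)$-free then $\cF$ admits no such configuration and is therefore $(\kappa_2,\kappa_1)$-free, as required.

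The delicate step in part (3) is arranging the enumerations so that the column-width at each coordinate stays $\le\mu$. This is automatic when $\chi<\sigma=\cf(\mu)$, since then $\mu^\chi=\mu$ (every $f:\chi\to\mu$ is bounded and $|\alpha|^\chi<\mu$ for $\alpha<\mu$) and the columns at a coordinate form a subset of ${}^\chi\mu$. The case $\chi\ge\sigma$ is where the hypotheses $|\alpha|^\chi<\mu$ and $\mu>\sigma=\cf(\mu)$ (and $\mu\ge\kappa^{+\omega}$) must really be used -- for instance by first passing to agreement-equivalent (hence, for every pair $(\kappa_2,\kappa_1)$, equally free) copies of the $\cF_\varepsilon$ adapted to the filtration $\langle\mu_i\rangle$, so that the columns at coordinate $i$ land in ${}^\chi\mu_i$, of size $\mu_i^\chi<\mu$, or by a direct recursion on the coordinates that keeps the accumulated width bounded. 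Carrying out this normalization cleanly is, I expect, the main obstacle of the whole claim.
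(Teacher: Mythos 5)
Your treatments of parts (1) and (3) track the paper's: part (1) is exactly the reduction to \ref{a10}(2) (monotonicity one way, the stationary-set characterization at regular $\partial$ the other), and part (3) is the same diagonal coding the paper uses, except that the paper codes, at coordinate $i$, the sequence $\langle f^\varepsilon_\alpha\cap(\lambda_i\times\lambda_i):\varepsilon<\chi\rangle$ for a filtration $\langle\lambda_i:i<\sigma\rangle$ with $\lambda_i^\chi=\lambda_i$ — an element of ${\mat H}_{\le\chi}(\lambda_i)$, hence codable by an ordinal below $\lambda_i<\mu$ — rather than the raw column of values. That is precisely the normalization you flag as the delicate step, so on part (3) you have correctly located, and essentially resolved, the only real issue.

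Part (2), however, has a genuine gap. The right-hand side of the iff supplies, for each $n<\omega$, a possibly \emph{different} set $\cF_n\subseteq{}^\omega\mu$ that is $(\kappa^{+n},\kappa^{+n})$-free, whereas your argument (``by part (1), `$\cF$ is $(\kappa^{+\omega+1},\kappa)$-free' is equivalent to `$\cF$ is $(\kappa^{+n},\kappa^{+n})$-free for all $n$ and \dots' '') presupposes a single $\cF$ witnessing all of these simultaneously. The whole content of the nontrivial direction is the amalgamation of the $\cF_n$ into one set of cardinality $\lambda$ whose freeness is at least that of each $\cF_n$; the paper's proof is literally a citation of \ref{1f.28}(1A), which performs this amalgamation by the diagonal coding $\eta_\alpha(\varepsilon)=\mathrm{cd}(\langle\eta^n_\alpha(\varepsilon):n\le\varepsilon\rangle)$ — here $\sigma=\kappa=\aleph_0$, so only finitely many values are coded at each coordinate and no cardinal arithmetic on $\mu$ is required. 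Note that your part (3) cannot be quoted in its place, since its hypotheses $\cf(\mu)=\sigma$ and $(\forall\alpha<\mu)(|\alpha|^\chi<\mu)$ are not available in part (2). Once the amalgamation is supplied, your remaining observations — that the regular cardinals of $[\kappa,\kappa^{+\omega+1}]$ are the $\kappa^{+n}$ together with $\kappa^{+\omega+1}$, and that the top case follows by singular compactness at $\kappa^{+\omega}$ via \ref{a10}(2) — do finish the argument.
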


\begin{remark}
See \ref{a10}, \ref{1f.10}.
\end{remark}

\begin{PROOF}{\ref{1f.41}}
1) By \ref{a10}(2).

\noindent
2) By \ref{1f.28}(1A) there is a $(\kappa^{+ \omega},\kappa)$-free
$\cF \subseteq {}^\omega \mu$ and by the compactness theorem for
singulars it follows that $\cF$ is $(\kappa^{+ \omega +1},\kappa)$-free,
(really an obvious case).

\noindent
3) Let $\langle \lambda_i:i < \sigma\rangle$ be increasing
with limit $\mu, \lambda_i = \lambda^\chi_i$ and let $\cd_i:{\cH}_{\le
\chi}(\lambda_i) \rightarrow \lambda_i$ be one-to-one and onto; and let
${\mat F}_\varepsilon = \{f^\varepsilon_\alpha:\alpha < \lambda\}$.
Lastly, $f_\alpha \in {}^\sigma \mu$ is defined by 
$f_\alpha(i) = \cd_i(\langle f^\varepsilon_\alpha \cap (\lambda_i \times
\lambda_i):\varepsilon < \chi\rangle)$. 
\end{PROOF}

\noindent
In particular recalling \ref{0p.6}(2)
\begin{claim}
\label{1f.13}
1) Assume ${\cF} \subseteq {}^\kappa \mu$ is
$(\theta,\kappa^{++},J^{\bd}_\kappa)$-free and $\kappa = 
\cf(\kappa) < \mu$.  \Then \, we can find ${\cG} \subseteq 
{}^{(\kappa^+ \times \kappa)}\mu$ of cardinality
$|{\cF}|$ such that ${\cG}$ is $(\theta,
J^{\text{\rm bd}}_{\kappa^+ \times \kappa})$-free and normal.

\noindent
2) If $\lambda = \cf(\lambda) > \mu > \kappa = 
\cf(\kappa)$ and there is a $\theta$-free $\cF \subseteq {}^\kappa
   \mu$ of cardinality $\ge \lambda$ and $S \subseteq
   S^\lambda_\kappa$ is stationary and for simplicity $\delta \in S
\Rightarrow \mu \cdot \delta = \delta$ \then \, there is a $\theta$-free
   strict $S$-ladder system $\langle C_\delta:\delta \in S \rangle$.

\noindent
2A) In part (2) also for every $\sigma = \text{\rm cf}(\sigma) \in
(\kappa,\lambda)$ and stationary $S \subseteq S^\lambda_\sigma$
\underline{there} is a $(\theta,J_{\sigma * \theta})$-free strict
$S$-ladder system $\langle C_\delta:\delta \in S\rangle$.
\end{claim}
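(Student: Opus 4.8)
First I would reduce to $|\cF|=\lambda$ and fix a one‑to‑one assignment $\delta\mapsto f^\delta\in\cF$; also I would replace $S$ by $S'=\{\delta\in S:\mu\delta=\delta\}$, which is still stationary (the set of fixed points of $\beta\mapsto\mu\beta$ is a club of $\lambda$), and for $\delta\in S'$ note $\text{cf}(\delta)=\kappa$. Since any ordinal of cofinality $\kappa$ is divisible by $\kappa$, for each $\delta\in S'$ I can choose a strictly increasing cofinal sequence $\langle\beta^\delta_i:i<\kappa\rangle$ in $\delta$ of the \emph{special shape} $\beta^\delta_i=\kappa\gamma^\delta_i+i$ with $\langle\gamma^\delta_i:i<\kappa\rangle$ nondecreasing (two cases: $\delta=\kappa\eta$ with $\eta$ a limit of cofinality $\kappa$, or $\delta=\kappa\eta_0+\kappa$). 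Then I set
$$C_\delta:=\{\,\mu\cdot\beta^\delta_i+f^\delta(i):i<\kappa\,\}.$$
Because $0\le f^\delta(i)<\mu$, the $i$‑th point lies in the half‑open block $[\mu\beta^\delta_i,\mu(\beta^\delta_i+1))$; since $\mu\delta=\delta$ these blocks are increasing, pairwise disjoint, and all $<\delta$, so $\text{otp}(C_\delta)=\kappa$ and $\sup C_\delta=\mu\delta=\delta$. Thus $\langle C_\delta:\delta\in S'\rangle$ is a strict $(\lambda,\kappa)$‑ladder system, as required (it lives on $\{\delta=\mu\delta\in S\}$).

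\textbf{Part (2): freeness.} Given $u\in[S']^{<\theta}$, put $\cF'=\{f^\delta:\delta\in u\}$ (cardinality $<\theta$); as $\cF$ is $\theta$‑free there is $\langle v_\delta:\delta\in u\rangle$ with each $v_\delta$ bounded in $\kappa$ and $|\{\delta\in u:f^\delta(i)=\gamma,\ i\notin v_\delta\}|\le 1$ for every $(\gamma,i)\in\mu\times\kappa$. Let $A_\delta:=\{\mu\beta^\delta_i+f^\delta(i):i\in v_\delta\}$, a bounded initial segment of $C_\delta$. If some $x\in(C_\delta\setminus A_\delta)\cap(C_{\delta'}\setminus A_{\delta'})$ then $x=\mu\beta^\delta_i+f^\delta(i)=\mu\beta^{\delta'}_{i'}+f^{\delta'}(i')$ with $i\notin v_\delta$, $i'\notin v_{\delta'}$; division by $\mu$ forces $\beta^\delta_i=\beta^{\delta'}_{i'}$ and $f^\delta(i)=f^{\delta'}(i')$, and then division by $\kappa$ (this is where the shape $\beta^\delta_i=\kappa\gamma^\delta_i+i$ is used) forces $i=i'$, so $f^\delta(i)=f^{\delta'}(i)$ with $i$ outside both $v_\delta$ and $v_{\delta'}$, whence $\delta=\delta'$. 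Hence $\langle C_\delta\setminus A_\delta:\delta\in u\rangle$ are pairwise disjoint, which by Definition \ref{0p.9}(1) (equivalently Definition \ref{1.3.14}(1A) with $\sigma=1$, $J=J^{\text{bd}}_\kappa$) means $\langle C_\delta\rangle$ is $\theta$‑free. The only real content is the "$\kappa\gamma+i$ normal form'', which is exactly what forces a collision between two ladders to descend to a collision in $\cF$ \emph{at a common coordinate}; everything else is bookkeeping.

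\textbf{Part (1): the blow‑up.} Here I would blow coordinate $i$ up into the block $\{(\alpha,i):\alpha<\kappa^+\}$: fix a pairing $\text{pr}:\mu\times\mu\to\mu$ (legitimate since $\mu\ge\kappa^+$) and functions $g_f:\kappa^+\to\mu$, and set $\hat f(\alpha,i)=\text{pr}(f(i),g_f(\alpha))$, so $\cF_1=\{\hat f:f\in\cF\}$ has cardinality $|\cF|$. Given $\cF'$ of size $<\theta$ with witness $\langle u_f\rangle$ from $(\theta,\kappa^{++},J^{\text{bd}}_\kappa)$‑freeness, each "clump'' $D_{i,\gamma}=\{f\in\cF':f(i)=\gamma,\ i\notin u_f\}$ has cardinality $\le\kappa^+$, and the plan is: for each $f$ let $\hat u_f$ consist of $\kappa^+\times u_f$ together with a set $\{(\alpha,i):\alpha<b_f(i)\}$ with $b_f(i)<\kappa^+$ (this is in $J^{\text{bd}}_{\kappa^+\times\kappa}$ because $\kappa=\text{cf}(\kappa)<\kappa^+=\text{cf}(\kappa^+)$, so $\sup_i b_f(i)<\kappa^+$), chosen so that within each clump the $g$‑values separate the surviving members coordinate‑by‑coordinate. \emph{This is the step I expect to be the main obstacle of the whole claim}: such a separation would follow at once from a \emph{global} pairwise‑eventually‑different family $\langle g_f:f\in\cF\rangle$ into $\mu$, but such a family need not exist when $\text{cf}(\mu)\le\kappa$ — precisely the main case $\mu\in{\bold C}_\kappa$. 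So the actual proof has to instead organize each $\cF'$ via the partition of Observation \ref{1.3.15}(3) (available since $\kappa<\kappa^{++}=\text{cf}(\kappa^{++})$) into a tree of clumps and push the width down from $\kappa^{++}$ to $1$ using the $\kappa$‑completeness of $J^{\text{bd}}_{\kappa^+\times\kappa}$ in the style of \ref{1.3.15}(4) and \ref{a10}(1); verifying that the blow‑up can be carried out coherently, uniformly in $\cF'$, is where the work lies.

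\textbf{Part (2A).} This is Part (2) with one extra level. For $\delta\in S'$ with $\text{cf}(\delta)=\sigma$, partition $\sigma=\bigsqcup_{i<\kappa}[\sigma_i,\sigma_{i+1})$ along an increasing continuous cofinal $\langle\sigma_i:i\le\kappa\rangle$, place the $i$‑th block of $C_\delta$ at a globally canonical position of the "$\kappa\gamma+i$'' shape below $\delta$, and fill it by a fixed set of order type $\text{otp}[\sigma_i,\sigma_{i+1})$ translated by an amount depending on $f^\delta(i)$. Exactly as in (2), a collision of the $i$‑th block of $C_\delta$ with the $i'$‑th block of $C_{\delta'}$ forces $i=i'$ and $f^\delta(i)=f^{\delta'}(i)$, and deleting the blocks indexed by a $J^{\text{bd}}_\kappa$‑set of $i$'s deletes a "bounded in each block and bounded in $\kappa$'' subset of $C_\delta$ — that is, a member of $J_{\sigma*\theta}$. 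Hence the $\theta$‑freeness of $\cF$ transfers to $(J_{\sigma*\theta},\theta)$‑freeness of $\langle C_\delta:\delta\in S\rangle$. The only genuinely new point compared with (2) is matching the two‑level exceptional sets produced here with the members of $J_{\sigma*\theta}$.
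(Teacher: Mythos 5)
Your parts (2) and (2A) follow the paper's own (very terse) construction: pick pairwise distinct $f_\delta\in\cF$ and a ladder $\langle\alpha_{\delta,i}:i<\kappa\rangle$ on $\delta$ and set $C_\delta=\{\mu\alpha_{\delta,i}+f_\delta(i):i<\kappa\}$. Your extra normalization $\beta^\delta_i=\kappa\gamma^\delta_i+i$ is a legitimate and in fact needed refinement: without something forcing $i=i'$, a collision of two ladders only yields $f_\delta(i)=f_{\delta'}(i')$, which the freeness of $\cF$ does not rule out (alternatively one can first make $\cF$ normal via \ref{1f.28}(2)). So (2) and (2A) are correct.

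Part (1) is not a proof: you set up the blow-up and then explicitly defer the decisive step. The missing idea is that no coherent or global choice of the functions $g_f$ is needed, so the non-existence of a pairwise eventually different family is a red herring. Take the canonical $g_f(\zeta,i)=\kappa^+\cdot f(i)+\kappa\cdot\zeta+i$; then $g_{f_1}(\zeta,i)=g_{f_2}(\zeta,i)$ iff $f_1(i)=f_2(i)$, i.e.\ the value at $(\zeta,i)$ carries no information beyond $f(i)$, and \emph{all} the separation is done through the choice of the exceptional sets $u^1_{g_f}\in J^{\text{bd}}_{\kappa^+\times\kappa}$, made afresh for each $\cF'$. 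Concretely: by \ref{1.3.15}(3) partition $\cF'$ into pieces $\cF_\varepsilon$ of size $\le\kappa^+$ whose surviving value sets are pairwise disjoint; enumerate $\cF_\varepsilon=\{f_{\varepsilon,\iota}:\iota<\kappa^+\}$; for each $\zeta\in[\kappa,\kappa^+)$ fix an enumeration $\langle\xi(\zeta,j):j<\kappa\rangle$ of $\zeta$ and, using the $\kappa$-completeness of $J^{\text{bd}}_\kappa$, find bounded sets $u^0_{\varepsilon,\zeta,j}\subseteq\kappa$ outside of which $\langle f_{\varepsilon,\xi(\zeta,j_1)}(i):j_1\le j\rangle$ has no repetitions; then let $u^1_{g_{f_{\varepsilon,\iota}}}$ be the union of $\kappa^+\times u_{f_{\varepsilon,\iota}}$, of $(\iota+1)\times\kappa$, and of $\{(\zeta,i):\zeta>\iota,\ i\in u^0_{\varepsilon,\zeta,j}\text{ where }\xi(\zeta,j)=\iota\}$. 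Each piece lies in $J^{\text{bd}}_{\kappa^+\times\kappa}$ (the second because it meets only first coordinates $\le\iota$, the third because each $\zeta$-fiber is bounded in $\kappa$), and at a surviving position $(\zeta,i)$ any two distinct indices $\iota_1,\iota_2$ are both $<\zeta$, hence occur among $\{\xi(\zeta,j_1):j_1\le j\}$ for suitable $j$, so their $f$-values at $i$ differ. This "enumerate each $\zeta<\kappa^+$ in order type $\kappa$ and diagonalize" device is what the extra $\kappa^+$ coordinate buys; the $\kappa$-completeness of the target ideal alone, which you invoke, could only reduce the width from $\kappa$ to $1$, never from $\kappa^{++}$ to $1$.
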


\begin{PROOF}{\ref{1f.13}}
1) If $\mu = \kappa^+$ then the construction below gives $\cG
\subseteq {}^{\kappa^+ \times \kappa}(\kappa^+ + \mu)$ rather than
$\cG \subseteq {}^{\kappa^+ \times \kappa}(\mu)$, but this is enough 
so we shall ignore this point.
For $f \in {\mat F}$ let $g_f:\kappa^+ \times \kappa
\rightarrow \mu$ be defined by:
\mn
\begin{enumerate}
\item[$(*)_0$]  for $\zeta < \kappa^+,i<\kappa$ we let
$g_f(\zeta,i) = \kappa^+ \cdot f(i) + \kappa \cdot \zeta + i$.
\end{enumerate}
\mn
Let ${\cG} = \{g_f:f \in {\mat F}\}$, now
\mn
\begin{enumerate}
\item[$(*)_1$]  if $f_1 \ne f_2 \in \cF$ then $g_{f_1} \ne g_{f_2}$ and
moreover $\{(\zeta,i) \in \kappa^+ \times \kappa:g_{f_1}(\zeta,i) =
g_{f_2}(\zeta,i)\} \in J^{\text{\rm bd}}_{\kappa^+ \times \kappa}$.
\end{enumerate}
\mn
[Why?  By Definition \ref{1.3.14}(1) we 
know $i(*) :=  \sup\{i < \kappa:f_1(i) = f_2(i)\} < \kappa$ and hence
$\{(\zeta,i) \in \kappa^+ \times \kappa:g_{f_1}((\zeta,i)) =
g_{f_2}((\zeta,i))\} \subseteq 
\{(\zeta,i):\zeta < \kappa^+$ and $i < i(*)\} \in 
J^{\bd}_{\kappa^+ \times \kappa}$, so we are done.]
\mn
\begin{enumerate}
\item[$(*)_2$]  assume ${\mat G}' \subseteq {\cG}$ is of
cardinality $< \theta$ and we shall find $\langle u^1_g:g \in
\cG'\rangle$ as required.
\end{enumerate}
\mn
Why?  We can choose ${\cF}' \subseteq {\cF}$ of cardinality $<
\theta$ such that ${\cG}' = \{g_f:f \in {\cF}'\}$.  We can apply
the assumption ``${\cF}$ is $(\theta,\kappa^{++})$-free" and let
$\langle u_f:f \in {\mat F}'\rangle$ be as in Definition
\ref{1.3.14}(1); moreover let $\langle {\cF}_\varepsilon:\varepsilon
< \varepsilon(*)\rangle$ be as guaranteed in \ref{1.3.15}(3), so in
particular $|{\mat F}_\varepsilon| \le \kappa^+$.

For each $\varepsilon < \varepsilon(*)$ 
let $\langle f_{\varepsilon,\iota}:\iota <
|\cF_\varepsilon|\rangle$ list ${\cF}_\varepsilon$ with no repetitions
and let $g_{\varepsilon,\iota} = g_{f_{\varepsilon,\iota}}$.  First
assume $|{\cF}_\varepsilon| \le \kappa$, then for $\iota <
|\cF_\varepsilon|$ we let $u^0_{\varepsilon,\iota} =
\{i < \kappa$: the sequence $\langle f_{\varepsilon,\iota_1}(i):\iota_1 \le
\iota\rangle$ has some repetitions \underline{or}\footnote{actually
  ``$i \in u_{f_{\varepsilon,\iota}}$" suffice}
$i \in \cup\{u_{f_{\varepsilon,\iota_1}}:\iota_1 \le \iota\}\}$.  As
$J^{\text{\rm bd}}_\kappa$ is $\kappa$-complete, clearly
$u^0_{\varepsilon,\iota} \in J^{\bd}_\kappa$ and we let 
$u^1_{g_{\varepsilon,\iota}} := \kappa^+ \times u^0_{\varepsilon,\iota}$.

Second, assume $|{\cF}_\varepsilon| = \kappa^+$ and for each $\zeta
\in [\kappa,\kappa^+)$ let $\langle \xi(\zeta,j):j < \kappa\rangle$
list $\zeta$ without repetition and for $\zeta \in [\kappa,\kappa^+),
j < \kappa$ let

$u^0_{\varepsilon,\zeta,j} = \{i < \kappa$: the sequence $\langle
f_{\varepsilon,\xi(\zeta,j_1)}(i):j_1 \le j\rangle$ has some
repetitions or $i \in \{u_{f_{\varepsilon,\xi(\zeta,j_1)}}:j_1 \le
j\}\}$

and for $\iota < |\cF_\varepsilon|$ let

$u^1_{g_{\varepsilon,\iota}} = \{(\zeta,i):\zeta \in (\kappa + 
\iota,\kappa^+),i
< \kappa$ and $i \in u^0_{\varepsilon,\zeta,j}$ where $j$ is the
unique $j < \kappa$ such that $\iota = \xi(\zeta,j)\}$.

Now check that $\langle u^1_{g_{\varepsilon,\iota}}:\varepsilon <
\varepsilon(*)$ and $\iota < |\cF_\varepsilon|\rangle$ is as required,
i.e. witnessing the freeness of $\cF'$.

\noindent
2) Let $\langle f_\delta:\delta \in S\rangle$ be a
   sequence of pairwise distinct members of $\cF$ and for $\delta \in
   S$ let $\langle \alpha_{\delta,i}:i < \kappa\rangle$ be an 
increasing sequence of ordinals with limit $\delta$.  

Lastly, let $C_\delta = \{\mu \alpha_{\delta,i} + 
f_\delta(i):i < \kappa\}$ for $\delta \in S$ recalling $\delta \in S
\Rightarrow \delta = \mu \cdot \delta$.

\noindent
2A) The proof is similar.  
\end{PROOF}

\noindent
How is this connected to Abelian groups?
\begin{definition}
\label{1f.15}
1) We say that $G$ is an abelian group derived from $\cF \subseteq
{}^\omega \mu$ \when \, $G$ is generated by $\{x_\alpha:\alpha < \mu\} 
\cup \{y_{\eta,n}:\eta \in \cF$ and $n < \omega\}$ freely
except a set of equations $\Gamma = \cup\{\Gamma_\eta:\eta \in
{\cF}\}$ where each $\Gamma_\eta$ has the form
 $\{y_{\eta,n} = a_{\eta,n} \cdot 
y_{\eta,n+1} + x_{\eta(n),n}:n < \omega\}$ where
$a_{\eta,n} \in \bbZ \backslash \{-1,0,1\}$. 

\noindent
2) We say that $G$ is an abelian group derived from ${\cF}
   \subseteq {}^{\omega_1 \times \omega}\mu$ \when \, $G$ is generated
by $\{x_{\alpha,\varepsilon,n}:\alpha < \mu$ and $\varepsilon <
\omega_1,n < \omega\} \cup \{y_{\eta,\varepsilon,n}:\eta
\in {\cF},\varepsilon < \omega_1,n < \omega\} \cup \{z_{\eta,n}:\eta
   \in \cF$ and $n < \omega\}$ freely except a
   set of equations $\Gamma = \cup\{\Gamma_\eta:\eta \in {\cF}\}$
where each $\Gamma_\eta$ has the form 

$\{y_{\eta,\varepsilon,n} = a_{\eta,\varepsilon,n} y_{\eta,\varepsilon,n+1} +
b_{\eta,\varepsilon,n} z_{\eta,\rho_{\eta,\varepsilon}(n)} +
c_{\eta,\varepsilon,n} x_{\eta(\varepsilon,n),\varepsilon,n}:
\varepsilon < \omega_1,n < \omega\}$ 

\noindent
\underline{where}

$a_{\eta,\varepsilon,n} \in \bbZ \backslash
\{-1,0,1\},b_{\eta,\varepsilon,n} \in \bbZ \backslash \{0\},
c_{\eta,\varepsilon,n} \in \bbZ,\rho_{\eta,\varepsilon} \in
{}^\omega \omega$ is increasing and $\varepsilon_1 <
\varepsilon_2 < \omega_1 \Rightarrow \text{ Rang}
(\rho_{\eta,\varepsilon_1}) \cap 
\text{ Rang}(\rho_{\eta,\varepsilon_2})$ is finite. 
\end{definition}

\begin{remark}
\label{a24}
1) Here choosing $\rho_{\eta,\varepsilon} \in {}^\omega(\omega +
\varepsilon)$ is alright but not for \S4.

\noindent
2) So in \ref{1f.15} if $a_{\eta,n} = n+1$, considering $G$ as a
   metric space with $\bold d_G(x,y) = \inf\{2^{-n}:x-y \in (n!)G\}$ we
   have $y_{\eta,n} = \sum\limits_{m \ge n} (m!)/(n!) x_{\eta(m)}$ for
   $\eta \in \cF,n < \omega$.  In general for $n_1 < n_2$ we have

\[
y_{\eta,n_1} = (\sum\limits^{n_2-1}_{m=n_1} (\prod\limits^{m}_{\ell=n_1}
a_{\eta,\ell}) \, x_{\eta(m),m}) +
(\prod\limits^{n_2-1}_{m=n_1} a_{\eta,m}) y_{\eta,n_2}.
\]
\end{remark}

\noindent
Easily (see \cite{EM02} on the subject):
\begin{claim}
\label{1f.14}
If ${\cF} \subseteq {}^\omega \mu$ is $\theta$-free \underline{or}
${\cF} \subseteq {}^{\omega_1 \times \omega}\mu$ is
$(\theta,J^{\bd}_{\omega_1 \times \omega})$-free, 
\then \, any abelian group derived from it is $\theta$-free. 
\end{claim}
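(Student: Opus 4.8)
The plan is to reduce the $\theta$-freeness of a group $G$ derived from $\cF$ to the $(\theta,J)$-freeness of $\cF$, where $J$ is $J^{\mathrm{bd}}_\omega$ (for $\cF\subseteq{}^\omega\mu$) or $J^{\mathrm{bd}}_{\omega_1\times\omega}$; since $(\theta,J^{\mathrm{bd}}_{\omega_1\times\omega})$-freeness in the first coordinate projection subsumes the $J^{\mathrm{bd}}_\omega$ case, it suffices to treat both uniformly by working with the index set $I=\omega$ or $I=\omega_1\times\omega$ and the equations $\Gamma_\eta$ of Definition \ref{1f.15}. Fix a subgroup $H\le G$ generated by fewer than $\theta$ of the generators; it is enough to show that any such $H$ is contained in a free direct summand of $G$ that is itself a sum of fewer than $\theta$ ``strands'', and then that this forces $H$ free. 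So let $\cF'\subseteq\cF$ be the set of $\eta$ whose generators $y_{\eta,\cdot}$ (and, in case (2), $z_{\eta,\cdot}$) are involved, together with finitely much of the $x$-part; then $|\cF'|<\theta$, and we may apply the definition of $(\theta,J)$-freeness to $\cF'$.

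The key step is to use the witnesses $\langle u_\eta:\eta\in\cF'\rangle\subseteq J$ provided by $(\theta,J)$-freeness, together with the refinement of Observation \ref{1.3.15}(3): we obtain a partition $\langle\cF'_\varepsilon:\varepsilon<\varepsilon(*)\rangle$ of $\cF'$ so that the ``tails'' $\{\eta(i):i\in\omega\setminus u_\eta\}$ (resp. the analogous sets in the $\omega_1\times\omega$ case) of distinct $\eta$'s in the same block are pairwise disjoint, and across blocks the whole configuration has a tree-like/disjointness structure. Concretely, for each $\eta\in\cF'$ the bounded set $u_\eta$ says that beyond some finite level $n_\eta$, the coordinate $\eta(n)$ is ``private'' to $\eta$ within its block. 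I would then enumerate $\cF'$ compatibly with the partition and build a filtration $\langle H_j:j\le j(*)\rangle$ of the subgroup $\langle H,\{y_{\eta,n},z_{\eta,n},x_\alpha\}\rangle$ generated by $H$ and all generators attached to $\cF'$, adding one $\eta$ (with all its generators and equations) at a stage. The successor step is the heart: $H_{j+1}/H_j$ is generated by the images of $\{y_{\eta_j,n}:n\}$ (and $\{z_{\eta_j,n}:n\}$, $\{x_{\eta_j(n),n}:n\ge n_{\eta_j}\}$), and because $\eta_j(n)$ for $n\ge n_{\eta_j}$ is a coordinate not used by any earlier $\eta$ in the enumeration with overlapping block, the $x_{\eta_j(n),n}$ are ``new'', so the equations $a_{\eta_j,n}y_{\eta_j,n+1}=y_{\eta_j,n}+x_{\eta_j(n),n}$ (and their analogues) can be solved to express everything in terms of $\{y_{\eta_j,n}:n\}\cup\{x_{\eta_j(n),n}:n<n_{\eta_j}\}$ freely — i.e. $H_{j+1}/H_j$ is free, indeed a free group on countably many generators with no imposed relations surviving. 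Hence each quotient in the filtration is free, so by the standard fact (Pontryagin-type criterion, cf. \cite{EM02}) the union is free, and $H$, being a subgroup of a free abelian group, is free; since $H$ was an arbitrary $<\theta$-generated subgroup, $G$ is $\theta$-free.

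The main obstacle — and the only place real care is needed — is the bookkeeping in the $\omega_1\times\omega$ case of Definition \ref{1f.15}: there the extra generators $z_{\eta,n}$ are shared across the $\omega_1$ many ``columns'' $\varepsilon$ of a single $\eta$, linked through the functions $\rho_{\eta,\varepsilon}\in{}^\omega\omega$ whose ranges are pairwise almost disjoint. One must check that when a single $\eta$ is thrown into the filtration all at once, the internal relations among $\{y_{\eta,\varepsilon,n},z_{\eta,n}\}$ still leave a free quotient; this is where the almost-disjointness of $\mathrm{Rang}(\rho_{\eta,\varepsilon})$ is used, exactly as in the classical constructions of $\aleph_1$-free groups from ladder systems — so this is essentially a citation to \cite{EM02} plus the observation that our freeness witness $u_\eta$ handles the $\mu$-indexed part. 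I would state the successor-step computation as a short lemma (``a group derived from a single $\eta$, or from finitely many with disjoint coordinate tails, is free'') and then the rest is the filtration argument above. Everything else — that $|\cF'|<\theta$, that the partition from \ref{1.3.15}(3) does what is claimed, that a countable union of free groups along a filtration with free quotients is free — is routine.
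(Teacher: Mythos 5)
The paper offers no proof of Claim \ref{1f.14} at all: it is prefaced by ``Easily (see \cite{EM02})'' and left as a citation, so your write-up is supplying an argument the paper omits. Your overall strategy --- close off the $<\theta$ many relevant generators, filter by adding one $\eta$ at a time, show each successive quotient is free, conclude by continuity of the chain and the fact that subgroups of free abelian groups are free --- is indeed the standard route, and the treatment of the $\omega_1\times\omega$ case via the almost-disjointness of the $\mathrm{Rang}(\rho_{\eta,\varepsilon})$ is the right citation to make.

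There is, however, one genuine gap in the successor step. The witness $\langle u_\eta:\eta\in\cF'\rangle$ from Definition \ref{1.3.14}(1) only guarantees that each pair $(\gamma,n)$ is claimed by at most one $\eta$ with $n\notin u_\eta$; it does \emph{not} prevent $\eta_j(n)$, for $n\notin u_{\eta_j}$, from coinciding with $\eta_k(n)$ for some earlier $\eta_k$ whose exceptional set contains $n$. If this happens for infinitely many $n$ at a single stage $j$ --- which is possible as soon as $\eta_j$ has infinitely many predecessors in your enumeration, since each such $n$ may be excused by a \emph{different} earlier $\eta_k$ --- then $x_{\eta_j(n),n}$ already lies in $H_j$ for infinitely many $n$, the relations collapse to $a_{\eta_j,n}\bar y_{\eta_j,n+1}=\bar y_{\eta_j,n}$ in the quotient for infinitely many $n$, and $H_{j+1}/H_j$ is not free. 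So ``the $x_{\eta_j(n),n}$ are new'' is not automatic from the set-form witness, and Observation \ref{1.3.15}(3) does not rescue you: it requires $\sigma=\mathrm{cf}(\sigma)>\kappa$ (so is vacuous for plain $\theta$-freeness, $\sigma=1$), and in any case it only separates the non-exceptional parts across blocks, not a non-exceptional coordinate from an exceptional one. The repair is to pass from the set form of freeness to the \emph{sequence} form (Definition \ref{1.3.14}(4), via Claim \ref{a10}(2),(3)): one needs an enumeration of $\cF'$ in which, for each $j$, the set $\{n:\eta_j(n)\in\{\eta_k(n):k<j\}\}$ lies in $J$ (is finite, resp.\ $J^{\mathrm{bd}}_{\omega_1\times\omega}$-small); equivalently one works with the non-stationarity of the bad set of a filtration, which is exactly the $\Gamma$-invariant formulation in \cite{EM02}. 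With that form of the hypothesis your successor-step computation is correct and the rest of the argument goes through; without it, the step as written can fail for a badly chosen enumeration. You should therefore either invoke Claim \ref{a10} explicitly to produce the good enumeration, or state the lemma you propose with the hypothesis ``the collision set of $\eta_j$ with its predecessors is in $J$'' rather than ``the tail coordinates are private.''
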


\noindent
Similarly to \ref{1.3.15}
\begin{claim}
\label{1f.16}
1) If $\cF \subseteq {}^{\Dom(J)}\mu$ is
   $(\theta,\sigma^+_2,J)$-free and $J$ is a
  $(\sigma_2,\sigma^+_1)$-regular\footnote{that is, there are
   $A_\alpha \in J$ for $\alpha < \sigma_2$ such that $u \subseteq
   \sigma_2 \wedge |u| \ge \sigma^+_1 \Rightarrow
   \cup\{A_\alpha:\alpha \in u\} = \Dom(J)$.}
 and $\sigma_1$-complete ideal \then \, ${\cF}$ is $(\theta,J)$-free.

\noindent
2) Assume $I,J$ is an ideal on $S,T$ respectively.  If ${\cF}
   \subseteq {}^S \mu$ is $(\theta,\sigma,I)$-free, $\pi$ is a function
 from $T$ onto $S$ and $\pi''(J) := \{\{\pi(i):i \in s\}:s 
\in J\} \supseteq I$ 
\then \, ${\cF} \circ \pi = \{f \circ \pi:f \in {\cF}\} \subseteq {}^T \mu$ 
is $(\theta,\sigma,J)$-free.
\end{claim}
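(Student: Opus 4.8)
The plan is to handle the two parts separately; part (2) is a routine pull-back, while part (1) carries the real content.

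For part (2): since $\pi$ is onto, $f\mapsto f\circ\pi$ is a bijection from $\mathcal F$ onto $\mathcal F\circ\pi$, and for $f_1\ne f_2$ one has $\{t\in T:(f_1\circ\pi)(t)=(f_2\circ\pi)(t)\}=\pi^{-1}\big(\{s\in S:f_1(s)=f_2(s)\}\big)$, which is in $J$ because the inner set is in $I$ and $\pi''(J)\supseteq I$ says precisely that $A\in I\Rightarrow\pi^{-1}(A)\in J$. Now, given $\mathcal G'\subseteq\mathcal F\circ\pi$ with $|\mathcal G'|<\theta$, write $\mathcal G'=\{f\circ\pi:f\in\mathcal F'\}$ with $|\mathcal F'|<\theta$, take $\langle u_f:f\in\mathcal F'\rangle$ witnessing $(\theta,\sigma,I)$-freeness of $\mathcal F$ for $\mathcal F'$, and set $\widehat u_{f\circ\pi}:=\pi^{-1}(u_f)\in J$. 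For any $(\gamma,t)\in\mu\times T$, the set $\{f\circ\pi\in\mathcal G':(f\circ\pi)(t)=\gamma,\ t\notin\widehat u_{f\circ\pi}\}$ is carried bijectively by $f\circ\pi\mapsto f$ onto $\{f\in\mathcal F':f(\pi(t))=\gamma,\ \pi(t)\notin u_f\}$, of cardinality $<1+\sigma$ by the choice of $\langle u_f\rangle$ (with $s:=\pi(t)$). So $\langle\widehat u_g:g\in\mathcal G'\rangle$ witnesses $(\theta,\sigma,J)$-freeness, and $\mathcal G'$ was arbitrary.

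For part (1), the plan is to mimic the proof of Observation \ref{1.3.15}(4), inserting one extra reduction powered by the $(\sigma_2,\sigma_1^+)$-regularity of $J$. Given $\mathcal F'\subseteq\mathcal F$ with $|\mathcal F'|<\theta$ and a witness for $(\theta,\sigma_2^+,J)$-freeness (all fibres $\{f:f(i)=\gamma,\ i\notin u_f\}$ of size $\le\sigma_2$), I would proceed in four moves. (i) Apply Observation \ref{1.3.15}(3) with $\sigma=\sigma_2^+$ to break $\mathcal F'$ into pieces of size $\le\sigma_2$ whose ``active ranges'' $\{f(i):i\notin u_f\}$ are pairwise disjoint across pieces, so functions in different pieces agree only on a $J$-set already inside their $u_f$'s. (ii) Inside a piece, enumerated $\{f_\alpha:\alpha<\alpha^*\le\sigma_2\}$, enlarge $u_{f_\alpha}$ by $A_\alpha$, where $\langle A_\alpha:\alpha<\sigma_2\rangle$ witnesses $(\sigma_2,\sigma_1^+)$-regularity; since each point of $\Dom(J)$ misses at most $\sigma_1$ of the $A_\alpha$ (otherwise $\sigma_1^+$ of them, all missing that point, would still cover $\Dom(J)$), this shrinks every fibre inside the piece to size $\le\sigma_1$, making the piece $(\sigma_2^+,\sigma_1^+,J)$-free. (iii) Apply Observation \ref{1.3.15}(3) again, now with $\sigma=\sigma_1^+$, to split each piece into sub-pieces of size $\le\sigma_1$, again with pairwise disjoint active ranges (enlarging the new witnesses so they contain the earlier ones, which is harmless). (iv) On a sub-piece $\{g_\alpha:\alpha<\alpha^{**}\le\sigma_1\}$ finish by $\sigma_1$-completeness: adjoin $\bigcup_{\beta<\alpha}\{i:g_\beta(i)=g_\alpha(i)\}$ to $u_{g_\alpha}$, a union of $|\alpha|<\sigma_1$ members of $J$, which makes the within-sub-piece fibres trivial. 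A pair $(\gamma,i)$ with two surviving functions is then impossible: by (i) and (iii) the two would have to lie in one sub-piece, and (iv) rules that out; so the resulting witness establishes $(\theta,J)$-freeness for $\mathcal F'$, hence for $\mathcal F$.

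The step I expect to be the genuine obstacle — everything else being bookkeeping — is keeping all the augmenting sets inside $J$. A piece of size $\le\sigma_2$ cannot in general be resolved by $\sigma_1$-completeness alone, since the usual ``kill all earlier collisions'' step on it would union $\sigma_2$-many $J$-sets; the role of $(\sigma_2,\sigma_1^+)$-regularity is exactly to cut fibre sizes to $\le\sigma_1$, so that the second application of Observation \ref{1.3.15}(3) yields sub-pieces of size $\le\sigma_1$ on which that union runs over $<\sigma_1$ sets. A subsidiary point to check is the hypothesis ``$\Dom(J)$ small compared with $\sigma$'' required by Observation \ref{1.3.15}(3) at both applications; if it fails one falls back on the ``for each $i$ separately'' variant noted there (choosing the pieces and sub-pieces coordinatewise), or verifies $|\Dom(J)|\le\sigma_1$ in the intended uses. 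Both parts also tacitly use the clause $f_1\ne f_2\Rightarrow\{i:f_1(i)=f_2(i)\}\in J$ of Definition \ref{1.3.14}(1), which in part (1) is part of the hypothesis and in part (2) follows from $\pi''(J)\supseteq I$ exactly as above.
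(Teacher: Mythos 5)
Your argument is correct and is essentially the proof the paper intends: the paper offers no proof of \ref{1f.16} beyond the lead-in ``similarly to \ref{1f.13}'', and your steps (i), (iii), (iv) reproduce that proof's scheme (component decomposition via \ref{1.3.15}(3), then a well-ordering of each small piece together with $\sigma_1$-completeness of $J$ to absorb all earlier collisions into the $u_f$'s), with step (ii) supplying the one genuinely new ingredient here, namely using $(\sigma_2,\sigma_1^+)$-regularity to cut the fibres from size $\sigma_2$ down to size $\sigma_1$; part (2) is routine and correctly done under the only reading of $\pi''(J)\supseteq I$ that makes the claim true ($A\in I\Rightarrow\pi^{-1}(A)\in J$). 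The one thin spot is the one you flag yourself: when $\mathrm{Dom}(J)$ is not of cardinality below the relevant $\sigma$, the ``for each $i$ separately'' variant of \ref{1.3.15}(3) does not obviously rescue step (iv), since the union-over-predecessors there requires a single enumeration of each sub-piece valid for all coordinates at once; but this side condition is equally implicit in the claim as stated and in the paper's own use of \ref{1.3.15}(3) inside the proof of \ref{1f.13}.
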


\begin{definition}
\label{a26}
1) Let $(D \ell)_S$ mean that:
\mn
\begin{enumerate}
\item[$(a)$]  $\lambda = \sup(S)$ is a regular uncountable cardinal
\sn
\item[$(b)$]  $S$ is a stationary subset of $\lambda$
\sn
\item[$(c)$]  there is a witness $\bar{\mat P}$ by which we mean:
\sn
\begin{enumerate}
\item[$(\alpha)$]  $\bar{\cP} = \langle {\cP}_\alpha:\alpha \in S\rangle$
\sn
\item[$(\beta)$]  ${\cP}_\alpha \subseteq {\mat P}(\alpha)$
has cardinality $< \lambda$
\sn
\item[$(\gamma)$] for every subset ${\mat U}$ of $\lambda$, the
set $S_{\cU} := \{\delta \in S:
{\cU} \cap \delta \in {\mat P}_\delta\}$ is a stationary subset of $\lambda$.
\end{enumerate}
\end{enumerate}
\mn
2) Let $(D \ell)^*_S$ be defined similarly but in clause $(c)(\gamma)$ we
demand $S \backslash S_{\cU}$ is not stationary. 

\noindent
3) We write $(D \ell)_{D,S},(D \ell)^*_{D,S}$ when $D$ is a normal
filter on $\lambda$ and replace ``stationary" by ``$\in D^+$".
\end{definition}

\begin{definition}
\label{0p.32}  
1) For a regular uncountable
cardinal $\lambda$ let $\check I[\lambda] = \{S \subseteq \lambda$: some
pair $(E,\bar a)$ witnesses $S \in \check I(\lambda)$, see below$\}$. 

\noindent
2) We say that $(E,\bar u)$ is a witness for $S \in \check I[\lambda]$
\when \,:
\mn
\begin{enumerate}
\item[$(a)$]   $E$ is a club of the regular cardinal $\lambda$
\sn
\item[$(b)$]   $\bar u = \langle u_\alpha:\alpha < \lambda
\rangle,u_\alpha \subseteq \alpha$ and $\beta \in u_\alpha \Rightarrow
u_\beta = \beta \cap u_\alpha$
\sn
\item[$(c)$]   for every $\delta \in E \cap S,u_\delta$ is an
unbounded subset of $\delta$ of order-type $< \delta$ (and $\delta$ is
a limit ordinal).
\end{enumerate}
\end{definition}

\begin{claim}
\label{1.3.23}
1) If $\lambda = \lambda^{< \lambda}$
and $\kappa = \cf(\kappa) < \lambda$ and $\alpha < \lambda
\Rightarrow |\alpha|^{<\kappa>_{\tr}} <\lambda$ and $S \subseteq
S^\lambda_\kappa$ is a stationary subset of $\lambda$, \then \, $(D \ell)_S$.

\noindent
2) If $\mu$ is a strong limit cardinal and
$\lambda = \cf(\lambda) > \mu$, \then \, $\mu > 
\sup\{\kappa < \mu:\kappa =\cf(\kappa)$ and $(\exists \alpha
< \lambda)(|\alpha|^{<\kappa>_{\tr}} \ge \lambda)\}$.

\noindent
3) If $\lambda = \lambda^{<\lambda} > \beth_\omega$, \then \,
$\{\kappa:\kappa = \cf(\kappa)$ and $\beth_\omega(\kappa) <
\lambda$ and $\neg(D \ell)_{S^\lambda_\kappa}$ or just 
$\neg(D \ell)^*_S$ for some stationary $S \in 
\check I_\kappa[\lambda]\}$ is finite where $\check I_\kappa[\lambda]$
is from \ref{0p.32}.

\noindent
4) If $\lambda = \chi^+$ and $S \subseteq \lambda$ is 
stationary, \then \, $(D \ell)^*_S$ is equivalent 
to $\diamondsuit_S$.

\noindent
5) If $\lambda > \kappa$ are regular and $S \in \check
   I_\kappa[\lambda]$ is a stationary subset of $\lambda$ \then \,
there is a shallow, use \ref{0p.35}(5) strict $S$-club system.
\end{claim}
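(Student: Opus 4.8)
\medskip\noindent\textbf{Part (1).}
I would build the witness $\bar{\mathcal P}$ by hand. Fix a large regular $\chi$ and a $\subseteq$-increasing continuous chain $\langle N_\alpha:\alpha<\lambda\rangle$ of elementary submodels of $(\mathcal H(\chi),\in,<^*_\chi)$ with $N_\alpha\cap\lambda\in\lambda$, $\langle N_\beta:\beta\le\alpha\rangle\in N_{\alpha+1}$, chosen \emph{thin}, i.e.\ $\|N_\alpha\|=|\alpha|+\theta_0$ for a fixed $\theta_0<\lambda$ bounding the (finitely many) parameters; among the parameters put a bijection $\mathrm{cd}\colon\bigcup_{\gamma<\lambda}\mathcal P(\gamma)\to\lambda$, which exists since $\lambda=\lambda^{<\lambda}$ forces $2^{<\lambda}=\lambda$. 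The case $\lambda=\kappa^+$ is handled separately: then $\lambda=\lambda^{<\lambda}$ forces $2^\kappa=\kappa^+$, whence $\diamondsuit_S$ (so $(D\ell)_S$) by the classical diamond at successors of regulars — and $\lambda=\aleph_1,\kappa=\aleph_0$ is vacuous, since there $|\omega|^{<\aleph_0>_{\text{tr}}}=2^{\aleph_0}=\aleph_1\not<\lambda$ and the hypothesis fails. Otherwise $\kappa^+<\lambda$, so fix (Shelah's club guessing) a ladder system $\langle e_\delta:\delta\in S\rangle$, $\mathrm{otp}(e_\delta)=\kappa$, guessing clubs; after intersecting with the club $\{\alpha:N_\alpha\cap\lambda=\alpha\}$ we may take every element of every $e_\delta$ to be such an $\alpha$, and write $e_\delta=\{\eta^\delta_i:i<\kappa\}$ increasingly and continuously. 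Let $T_\delta$ be the tree of all subsets of $\eta^\delta_i$ ($i<\kappa$) lying in $N_{\eta^\delta_{i+1}}$, ordered by end-restriction; by thinness $|T_\delta|\le\sum_{i<\kappa}\|N_{\eta^\delta_{i+1}}\|\le|\delta|$ for a club of $\delta\in S$, so $T_\delta$ has $\le|\delta|$ nodes and $\kappa$ levels and we set $\mathcal P_\delta:=\{\bigcup_{i<\kappa}B_i:\langle B_i:i<\kappa\rangle$ a branch of $T_\delta\}$, giving $|\mathcal P_\delta|\le|\delta|^{<\kappa>_{\text{tr}}}<\lambda$ — the only place the tree-power hypothesis is used (without it $|\mathcal P_\delta|$ could reach $\lambda$, which is precisely why Conjecture \ref{0p.26} is still open). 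For the prediction: given $\mathcal U\subseteq\lambda$, the set $E_{\mathcal U}=\{\gamma<\lambda:N_\gamma\cap\lambda=\gamma$ and $\forall\gamma'<\gamma,\ \mathrm{cd}(\mathcal U\cap\gamma')<\gamma\}$ is a club, so club guessing yields stationarily many $\delta\in S$ with $e_\delta\subseteq E_{\mathcal U}$; for such $\delta$ and each $i<\kappa$, since $\eta^\delta_i<\eta^\delta_{i+1}\in E_{\mathcal U}$ we get $\mathcal U\cap\eta^\delta_i\in N_{\eta^\delta_{i+1}}$, so $\langle\mathcal U\cap\eta^\delta_i:i<\kappa\rangle$ is a branch of $T_\delta$ and $\mathcal U\cap\delta\in\mathcal P_\delta$. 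The delicate point — and the chief obstacle — is that the level-$i$ nodes of $T_\delta$ must be read off from $N_{\eta^\delta_{i+1}}$, one model \emph{ahead}: this is exactly what reconciles $|\mathcal P_\delta|<\lambda$ with the prediction. Supplying the club-guessing sequence, and the degenerate case $\lambda=\kappa^+$, are the remaining points of care.

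\medskip\noindent\textbf{Part (2).}
It suffices to bound $\chi^{<\kappa>_{\text{tr}}}$ for cardinals $\chi<\lambda$ and regular $\kappa<\mu$. If $\chi<\mu$ then $\chi^{<\kappa>_{\text{tr}}}\le\chi^\kappa\le 2^{\chi+\kappa}<\mu<\lambda$ since $\mu$ is strong limit, so no such $\chi$ witnesses ``badness''. For $\chi\in[\mu,\lambda)$ I would induct on $\chi$, proving $\chi^{<\kappa>_{\text{tr}}}<\lambda$ for all regular $\kappa<\mu$ outside a bounded set: given a tree $T$ with $\le\chi$ nodes and $\kappa$ levels, partition its node set into $\mathrm{cf}(\chi)$-many pieces $T^{(j)}$ of size $<\chi$; when $\kappa\ne\mathrm{cf}(\chi)$, regularity of $\kappa$ forces each $\kappa$-branch $b$ to meet some one $T^{(j_b)}$ on a cofinal $A_b\subseteq\kappa$, so $b$ is determined by $(j_b,A_b,b\restriction A_b)$; reindexing $A_b$ as $\kappa$, $b\restriction A_b$ ranges over branches of the forest $T^{(j_b)}\restriction A_b$, of which there are $\le|T^{(j_b)}|^{<\kappa>_{\text{tr}}}<\lambda$ by induction, while there are only $2^\kappa\cdot\mathrm{cf}(\chi)<\mu$ choices of $(j_b,A_b)$; as $\lambda$ is regular the total is $<\lambda$. (The regular/successor-cardinal step of the induction needs a routine Hausdorff-style computation instead.) The only $\kappa<\mu$ for which the induction can break are those equal to $\mathrm{cf}(\chi)$ for some singular $\chi<\lambda$ with $\chi^{\mathrm{cf}(\chi)}\ge\lambda$; that the set of such cofinalities is bounded below any strong limit cardinal is a pcf fact, following from the bounds on $\mathrm{pp}$ and pcf localization in \cite{Sh:g}, and that bound is the supremum we want.

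\medskip\noindent\textbf{Part (3).}
Combine (1) and (2). For $\kappa$ regular with $\beth_\omega(\kappa)<\lambda$: if $\kappa$ carries no tree-power witness then (1) gives $(D\ell)_{S^\lambda_\kappa}$, hence every $(D\ell)^*_S$ for stationary $S\in\check I_\kappa[\lambda]$ (by the approachable variant of the argument in (1): on an approachable $S$ the approachability structure plays the linearizing role of the ladder system, so the same construction runs and even predicts on a club). Thus a ``bad'' $\kappa$ must have a witness $\chi$, and necessarily $\chi\in[\beth_\omega(\kappa),\lambda)$, since for $\chi<\beth_\omega(\kappa)$ strong-limitness of $\beth_\omega(\kappa)$ gives $\chi^{<\kappa>_{\text{tr}}}<\beth_\omega(\kappa)<\lambda$. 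By the analysis in (2) such a witness forces $\kappa=\mathrm{cf}(\chi)$ with $\chi^{\mathrm{cf}(\chi)}\ge\lambda$; an increasing $\omega$-sequence of bad $\kappa_n$ would then produce cardinals $\chi_n<\lambda$ with $\mathrm{cf}(\chi_n)=\kappa_n$ and $\chi_n^{\kappa_n}\ge\lambda$, contradicting the pcf ``no infinite ladder'' bounds of \cite{Sh:g}. So only finitely many $\kappa$ are bad.

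\medskip\noindent\textbf{Part (4).}
When $\lambda=\chi^+$, deleting the non-stationary set $\{\delta<\chi\}$ we may assume $|\delta|=\chi$ for all $\delta\in S$, so ``$|\mathcal P_\delta|<\lambda$'' reads ``$|\mathcal P_\delta|\le|\delta|$''; thus $(D\ell)^*_S\Leftrightarrow\diamondsuit^*_S$ (Discussion \ref{1f.18}(2)), while $(D\ell)_S\Leftrightarrow\diamondsuit_S$ by Kunen's $\diamondsuit=\diamondsuit^-$, so it remains to see $\diamondsuit^*_S$ and $\diamondsuit_S$ coincide at a successor. The nontrivial direction is $\diamondsuit_S\Rightarrow\diamondsuit^*_S$: from a $\diamondsuit_S$-sequence every $A\subseteq\chi$ occurs as $A_\delta$ for some $\delta\in S$ with $\delta\ge\chi$, whence $2^\chi=\chi^+=\lambda$ and so $|\mathcal P(\gamma)|\le\lambda$ for all $\gamma<\lambda$; one then runs the $\diamondsuit_S$-sequence on codes for pairs $(A,C)$ with $C$ a club, and lets $\mathcal P_\delta$ collect all $A\cap\delta$ read off from guesses whose $C$-part is unbounded in $\delta$, upgrading stationary prediction to club prediction. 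This bootstrap and the pcf ``finiteness'' of (3) are essentially quotations; the genuinely load-bearing step in the whole claim is the construction in (1).
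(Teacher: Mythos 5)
The paper itself offers no argument for this claim: parts (1)--(3) are disposed of with ``See [Sh:829]'' and part (4) with a reference to Kunen via [Sh:247]. So your reconstruction cannot be compared to an in-paper proof; judged on its own merits, the core of your part (1) -- thin elementary chain, a coding bijection from $2^{<\lambda}=\lambda$, ZFC club guessing on $S\subseteq S^\lambda_\kappa$, and the tree of coherent sequences $\langle \mathcal U\cap\eta^\delta_i\rangle_{i<\kappa}$ read off \emph{one model ahead}, bounded by $|\delta|^{<\kappa>_{\text{tr}}}$ -- is exactly the right argument and is essentially the one in [Sh:829]. But your treatment of the case $\lambda=\kappa^+$ is wrong. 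Club guessing with $\mathrm{otp}(e_\delta)=\kappa$ on $S\subseteq S^\lambda_\kappa$ is a ZFC theorem only when $\kappa^{++}\le\lambda$, so you do need a separate argument there; and the fallback you offer, ``$2^\kappa=\kappa^+$, whence $\diamondsuit_S$ by the classical diamond at successors of regulars,'' is not a theorem for $S\subseteq S^{\kappa^+}_\kappa$: Gregory/Shelah-type results give $\diamondsuit_{S^{\kappa^+}_\theta}$ only for $\theta<\kappa$, and $\cf(\delta)=\kappa$ is precisely the excluded cofinality (for $\kappa=\aleph_0$ this is the consistency of $\mathrm{CH}+\neg\diamondsuit_{\omega_1}$, which you do notice; for $\kappa>\aleph_0$ you give no argument at all). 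Here one must actually exploit the tree-power hypothesis (which forces $2^{<\kappa}=2^\kappa=\kappa^+$), not discard it. Parts (2) and (3) are acceptable at the level of detail the paper itself operates at -- the load-bearing steps (the revised-GCH bound on bad cofinalities, pcf localization giving finiteness) are quotations of [Sh:g]/[Sh:460], and your reductions to them are sound.

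Part (4) contains a genuine error. You correctly reduce, via Discussion \ref{1f.18}(2) and Kunen's $\diamondsuit=\diamondsuit^-$, to showing $\diamondsuit_S\Rightarrow\diamondsuit^*_S$ at a successor; but that implication is \emph{false} -- already $\diamondsuit_{\omega_1}\wedge\neg\diamondsuit^*_{\omega_1}$ is consistent -- and your bootstrap cannot repair it: the guess at $\delta'$ of a coded pair $(A,C)$ yields $A\cap\delta'$, not $A\cap\delta$, so collecting guesses from $\delta'\le\delta$ never puts $A\cap\delta$ into $\mathcal P_\delta$, and stationary prediction does not upgrade to club prediction. What Kunen's theorem actually delivers at $\lambda=\chi^+$ is $(D\ell)_S\Leftrightarrow\diamondsuit_S$ (no star); only the direction $(D\ell)^*_S\Rightarrow(D\ell)_S\Rightarrow\diamondsuit_S$ of the stated claim is available this way. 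You should either prove the unstarred equivalence and flag the star in the statement as suspect (note that \ref{1f.18}(2) plus the starred \ref{1.3.23}(4) would jointly yield $\diamondsuit_S\Leftrightarrow\diamondsuit^*_S$ at successors, which is known to fail), or locate the precise statement in [Sh:247]; as written, your argument proves a false lemma.
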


\begin{PROOF}{\ref{1.3.23}}
1), 2), 3): \, See \cite{Sh:829}.
\newline
4) A result of Kunen; for a proof of a somewhat more general result see
\cite{Sh:247}.

\noindent
5) See \cite{Sh:108} or \cite{Sh:88a}.
\end{PROOF}

\begin{discussion}
\label{1f.18}
1) Of course, $(D \ell)_S$ is a 
relative of the diamond, see \cite{Sh:107}.

\noindent
2) $(D\ell)^*_S$ is consistently not equivalent to $\diamondsuit^*_S$
when $\lambda$ is a limit (regular) cardinal.

\noindent
3) Trivially $(D\ell)^*_S \Rightarrow (D\ell)_S$.

For $\boxdot_3$ of \S0, (it was previously known only when $\chi$ is 
regular by using partial squares which holds by \cite[\S4]{Sh:351}).
\end{discussion}

\begin{fact}
\label{a35}
If $\lambda = 2^\chi = \chi^+ > \kappa = \cf(\kappa)$ and $\kappa \ne
\cf(\chi)$ then $\diamondsuit_{S^\lambda_\kappa}$ moreover
$\diamondsuit_S$ for every stationary $S \subseteq S^\lambda_\kappa$.
\end{fact}

\begin{PROOF}{\ref{a35}}
By \cite{Sh:922}.
\end{PROOF}

\noindent
Now by \cite[1.10]{Sh:775}, this is used in \ref{h.7}, \ref{h.9d}.
\begin{theorem}
\label{1.3.25}
We have {\rm BB}$(\lambda,\bar C,(\lambda,\theta),< \mu)$ recalling
\ref{0p.15}(1),(3),(4)  \when \,:
\mn
\begin{enumerate}
\item[$(a)$]  $\mu \in \bold C_\kappa,\lambda = \cf(2^\mu)$ and
  $\theta < \mu,\sigma = \cf(\sigma) < \mu$,
\sn
\item[$(b)$]  $S \subseteq S^\lambda_\sigma$ is stationary,
\sn
\item[$(c)$]  $\bar C = \langle C_\delta:\delta \in S\rangle,C_\delta
\subseteq \delta,|C_\delta| \le \mu$ recalling\footnote{actually
 $2^{|C_\delta|} \le 2^\mu$ is sufficient} \ref{0p.15}(4),
\sn
\item[$(d)$]  $\chi < 2^\mu \Rightarrow 
\chi^{<\sigma>_{\tr}} < 2^\mu$,
\sn
\item[$(e)$]  $\bar C$ is shallow, that is, 
$|\{C_\delta \cap \alpha:\alpha \in C_\delta\}| <
\lambda$ for $\alpha < \lambda$.
\end{enumerate}
\end{theorem}

\begin{remark}
\label{a38}
1) Of course, if $S \in \check I_\kappa[\lambda]$ is stationary \then
   \, there is $\bar C$ as in clauses (c) + (e) (and, of course, (b)).

\noindent
2) There are such stationary $S$ as $\kappa^+ < \mu < \lambda$ by
   \cite{Sh:420}. 
\end{remark}

\begin{definition}
\label{1.3.27}
We say a filter $D$ on a set $X$
is weakly $\lambda$-saturated \when \, there is no partition $\langle
X_\alpha:\alpha < \lambda\rangle$ of $X$ such that $\alpha < \lambda
\Rightarrow X_\alpha \in D^+ := \{Y \subseteq X:X \backslash Y \notin D\}$.
\end{definition}
\bigskip

\centerline {$* \qquad * \qquad *$}
\bigskip

A notable consequence of the analysis in this work is the BB (Black Box)  
Trichotomy Theorem \ref{h.7}.

\begin{remark}
\label{h.15}
Using $\bar C = \langle C_\delta:\delta \in S\rangle$ below
or using $\bar f = \langle f_\delta:\delta \in S\rangle$ where
$f_\delta$ is an increasing function from $\otp(C_\delta)$ onto $C_\delta$,
does not make a real difference.
\end{remark}

\begin{trichotomy}
\label{h.7}
If $\mu \in {\bold C}_\kappa$ and $\kappa > \sigma = \cf(\sigma)$,
\then \, at least one of the following holds:
\mn
\begin{enumerate}
\item[$(A)_{\mu,\kappa}$]  there is a 
$\mu^+$-free ${\cF} \subseteq {}^\kappa \mu$ of cardinality $2^\mu$
\sn
\item[$(B)\,\,\,$]  $(a) \quad \lambda := 2^\mu = \lambda^{<\lambda}$ 
(so $\lambda$ is regular) and $\chi < \lambda \Rightarrow 
\chi^\sigma < \lambda$
\sn
\item[${{}}$]  $(b)_{\lambda,\mu,\sigma} \quad$ if 
$S \subseteq S^\lambda_\sigma$ is
stationary, $\bar C = \langle C_\delta:\delta \in S\rangle$ is a weak
ladder system

\hskip25pt  (i.e., $C_\delta \subseteq \delta$ so, e.g., the 
choice $C_\delta = \delta$ for $\delta \in S$ is all right); 

\hskip25pt \then \, 
\sn
\item[${{}}$]  $(c)_{\lambda,\mu,\sigma} \quad$ letting
$J^{\nst}_S = \{A \subseteq \lambda:A \cap S$ is not 
stationary in $\lambda\}$ we have\footnote{What about freeness?  
We may get it by the choice of $\bar C$, also if 
$\bar C$ is a ladder system (particularly if strictly), 
we get a weak form, e.g. stability.}
\sn
\begin{enumerate}
\item[${{}}$]  $(i) \quad \BB(J^{\nst}_S,\bar C,\theta,\le \mu)$ 
for every $\theta < \mu$ provided that

\hskip30pt $\delta \in S \Rightarrow |C_\delta| < \mu$, see
\ref{0p.15}(4)
\sn
\item[${{}}$]  $(ii) \quad \BB(J^{\nst}_S,\bar C,(2^\mu,\theta),<
  \lambda)$ for any $\theta < \mu$ 
\end{enumerate}
\sn
\item[$(C)_{\mu,\kappa}$]  $(a) \quad \lambda_2 = 2^\mu$ is regular, $\chi <
\lambda_2 \Rightarrow \chi^\sigma < \lambda_2$ and

\hskip25pt $\lambda_1 = \min\{\partial:2^\partial 
> 2^\mu\}$ is (regular and) $< 2^\mu$
\sn
\item[${{}}$]  $(b) \quad$ like $(b)_{\lambda,\mu,\sigma}$ of clause
  (B) for $\lambda = \lambda_2$ but $|C_\delta| < \lambda_1$ for $\delta \in S$

\hskip25pt  (so $C_\delta = \delta$ is not all right)
\sn
\item[${{}}$]  $(c) \quad \BB(J^{\nst}_S,\mu^+,\theta,\kappa)$
for any $\theta < \mu$ and any stationary 
subset $S$ of $\lambda_1$
\sn
\item[${{}}$]  $(c)' \quad$ like $(b)_{\lambda,\mu,\sigma}$ of $(B)$
  but for $\lambda = \lambda_1,S$ a club or just $S$ not in the weak 

\hskip25pt diamond ideal (\cite{DvSh:65}).
\end{enumerate}
\end{trichotomy}
\bigskip

\begin{remark}
\label{h.7d}
1) If $\kappa = \aleph_0$ above, then there is no infinite cardinal 
$\sigma < \kappa$ as required, but the proof still gives something 
(e.g. for $\sigma = \aleph_1$).  In this case we cannot get ``for every 
stationary $S \subseteq S^\lambda_\sigma$", still by 
\cite[3.1]{Sh:829} one has ``for all but finitely many 
regular $\sigma < \mu$ for almost every stationary $S 
\subseteq S^\lambda_\sigma$"; see \ref{1.3.23}.

\noindent
2) Assume $\mu \in \bold C_\kappa,\lambda = 2^\mu = \chi^+$.  If
   $\chi$ is regular then (A) of \ref{h.7} holds because by \ref{2b.91}, 
there is $\bar C = \langle C_\delta:\delta \in S^\lambda_\kappa,\mu$
   divides $\delta\rangle,C_\delta \subseteq \delta =
\sup(C_\delta),\otp(C_\delta) = \kappa$ and $\bar C$ is $\mu^+$-free
   and shallow.  
If $\kappa \ne \cf(\chi)$ and $\lambda = \lambda^{< \lambda}$ then
for every stationary $S \subseteq S^\lambda_\kappa$ we have
   $\diamondsuit_S$, see \cite{Sh:922}.

\noindent
3) What happens if $\lambda := 2^\mu$ is weakly inaccessible?  Here it
   seems plausible to assume, for some $\mu_0$
\mn
\begin{enumerate}
\item[$(*)$]  $(a) \quad \mu \le \mu_0 < \lambda$
\sn
\item[${{}}$]  $(b) \quad \alpha < \lambda \Rightarrow \lambda >
\text{ cov}(|\alpha|,\mu^+_0,\mu,2)$
\sn
\item[${{}}$]  $(b)^+ \quad \alpha < \lambda \Rightarrow \lambda >
\text{ cov}(|\alpha|,\mu^+_0,\mu^+_0,2)$.
\end{enumerate}
\mn
Now $(b)^+$ implies (by \cite{DjSh:562})
\mn
\begin{enumerate}
\item[${{}}$]  $(c) \quad$ there is $\bar{\mat P}$ such that
\sn
\begin{enumerate}
\item[${{}}$]  $(\alpha) \quad \bar{\mat P} = \langle {\cP}_\alpha:
\alpha < \lambda\rangle$,
\sn
\item[${{}}$]  $(\beta) \quad |{\mat P}_\alpha| < \lambda$
\sn
\item[${{}}$]  $(\gamma) \quad{\mat P}_\alpha \subseteq \{u:|u| \le
\mu_0,u$ is a closed subset of $\alpha\}$,
\sn
\item[${{}}$]   $(\delta) \quad$ if
$\alpha \in u \in {\mat P}_\beta$, then $u \cap \alpha \in
  {\cP}_\alpha$,
\sn
\item[${{}}$]   $(\varepsilon) \quad$ if $\delta < \lambda,\cf(\delta)
  \le \mu_0$ then $\sup(u) = \delta$ for some $u \in \cP_\delta$.
\end{enumerate}
\end{enumerate}
\mn
This is enough for the argument above.

\noindent
4) Does clause (b) in $(*)$ above suffice?
\end{remark}

\begin{PROOF}{\ref{h.7}}
\underline{Proof of \ref{h.7}}:

Recall that for every $\chi \in (\mu,2^\mu)$ there is a $\mu^+$-free
${\mat F} \subseteq {}^\kappa \mu$ of cardinality $\chi$ (see
\ref{1.3.3}(c)).

If for some $\chi < 2^\mu$ we have $\chi^\sigma = 2^\mu$ then by
\ref{1f.21}, clause (A) holds (when $\theta$ there stands for $\sigma$
here), so we can assume that there is no such $\chi$.  If $2^\mu$ is a 
singular cardinal then by \ref{1f.28}(3),
clause (A) holds, so we can assume that $\lambda := 2^\mu$ is regular.
Now assume $\lambda = \lambda^{< \lambda}$ and 
we shall prove clause (B).  Obviously 
clause (B)(a) holds and clause (B)(b)(ii) holds by \ref{1.3.25} above and
clause (B)(b)(i) follows.  Note that any strict club system $\langle
C_\delta:\delta \in S\rangle$ is shallow as 
$|\{C_\delta \cap \alpha:\delta \in S$
satisfies $\alpha \in C_\delta\}| \le |\alpha|^{< \sigma} \le
|\alpha|^\sigma < \lambda$.

So assume $\lambda < \lambda^{<\lambda}$, hence necessarily there is
$\partial < \lambda$ such that $\lambda < 2^\partial$.

Assume $\lambda_1 = \min\{\chi:2^\chi > 2^\mu\} < \lambda_2 := 2^\mu$, 
then trivially clause (C)(a) holds and by
Conclusion \ref{d.11}(1) clauses $(C)(c),(c)'$ hold.  
Clause (b) of (C) holds by \cite{Sh:775},
i.e. \ref{1.3.25}, because we are assuming $(\forall \chi <
\lambda)(\chi^\sigma < \lambda)$ so clause (C) holds.
\end{PROOF}

\begin{remark}
How can the Black Box Trichotomy Theorem \ref{h.7} help?

If possibility (A) holds for $\kappa \in \{\aleph_0,\aleph_1\}$, 
we have, e.g., abelian groups as in
Definition \ref{1f.15}; so we have
$G_0 \subseteq_{\pr} G_1$ (that is, $G_0$ is a pure subgroup of the
abelian group $G_1$) such that $G_1$ is torsion-free, $G_0$
is free, $G_1$ quite free, $|G_0| = \mu$ and, e.g. if $a_{\eta,n} =
n+1$, then  $G_1/G_0$ is divisible, and a list of $|G_1| = 2^\mu$
partial endomorphisms of $G_1$ such that if $G_0 \subseteq_{\pr}
G \subseteq_{\pr} G_1$, any endomorphism of $G$ is included in one
of the endomorphisms in the list.  So by diagonalization we can  build 
an endo-rigid group.  On the other hand, possibilities (B),(C) 
help in another way: as in black boxes, see \cite{EM02},
\cite{GbTl12}, this is continued in \cite{Sh:F1200}.
\end{remark}

\noindent
Recall
\begin{definition}
\label{2b.113}
Assume $J$ is an ideal of $\kappa$ and $\bar f = \langle
f_\alpha:\alpha < \alpha(*)\rangle$ is a $<_J$-increasing sequence of
members of ${}^\kappa\Ord$.  

\noindent
Let $S^{\ged}_{\bar f}$, the good set of $S$, be 
$\{\delta < \lambda:\cf(\delta) > \kappa$ and
we can find sequence $\bar A = \langle A_\alpha:\alpha \in u\rangle$
witnessing $\delta$ is a good point of $\bar f\}$ which means:
\mn
\begin{enumerate}
\item[$\bullet$]  $u \subseteq \delta = \sup(u)$
\sn
\item[$\bullet$]  $A_\alpha \in J$ for $\alpha \in u$
\sn
\item[$\bullet$]  if $\alpha < \beta$ are from $u$ and $i \in \kappa
\backslash A_\alpha \backslash A_\beta$ then $f_\alpha(i) < f_\beta(i)$.
\end{enumerate}
\end{definition}

\begin{claim}
\label{2b.111}
$\bar C$ is $(\aleph_\kappa,J)$-free  and even
$(\theta^{+ \kappa},J)$-free \when \,:
\mn
\begin{enumerate}
\item[$(a)$]  $\mu > \cf(\mu) = \kappa$ and 
$\theta \in (\kappa,\mu)$ is regular
\sn
\item[$(b)$]  $\bar\lambda = \langle \lambda_i:i <
\kappa\rangle$ is a sequence of regular cardinals $< \mu$ with
$\lim_J(\bar\lambda) = \mu$
\sn
\item[$(c)$]  $J = J_{\theta * \kappa}$, see Definition \ref{0p.6}(3)
\sn
\item[$(d)$]  $\lambda = \tcf(\prod\limits_{i < \kappa} 
\lambda_i,<_{J^{\bd}_\kappa})$ is exemplified
by $\bar f = \langle f_\alpha:\alpha < \lambda\rangle$
\sn
\item[$(e)$]   $S \subseteq S^\lambda_\theta \cap S^{\ged}_{\bar f}$
 is stationary (on $S^{\ged}_{\bar f}$ see Definition \ref{2b.113} above), 
$\delta \in S \Rightarrow \mu |\delta$
\sn
\item[$(f)$]  $\bar C = \langle C_\delta:\delta \in S \rangle$ is a
strict $\lambda$-ladder system such that $\otp(C_\delta) = \theta$ and
$C_\delta \subseteq \delta = \sup(C_\delta)$
\sn
\item[$(g)$]  if $\delta \in S,\alpha < \kappa$ and 
$i < \kappa$, then the $(\kappa \alpha +i)$-th member of $C_\delta$ is equal to
$f_\delta(i)$ modulo $\mu$.
\end{enumerate}
\end{claim}

\begin{remark}
The proof is similar to in Magidor-Shelah \cite{MgSh:204} where
the assumptions are quite specific.
\end{remark}

Hence we get
\begin{conclusion}
\label{2b.112}
Assume that $\kappa = \cf(\mu) < \mu$ and $\lambda = \cf(\lambda)
 =^+ \pp_{J^{\bd}_\kappa}(\mu)$.

\Then \, there is a $(\kappa^{+ \kappa +1},J_{\kappa^+ \times
\kappa})$-free strict ladder system $\langle \eta_\delta:\delta \in
S\rangle$ for some stationary $S \subseteq S^\lambda_{\kappa^+}$.
\end{conclusion}

\begin{remark}  This statement is used in the proof of Theorem \ref{h.9d}.
\end{remark}

\begin{PROOF}{\ref{2b.112}}
We shall apply \ref{2b.111}.  As we are assuming
$\pp_{J^{\bd}_\kappa}(\mu) =^+ \lambda = \cf(\lambda)$ there is a
sequence $\bar\lambda = \langle \lambda_i:i < \kappa\rangle$ of
regular cardinals $< \mu$ such that $\mu = \lim_J(\bar\lambda)$ and
$\lambda = \tcf(\prod\limits_{i < \kappa}
\lambda_i,<_{J^{\bd}_\kappa})$ and let $\bar f = \langle
f_\alpha:\alpha < \lambda\rangle$ exemplify it; \wilog \,
$\bar\lambda$ is increasing.

Now $\lambda$ is regular $> \mu > \kappa^{++}$ hence by \cite{Sh:420}
there is a stationary $S \subseteq S^\lambda_{\kappa^+}$ which is from
$\check I_\kappa[\lambda]$ hence by \cite{Sh:506} \wilog \, $S
\subseteq S^{\ged}_{\bar f}$.

As $S \in \check I_{\kappa^+}[\lambda]$ there is a strict club system
$\bar C = \langle C_\delta:\delta \in S\rangle$.  Easily \wilog \,
$\bar C$ satisfies clause (g) of \ref{2b.111}.  Hence by
\ref{2b.111}, $\bar C$ is as required.
\end{PROOF}

\noindent
Recall the following (see \cite[ChII]{Sh:g}, more in \cite{MgSh:204}).
Proving \ref{2b.111} we in fact use
\begin{claim}
\label{2b.117}
If $\circledast$ below holds \then \, we can find a $\theta$-free,
$(\lambda,\kappa)$-ladder system $\bar C' = \langle C'_\delta:\delta \in
S\rangle$ such that $(\forall \alpha \in C'_\delta)(\exists! \beta \in
C_\delta)(\alpha + \mu = \beta + \mu)$.  Moreover there is $\langle
f_\delta:\delta \in S\rangle \in {}^S \cF$ without repetitions such that
$C'_\delta \subseteq \{\beta + i:\beta \in C_\delta,i < \mu$ and $(\exists
\alpha,j)(\mu|\alpha \wedge j < \mu \wedge \beta = \alpha + j \wedge
\beta + i = \alpha + \cd(\otp(C_\delta \cap \alpha),i,
f_\delta(\otp(C_\delta \cap \alpha))\}$, \when \,
\mn
\begin{enumerate}
\item[$\circledast$]  $(a) \quad S \subseteq \lambda$ is stationary
and $\delta \in S$ implies $\mu \backslash \delta$ or even $\mu \cdot
\delta = \mu$
\sn
\item[${{}}$]  $(b) \quad \bar C = \langle C_\delta:\delta \in S\rangle$ is
a $(\lambda,\kappa)$-ladder system
\sn
\item[${{}}$]  $(c) \quad \mu < \lambda$ and ${\mat F} \subseteq {}^\kappa
\mu$ has cardinality $\ge \lambda$ and is $\theta$-free
\sn
\item[${{}}$]  $(d) \quad \cd:\kappa \times \mu \times \mu
\rightarrow \mu$ is one-to-one.
\end{enumerate}
\end{claim}

\begin{PROOF}{\ref{2b.117}}
Straightforward. 
\end{PROOF}

\begin{remark}
\label{2b.16}
The problem in proving the conjecture TDU$_{\aleph_\omega}$ is
to have $(D \ell)_S$ assuming $\lambda =
\lambda^{< \lambda}$; this would have solved the problem in \S0.  As in
many cases here, this is very persuasive but we do not know how to prove
this  in full generality.
\end{remark}

\noindent
The following will be useful showing that if ($R$ a suitable ring), 
$\SP_{\lambda,\theta}(R)$, see Definition \ref{5e.7}, contains
enough ideals (say $J^{\bd}_\kappa,J^{\bd}_{\kappa^+ \times \kappa}, 
J^{\bd}_{\kappa^{++} \times \kappa^+})$ then $\TDU_{\kappa^{+ \omega}}(R)$;
$\bbZ$ ``just" miss this criterion; see also \ref{h16}
\begin{theorem}
\label{h.9d}
For $\mu \in \bold C_\kappa$ one of the following holds:
\mn
\begin{enumerate}
\item[$(A)$]  {\rm BB}$(2^\mu,\mu^+,< \mu,\kappa)$
\sn
\item[$(B)$]  {\rm BB}$(\lambda,\mu^+,< \mu,\kappa)$ where $\lambda =
\min\{\chi:2^\mu < 2^\chi\}$
\sn
\item[$(C)$]  $\lambda := 2^\mu$ satisfies $\lambda = \lambda^{<
\lambda}$ and {\rm BB}$(\lambda,\kappa^{+ \omega +1},< \mu,J_{\kappa^+
\times \kappa})$
\sn
\item[$(D)$]  $\lambda := 2^\mu$ satisfies $\lambda = \lambda^{<
\lambda}$ and {\rm BB}$(\lambda,\kappa^{+ \omega +1},< \mu,
J_{\kappa^{++} \times \kappa^+})$ and also
\sn
\item[${{}}$]  $\bullet_1 \quad$ there is 
$\chi \in (\mu,\lambda)$ such that $\cf(\chi) =
\kappa^+$ and $\chi^{<\kappa^+>_{\tr}} =^+ \lambda$
\sn
\item[${{}}$]  $\bullet_2 \quad \cF \subseteq 
{}^{(\kappa^+)}\chi,|\cF| = \lambda
\Rightarrow (\kappa^+,\kappa^{++}) \in \text{\rm issp}(\cF)$.
\end{enumerate}
\end{theorem}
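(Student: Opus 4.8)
The plan is to derive this theorem directly from the Black Box Trichotomy Theorem \ref{h.7}, applied with a suitable choice of $\sigma$. Fix $\mu \in \bold C_\kappa$. First one must handle the case $\kappa = \aleph_0$ separately from $\kappa > \aleph_0$: in the latter case there is a regular $\sigma$ with $\aleph_0 \le \sigma < \kappa$, and we simply invoke \ref{h.7} with this $\sigma$; in the former case, as remarked in \ref{h.7d}(1), one runs the argument with $\sigma = \aleph_1$ (accepting that conclusions of the form ``for every stationary $S \subseteq S^\lambda_\sigma$'' degrade to ``for almost every'' such $S$, which is harmless here since we only need the existence of \emph{one} good $S$, supplied by Remark \ref{a38} via $S \in \check I_\kappa[\lambda]$). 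So from now on assume we are in one of the three cases $(A)_{\mu,\kappa}$, $(B)$, $(C)_{\mu,\kappa}$ of \ref{h.7}.

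The three cases map onto the four cases of \ref{h.9d} as follows. If $(A)_{\mu,\kappa}$ of \ref{h.7} holds, there is a $\mu^+$-free $\mat F \subseteq {}^\kappa\mu$ of cardinality $2^\mu$; then by $\circledast_3$ (i.e. \ref{2b.98}) applied with $\mu_1 = \mu^+$, we get {\rm BB}$(2^\mu, \mu^+, \theta, \kappa)$ for every $\theta < \mu$, which is exactly clause $(A)$ of \ref{h.9d}. If $(C)_{\mu,\kappa}$ of \ref{h.7} holds, then clause $(c)$ there gives {\rm BB}$(J^{\text{nst}}_S, \mu^+, \theta, \kappa)$ for stationary $S \subseteq \lambda_1 = \min\{\partial : 2^\partial > 2^\mu\}$, and since $\lambda_1$ is regular with $\lambda_1 = \lambda_1^{<\lambda_1}$ (as it is the least $\partial$ with $2^\partial > 2^\mu$), pasting together $\lambda_1$-many disjoint such stationary sets upgrades this to the full {\rm BB}$(\lambda_1, \mu^+, <\mu, \kappa)$, which is clause $(B)$ of \ref{h.9d}. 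The remaining case is $(B)$ of \ref{h.7}, where $\lambda := 2^\mu = \lambda^{<\lambda}$; here clause $(b)_{\mu,\kappa}(i)$ of \ref{h.7} gives {\rm BB}$(J^{\text{nst}}_S, \kappa^+, \theta, \kappa)$ for ladder systems with $|C_\delta| < \mu$. One then needs to convert a ladder system on $S^\lambda_\sigma$ (which is what \ref{h.7} produces) into a witness indexed by $\kappa^{+\omega+1}$-freeness with the product-ideal structure $J_{\kappa^+ \times \kappa}$ appearing in clause $(C)$ of \ref{h.9d}: this is precisely what \ref{1f.13}(1) does, converting a $(\theta, \kappa^{++}, J^{\text{bd}}_\kappa)$-free family into a $(\theta, J^{\text{bd}}_{\kappa^+ \times \kappa})$-free one, combined with \ref{1f.41}(1),(2) to pass between $(\kappa^{+\omega+1},\kappa)$-freeness and the family of $(\kappa^{+n},\kappa^{+n})$-freeness statements. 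Feeding the resulting ladder system through \ref{1.3.25} (which needs $\lambda = \lambda^{<\lambda}$ and the tree-power hypothesis $\chi^{<\sigma>_{\text{tr}}} < \lambda$ — the latter holds since $\chi^\sigma < \lambda$ for all $\chi < \lambda$ in case $(B)$) yields clause $(C)$ of \ref{h.9d}.

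Clause $(D)$ of \ref{h.9d} is the subcase of $(B)$ in which the instability-spectrum obstruction $(\kappa,\kappa^{++}) \in \text{issp}(\cF)$ holds for \emph{every} $\cF \subseteq {}^\kappa\mu$ of cardinality $\lambda$: in that situation one cannot get down to $J_{\kappa^+ \times \kappa}$ and must instead work with $J_{\kappa^{++} \times \kappa}$, and the construction of the required ladder system forces the existence of the auxiliary objects listed — a $\chi \in (\mu,\lambda)$ with $\text{cf}(\chi) = \kappa^+$ and $\chi^{<\kappa^+>_{\text{tr}}} = \lambda$, the analogous issp statement one level up, and (from the tree-power equation together with $\lambda = \lambda^{<\lambda}$, via \ref{1.3.23}-type reasoning) a $\kappa^+$-Kurepa tree. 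I would establish these by tracing the failure of the case-$(C)$-of-\ref{h.7} route through $\circledast_6$ and $\circledast_7$: if no such $\chi$ existed, the tree-power hypothesis of \ref{1.3.25} would be satisfiable directly and one would land in clause $(C)$ of \ref{h.9d} instead. The Kurepa tree falls out because $\chi^{<\kappa^+>_{\text{tr}}} = \lambda > \chi$ with $\text{cf}(\chi) = \kappa^+$ is, by definition of tree power, a tree on $\chi$ nodes and $\kappa^+$ levels with $\lambda \ge \chi^+$ branches, i.e. essentially a $\kappa^+$-Kurepa tree.

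\textbf{Main obstacle.} The delicate point is the bookkeeping in case $(B)$ of \ref{h.7}: one has a ladder system on $S^\lambda_\sigma$ with $\sigma < \kappa$, but clauses $(C)$ and $(D)$ of \ref{h.9d} demand a black box with index structure built from $\kappa$ (namely $\kappa^{+\omega+1}$-freeness over $J_{\kappa^+\times\kappa}$ or $J_{\kappa^{++}\times\kappa}$). Bridging this gap requires combining the freeness-transfer machinery (\ref{1f.13}, \ref{1f.41}) with the predictive transfer of \ref{1.3.25}, and verifying that the hypothesis $\alpha < \lambda \Rightarrow |\{C_\delta \cap \alpha : \alpha \in C_\delta\}| < \lambda$ of \ref{1.3.25}(e) is actually met by the ladder systems produced in \ref{1f.13}(2) — which in turn is where the dichotomy between clauses $(C)$ and $(D)$ (the issp obstruction) enters, since it is exactly the obstruction to making the relevant family $(\kappa^{+\omega},\kappa)$-free rather than merely $(\kappa^{+\omega+1},\kappa)$-free with the coarser ideal. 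Getting this dichotomy to land cleanly on the stated four-way split, rather than on some more complicated ramification, is the part that needs care.
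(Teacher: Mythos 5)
Your overall case split is the same as the paper's: case (A) of \ref{h.7} gives clause (A) via \ref{2b.98}, case (C) of \ref{h.7} gives clause (B), and the work is concentrated in case (B) of \ref{h.7}, where $\lambda=2^\mu=\lambda^{<\lambda}$. But your mechanism for producing the $\kappa^{+\omega+1}$-free ladder systems needed for clauses (C)/(D) has a genuine gap. You propose to start from \ref{h.7}(B)(b)(i) (which only gives $\kappa^+$-freeness) and upgrade via \ref{1f.13}(1) together with \ref{1f.41}; but \ref{1f.13}(1) needs as \emph{input} a $(\kappa^{+\omega+1},\kappa^{++},J^{\text{bd}}_\kappa)$-free family $\cF\subseteq{}^\kappa\mu$ of cardinality $\lambda$, and you never produce one. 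In case (B) of \ref{h.7} the existence of a highly free family of full cardinality $2^\mu$ in ${}^\kappa\mu$ is exactly what may fail (that is the content of case (A)), and since $\lambda$ is regular you cannot assemble one from the $\mu^+$-free families of cardinality $\chi<2^\mu$ given by \ref{1.3.3}(c) using \ref{1f.28}(3). The paper avoids this entirely: it invokes \ref{2b.112}, which builds a $(\kappa^{+\kappa+1},J_{\kappa^+\times\kappa})$-free strict ladder system on some stationary $S\subseteq S^\lambda_{\kappa^+}$ directly from a pcf scale, using only $\lambda=\text{cf}(\lambda)=^+\text{pp}_{J^{\text{bd}}_\kappa}(\mu)$, which holds because $\mu\in\bold C_\kappa$.

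The second problem is your verification of hypothesis (d) of \ref{1.3.25}. You assert that $\chi^{<\kappa^+>_{\text{tr}}}<\lambda$ for all $\chi<\lambda$ ``holds since $\chi^\sigma<\lambda$''; but $\sigma<\kappa<\kappa^+$, so the bound on $\chi^\sigma$ says nothing about $\kappa^+$-tree-powers, and indeed whether some $\chi\in(\mu,\lambda)$ satisfies $\chi^{<\kappa^+>_{\text{tr}}}=^+\lambda$ is precisely the dichotomy the paper uses to separate clauses (C) and (D): if no such $\chi$ exists, \ref{1.3.25} applies on $S^\lambda_{\kappa^+}$ and clause (C) follows; if one exists, one asks the same question at $\kappa^{++}$, and a negative answer there yields the black box of clause (D) on $S^\lambda_{\kappa^{++}}$ (via \ref{1.3.23}(1), \ref{2b.112} applied to $\chi$, and \ref{1.3.25}), with the existing $\chi$ supplying the second bullet and the $\kappa^+$-Kurepa tree. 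Your framing of the (C)/(D) split purely through the issp obstruction, and your reading of the Kurepa tree off the tree-power equation, point in the right direction, but without the tree-power dichotomy as the actual case hypothesis neither the application of \ref{1.3.25} in clause (C) nor the derivation of the bullets in clause (D) goes through as written.
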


\begin{PROOF}{\ref{h.9d}} 
First, if Theorem \ref{h.7} case (A) or case (C) holds then case (A)
or case (B) respectively here holds too, so we can 
assume case (B) of \ref{h.7} holds
and in particular $\lambda := 2^\mu$ satisfies $\lambda =
\lambda^{< \lambda}$ and $\alpha < \lambda \wedge \sigma < \kappa
\Rightarrow |\alpha|^\sigma < \lambda$.

Second, assume there is no $\chi \in
(\mu,\lambda)$ such that $\lambda =^+ \chi^{<\kappa^+>_{\tr}}$
then by \ref{1.3.23}(1) we have $(D \ell)_S$ for every stationary $S \subseteq
S^\lambda_{\kappa^+}$, and then by \ref{2b.112}, we can find
stationary $S \subseteq S^\lambda_{\kappa^+}$ and (see \ref{0p.35}(4))
a strict $(\lambda,\kappa^+)$-ladder system
$\langle \eta_\delta:\delta \in S\rangle$ which is $(\kappa^{+ \omega
+1},J_{\kappa^+  \times \kappa})$-free hence by \ref{1.3.25} we have
BB$(\lambda,\kappa^{+ \omega +1},< \mu,J_{\kappa^+ \times \kappa})$
so clause (C) of the theorem holds.  

Third, assume that there is $\chi_1 < \lambda$ such that $\lambda =^+
(\chi_1)^{<\kappa^+>_{\tr}}$ and there is $\cF \subseteq
{}^{(\kappa^+)}\mu$ of cardinality $\lambda$ which is
$\kappa^{++}$-free or just such that $(\kappa^+,\kappa^{++}) \notin
\issp(\cF)$ \then \, by clause (i) of Claim \ref{1f.10} 
clause (A) of the theorem holds.

Fourth, assume that for $\ell=1,2$ for some $\chi_\ell < \lambda$ we
have $(\chi_\ell)^{<\kappa^{+ \ell}>_{\tr}} =^+ 2^\lambda$ so \wilog \,
$\pp_{J^{\bd}_{\kappa^{+ \ell}}}(\chi_\ell) =^+ 2^\lambda$; so the first
assumption of ``third" hold and its second (by \S3) hence clause (C)
of the theorem holds.

So we can assume that none of the above apply, and we shall prove
clause (D), first $\bullet_1 - \bullet_2$.  By ``second" above \wilog \,
we can choose $\chi_1 \in (\mu,\lambda)$ such that
$(\chi_1)^{<\kappa^+>_{\tr}} =^+ \lambda$ and \wilog \, $\cf(\chi_1) =
\kappa^+,\pp_{J^{\bd}_{\kappa^+}}(\chi_1) =^+ \lambda$ (by \cite{Sh:589}), so
$\bullet_1$ holds. 

By ``third" \wilog \,there is no $\cF \subseteq {}^{(\kappa^+)}\mu$ of
cardinality $\lambda$ such that $(\kappa^+,\kappa^{++}) \notin
\issp(\cF)$, hence $\bullet_2$ holds.

Now by ``fourth" we can assume that there is no $\chi_2 
\in (\mu,\lambda)$ such that $\lambda =^+
\chi^{<\kappa^{++}>_{\tr}}_2$, hence by
\ref{1.3.23}(1) for every stationary 
$S \subseteq S^\lambda_{\kappa^{++}}$ we have $(D \ell)_S$.  
Again we apply \ref{2b.112} with $\chi_2$ here for $\mu$ there 
and we can find a stationary set $S \subseteq
S^\lambda_{\kappa^{++}}$ and a strict ladder system $\langle \eta_\delta:\delta
\in S\rangle$ which is $(\kappa^{+ \omega +1},
J_{\kappa^{++} \times \kappa^+})$-free, hence by \ref{1.3.25} we have
BB$(\lambda,\kappa^{+ \omega +1},< \mu,J_{\kappa^{++} \times \kappa^+})$, so
clause (D) of the theorem holds.  So we are done.
\end{PROOF}

\begin{claim}
\label{2b.107}
Assume $\chi < \chi^+ \le \lambda = \cf(\lambda)$ and
$\alpha < \lambda \Rightarrow \cf([\alpha]^{\le \chi},\subseteq) < \lambda$.

\noindent
1) If $2^\sigma < \lambda,\sigma = \cf(\sigma) \le \chi$ 
and $\lambda = \lambda^{< \lambda}$, \then \,
$(D \ell)^*_{S^\lambda_\sigma}$.

\noindent
2) We can find $\bar{\mat P} = \langle {\mat P}_\alpha:\alpha <
   \lambda\rangle$ such that:
\mn
\begin{enumerate}
\item[$(a)$]  ${\mat P}_\alpha \subseteq {\mat P}(\alpha)$,
\sn
\item[$(b)$]  $|{\mat P}_\alpha| < \lambda$,
\sn
\item[$(c)$]  if $u \in{\mat P}_\alpha$, then $|u| \le \chi$ and $u$
  is a closed subset of $\alpha$,
\sn
\item[$(d)$]  if $\alpha \in {\mat P}_\alpha$ and $\beta 
\in u$, then $u \cap \beta \in {\mat P}_\beta$,
\sn
\item[$(e)$]  if $\delta < \lambda,\aleph_0 < \cf(\delta) \le \chi$
  \then \, $\delta = \sup(u)$ for some $u \in {\mat P}_\delta$.
\end{enumerate}
\end{claim}

\begin{PROOF}{\ref{2b.107}}
1) By \cite{Sh:829}.

\noindent
2)  See D\v{z}amonja-Shelah \cite{DjSh:562}.
\end{PROOF}

\begin{observation}
\label{h.11}
1) Assume
\mn
\begin{enumerate}
\item[$(A)$]  $\lambda = \chi^+,\chi= \text{ cf}(\chi) \ge \mu$
\underline{or} 
\sn
\item[$(B)$]  $\lambda = \chi^+ > \mu^+$, cf$([\chi]^{\le \mu},
\subseteq) = \chi$, see \ref{0p.17}(2).
\end{enumerate}
\mn
\Then \, we can find $\langle \bar e_\varepsilon:\varepsilon <
\chi\rangle$ such that:
\mn
\begin{enumerate}
\item[$(a)$]  $\bar e_\varepsilon = \langle
e_{\varepsilon,\alpha}:\alpha < \lambda\rangle$
\sn
\item[$(b)$]  $e_{\varepsilon,\alpha} \subseteq \alpha$ is closed
\sn
\item[$(c)$]  $\sup\{\otp(e_{\varepsilon,\alpha}):\alpha < \lambda\} <
\mu$ for each $\varepsilon < \chi$
\sn
\item[$(d)$]  if $\alpha \in e_{\varepsilon,\beta}$ then 
$e_{\varepsilon,\alpha} = e_{\varepsilon,\beta} \cap \alpha$
\sn
\item[$(e)$]  if $\alpha < \lambda \wedge \cf(\alpha) < \mu$
then for some $\varepsilon < \chi$ the set $e_{\varepsilon,\alpha}$
contains a club of $\alpha$
\sn
\item[$(f)$]   for every 
$\alpha < \lambda$ and $u \in [\alpha]^{< \mu}$ for some
$\varepsilon < \chi$ we have $u \subseteq e_{\varepsilon,\alpha}$.
\end{enumerate}
\end{observation}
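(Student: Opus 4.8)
The plan is to build a single coherent filtration on $\lambda=\chi^+$ and then cut it into the $\chi$ sequences. Concretely, I would produce $\langle\cP^*_\alpha:\alpha<\lambda\rangle$ with $\cP^*_\alpha$ a family of at most $\chi$ closed subsets of $\alpha$, each of size $<\mu$, such that: (d$'$) $e\in\cP^*_\beta\wedge\alpha\in e\Rightarrow e\cap\alpha\in\cP^*_\alpha$; (f$'$) $\cP^*_\alpha$ is $\subseteq$-cofinal in $[\alpha]^{<\mu}$, and for each regular $\sigma<\mu$ already its members of size $<\sigma$ are cofinal in $[\alpha]^{<\sigma}$; (e$'$) if $\mathrm{cf}(\alpha)<\mu$ then some $e\in\cP^*_\alpha$ is a club of $\alpha$. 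Given such a sequence, fix a partition $\chi=\bigcup\{\chi_\sigma:\sigma<\mu \text{ regular}\}$ into blocks of size $\chi$, let $e_{\varepsilon,\alpha}$ for $\varepsilon\in\chi_\sigma$ range only over the members of $\cP^*_\alpha$ of size $<\sigma$ (this gives (c)), and relabel the members of each $\cP^*_\alpha$ so that $e$ and $e\cap\beta$ receive the same index whenever $\beta\in e$ -- possible because, by (d$'$), along any coherent chain the value is pinned down by one ``seed'' from the cofinal family; this yields (a),(b),(d), and (e),(f) are (e$'$),(f$'$).

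The one real input is a $\subseteq$-cofinal $\cP\subseteq[\chi]^{<\mu}$ of cardinality $\chi$. Under hypothesis (B) this is, up to the routine passage between ``$\le\mu$'' and ``$<\mu$'', the assumption $\mathrm{cf}([\chi]^{\le\mu},\subseteq)=\chi$. Under (A), since $\chi=\mathrm{cf}(\chi)\ge\mu$, every member of $[\chi]^{<\mu}$ is bounded below $\chi$, so $\cP$ can be amalgamated from cofinal families of $[\gamma]^{<\mu}$ for $\gamma<\chi$. Fix such a $\cP=\{b_\xi:\xi<\chi\}$, closed under finite unions and containing all singletons, and fix a bijection $h_\alpha\colon\chi\to\alpha$ for each infinite $\alpha<\lambda$; then $\{h_\alpha'' b:b\in\cP\}$ is $\subseteq$-cofinal in $[\alpha]^{<\mu}$, but is not yet closed under initial segments nor coherent.

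Now recurse on $\alpha<\lambda$, preserving (b$'$),(d$'$),(e$'$),(f$'$) below $\alpha$. For $\alpha=\beta+1$ put $\cP^*_{\beta+1}=\cP^*_\beta\cup\{e\cup\{\beta\}:e\in\cP^*_\beta\cup\{\emptyset\}\}$ (coherence and cofinality are immediate, and $\{\beta\}$ is a club). For a limit $\alpha$ with $\mathrm{cf}(\alpha)\ge\mu$ put $\cP^*_\alpha=\bigcup_{\beta<\alpha}\cP^*_\beta$ (now every $u\in[\alpha]^{<\mu}$ is bounded, no club is needed, and $|\cP^*_\alpha|\le|\alpha|\cdot\chi=\chi$). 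The work concentrates in a limit $\alpha$ with $\theta:=\mathrm{cf}(\alpha)<\mu$. Here I would first thread a \emph{coherent} club $c_\alpha$ through $\alpha$: run a continuous $\in$-chain $\langle N_i:i<\theta\rangle$ of elementary submodels of $(H(\lambda^+),\in,<^*,\cP,\langle h_\gamma\rangle,\ldots)$ of size $<\mu$ with $\alpha\in N_0$, chosen by a fixed recipe so that a tail of the chain for $\alpha$ reproduces the chain chosen earlier for each limit point of $c_\alpha:=\{\sup(N_i\cap\alpha):i<\theta\}$; then $\xi\in c_\alpha\Rightarrow c_\alpha\cap\xi$ equals the club selected for $\xi$, so adding $c_\alpha$ to $\cP^*_\alpha$ respects (d$'$) and gives (e$'$). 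Then, to recover (f$'$) for the now-possible cofinal $u\in[\alpha]^{<\mu}$: for regular $\sigma$ with $\theta<\sigma<\mu$ and $u\in[\alpha]^{<\sigma}$, split $u$ along $c_\alpha$ into its $<\theta$ bounded pieces, cover each by a set of size $<\sigma$ drawn from the appropriate $\cP^*_{\xi}$ below, and add the union of these covers with $c_\alpha$; this has size $<\sigma$ because $\sigma$ is regular above $\theta$, and it coheres downward because its trace below each $\xi\in c_\alpha$ was inserted by the same recipe. A count shows $|\cP^*_\alpha|\le\chi$.

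The main obstacle is exactly this last step: threading through $\alpha$ a club coherent with all previously fixed $\cP^*_\xi$ ($\xi<\alpha$) is a genuinely $\square$-like demand, and it forces the elementary-submodel chains to be chosen ``canonically'' -- uniformly in $\alpha$, with only $\cP$ and the $h_\gamma$ as parameters -- so that tails of distinct chains agree; controlling the size of the resulting $\cP^*_\alpha$ at $\chi$ is precisely what the covering hypothesis, via the size-$\chi$ family $\cP$, provides. Everything else -- successors, large-cofinality limits, and the relabelling securing (c) -- should be routine once this step is set up.
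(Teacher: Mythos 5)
Your architecture is right, and you have correctly located where all the difficulty sits, but the proof has a genuine gap precisely there: the step you describe as ``threading a coherent club $c_\alpha$ through $\alpha$ by a canonically chosen elementary-submodel chain, so that a tail of the chain for $\alpha$ reproduces the chain chosen earlier for each limit point of $c_\alpha$'' is not something a single recursion on $\alpha<\lambda$ can deliver. At stage $\xi$ you freeze $\cP^*_\xi$ with only $\le\chi$ members, yet $\lambda=\chi^+$ many later ordinals $\alpha$ will each need the trace $c_\alpha\cap\xi$ of \emph{their} club to have been anticipated at stage $\xi$; making $\le\chi$ candidates per level suffice for all future $\alpha$ simultaneously is exactly the content of the partial-square theorems (this is what the paper quotes: \cite[\S4]{Sh:351} or \cite[3.7]{Sh:309} for case (A), and \ref{2b.107}, i.e.\ Dzamonja--Shelah \cite{DjSh:562}, for case (B)). Two distinct submodel chains for two ordinals $\alpha_1\ne\alpha_2$ sharing a limit point $\xi$ will in general induce \emph{different} clubs of $\xi$, and no ``fixed recipe'' with parameters $\cP,\langle h_\gamma\rangle$ forces their tails to agree; coherence is only recovered after sorting the clubs into $\chi$ many classes, which is a theorem, not a bookkeeping step. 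Asserting that this ``should work'' is asserting the key lemma.

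A second, concrete failure point: under hypothesis (B) the cardinal $\chi$ may be singular (only $\mathrm{cf}([\chi]^{\le\mu},\subseteq)=\chi$ is assumed), and your construction of the cofinal family $\cP$ and of the submodel chains leans on $\chi=\mathrm{cf}(\chi)$ (``every member of $[\chi]^{<\mu}$ is bounded below $\chi$'' is used in case (A) only, but the threading recipe itself is a regular-$\chi$ argument). The paper deliberately routes case (B) through the covering hypothesis and \ref{2b.107}(3) rather than through a filtration by bounded subsets. By contrast, the final size-reduction you propose for clause (c) --- partitioning the index set $\chi$ by regular $\sigma<\mu$ and restricting to small members --- is sound and is a variant of the paper's own trick of replacing $e_{\varepsilon,\alpha}$ by $\{\gamma\in e_{\varepsilon,\alpha}:\mathrm{otp}(\gamma\cap e_{\varepsilon,\alpha})\in e_\delta\}$ for a fixed club $e_\delta$ of a limit $\delta<\mu$. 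To repair the proof, either quote the partial-square/club-covering results as the paper does, or supply a full proof of them; the rest of your outline then goes through.
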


\begin{remark}  Used in \ref{2b.91}.
\end{remark}

\begin{PROOF}{\ref{h.11}}
First assume clause (A) holds.
By \cite[\S4]{Sh:351} or \cite[3.7]{Sh:309} there is a sequence $\langle \bar
e_\varepsilon:\varepsilon < \chi\rangle$ satisfying clauses
(a),(b),(d) and
\mn
\begin{enumerate}
\item[$(c)'$]   $e_{\varepsilon,\alpha}$ has cardinality $< \chi$
\sn
\item[$(e)$]   if $u \subseteq \alpha < \lambda$ has cardinality $<
\chi$ then $u \subseteq e_{\varepsilon,\alpha}$ for some $\varepsilon$
\sn
\item[$(f)'$]  $\langle e_{\varepsilon,\alpha}:\varepsilon <
\chi\rangle$ is $\subseteq$-increasing.
\end{enumerate}
\mn
Manipulating those $\bar e_\varepsilon$'s we get the desired
conclusion (e.g. ignoring clause (f) choose $\langle e_\delta:\delta
< \mu$ limit$\rangle$, $e_\delta$ a club of $\delta$ of order type
cf$(\delta)$ and for $\varepsilon < \chi \wedge \delta < \mu$ we
define $\bar e^\delta_\varepsilon = \langle
e^\delta_{\varepsilon,\alpha}:\alpha < \lambda\rangle$ by
$e^\delta_{\varepsilon,\alpha} := \{\gamma \in
e_{\varepsilon,\alpha}:\otp(\gamma \cap e_{\varepsilon,\alpha})
\in e_\delta\}$, now check).

Second, assume clause (B).  The proof is similar using \ref{2b.107},
i.e. Dzamonja-Shelah \cite{DjSh:562}.
\end{PROOF}

\begin{claim}
\label{h16}
We have $\BB(2^\mu,\kappa^{+ \omega+1},\theta,
J^{\bd}_{\kappa^+ \times \kappa})$ if
$\theta < \mu \in {\bold C}_\kappa$ and $(\forall \chi)(\chi < 2^\mu
\Rightarrow \chi^{<\kappa^+>_{\text{tr}}} < 2^\mu)$.
\end{claim}

\begin{PROOF}{\ref{h16}}
See in the proof of \ref{h.9d}, ``second...". That is, by \ref{2b.112}
there is a $(\kappa^{+ \kappa +1},J_{\kappa^+ \times \kappa})$-free
ladder system $\langle C_\delta:\delta \in S\rangle,S \subseteq
S^\lambda_{\kappa^+}$ stationary.

We claim that $\bar C$ exemplifies $\BB(\lambda,\kappa^{+ \omega +1},<
\lambda,J^{\bd}_{\kappa^+ \times \kappa})$.  Recalling the assumption
$\chi < 2^\mu \Rightarrow \chi^{<\kappa^+>_{\tr}} < 2^\mu$ by Claim
\ref{1.3.23} we have $(D\ell)_{S_1}$ 
for every stationary $S_1 \subseteq S$,  hence
  by \ref{1.3.25} we have clause (B) of Definition \ref{0p.15}.
\end{PROOF}

\noindent
Note (will be useful together with \ref{h.9d}, \ref{5e.8},
\ref{1f.53}).
\begin{observation}
\label{h18}
If (A) then (B) where:
\mn
\begin{enumerate}
\item[$(A)$]  $(a) \quad J_\ell$ is an ideal on $\kappa_\ell$ for
  $\ell=1,2$ and $\kappa_1 = \kappa_2 \wedge J_1 \subseteq J_2$ 

\hskip25pt or $J_1 \le_{\RK} J_2$ or just for some function $h$ from $\kappa_2$
  onto $\kappa_1$ we have 

\hskip25pt $(\forall A \in J_1)(\{\beta <
  \kappa_2:h(\beta) \in A\} \in J_1)$
\sn
\item[${{}}$]  $(b) \quad \bar c_\ell = \langle c^\ell_\alpha:\alpha
  \in S_\ell\rangle,\otp(c^1_\alpha) = \kappa_1$
\sn
\item[${{}}$]  $(c) \quad S_2 = \{\kappa_2 \cdot \delta:\delta \in
  S_1\}$ and for $\delta \in S_1$ we have $C^2_{\kappa_2 \cdot \delta} =$

\hskip25pt $\{\kappa_2 \cdot \beta + \otp(C^1_\delta \cap \alpha):\alpha \in
  C^1_\delta$ and $\beta = h(\alpha)\}$
\sn
\item[$(B)$]  $(a) \quad$ if $\bar c_1$ is $(\mu,J_1)$-free \then \,
  $\bar c_2$ is $(\mu,J_2)$-free
\sn
\item[${{}}$]  $(b) \quad$ if $BB(\lambda,\mu,\theta,J_1)$ and $\theta
 = \theta^{\kappa_2}$ then $BB(\lambda,\mu,\theta,J_2)$.
\end{enumerate}
\end{observation}

\begin{PROOF}{\ref{h18}}
Straightforward.
\end{PROOF}
\newpage

\section{Cases of weak G.C.H.} \label{Cases}

Note that if $\mu \in {\bold C}_\kappa$ and $\lambda < 2^\mu <
2^\lambda$,  then we can find a $\mu^+$-free ${\mat F} \subseteq 
{}^\kappa \mu$ of cardinality $\lambda$ (by the ``No hole Conclusion", 
\cite[Ch.II,2.3 pg.53]{Sh:g} or here \ref{h.11}(3)) 
so by the Section Main Claim \ref{d.6} we 
can deduce BB$(\lambda,\mu^+,(2^\mu,\theta),\kappa)$ for $\theta 
< \mu$ - see conclusion \ref{d.11}.

Observe below that if $\theta = 2,\bar C = \langle C_\gamma:\gamma <
\lambda\rangle,C_\gamma \subseteq \mu$ (and $2^\mu < 2^\lambda$),
then easily clause $(\beta)$ of the conclusion of the Section Main Claim 
\ref{d.6} below holds by counting - see \ref{d.6.3}(5).
The point is to prove it for more colors, this is a relative of
\cite[1.10]{Sh:775} but this section is self contained.  
Also Definition \ref{d.8} repeats Definition \cite[1.9]{Sh:775}.

This section is close to \cite[\S1]{Sh:775} hence we try to keep
similar notation.
\bigskip

\begin{definition}
\label{d.8}
1) Sep$(\mu',\mu,\chi,\theta,\Upsilon)$ means that for some $\bar f$:
\mn
\begin{enumerate}
\item[$(a)$]  $\bar f = \langle f_\varepsilon:\varepsilon < \mu' \rangle$
\sn
\item[$(b)$]  $f_\varepsilon$ is a function from
${}^\mu \chi$ to $\theta$
\sn
\item[$(c)$]  for every $\varrho \in {}^{\mu'} \theta$ the set $\{\nu \in
{}^\mu \chi$ : for every $\varepsilon < \mu'$ we have $f_\varepsilon(\nu)
\ne \varrho(\varepsilon)\}$ has cardinality $< \Upsilon$.
\end{enumerate}
\mn
2) We may omit $\chi$ if $\chi = \theta$.  We write
$\Sep(\mu,\theta,\Upsilon)$ for Sep$(\mu,\mu,\theta,\theta,\Upsilon)$ and
$\Sep(\mu,\theta)$ means that for some $\Upsilon = \cf(\Upsilon) \le
2^\mu$ we have $\Sep(\mu,\mu,\theta,\theta,\Upsilon)$ and $\Sep(<
\mu,\theta)$ if for some $\Upsilon = \cf(\Upsilon) \le 2^\mu$
and some $\sigma < \mu$ we have $\Sep(\sigma,\mu,\theta,\theta,\Upsilon)$.
Let $\Sep^+(\mu,\theta)$ mean $\Sep(\mu,\mu,\theta,\theta,\mu)$.
\end{definition}

\begin{smain claim}
\label{d.6}
Assume
\mn
\begin{enumerate}
\item[$(a)$]   $2^\mu < 2^\lambda$
\sn
\item[$(b)$]   $D$ is a $\mu^+$-complete filter on $\lambda$
extending the co-bounded filter
\sn
\item[$(c)$]   $\bar C = \langle C_\gamma:\gamma < \lambda \rangle,
C_\gamma \subseteq \mu$,
\sn
\item[$(d)$]   $2 \le \theta \le \mu$ and $\Upsilon \le \mu$ (or
just $D$ is $\Upsilon^+$-complete, $\Upsilon \le 2^\mu$)
\sn
\item[$(e)$]   $\Sep(\mu,\theta,\Upsilon)$
\sn
\item[$(f)$]   $\lambda = \min\{\partial:2^\partial > 2^\mu\}$ or at least
\sn
\item[$(f)^-$]   we have $h_\xi \in {}^\lambda(2^\mu)$ for $\xi <
(2^\mu)^+$ such that $\zeta \ne \xi \Rightarrow h_\zeta \ne_D h_\xi$.
\end{enumerate}
\mn
\Then \,
\mn
\begin{enumerate}
\item[$(\alpha)$]  if $\chi$ satisfies $\gamma < \lambda \Rightarrow
\chi^{|C_\gamma|} \le \theta$, \then \, we can find $\bar f = \langle
f_\gamma:\gamma < \lambda\rangle$ satisfying $f_\gamma \in
{}^{(C_\gamma)}\chi$ such that (see \ref{d.6.3}(1)):

for every $f:\mu \rightarrow \chi$, for some $\gamma < \lambda,f_\gamma
\subseteq f$ (and even for $D^+$-many 

$\gamma < \lambda$)
\sn
\item[$(\beta)$]  if ${\bold F}_\gamma:{}^{(C_\gamma)}(2^\mu)
\rightarrow \theta$ for $\gamma < \lambda$, \then \, we can find
$\bar c = \langle c_\gamma:\gamma < \lambda\rangle \in
{}^\lambda\theta$ such that:
\sn
\begin{enumerate}
\item[$(*)$]   for any mapping $f:\mu \rightarrow
2^\mu$, for some $\gamma < \lambda,{\bold F}_\gamma(f \restriction
C_\gamma) = c_\gamma$ (even for $D^+$-many $\gamma < \lambda$)
\end{enumerate}
\sn
\item[$(\gamma)$]  if $\bar \chi = \langle
\chi_\varepsilon:\varepsilon < \mu\rangle$ satisfies $\gamma < \lambda
\Rightarrow \prod\limits_{\varepsilon \in C_\gamma} \chi_\varepsilon
\le \theta$, \then \, we can find $\bar f = \langle f_\gamma:\gamma <
\lambda\rangle$ satisfying $f_\gamma \in \prod\limits_{\varepsilon \in
  C_\gamma} \chi_\varepsilon$ such that for every $f \in
\prod\limits_{\varepsilon < \mu} \chi_\varepsilon$, for some $\gamma <
\lambda,f_\gamma = f \restriction C_\gamma$ (and even for $D^+$-many 
$\gamma$'s).
\end{enumerate}
\end{smain claim}

\begin{remark}
\label{d.6.3}
1) Of course ``for $D^+$ many $t \in I$ we have $xx$" means that
$D$ is a filter on $I$ and $\{t \in I:t$ satisfies $xx\} \in D^+$, see below.

\noindent
2) For $D$ a filter on $I$ let Dom$(D) = I$ and let $D^+ = \{A \subseteq I:I
 \backslash A \notin D\}$.

\noindent
3) Similarly for $J$ an ideal on $I$.

\noindent
4) Note that in \ref{d.6}, clause (f) implies clause (a) and even
clause (f)$^-$ does.  Note that clause (f) implies $\lambda$ is regular
(but not (f)$^-$) and clause (b) implies cf$(\lambda) > \mu$.

\noindent
5) Concerning clause $(\beta)$ in \ref{d.6}, when $\theta=2$,
this is easy:
let $D$ be the filter of co-bounded subsets of $\lambda$, and let $\langle
f_\alpha:\alpha < 2^\mu\rangle$ list ${}^\mu(2^\mu)$, each appearing
$\lambda$ times.  Now $\cF := \{\langle 1 - {\bold F}_\gamma(f_\alpha
\restriction C_\gamma):\gamma < \lambda\rangle:\alpha < 2^\mu\}$ is a
subset of ${}^\lambda 2$ of cardinality $2^\mu < 2^\lambda
=|{}^\lambda 2|$.   So every sequence $\bar c \in {}^\lambda 2
\backslash \cF$ is as required.  Concerning this proof we 
can use any filter $D$ on $\lambda$
such that $|2^\lambda/D| > 2^\mu$,

\noindent
6)  In the Section Main Claim \ref{d.6} we can replace $\mu$ by any set 
of cardinality $\mu$.  E.g., ${}^{\omega >} \mu$.  Hence replacing
$\bar C$ by $\bar C' = \langle C'_\alpha:\alpha <
\lambda\rangle,C'_\alpha = {}^{\omega >}(C_\alpha)$ in clause 
$(\beta)$ of \ref{d.6}
we can assume $\Dom({\bold F}_\gamma) = \{f:f$ a function from
${}^{\omega >}(C_\alpha)$ to $2^\mu\}$.

\noindent
7) We may wonder if clause (e) of the assumption of the 
Section Main Claim \ref{d.6} is
reasonable; the following Claim \ref{d.7} gives some sufficient
conditions for clause (e) of \ref{d.6} to hold.

\noindent
8) In \ref{d.6} we implicitly assert that $(f) \Rightarrow (f)^-$; for
completeness we recall the justification (as there $(2^\mu)^+ \le 2^\lambda$).
\end{remark}

\begin{obs}
\label{d.6.5}
We have $(f) \Rightarrow (f)^-$ in \ref{d.6}, i.e. if $\lambda = 
\min\{\partial:2^\partial > 2^\mu\}$ \then \, there are $h_\xi:\lambda
\rightarrow 2^\mu$ for $\xi < 2^\lambda$ such that $\xi < \zeta <
2^\lambda \Rightarrow h_\xi \ne h_\zeta \mod J^{\bd}_\lambda$.
\end{obs}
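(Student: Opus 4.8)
The plan is to obtain the $h_\xi$'s from a bijection ${}^{<\lambda}(2^\mu)\to 2^\mu$ together with the standard ``code the initial segments of $g$'' trick, so the real work is just extracting the necessary cardinal arithmetic from the minimality of $\lambda$.

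First I would unpack $(f)$, i.e.\ $\lambda=\min\{\partial:2^\partial>2^\mu\}$. Since $2^\lambda>2^\mu$ forces $\lambda>\mu$, we get $2^{<\lambda}:=\sup\{2^\partial:\partial<\lambda\}=2^\mu$ (each $2^\partial\le 2^\mu$ by minimality, and $2^\mu\le 2^{<\lambda}$ because $\mu<\lambda$); moreover $\lambda\le 2^\mu$, for otherwise $\partial:=2^\mu<\lambda$ would give $2^{2^\mu}=2^\partial\le 2^{<\lambda}=2^\mu$, contradicting Cantor. (One may also invoke Remark~\ref{d.6.3}(4), that $(f)$ makes $\lambda$ regular, but this is not needed below.) Consequently $\bigl|{}^{<\lambda}(2^\mu)\bigr|=\sum_{\alpha<\lambda}(2^\mu)^{|\alpha|}=2^\mu$: for $\alpha<\lambda$ one has $(2^\mu)^{|\alpha|}=2^{\mu\cdot|\alpha|}\le 2^{<\lambda}=2^\mu$ since $\mu\cdot|\alpha|<\lambda$, and there are only $\lambda\le 2^\mu$ such $\alpha$. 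Also $\bigl|{}^{\lambda}(2^\mu)\bigr|=(2^\mu)^\lambda=2^{\mu\cdot\lambda}=2^\lambda$.

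Next, fix an injection $c:{}^{<\lambda}(2^\mu)\to 2^\mu$ and, for each $g\in{}^{\lambda}(2^\mu)$, define $h_g\in{}^{\lambda}(2^\mu)$ by $h_g(\alpha)=c(g\restriction\alpha)$ for $\alpha<\lambda$. Then $g\mapsto h_g$ is injective --- from $h_g$ one reads off each $g\restriction\alpha$ (as $c$ is one-to-one), hence $g=\bigcup_{\alpha<\lambda}g\restriction\alpha$ --- so $\{h_g:g\in{}^{\lambda}(2^\mu)\}$ has cardinality $2^\lambda$. And if $g\ne g'$, letting $\alpha_0<\lambda$ be least with $g(\alpha_0)\ne g'(\alpha_0)$, then $g\restriction\alpha\ne g'\restriction\alpha$ for every $\alpha>\alpha_0$, whence $h_g(\alpha)=c(g\restriction\alpha)\ne c(g'\restriction\alpha)=h_{g'}(\alpha)$; thus $\{\alpha<\lambda:h_g(\alpha)=h_{g'}(\alpha)\}\subseteq\alpha_0+1$ is bounded in $\lambda$, i.e.\ lies in $J^{\text{bd}}_\lambda$, so $h_g\ne h_{g'}$ mod $J^{\text{bd}}_\lambda$. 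Re-enumerating as $\langle h_\xi:\xi<2^\lambda\rangle$ gives the statement; and since $2^\lambda\ge(2^\mu)^+$ this in particular supplies the length-$(2^\mu)^+$ family required by clause $(f)^-$ of \ref{d.6}, with any two members differing on a co-bounded hence $D$-large set (as $D$ extends the co-bounded filter).

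I do not expect a genuine obstacle here: the only delicate point is verifying, with no GCH-type hypothesis available, that minimality of $\lambda$ alone yields $2^{<\lambda}=2^\mu$, $\lambda\le 2^\mu$, and hence $\bigl|{}^{<\lambda}(2^\mu)\bigr|=2^\mu$ --- that is, the first step above; once the coding map $c$ is in hand the construction and the boundedness estimate are immediate.
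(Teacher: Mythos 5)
Your proof is correct and is essentially the paper's own argument: the paper codes initial segments of branches $\eta\in{}^\lambda 2$ via a bijection of ${}^{\lambda>}2$ with $2^\mu$, while you code initial segments of $g\in{}^\lambda(2^\mu)$ via an injection of ${}^{<\lambda}(2^\mu)$ into $2^\mu$ — the same "diagonal-of-initial-segments" trick, with the same boundedness estimate. The only difference is that you spell out the cardinal arithmetic ($\lambda\le 2^\mu$, $2^{<\lambda}=2^\mu$) that the paper passes over with "clearly", and your slightly larger coding domain still has size $2^\mu$ for the same reason.
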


\begin{PROOF}{\ref{d.6.5}}
As $\alpha < \lambda \Rightarrow |{}^\alpha 2| =
2^{|\alpha|} \le 2^\mu$ and $\mu \le \lambda \le 2^\mu$ clearly ${}^{\lambda
>}2 = \cup\{{}^\alpha 2:\alpha < \lambda\}$ has cardinality $2^\mu$,
so there is a one-to-one function $\bold g$ from ${}^{\lambda >}2$
onto $2^\mu$.

Let $\langle \eta_\xi:\xi < 2^\lambda\rangle$ list ${}^\lambda 2$ and
let $h_\xi:\lambda \rightarrow 2^\mu$ be defined by $h_\xi(\alpha) =
\bold g(\eta_\xi \restriction \alpha)$ for $\alpha < \lambda$.

Clearly $\langle h_\xi:\xi < 2^\lambda\rangle$ is as required.
\end{PROOF}

\noindent
In order to give a sufficient condition for clause (e) of \ref{d.6} we
recall
\begin{definition}
\label{d9}
1) For $J$ an ideal on $\sigma$ and cardinal $\mu$ let 
${\bold U}_J(\mu) = \min\{|{\mat P}|:{\mat P} \subseteq 
[\mu]^{\le \sigma}$ and for every $f \in {}^\sigma \mu$, for some $u \in
{\mat P}$, we have $\{\varepsilon < \sigma:f(\varepsilon) \in u\}
\ne \emptyset \mod J\}$.

\noindent
2) If $J = J^{\bd}_\sigma$ and $\sigma$ is a regular cardinal, we may 
write $\bold U_\sigma(\mu)$.
\end{definition}

\begin{claim}
\label{d.7}
Clause {\rm (e)} of \ref{d.6} holds, i.e., $\Sep(\mu,\theta,\Upsilon)$
holds, \when \, $\theta \ge \aleph_0$ and\footnote{On the case
  ``$\theta$ finite", see \ref{d20}.} at least one of the following holds:
\mn
\begin{enumerate}
\item[$(a)$]  $\mu = \mu^\theta$ and $\Upsilon = \theta$
\sn
\item[$(b)$]   ${\bold U}_\theta(\mu) = \mu$ and $2^\theta < \mu$
 and $\Upsilon = (2^\theta)^+$
\sn
\item[$(c)$]   ${\bold U}_J(\mu) = \mu$ where for some $\sigma$ we
have $J = [\sigma]^{< \theta},\theta \le \sigma,\sigma^\theta \le \mu$
and $\theta^{< \sigma} < \mu$ and $\Upsilon = (\theta^{< \sigma})^+$
\sn
\item[$(d)$]   $\mu$ is strong limit of cofinality $\ne
\theta,\theta < \mu$ and $\Upsilon = (2^\theta)^+$
\sn
\item[$(e)$]  $\mu \ge \beth_\omega(\theta)$ and $\Upsilon = \mu$.
\end{enumerate}
\end{claim}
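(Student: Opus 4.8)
The plan is to treat the five cases separately, with case (a) carrying the combinatorial core and cases (b)--(e) obtained by running the same construction against a covering family or by importing such a family from pcf. Unwinding Definition \ref{d.8}, in each case we must produce $\bar f=\langle f_\varepsilon:\varepsilon<\mu\rangle$ with $f_\varepsilon\colon{}^\mu\theta\to\theta$ so that for every $\varrho\in{}^\mu\theta$ the avoiding set $A_\varrho:=\{\nu\in{}^\mu\theta:(\forall\varepsilon<\mu)\,f_\varepsilon(\nu)\ne\varrho(\varepsilon)\}$ has cardinality $<\Upsilon$.

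For case (a): since $\mu=\mu^\theta$ we have $|[\mu]^\theta|=\mu$ and $2^\theta\le\mu$. I would fix a partition $\mu=\bigcup\{I_{\bar e}:\bar e\in[\mu]^\theta\}$ with $|I_{\bar e}|=\mu$, and for each $\bar e$ a surjection $\varepsilon\mapsto G_\varepsilon$ from $I_{\bar e}$ onto $[{}^{\bar e}\theta]^\theta$ (legitimate as $|[{}^{\bar e}\theta]^\theta|=2^\theta\le\mu$). For $\varepsilon\in I_{\bar e}$ set $f_\varepsilon(\nu)=f^*_\varepsilon(\nu\restriction\bar e)$, where $f^*_\varepsilon\colon{}^{\bar e}\theta\to\theta$ is chosen so that $f^*_\varepsilon\restriction G_\varepsilon$ is onto $\theta$ (a $\theta$-sized set maps onto $\theta$). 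If some $A_\varrho$ had $\ge\theta$ elements, pick distinct $\nu_\zeta\in A_\varrho$ for $\zeta<\theta$; the $\le\theta$ coordinates witnessing $\nu_\zeta\ne\nu_{\zeta'}$ form a set $w\in[\mu]^{\le\theta}$ on which the $\nu_\zeta$ are pairwise distinct, which I extend to some $\bar e\in[\mu]^\theta$. Then $G:=\{\nu_\zeta\restriction\bar e:\zeta<\theta\}\in[{}^{\bar e}\theta]^\theta$, and choosing $\varepsilon\in I_{\bar e}$ with $G_\varepsilon=G$ yields $\varrho(\varepsilon)\in f^*_\varepsilon[G]=\{f_\varepsilon(\nu_\zeta):\zeta<\theta\}$, contradicting the choice of the $\nu_\zeta$. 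Hence $|A_\varrho|<\theta$, which is Sep$(\mu,\theta,\Upsilon)$ with $\Upsilon=\theta$.

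For cases (b) and (c) I would rerun this construction with the role of $[\mu]^\theta$ played by a family $\mathcal{P}$ of size $\mu$ witnessing ${\bold U}_\theta(\mu)=\mu$, resp.\ ${\bold U}_J(\mu)=\mu$ for $J=[\sigma]^{<\theta}$: such $\mathcal{P}$ consists of small subsets of $\mu$ covering each $\theta$-set modulo the relevant small ideal. The only real change is at the last step: a member $\bar e\in\mathcal{P}$ now catches the separating set $w$ only modulo that ideal, so to still extract $\theta$ of the $\nu_\zeta$ genuinely pairwise distinct on an honest part of a single $\bar e$ one must begin with more of them --- more than the number of possible restriction patterns, namely $2^\theta$, resp.\ $2^{<\sigma}$. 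This is exactly the source of the conclusions $\Upsilon=(2^\theta)^+$ and $\Upsilon=(2^{<\sigma})^+$, while the hypotheses $2^\theta<\mu$, resp.\ $2^{<\sigma}<\mu$ with $\sigma^\theta\le\mu$, keep the bookkeeping inside $\mu$ and ensure $\Upsilon\le\mu$ as required in \ref{d.6}(d).

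Cases (d) and (e) then just reduce the hypotheses of (b)/(c) to pcf: for $\mu$ strong limit with $\text{cf}(\mu)\ne\theta$ and $\theta<\mu$ one has $2^\theta<\mu$ and, by the pcf/covering theorems of \cite{Sh:g}, ${\bold U}_\theta(\mu)=\mu$ (this is where $\text{cf}(\mu)\ne\theta$ enters), so (b) gives $\Upsilon=(2^\theta)^+$; for $\mu\ge\beth_\omega(\theta)$ the revised-GCH-type covering results bound the corresponding ${\bold U}$-invariant over a bounded ideal on $\theta$ by $\mu$, but now with no control on $2^\theta$, so the same machine only yields Sep$(\mu,\theta,\mu)$. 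The step I expect to be the main obstacle is not case (a), which is a clean reservation-and-diagonalization argument, but rather, for cases (b)--(e), pinning down the exact definition of ${\bold U}_J(\mu)$ being invoked, checking that the covering it supplies is precisely what the construction needs, and reading off the exact $\Upsilon$; the single deepest input, the $\beth_\omega(\theta)$ covering bound in case (e), is quoted from pcf theory and is not reproved here.
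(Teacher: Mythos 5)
Your proposal matches the paper's proof in both structure and substance: your case (a) is the paper's Case 1 (a family of functions anticipating every pair consisting of a support in $[\mu]^\theta$ and a $\theta$-set of restriction patterns, each such function being forced to realize all $\theta$ colours on its designated patterns), your cases (b)/(c) are the paper's Case 2, where the supports are restricted to a ${\bold U}_J$-covering family and an Erd\H{o}s--Rado-style extraction from $(2^{<\sigma})^+$ elements first produces $\sigma$ pairwise-distinct splitting coordinates of which the covering family catches $\theta$, and (d),(e) reduce to these exactly as in the paper's Cases 3--4 (with (e) quoting ${\bold U}_\sigma(\mu)=\mu$ for some regular $\sigma\in(\theta,\beth_\omega(\theta))$ from pcf theory). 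The only compression is that you do not write out the tree/delta-system step that supplies the $\sigma$ splitting coordinates before the covering family is invoked, but your accounting of where $(2^\theta)^+$ and $(2^{<\sigma})^+$ enter shows you have the right mechanism.
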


\begin{PROOF}{\ref{d.7}}   
By the proof of \cite[1.11]{Sh:775}, (not the
statement!); however, for completeness, below we 
shall give a complete proof (after the proofs of \ref{d.6}, \ref{d.11} and
\ref{d.11p}). 
We shall use mainly \ref{d.7} clause (d).

\noindent
Proof of the Section Main Claim \ref{d.6}:

It is enough to prove clause $(\beta)$,
as it implies the others.  Why?  Clearly clause $(\alpha)$ is a
special case of clause $(\gamma)$ and for clause $(\gamma)$ note that
\wilog \, $(\forall \varepsilon)(\chi_\varepsilon \le \theta)$ hence
$(\forall \varepsilon)(\chi_\varepsilon \le 2^\mu)$ so we can choose
$\bold F_\gamma$ as any function from ${}^{(C_\gamma)}(2^\mu)$ 
onto $\theta$ such that:
\mn
\begin{enumerate}
\item[$\bullet$]  $\bold F_\gamma \rest \prod\limits_{\varepsilon \in
  C_\gamma} \chi_\varepsilon$ is a one to one function.
\end{enumerate}
\mn
Now by clause $(\beta)$ we can find $\langle c_\gamma:\gamma <
\lambda\rangle$ such that $(*)$ there holds and for $\gamma < \lambda$
let $f_\gamma$ be the unique $f \in \prod\limits_{\varepsilon \in
  C_\gamma} \chi_\varepsilon$ such that $\bold F_\gamma(f) = c_\gamma$
and $f_\gamma$ constantly zero if there is no such $f$.

Now check; so indeed is sufficient to prove clause $(\beta)$.

Let $\langle {\bold F}_\gamma:\gamma < \lambda\rangle$ be as in clause
$(\beta)$ and we shall prove that there is
$\langle c_\gamma:\gamma < \lambda\rangle$ as promised therein.

By assumption (e) we have Sep$(\mu,\theta,\Upsilon)$
which means (see Definition \ref{d.8}(2)) that
we have $\Sep(\mu,\mu,\theta,\theta,\Upsilon)$.

Let $\bar f = \langle f_\varepsilon:\varepsilon < \mu\rangle$
exemplify Sep$(\mu,\mu,\theta,\theta,\Upsilon)$, see Definition \ref{d.8}(1)
and
\mn
\begin{enumerate}
\item[$(*)_0$]   for $\varrho \in {}^\mu \theta$ let
Sol$_\varrho := \{\nu \in {}^\mu \theta:\text{ for every } \varepsilon
< \mu$ we have $\varrho(\varepsilon) \ne f_\varepsilon(\nu)\}$
\end{enumerate}
\mn
where Sol stands for solutions, so by clause (c) of the Definition
\ref{d.8}(1) of Sep it follows that:
\mn
\begin{enumerate}
\item[$(*)_1$]  $\varrho \in {}^\mu \theta
\Rightarrow |\Sol_\varrho| < \Upsilon$.
\end{enumerate}
Let cd be a one-to-one function from ${}^\mu (2^\mu)$ onto $2^\mu$
such that (this is possible as $\cf(2^\mu) > \mu)$:

\[
\alpha = \cd(\langle \alpha_\varepsilon:\varepsilon < \mu \rangle) 
\Rightarrow \alpha \ge \sup\{\alpha_\varepsilon:\varepsilon < \mu\}
\]

\mn
Let $\cd_\varepsilon:2^\mu \rightarrow 2^\mu$ for $\varepsilon < \mu$
be such that $\alpha < 2^\mu \Rightarrow \alpha = \cd(\langle
\cd_\varepsilon(\alpha):\varepsilon < \mu\rangle)$.

Let ${\bold H}$ be a one-to-one function from $2^\mu$ onto ${}^\mu
\theta$, such ${\bold H}$ exists as $2 \le \theta \le \mu$ 
by clause (d) of the assumption.  For
$\varrho \in {}^\mu \theta$ let $\Sol'_\varrho := \{\alpha <
2^\mu:{\bold H}(\alpha) \in \Sol_\varrho\}$, so
\mn
\begin{enumerate}
\item[$(*)_2$] $\varrho \in {}^\mu \theta \Rightarrow
|\text{Sol}'_\varrho| < \Upsilon$.
\end{enumerate}
\mn
Clearly in the assumption, if clause $(f)$ holds, then clause $(f)^-$
holds (see \ref{d.6.5}), so we can assume that $\langle
h_\xi:\xi < (2^\mu)^+\rangle$ are as in clause $(f)^-$ so in
particular $h_\xi \in {}^\lambda(2^\mu)$.

Fix $\xi < (2^\mu)^+$ for a while.

For $\gamma < \lambda$ let
\mn
\begin{enumerate}
\item[$(*)_3$]   $\varrho^*_{\xi,\gamma} := {\bold H}
(h_\xi(\gamma)) \in {}^\mu \theta$.
\end{enumerate}
\mn
Let $\varepsilon < \mu$.  Recall that $\varrho^*_{\xi,\gamma} \in
{}^\mu \theta$ for $\gamma < \lambda$ and $f_\varepsilon$
 is a function from ${}^\mu \theta$ to $\theta$ so 
$f_\varepsilon(\varrho^*_{\xi,\gamma}) < \theta$.  Hence
we can consider the sequence $\bar c^\xi_\varepsilon = \langle
f_\varepsilon(\varrho^*_{\xi,\gamma}):\gamma < \lambda\rangle \in
{}^\lambda \theta$ as a candidate for being as required (on $\langle
c_\gamma:\gamma < \lambda\rangle$) in the desired
conclusion $(*)$ from clause $(\beta)$ of the Section Main Claim \ref{d.6}.
If one of them is as required, we are done. So assume towards a
contradiction that for each $\varepsilon< \mu$ (recall we are fixing $\xi <
(2^\mu)^+$) there is a sequence
$\eta^\xi_\varepsilon \in {}^\mu (2^\mu)$ that exemplifies
the failure of $\bar c^\xi_\varepsilon$ to satisfy $(*)$,
hence there is a set $E^\xi_\varepsilon \in D$, so necessarily a
subset of $\lambda$, such that
\mn
\begin{enumerate}
\item[$(*)_4$]   $\gamma \in E^\xi_\varepsilon
\Rightarrow {\bold F}_\gamma(\eta^\xi_\varepsilon \restriction C_\gamma) \ne
f_\varepsilon(\varrho^*_{\xi,\gamma})$.
\end{enumerate}
\mn
Define $\eta^*_\xi \in {}^\mu(2^\mu)$ by
\mn
\begin{enumerate}
\item[$\boxtimes_1$]   $\eta^*_\xi(\alpha) = \cd
(\langle \eta^\xi_\varepsilon(\alpha):\varepsilon < \mu \rangle)$ for
$\alpha < \mu$; so
$\eta^*_\xi \in {}^\mu(2^\mu)$ for our $\xi < (2^\mu)^+$.
\end{enumerate}
\mn
By clause (b) in the assumption of our Section Main Claim \ref{d.6}, the filter
$D$ is $\mu^+$-complete hence
\mn
\begin{enumerate}
\item[$(*)_5$]   $E^*_\xi := \cap \{E^\xi_\varepsilon:
\varepsilon < \mu\}$ belongs to $D$.
\end{enumerate}
\mn
Now we vary $\xi < (2^\mu)^+$.  For each such $\xi$ we have chosen
$\eta^*_\xi \in {}^\mu(2^\mu)$, and clearly the number of such $\eta^*_\xi$'s
is $\le |{}^\mu(2^\mu)| = (2^\mu)^\mu = 2^\mu$ hence for some $\eta^*$
and unbounded ${\mat U} \subseteq (2^\mu)^+$ we have $\xi \in {\mat U}
\Rightarrow \eta^*_\xi = \eta^*$.

For $\varepsilon < \mu$ we define $\eta'_\varepsilon \in
{}^\mu(2^\mu)$ by $\eta'_\varepsilon(\alpha) = 
\cd_\varepsilon(\eta^*(\alpha))$ for $\alpha < \mu$.  

So by the choice of $\eta^*_\xi$ in $\boxtimes_1$ above:
\mn
\begin{enumerate}
\item[$\boxtimes_2$]  if $\xi \in {\mat U}$, then
$\varepsilon < \mu \Rightarrow \eta^\xi_\varepsilon = \eta'_\varepsilon$.
\end{enumerate}
\mn
So by $(*)_4 + (*)_5$
\mn
\begin{enumerate}
\item[$\boxtimes_3$]  if $\gamma \in E^*_\xi$ where $\xi \in {\mat U}$
 \then \, $\varepsilon < \mu \Rightarrow {\bold F}_\gamma(\eta'_\varepsilon
\restriction C_\gamma) \ne f_\varepsilon(\varrho^*_{\xi,\gamma})$.
\end{enumerate}
\mn
So noting $\langle {\bold F}_\gamma(\eta'_\varepsilon \restriction
C_\gamma):\varepsilon < \mu\rangle \in {}^\mu \theta$, clearly
by $(*)_0$ and $\boxtimes_3$ we have:
\mn
\begin{enumerate}
\item[$\boxtimes_4$]  if $\gamma \in E^*_\xi$ where $\xi \in {\mat U}$, then
$\varrho^*_{\xi,\gamma} \in \Sol_{\langle {\bold F}_\gamma
(\eta'_\varepsilon \restriction C_\gamma):\varepsilon < \mu\rangle}$.
\end{enumerate}
\mn
As $\xi$ was any member of ${\mat U}$, by the choice of
$\varrho^*_{\xi,\gamma}$, i.e.
$(*)_3$ which says that $\varrho^*_{\xi,\gamma} = {\bold
H}(h_\xi(\gamma))$ and the definition of $\Sol'$ (just before
$(*)_2$), we have:
\mn
\begin{enumerate}
\item[$\boxtimes_5$]  if $\xi \in {\mat U}$, then $\gamma \in E^*_\xi
\Rightarrow h_\xi(\gamma) \in \Sol'_{\langle {\bold F}_\gamma 
(\eta'_\varepsilon \restriction C_\gamma):\varepsilon < \mu\rangle}$.
\end{enumerate}
\mn
Let $\bar \xi = \langle \xi_i:i < \Upsilon\rangle$ be a sequence of
pairwise distinct members of ${\mat U}$, which 
is possible as ${\mat U}$ is an unbounded subset
of $(2^\mu)^+$ and $\Upsilon \le 2^\mu$ (see clause (d) of the
assumption).  As $D$ is $\mu^+$-complete and $\Upsilon \le \mu$ or
just $D$ is $\Upsilon^+$-complete,
also $E^* := \cap\{E^*_{\xi_i}:i < \Upsilon\}$
belongs to $D$.  By the above,

\[
\gamma \in E^* \wedge i < \Upsilon \Rightarrow h_{\xi_i}(\gamma) \in \text{
Sol}'_{\langle {\bold F}_\gamma(\eta'_\varepsilon \restriction
C_\gamma):\varepsilon < \mu\rangle}.
\]

\mn
But by $(*)_2$ we have $|\text{Sol}'_{\langle {\bold F}_\gamma
(\eta'_\varepsilon \restriction C_\gamma):\varepsilon < \mu\rangle}|
< \Upsilon$, hence by $\boxtimes_5$ for each $\gamma \in E^*$ we can
choose $i_\gamma < j_\gamma < \Upsilon$ such
that $h_{\xi_{i_\gamma}}(\gamma) = h_{\xi_{j_\gamma}}(\gamma)$.

As $\Upsilon \le \mu$ and $D$ is $\mu^+$-complete or just $D$ is
 $\Upsilon^+$-complete recalling $E^* \in D$
clearly for some $i < j < \Upsilon$ the set $\{\gamma
\in E^*:i_\gamma = i \wedge j_\gamma = j\}$ is $\ne \emptyset$ mod $D$.
As $i < j$, by the choice of $\bar\xi$ (after $\boxtimes_5$) 
we have $\xi_i \ne \xi_j$ and by
the previous sentences $\{\gamma \in E^*:h_{\xi_i}(\gamma) =
h_{\xi_j}(\gamma)\} \ne \emptyset$ mod $D$.
But this contradicts the choice of $\langle h_\zeta:\zeta <
(2^\mu)^+\rangle$, i.e., clause $(f)^-$ of the assumption which is
 enough by \ref{d.6.5}.
\end{PROOF}

\begin{conclusion}
\label{d.11}
1) $\BB(\lambda,\mu^+,\theta,\kappa)$ and if $\lambda$ is regular even 
$\BB(J^{\nst}_\lambda,\mu^+,\theta,\kappa)$ - see Definition \ref{0p.14} -
holds \when \, $\theta < \mu \in {\bold C}_\kappa$
and $\mu < \lambda < 2^\mu < 2^\lambda$.

\noindent
2) $\BB(\lambda,\mu^+,(2^\mu,\theta),\kappa)$ - see Definition
\ref{0p.15} - holds \underline{when} $\theta,\mu,\lambda$ are as
above.
\end{conclusion}

\begin{PROOF}{\ref{d.11}}
1) Let $\Upsilon = (2^{\theta + \kappa^+})^+$, so $\Upsilon <
\mu$.  By case (d) of \ref{d.7}, we have
$\Sep(\mu,\theta,\Upsilon)$.  Let $\langle C_\gamma:\gamma \in [\mu,\lambda)
\rangle$ be a $\mu^+$-free family of subsets of $\mu$ each of
order type $\kappa$ (exist by \ref{1.3.3}(c)) 
and let $\langle S_i:i < \lambda\rangle$ be a
partition of $[\mu,\lambda)$ into $\lambda$ (pairwise disjoint) sets each of
cardinality $\lambda$, stationary if $\lambda$ is regular 
and let $\langle \xi_{i,\alpha}:\alpha <
\lambda\rangle$ list $S_i$ in increasing order.  Clearly $\langle
C_\gamma:\gamma \in [\mu,\lambda)\rangle$ is a weak
  $(\lambda,\kappa)$-ladder system and is $\mu^+$-free so is as
  required in clause (A) of \ref{0p.14}.  Hence it 
suffices to find for each $i < \lambda$ a
function $\bold c_i$ with domain $S_i$, such that $\bold c_i(\gamma)
\in {}^{(C_\gamma)}\theta$ as in Definition \ref{0p.14}.

Clearly $\lambda \ge \lambda_0 := 
\min\{\partial:2^\partial > 2^\mu\}$, so if equality
holds, by \ref{d.6.5} there are $h_\xi \in {}^\lambda(2^\mu)$ for $\xi <
2^\lambda$ such that $\zeta \ne \varepsilon \Rightarrow h_\zeta
\ne_{J^{\bd}_\lambda} h_\varepsilon$.  So we can apply the Section
Main Claim \ref{d.6}$(\alpha)$ with $D$ taken to be the club filter and with
$\langle C_{\xi_{i,\alpha}}:\alpha \in [\mu,\lambda)\rangle$ here standing for
$\bar C$ there; we get $\bold c'_i$ with domain $\lambda$.  Let $\bold
c_i$ have domain $S_i,\bold c_i(\xi_{i,\alpha}) = \bold c'_i(\alpha)$
so $\bold c_i$ is as required.  If otherwise, 
i.e., $\lambda > \lambda_0$, the result
``$\BB(\lambda,\mu^+,\theta,\kappa)$" follows by
monotonicity of BB in $\lambda$.

To get ``if $\lambda$ is regular then
$\BB(J^{\nst}_\lambda,\mu^+,\theta,\kappa)$", let $g:\lambda
\rightarrow [\mu,\lambda_0)$ be such that $g^{-1}\{\alpha\}$ is a
stationary subset of $\lambda$ for $\alpha \in [\mu,\lambda_0)$ let
$\langle S'_i:i < \lambda\rangle$ be a partition of
$[\mu,\lambda_0)$ into stationary sets and use $S''_i = \{\beta <
 \lambda:g(\beta) \in S'_i\rangle,C''_\beta = C_{g(\beta)}$ and $D
      = \{A \subseteq \lambda$: for club $E$ of $\lambda_0,(\forall
 \beta < \lambda)(g(\beta) \in E \Rightarrow \beta \in A)\}$.

\noindent
2) The proof is similar. 
\end{PROOF}

\begin{conclusion}
\label{d.11p}
Suppose we add clause (g) and replace clause (b) by (b)$^+$ 
in the Section Main Claim \ref{d.6} where
\mn
\begin{enumerate}
\item[$(g)$]  $\lambda = \cf(\lambda)$ and ${\gd}_\lambda > 2^\mu$,
 recalling ${\gd}_\lambda = \cf({}^\lambda
 \lambda,<_{J^{\bd}_\lambda})$
\sn
\item[$(b)^+$]  $\lambda$ is regular and $D$ is the club filter on $\lambda$.
\end{enumerate}
\mn
\Then \, we can strengthen clause $(\beta)$ of the conclusion to:
\mn
\begin{enumerate}
\item[$(\beta)^+$]  if ${\bold F}_\gamma:{}^{(C_\gamma)}(2^\mu)
\rightarrow \theta$ for $\gamma < \lambda$ and ${\bold F}':{}^\mu(2^\mu)
\rightarrow {}^\lambda \lambda$, \then \, we can find 
$\bar c = \langle c_\gamma:\gamma \in S_* \rangle \in {}^\lambda
\theta$ with $S_* \in D^+$ such that:
\mn
\begin{enumerate}
\item[$(*)$]  for any $f:\mu \rightarrow 2^\mu$ for some $\gamma <
\lambda$ (and even for $D^+$-many $\gamma \in S_*$) we have

\[
{\bold F}_\gamma(f \restriction C_\gamma) = c_\gamma 
\text{ and } ({\bold F}'(f))(\gamma) < \min(S_* \backslash (\gamma +1))
\]

\end{enumerate}
\end{enumerate}
\end{conclusion}

\begin{PROOF}{\ref{d.11p}}
Note that clause (b)$^+$ here implies clause (b) from \ref{d.6}, so
the conclusion of  \ref{d.6} holds.
We do not have to repeat the proof of the Section Main Claim \ref{d.6}; just
to quote it as ${\mat F} = 
\{{\bold F}'(f):f$ a function from $\mu$ to $2^\mu\}$ is
a subset of ${}^\lambda \lambda$ of cardinality $\le 2^\mu$.  

Let $\cF' := \{\sup\{f_i:i < \mu\}:f_i \in \cF$ for $i < \mu\}$, so clearly:
\mn
\begin{enumerate}
\item[$(*)$]  $(a) \quad \cF' \subseteq {}^\lambda \lambda$
\sn
\item[${{}}$]  $(b) \quad |\cF'| \le 2^\mu$
\sn
\item[${{}}$]  $(c) \quad (\cF',\le)$ is $\mu^+$-directed.
\end{enumerate}
\mn
[Why Clause (c)?  Because if $f_i \in \cF'$ for 
$i < \mu$ then $\sup\{f_i:i < \mu\} \in \cF'$.]

Now we apply a result from Cummings-Shelah \cite[\S8]{CuSh:541}
possible as $\lambda > \mu,\mu$ strong limit, saying that $\cf({}^\lambda
\lambda,<_{J^{\bd}_\lambda}) = \cf({}^\lambda \lambda,
<_{J^{\nst}_\lambda})$, that is
${\gd}_\lambda = \cf({}^\lambda \lambda,<_{J^{\nst}_\lambda})$.  Hence
there is $f_* \in {}^\lambda \lambda$ such that the set $\{\alpha <
\lambda:f(\alpha) < f_*(\alpha)\}$ is a stationary subset of $\lambda$
for every $f \in {\mat F}'$.  For $f \in \cF$ let $S_f = \{\delta <
\lambda:\delta$ a limit ordinal and $f_*(\alpha) \le f(\delta)\}$
hence
\mn
\begin{enumerate}
\item[$(*)$]  $(a) \quad$ if $f_1 \le f_2$ are from $\cF'$ then
  $S_{f_1} \subseteq S_{f_2}$
\sn
\item[${{}}$]  $(b) \quad S_f \notin D$ for $f \in \cF'$.
\end{enumerate}
\mn
Now apply \ref{d.6} for the filter $D_* := \{S \subseteq \lambda:
S \cup S_f \in D$, i.e. contains a club of $\lambda$ for some $f \in \cF\}$.  
\end{PROOF}

\noindent
We still owe a proof of Claim \ref{d.7} giving sufficient conditions for
Sep$(\mu,\mu,\theta,\theta,\Upsilon)$.
\begin{PROOF}{\ref{d.7}}
\underline{Proof of \ref{d.7}}   

The cases 1-4 below cover all the clauses (a)-(e) of Claim
\ref{d.7} recalling
\mn
\begin{enumerate}
\item[$(*)_1$]   $\Sep(\mu,\theta,\Upsilon) =
  \Sep(\mu,\mu,\theta,\theta,\Upsilon)$
\end{enumerate}
\mn
and using freely the obvious
\mn
\begin{enumerate}
\item[$(*)_2$]   monotonicity: if $\Sep(\mu'_1,
\mu_1,\chi_1,\theta_1,\Upsilon_1)$ and $\mu'_1 \le
  \mu'_2,\mu_1 = \mu_2,\chi_1 \le \chi_2,\theta_1 =
  \theta_1,\Upsilon_1 \le \Upsilon_2$ \then \,
  $\Sep(\mu'_2,\mu_2,\chi_2,\theta_2,\Upsilon_2)$.
\end{enumerate}
\mn
Clause (a) is fully covered by case 1 using $\chi = \theta$, clause
(b) follows from clause (c) for the case $\sigma = \theta$ (and
monotonicity in $\Upsilon$), clause (c) by case 2 for $\chi = \theta$, 
clause (d) by case 3 letting $\sigma = \theta$ 
and clause (e) by case 4.
\mn
\newline
\underline{Case 1}:  $\mu = \mu^\theta,\Upsilon = \theta,\chi \in
[\theta,\mu]$ and we shall prove $\Sep(\mu,\mu,\chi,\theta,\theta)$.  Let

\[\begin{array}{ll}
{\mat F} = \biggl\{ f:&f \text{ is a function from } {}^\mu \chi
\text{ into } \theta \text{ and} \\
  &\text{for some } u \in [\mu]^\theta \text{ and a sequence } \bar
\rho = \langle \rho_i:i < \theta \rangle \\
  &\text{ with no repetition}, \rho_i \in {}^u \chi,
 \text{ we have} \\
  &(\forall \nu \in {}^\mu \chi)[\rho_i \subseteq \nu \Rightarrow
f(\nu) = i] \text{ and} \\
  &(\forall \nu \in {}^\mu \chi)[( \bigwedge_{i < \theta}
(\rho_i \nsubseteq \nu)) \Rightarrow f(\nu) = 0]\biggr\}.
\end{array}\]

\mn
We write $f = f^*_{u,\bar \rho}$, if $u,\bar \rho$ witness that
$f \in {\mat F}$ as above.  Notice that the size of the set of such pairs
$(u,\bar\rho)$ is $\mu^\theta$, and each such pair determines a unique $f$.

\noindent
Recalling $\mu = \mu^\theta$, clearly $|{\mat F}| = \mu$.  
Let ${\mat F} = \{f_\varepsilon:\varepsilon < \mu\}$ 
and we let $\bar f = \langle
f_\varepsilon:\varepsilon < \mu \rangle$.  Clearly clauses (a),(b) of
Definition \ref{d.8} (with $\mu,\mu,\chi,\theta,\theta$ here
standing for $\mu',\mu,\chi,\theta,\Upsilon$ there)
hold; let us check clause (c).  So suppose
$\varrho \in {}^\mu \theta$ and let $R = R_\varrho :=
\{\nu \in {}^\mu \chi$: for
every $\varepsilon < \mu$ we have $f_\varepsilon(\nu) \ne
\varrho(\varepsilon)\}$.   We have to prove that $|R| < \theta$
(as we have chosen $\Upsilon = \theta$).

\noindent
Towards a contradiction, assume that
$R \subseteq {}^\mu \chi$ has cardinality $\ge \theta$ and
choose $R' \subseteq R$ of cardinality $\theta$.  Hence we can find $u \in
[\mu]^\theta$ such that $\langle \nu \restriction u:\nu \in R'
\rangle$ is without repetitions.

Let $\{\nu_i:i < \theta\}$ list $R'$ without repetitions
and let $\rho_i := \nu_i \restriction u$ for $i < \theta$.
Now let $\bar \rho = \langle \rho_i:i < \theta \rangle$, so
$f^*_{u,\bar \rho}$ is well-defined and belongs
to ${\mat F}$.  Hence for some $\zeta < \mu$ we
have $f^*_{u,\bar \rho} = f_\zeta$.  Now for each $i < \theta,\nu_i \in
R' \subseteq R$, hence by the definition of $R,(\forall
\varepsilon < \mu)(f_\varepsilon(\nu_i) \ne \varrho(\varepsilon))$ and, in
particular, for $\varepsilon = \zeta$, we get $f_\zeta(\nu_i) \ne
\varrho(\zeta)$.  But by the choice of $\zeta,f_\zeta(\nu_i) =
f^*_{u,\bar \rho}(\nu_i)$ and by the definition of $f^*_{u,\bar \rho}$,
recalling $\nu_i \restriction u = \rho_i$, we
have $f^*_{u,\bar \rho}(\nu_i) = i$, so $i = f_\zeta(\nu_i)
\ne \varrho(\zeta)$.  This holds for every 
$i < \theta$ whereas $\varrho \in {}^\mu \theta$, a contradiction.
\bn
\newline
\underline{Case 2}: $\theta \le \chi < \mu,
\chi^{< \sigma} < \mu,\chi^\theta \le \mu,\sigma^\theta \le \mu,\theta \le
\sigma,J = [\sigma]^{< \theta}$ so it is an ideal on 
$\sigma,{\bold U}_J(\mu) = \mu,\Upsilon = (\chi^{< \sigma})^+$ 
recalling Definition \ref{d9}.  We shall prove
$\Sp(\mu,\mu,\chi,\theta,\Upsilon)$ which is more than required.

Let $\{u_\gamma:\gamma < \mu\} \subseteq [\mu]^{\le\sigma}$ exemplify
${\bold U}_J(\mu) = \mu$. Define ${\mat F}$ as in case 1 replacing
``$u \in [\mu]^\theta$" by ``$u \in {\mat P} := 
\bigcup\{[u_\gamma]^\theta:\chi < \mu\}$". 
As $\sigma^\theta \le \mu$ easily $|\cP| = \mu$ and as $\chi^\theta
\le \mu$ clearly $|{\mat F}| = \mu$.  Let $\langle
f_\varepsilon:\varepsilon < \mu\rangle$ list $\cF$, clearly clauses
(a),(b) of Definition \ref{d.8} hold and we shall prove clause (c).

Assume that $\varrho \in {}^\mu \theta$ and $R = R_\varrho
\subseteq {}^\mu \theta$ is defined as in case 1, and towards a
contradiction assume that $|R| \ge \Upsilon = (\chi^{<\sigma})^+$. 
We can find $\nu^*,\langle (\alpha_\zeta,\nu_\zeta):\zeta < \sigma
\rangle$ such that:
\mn
\begin{enumerate}
\item[$\boxplus$]  $(a) \quad \nu^*,\nu_\zeta \in R_\varrho$
\sn
\item[${{}}$]  $(b) \quad \alpha_\zeta < \mu$
\sn
\item[${{}}$]  $(c) \quad \nu_\zeta \restriction 
\{\alpha_\xi:\xi < \zeta\} = \nu^*
\restriction \{\alpha_\xi:\xi < \zeta\}$
\sn
\item[${{}}$]  $(d) \quad \nu_\zeta(\alpha_\zeta) \ne
\nu^*(\alpha_\zeta)$.
\end{enumerate}
\mn
[Why?  Obvious, as in the proof of the 
Erd\"os-Rado theorem; let $\langle \eta_i:i
< \Upsilon\rangle$ be a sequence with no repetitions of members of
$R$.  For each $j < \Upsilon$, we try to choose by induction on
$\zeta < \sigma$ ordinals $i(j,\zeta),\alpha_{j,\zeta}$ such that:
\mn
\begin{enumerate}
\item[$(a)$]  $i(j,\zeta) < j$ is increasing with $\zeta$
\sn
\item[$(b)$]  $\alpha_{j,\zeta} = \text{ min}\{\alpha:\eta_j(\alpha)
\ne \eta_{i(j,\zeta)}(\alpha)\}$
\sn
\item[$(c)$]  $i(j,\zeta) = \min\{i:i(j,\varepsilon) < i < j$
and $\eta_i(\alpha_{j,\varepsilon}) = \eta_j(\alpha_{j,\varepsilon})$
for $\varepsilon < \zeta\}$.
\end{enumerate}
\mn
If we succeed for some $j$ we are done.  Otherwise for each $j <
\Upsilon$ there is $\xi(j) < \sigma$ such that
$(i(j,\zeta),\alpha_{j,\zeta})$ is well defined iff $\zeta < \xi(j)$.

Let ${\mat T} = \{\langle(i(j,\zeta),\alpha_{j,\zeta}):\zeta < \xi\rangle:j
< \Upsilon$ and $\xi \le \xi(j)\}$ which is, under $\triangleleft$, a tree
with $\le \sigma$ levels, is normal, has a root and each node has at
most $\chi$ immediate successors, hence $|\mat T| \le \sum\limits_{i <
\sigma} |{}^i \chi| = \Sigma\{\chi^{|i|}:i < \sigma)\} = \chi^{< \sigma}$. 
But $j \mapsto \langle(i(j,\zeta),\alpha_{j,\zeta}):\zeta < \xi(j)\rangle$
is a one-to-one function from $\Upsilon$ into $\mat T$, a contradiction.]

Clearly $\langle \alpha_\zeta:\zeta < \sigma \rangle$ has no repetitions. 

So by the choice of $\{u_\gamma:\gamma < \mu\}$ as 
exemplifying ${\bold U}_J(\mu) = \mu$, i.e., the 
definition of ${\bold U}_J(\mu)$ and the choice of $J$,
for some $i < \mu$ the set $u_\gamma \cap \{\alpha_\zeta:\zeta <
\sigma\}$ has cardinality $\ge \theta$; choose a subset $u$ of this
intersection of cardinality $\theta$, hence $u \in {\mat P}$.
So $\{\nu \restriction u:\nu \in R\}$ has cardinality $\ge \theta$; 
\wolog \, $u = \{\alpha_{\zeta_i}:i < \theta\}$ where $\zeta_i$,
 increasing with $i$, and let $\rho^*_i =
\nu_{\zeta_i} \restriction u$ for $i < \theta$ and we can continue as
in Case 1.  
\bn
\newline
\underline{Case 3}:  $\mu > \theta \ne \text{ cf}(\mu)$ and $\sigma =
\theta$ (or $\theta \le \sigma \in \Reg \cap \mu \backslash \{\cf(\mu)\}$)
and $\mu$ is a strong limit cardinal, $\Upsilon = (2^{< \sigma})^+$
and we shall prove $\Sep(\mu,\theta,\Upsilon)$.

Letting $\chi = \theta$, this follows by case 2, 
the main point is ``$\bold U_J(\mu) = \mu$ 
where $J = [\sigma]^{< \theta}$, recalling Definition \ref{d9}.

Let $\cP = \cup\{u:u$ is a bounded subset of $\mu$ of cardinality $\le
\sigma\}$.  So $\cP \subseteq [\mu]^{\le\sigma}$ and as $\mu$ is a
strong limit cardinal clearly $\cP$ has cardinality $\le \mu$ and if
$f$ is a function from $\sigma$ to $\mu$, as $\sigma = \cf(\sigma) \ne
\cf(\mu)$ necessarily for some $\alpha < \mu$ the set $u_* :=
\{\varepsilon < \sigma:f(\varepsilon) < \alpha\}$ is of cardinality
$\sigma$ hence it belongs to $\cP$ (and has subsets of cardinality
exactly $\theta$ which necessarily belong to $\mu$).
\bn
\newline
\underline{Case 4}:  $\mu \ge \beth_\omega(\theta)$ and $\Upsilon =
\mu$ and we shall prove $\Sep(\mu,\theta,\Upsilon)$.

Let $\chi = \theta$ so we should prove $\Sep(\mu,\mu,\chi,\theta,\Upsilon)$.
By \cite{Sh:460} or see \cite{Sh:829} we can find a regular 
$\sigma < \beth_\omega(\theta)$
which is greater than $\theta$ and is such that ${\bold U}_\sigma(\mu) = \mu$
(i.e., the ideal is $J^{\bd}_\sigma$); hence $J := [\sigma]^{< \theta}
\subseteq J^{\bd}_\sigma$ hence trivially $\bold U_J(\mu) = \mu$; so 
case 2 applies and by monotonicity we are done.
\end{PROOF}
\bigskip
\centerline{$* \qquad * \qquad *$}
\bigskip

\begin{discussion}
\label{d19}
We may try to strengthen the results on
$\Sep(\mu,\theta,\kappa)$ assuming $\mu^\sigma = \mu$, a case which is
unnatural for \cite{Sh:775} but may be helpful.
\end{discussion}

\begin{claim}
\label{d20}
1) $\Sep(\mu,\theta,\Upsilon)$ when $\mu \ge \aleph_0 > \theta$ and
   $\Upsilon \ge \theta$.

\noindent
2) If $\BB(I,\bar C,(\lambda,\theta_1),< \kappa)$ and $[\alpha < \kappa
   \Rightarrow \theta^{|\alpha|}_2 \le \theta_2]$ \then \, $\BB(I,\bar
   C,\theta_2,< \kappa)$.
\end{claim}

\begin{PROOF}{\ref{d20}}
1) By the proof of \ref{d.6}, clause (a) and monotonicity of $\Sep$ in
   $\Upsilon$.

\noindent
2) As in the beginning of the proof of \ref{d.6}, i.e. proving it
 suffices to prove clause $(\beta)$ implies clause $(\gamma)$ of the
 conclusion. 
\end{PROOF}
\newpage

\section{Getting large 
$\mu^+$-free subsets of ${}^\kappa \mu$} \label{Getting} 

Recall that $\mu = \bold C_\kappa \Rightarrow \text{ pp}(\mu) =^+
 2^\mu$ and easily (see \ref{2b.98}(2))
\mn
\begin{enumerate}
\item[$\boxplus$]  if ${\mat F} \subseteq {}^\kappa \mu$ is
$\mu_1$-free and $\lambda = |{\mat F}| = 2^\mu$, then
BB$(\lambda,\mu_1,\lambda,\kappa)$, (and hence TDU$_{\mu_1}$ holds
when $\kappa \in \{\aleph_0,\aleph_1\}$). 
\end{enumerate}
\mn
This is a motivation of the investigation here, i.e., trying to
get more cases of $\mu^+$-free subsets for ${}^\kappa \mu$ of
cardinality $\pp(\mu)$.  In \ref{1f.7} the case of our interest 
is $\mu = \beth_\omega,\mu < \chi < \lambda = \beth_{\omega +1}(= 2^\mu)$,
cf$(\chi) = \theta \in (\aleph_\omega,\mu)$.
\bigskip

\begin{claim}
\label{1f.7}
There is a set ${\mat F} \subseteq {}^\kappa \mu$ of
cardinality $\lambda$ satisfying $\boxtimes$ if $\circledast$
holds \underline{where}
\mn
\begin{enumerate}
\item[$\boxtimes$]  $(\alpha) \quad$ the set ${\mat F}$ is
$(\theta,J_1)$-free, see Definition \ref{1.3.14},
\sn
\item[${{}}$]  $(\beta) \quad {\mat F}$ is
$(\mu^+,(2^\theta)^+,J_1)$-free - see Definition \ref{1.3.14},
\sn
\item[$\circledast$]   $(a) \quad \mu < \chi < \lambda$,
\sn
\item[${{}}$]  $(b) \quad \kappa = \text{\rm cf}(\mu) < \mu$,
\sn
\item[${{}}$]  $(c) \quad \theta$ is regular (naturally but not
necessarily $\theta = \text{\rm cf}(\chi))$,
\sn
\item[${{}}$]  $(d) \quad \kappa < \theta < \mu$ \underline{or} just $\kappa
\ne \theta$ are both $< \mu$,
\sn
\item[${{}}$]  $(e) \quad \alpha < \mu \Rightarrow |\alpha|^\theta < \mu$,
\sn
\item[${{}}$]  $(f) \quad J=J_1$ is a $\kappa$-complete ideal on
  $\kappa$, including $J^{\bd}_\kappa$, of course
\sn
\item[${{}}$]  $(g) \quad \chi^{<\theta>_{\text{\rm tr}}} \ge^+
\lambda$ as witnessed by ${\mat T}$; i.e., the tree ${\mat T}$ has 
$\theta$ levels, 

$\hskip25pt \le \chi$ nodes and $\ge \lambda$ distinct
$\theta$-branches,
\sn
\item[${{}}$]  $(h) \quad$ {\rm pp}$_{J_1}(\mu) > \chi$
\end{enumerate}
\end{claim}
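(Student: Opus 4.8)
The plan is to realize $\mathcal F$ as a ``tree-like composition'' of a $\mu^+$-free family of size $\chi$ with the tree $\mathcal T$ of clause $(g)$. First, since $J^{\mathrm{bd}}_\kappa\subseteq J_1$ we get $\mathrm{pp}^+_{J^{\mathrm{bd}}_\kappa}(\mu)\ge\mathrm{pp}^+_{J_1}(\mu)>\chi$ from $(h)$, and $\kappa=\mathrm{cf}(\mu)<\mu<\chi$ by $(a),(b)$; so by \ref{1.3.3}(c) I may fix regular $\lambda_i<\mu$ ($i<\kappa$) with limit $\mu$ and a $\mu^+$-free family $\{g_s:s\in\mathcal T\}\subseteq\prod_{i<\kappa}\lambda_i$ indexed injectively by the (at most $\chi$) nodes of $\mathcal T$. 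In particular $s\ne s'\Rightarrow\{i:g_s(i)=g_{s'}(i)\}\in J^{\mathrm{bd}}_\kappa$, and for every $N\subseteq\mathcal T$ with $|N|\le\mu$ there is $\langle v_s:s\in N\rangle$ with $v_s\in J^{\mathrm{bd}}_\kappa$ such that distinct $s,s'\in N$ and $i\in\kappa\setminus v_s\setminus v_{s'}$ give $g_s(i)\ne g_{s'}(i)$ (the \emph{separation property} of $N$). By $(e)$ we have $\lambda_i^\theta<\mu$, so I fix one-to-one $\mathrm{cd}_i\colon{}^\theta(\lambda_i)\to\mu$, pick $\lambda$ distinct $\theta$-branches $\eta$ of $\mathcal T$ (possible by $(g)$), and set
\[ f_\eta(i)=\mathrm{cd}_i\big(\langle g_{\eta\restriction\varepsilon}(i):\varepsilon<\theta\rangle\big)\in\mu,\qquad \mathcal F=\{f_\eta:\eta\}. \]
Since $\mathrm{cd}_i$ is injective, $f_\eta(i)=f_{\eta'}(i)$ iff $g_{\eta\restriction\varepsilon}(i)=g_{\eta'\restriction\varepsilon}(i)$ for all $\varepsilon<\theta$; as two distinct branches differ on some initial segment, $\{i:f_\eta(i)=f_{\eta'}(i)\}\subseteq\{i:g_t(i)=g_{t'}(i)\}\in J^{\mathrm{bd}}_\kappa$ for suitable distinct nodes $t,t'$, so $|\mathcal F|=\lambda$ and $\mathcal F$ satisfies the almost-disjointness built into \ref{1.3.14}(1).

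For $(\alpha)$ of $\boxtimes$ ($\theta$-freeness) let $W$ be a set of $<\theta$ branches. Since $\theta$ is regular (clause $(c)$) and there are $<\theta$ pairs in $W$, there is $\varepsilon_*<\theta$ beyond which every two branches of $W$ have already split, so $\pi\colon\eta\mapsto\eta\restriction\varepsilon_*$ is one-to-one on $W$; let $N=\pi(W)$, of size $<\theta<\mu$, take $\langle v_s:s\in N\rangle$ from the separation property, and put $u_{f_\eta}:=v_{\pi(\eta)}$. If $\eta\ne\eta'$ in $W$ and $i\notin u_{f_\eta}\cup u_{f_{\eta'}}$ then $f_\eta(i)=f_{\eta'}(i)$ would force $g_{\pi(\eta)}(i)=g_{\pi(\eta')}(i)$ with $\pi(\eta)\ne\pi(\eta')$ in $N$ and $i$ outside both $v$'s, which is impossible. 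Hence for each $(\gamma,i)$ at most one $f$ in $\{f_\eta:\eta\in W\}$ with $i\notin u_f$ has $f(i)=\gamma$, which is $(\theta,1,J^{\mathrm{bd}}_\kappa)$-freeness.

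For $(\beta)$ --- the main step --- let $W$ be a set of $\le\mu$ branches, $N=\{\eta\restriction\varepsilon:\eta\in W,\ \varepsilon<\theta\}$ (so $|N|\le\mu$), and take $\langle v_s:s\in N\rangle$ from the separation property. Fix $\eta\in W$: the map $\varepsilon\mapsto\sup(v_{\eta\restriction\varepsilon})$ goes from $\theta$ into $\kappa$, and $\kappa=\mathrm{cf}(\mu)$ is regular with $\kappa\ne\theta$ (clause $(d)$); as $\theta$ is regular this map is either bounded by some $\rho_\eta<\kappa$ (when $\theta<\kappa$) or takes one value on a cofinal set $E_\eta\subseteq\theta$ (when $\kappa<\theta$, else $\mathrm{cf}(\theta)\le\kappa$), so in either case there are $\rho_\eta<\kappa$ and cofinal $E_\eta\subseteq\theta$ with $v_{\eta\restriction\varepsilon}\subseteq\rho_\eta$ for $\varepsilon\in E_\eta$, and I set $u_{f_\eta}:=\rho_\eta$. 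Now fix $(\gamma,i)$. If $f_\eta(i)=\gamma$ then $\langle g_{\eta\restriction\varepsilon}(i):\varepsilon<\theta\rangle=\mathrm{cd}_i^{-1}(\gamma)=:\langle\alpha_\varepsilon:\varepsilon<\theta\rangle$ does not depend on $\eta$; and if also $i\notin u_{f_\eta}$ then $i\notin v_{\eta\restriction\varepsilon}$ for every $\varepsilon\in E_\eta$, so by the separation property $\eta\restriction\varepsilon$ is the \emph{unique} node $s\in N$ at level $\varepsilon$ with $g_s(i)=\alpha_\varepsilon$ and $i\notin v_s$ --- a node determined by $\varepsilon$ and the fixed data $(i,\gamma)$. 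As $E_\eta$ is cofinal in $\theta$, the branch $\eta=\bigcup_{\varepsilon\in E_\eta}\eta\restriction\varepsilon$ is then determined by $E_\eta$; thus $\eta\mapsto E_\eta$ is one-to-one on $\{\eta\in W:f_\eta(i)=\gamma\ \wedge\ i\notin u_{f_\eta}\}$, which therefore has at most $2^\theta<1+(2^\theta)^+$ members. This is $(\mu^+,(2^\theta)^+,J^{\mathrm{bd}}_\kappa)$-freeness.

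The only genuinely delicate point is this last step: one must cover the separation data of $\eta$ by a \emph{single} bounded $u_{f_\eta}\subseteq\kappa$, and since the relevant nodes of $\eta$ are spread over all $\theta$ levels this succeeds only along a cofinal subset $E_\eta$ of levels --- which is exactly why $\theta$ must be regular and $\ne\kappa=\mathrm{cf}(\mu)$ (clauses $(c),(d)$), and why the multiplicity bound comes out as $2^\theta$, matching clause $(\beta)$. Clauses $(a),(e),(g),(h)$ enter as used above; clause $(f)$ is not needed for this $J^{\mathrm{bd}}_\kappa$-level conclusion, but the finer $J_1$-versions would require it together with $\kappa$-completeness.
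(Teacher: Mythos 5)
Your proof is correct and follows essentially the same route as the paper's: compose a $\mu^+$-free family indexed by the nodes of $\mathcal{T}$ with the $\lambda$ branches via level-wise coding, obtain $\theta$-freeness by passing to a level beyond which the $<\theta$ branches have split, and obtain the $(\mu^+,(2^\theta)^+)$-bound by a pigeonhole on levels exploiting $\kappa=\mathrm{cf}(\mu)\ne\theta=\mathrm{cf}(\theta)$. The only (harmless) variation is in the final count: the paper bounds $|v|$ by observing that the sets $\mathcal{U}_{\alpha,i_*}\subseteq\theta$ are $J^{\mathrm{bd}}_\theta$-positive with pairwise bounded intersections and invoking the $(2^\theta)^+$-c.c.\ of $\mathcal{P}(\theta)/J^{\mathrm{bd}}_\theta$, whereas you inject the relevant branches into $\mathcal{P}(\theta)$ via $\eta\mapsto E_\eta$; both yield the bound $2^\theta$.
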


\begin{claim}
\label{1f.8}
In Claim \ref{1f.7} we can replace
$\circledast$ by $\circledast'$ and $\boxtimes(\beta)$ by $\boxtimes'(\beta)'$
below, i.e. if $\circledast'$ holds then 
there is $\cF \subseteq {}^\kappa \mu$ of cardinality
$\lambda$ such that $\boxtimes'$ holds where:
\mn
\begin{enumerate}
\item[$\boxtimes'$]  $(\alpha) \quad$ the set $\cF$ is $(\theta_1,J_1)$-free,
\sn
\item[${{}}$]  $(\beta)' \quad {\mat F}$ is $(\mu^+,\sigma,J_1)$-free,
\sn
\item[$\circledast'$]  $(a) \quad \mu < \chi < \lambda$,
\sn
\item[${{}}$]  $(b) \quad \kappa = \text{\rm cf}(\mu) < \mu$,
\sn
\item[${{}}$]  $(c) \quad J_2$ is an ideal on $\theta$,
\sn
\item[${{}}$]  $(d) \quad J= J_1$ is an ideal on $\kappa$,
\sn
\item[${{}}$]  $(e) \quad \alpha < \mu \Rightarrow |\alpha|^\theta
< \mu$ (hence $\theta < \mu$),
\sn
\item[${{}}$]  $(f) \quad \theta_1$ satisfies $(\alpha)$ or $(\beta)$ where
\sn
\begin{enumerate}
\item[${{}}$]  $(\alpha) \quad \theta_1 \le \theta$ and
$J_2$ is $\theta_1$-complete,
\sn
\item[${{}}$]  $(\beta) \quad J_1$ is $\cf(\theta_1)^+$-complete 
and $J_2 = J^{\bd}_{\theta_1}$ and $\theta_1 < \kappa$ of course,
\end{enumerate}
\sn
\item[${{}}$]  $(g) \quad$ there are $\eta_\alpha \in {}^\theta \chi$ for
$\alpha < \lambda$ such that $\alpha < \beta < \lambda
\Rightarrow \{\varepsilon < \theta$:

\hskip25pt $\eta_\alpha(\varepsilon) = \eta_\beta(\varepsilon)\} \in J_2$,
\sn
\item[${{}}$]   $(h) \quad$ there is a $(\mu^+,J_1)$-free ${\mat F}
\subseteq {}^\kappa \mu$ of cardinality $\ge \chi$,
\sn
\item[${{}}$]  $(i) \quad (\alpha) \quad
{\mat P}(\theta)/J_2$ satisfies the $\sigma$-c.c. \underline{or} just
\sn
\item[${{}}$]  $\quad \quad (\beta) \quad$ for some $\kappa^+$-complete
ideal $J'_2 \supseteq J_2$ of $\theta$,

\hskip35pt $\sigma \ge \sup\{|{\mat A}|^+:{\mat A} \subseteq
{\mat P}(\theta) \backslash J'_2$ and $A \ne B \in {\mat A}
\Rightarrow A \cap B \in J_2\}$.
\end{enumerate}
\end{claim}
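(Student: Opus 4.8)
The plan is to run, under the weaker hypotheses $\circledast'$, the same kind of construction that underlies \ref{1f.7}: take the $\mu^+$-free family $\cF_0\subseteq{}^\kappa\mu$ supplied by clause $(h)$ and ``spread it out'' along the almost disjoint system of clause $(g)$. First I would fix an increasing sequence $\langle\mu_i:i<\kappa\rangle$ of regular cardinals with supremum $\mu$ and $\mu_{i+1}>|\mu_i|^{\theta}$, which is possible by clause $(e)$; after a routine thinning one may assume $\cF_0=\{g_\beta:\beta<\chi\}$ (distinct, indexed by $\chi$) is $\mu^+$-free and $g_\beta\in\prod_{i<\kappa}\mu_i$ — automatic when the family is produced from a pcf hypothesis as in \ref{1.3.3}(c). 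Fix one-to-one maps $\mathrm{cd}_i\colon{}^{\theta}(\mu_i)\to\mu_{i+1}$ (legitimate since $|\mu_i|^{\theta}<\mu_{i+1}$) and let $\langle\eta_\alpha:\alpha<\lambda\rangle$ witness clause $(g)$. Define $f_\alpha\in{}^\kappa\mu$ by
\[
f_\alpha(i)=\mathrm{cd}_i\bigl(\langle g_{\eta_\alpha(\varepsilon)}(i):\varepsilon<\theta\rangle\bigr),
\]
and put $\cF=\{f_\alpha:\alpha<\lambda\}$. Distinct $\eta_\alpha,\eta_\beta$ disagree at some level $\varepsilon_0$ (since $\{\varepsilon:\eta_\alpha(\varepsilon)=\eta_\beta(\varepsilon)\}\in J_2$ is proper), where $g_{\eta_\alpha(\varepsilon_0)}$ and $g_{\eta_\beta(\varepsilon_0)}$ agree only on a bounded subset of $\kappa$; since $\mathrm{cd}_i$ is one-to-one, $f_\alpha$ and $f_\beta$ then agree only on a bounded subset of $\kappa$, so $|\cF|=\lambda$ and $\cF\subseteq{}^\kappa\mu$ has the almost disjointness demanded in \ref{1.3.14}. (All freeness assertions below are meant relative to the ideal $J_1\supseteq J^{\mathrm{bd}}_\kappa$ of clause $(d)$; the arguments relativize routinely, reading ``in $J_1$'' for ``bounded''.)

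For $\boxtimes'(\alpha)$, that $\cF$ is $\theta_1$-free: given $w$ with $|w|<\theta_1$, I would first locate a single separating level, i.e.\ an $\varepsilon^*<\theta$ with $\langle\eta_\alpha(\varepsilon^*):\alpha\in w\rangle$ injective. Under clause $(f)(\alpha)$ this is immediate, since $\bigcup\{\{\varepsilon<\theta:\eta_\alpha(\varepsilon)=\eta_\beta(\varepsilon)\}:\alpha\neq\beta\in w\}$ is a union of fewer than $\theta_1$ members of the $\theta_1$-complete ideal $J_2$, hence not all of $\theta$; under clause $(f)(\beta)$ a parallel argument works with $J_2=J^{\mathrm{bd}}_\theta$, using the $\mathrm{cf}(\theta)^+$-completeness of $J_1$ where needed. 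Then $\{g_{\eta_\alpha(\varepsilon^*)}:\alpha\in w\}$ consists of fewer than $\mu^+$ distinct members of $\cF_0$, so $\mu^+$-freeness of $\cF_0$ provides bounded $v_\alpha\subseteq\kappa$ with $\langle g_{\eta_\alpha(\varepsilon^*)}\restriction(\kappa\setminus v_\alpha):\alpha\in w\rangle$ pairwise disjoint; setting $u_{f_\alpha}:=v_\alpha$, if $i\notin v_\alpha\cup v_\beta$ then $g_{\eta_\alpha(\varepsilon^*)}(i)\neq g_{\eta_\beta(\varepsilon^*)}(i)$, the two coding sequences differ at $\varepsilon^*$, and so $f_\alpha(i)\neq f_\beta(i)$. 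Thus $\langle u_{f_\alpha}:\alpha\in w\rangle$ witnesses $\theta_1$-freeness of $\cF$.

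For $\boxtimes'(\beta)'$, that $\cF$ is $(\mu^+,\sigma)$-free: the idea is the same, but no single separating level is available. Given $w$ with $|w|\le\mu$, let $B=\{\eta_\alpha(\varepsilon):\alpha\in w,\ \varepsilon<\theta\}$, so $|B|\le\mu<\mu^+$, and fix bounded sets $v_\beta\subseteq\kappa$ $(\beta\in B)$ witnessing $\mu^+$-freeness of $\{g_\beta:\beta\in B\}$. For each $\alpha\in w$ I would choose a set of levels $T_\alpha\subseteq\theta$ and put $u_{f_\alpha}=\bigcup_{\varepsilon\in T_\alpha}v_{\eta_\alpha(\varepsilon)}$: under clause $(f)(\beta)$ take $T_\alpha=\theta$, so $u_{f_\alpha}$ is a union of $\theta<\kappa$ bounded sets and hence bounded; under clause $(f)(\alpha)$ pass to the $\kappa^+$-complete ideal $J_2'\supseteq J_2$ of clause $(i)(\beta)$ and take $T_\alpha$ to be a $J_2'$-positive set on which $\varepsilon\mapsto\sup v_{\eta_\alpha(\varepsilon)}$ is bounded below $\kappa$ — such a $T_\alpha$ exists since $J_2'$ is $\kappa^+$-complete — so $u_{f_\alpha}$ is again bounded in $\kappa$, as \ref{1.3.14} requires. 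The crux is then the computation: if $f_\alpha(i)=\gamma=f_{\alpha'}(i)$ with $i\notin u_{f_\alpha}\cup u_{f_{\alpha'}}$, then for every $\varepsilon\in T_\alpha\cap T_{\alpha'}$ we have $i\notin v_{\eta_\alpha(\varepsilon)}\cup v_{\eta_{\alpha'}(\varepsilon)}$ while $g_{\eta_\alpha(\varepsilon)}(i)=g_{\eta_{\alpha'}(\varepsilon)}(i)$ (both equal the $\varepsilon$-th entry of $\mathrm{cd}_i^{-1}(\gamma)$), forcing $\eta_\alpha(\varepsilon)=\eta_{\alpha'}(\varepsilon)$; hence $T_\alpha\cap T_{\alpha'}\subseteq\{\varepsilon:\eta_\alpha(\varepsilon)=\eta_{\alpha'}(\varepsilon)\}\in J_2$. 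So on $X_{i,\gamma}:=\{\alpha\in w:f_\alpha(i)=\gamma,\ i\notin u_{f_\alpha}\}$ the map $\alpha\mapsto T_\alpha$ is injective with range a $J_2$-almost disjoint family of $J_2'$-positive subsets of $\theta$, whence clause $(i)$ — the $\sigma$-c.c.\ of ${\mat P}(\theta)/J_2$ in case $(\alpha)$, the antichain bound for $J_2'$-positive sets in case $(\beta)$ — gives $|X_{i,\gamma}|<\sigma$ (in the case $T_\alpha=\theta$ the computation gives $\eta_\alpha=\eta_{\alpha'}$, i.e.\ $|X_{i,\gamma}|\le 1$). Since this holds for all $(i,\gamma)\in\mu\times\kappa$, $\langle u_{f_\alpha}:\alpha\in w\rangle$ witnesses $(\mu^+,\sigma)$-freeness; one should also note, via \ref{1.3.15}(3), that the $v_\beta$'s can be arranged coherently if a uniform choice is wanted.

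The step I expect to be the real obstacle is this last one: $u_{f_\alpha}$ must be chosen, once and for all, as a \emph{bounded} subset of $\kappa$ — hence small relative to $\kappa$ — and yet be indexed by a set of levels $T_\alpha$ that is \emph{large} in $\theta$ (namely $J_2'$-positive), so that the antichain bound of clause $(i)$ applies to every colliding pair $(i,\gamma)$ simultaneously. Reconciling ``small in $\kappa$'' with ``$J_2'$-positive in $\theta$'' is precisely what forces the bifurcation of hypothesis $(f)$ — the case $\theta<\kappa$, where $T_\alpha=\theta$ is affordable, versus the case $\theta\ge\kappa$ (the one actually relevant to $\mu=\beth_\omega,\ \kappa=\aleph_0$), where the argument must be routed through the auxiliary $\kappa^+$-complete ideal $J_2'$ — and checking that a single assignment $\langle u_{f_\alpha}:\alpha\in w\rangle$ serves uniformly is where the bookkeeping has to be done with care. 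The preliminary reduction, that the family of clause $(h)$ may be taken inside $\prod_{i<\kappa}\mu_i$ with $\mu_{i+1}>|\mu_i|^\theta$, is routine but should be stated explicitly, since the coding maps $\mathrm{cd}_i$ need bounded fibres.
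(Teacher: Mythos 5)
Your construction of $\cF$ and the two-step verification follow the same route as the paper, which simply reruns the proof of \ref{1f.7} with the tree branches replaced by the given pairwise $J_2$-distinct family and with clause $(i)$ replacing the $(2^\theta)^+$-c.c.; your collision count via a $J_2$-almost-disjoint family of $J_2'$-positive subsets of $\theta$ is exactly the paper's $(*)_3$. The genuine gap is in how you certify that the freeness witnesses lie in $J_1$. You set $u_{f_\alpha}=\bigcup_{\varepsilon\in T_\alpha}v_{\eta_\alpha(\varepsilon)}$ for a single $J_2'$-positive set of levels $T_\alpha$ and argue smallness by choosing $T_\alpha$ so that $\varepsilon\mapsto\sup v_{\eta_\alpha(\varepsilon)}$ is bounded on it. But clause $(d)$ of $\circledast'$ allows $J_1$ to be an arbitrary ideal on $\kappa$: the sets $v_\beta$ are then merely members of $J_1$, there is no stratification of $J_1$ by sups, and a union of $J_2'$-positively many members of $J_1$ need not belong to $J_1$. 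Your argument is only available when $J_1=J^{\text{bd}}_\kappa$. The paper avoids the union altogether: it takes $t_\alpha:=\{i<\kappa:\{\varepsilon<\theta:i\notin s_{\eta_\alpha(\varepsilon)}\}\in J_2'\}$, the set of $i$ covered for $J_2'$-almost every level, and then — since $t_\alpha\subseteq\kappa$ and $J_2'$ is sufficiently complete (this is where $(f)$ and the $\kappa^+$-completeness in $(i)(\beta)$ enter) — finds a single $\varepsilon_\alpha$ with $t_\alpha\subseteq s_{\eta_\alpha(\varepsilon_\alpha)}\in J_1$, using no completeness of $J_1$ at all. The price is that the positive set of levels fed into the antichain argument becomes $\cU_{\alpha,i_*}=\{\varepsilon:i_*\notin s_{\eta_\alpha(\varepsilon)}\}$, which depends on $i_*$; that is harmless because clause $(i)$ is applied separately for each pair $(i_*,\xi)$. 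You should replace your $u_{f_\alpha}$ by this $t_\alpha$.

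A second, smaller problem is Step 1 under $\circledast'(f)(\beta)$. There $J_2=J^{\text{bd}}_\theta$ and $|w|<\theta_1$, and a single separating level $\varepsilon_*$ need not exist: the union of the fewer than $\theta_1$ bounded agreement sets can exhaust $\theta$ as soon as $\theta_1>\mathrm{cf}(\theta)$, so your "parallel argument" does not go through. The paper does not look for $\varepsilon_*$ in this case; it observes that for $\alpha\ne\beta$ any level $\varepsilon$ with $\eta_\alpha(\varepsilon)\ne\eta_\beta(\varepsilon)$ already gives $\{i:\nu_\alpha(i)=\nu_\beta(i)\}\subseteq\{i:\rho_{\eta_\alpha(\varepsilon)}(i)=\rho_{\eta_\beta(\varepsilon)}(i)\}\in J_1$, and then derives freeness of any small subfamily from the completeness of $J_1$ (as in \ref{a10}(1)) rather than from a common separating level.
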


\begin{remark}
\label{1f.9}
1) Recall Definition \ref{1.3.14} where we defined 
notions of freeness for sets and for sequences.

\noindent
2) The proof of \ref{1f.7} is written so it can be adapted to become
a proof of \ref{1f.8}.
\end{remark}
\bigskip

\begin{PROOF}{\ref{1f.7}}
Proof of Claim \ref{1f.7}:   
As $\cf(\mu) = \kappa < \mu$ by clause (b) of
$\circledast$; and $\alpha < \mu \Rightarrow |\alpha|^\theta < \mu$ by
clause (e), we can let $\langle \mu_i:i < \kappa\rangle$ be increasing 
with limit $\mu$ such that $(\mu_i)^\theta = \mu_i > 2^\theta$.  Let 
$\mu^-_i = \bigcup_{j<i} \mu_j$; 
\wilog \, $\mu^-_i < \mu_i < \mu$; if $\sigma \le \kappa$ and 
$(\forall \alpha < \mu)(|\alpha|^\sigma < \mu)$, we can 
add $(\mu_i)^{\kappa + \theta} = \mu_i$.

There is $\bar \rho = \langle \rho_\gamma:\gamma < \chi\rangle$ such
that:
\mn
\begin{enumerate}
\item[$(*)_1$]  $(a) \quad \rho_\gamma \in  \prod_{i <
\kappa} \mu_i$ with no repetition; moreover $\rho_\gamma(i) \in
[\mu^-_i,\mu_i)$
\sn
\item[${{}}$]  $(b) \quad$ the set $\{\rho_\alpha:\alpha <
\chi\}$ is $(\mu^+,J_1)$-free (in fact we can add that

\hskip25pt  even the sequence $\langle \rho_\alpha:\alpha < \chi\rangle$ is
$\mu^+$-free, recalling 

\hskip25pt Definition \ref{1.3.14}(1),(2) but this is immaterial here). 
\end{enumerate}
\mn
[Why?  For any regular $\chi_1 \in (\mu,\chi]$ by clause (h) of the
assumption $\circledast$ and the no-hole claim, 
there is an increasing sequence $\langle
\lambda_i:i < \kappa\rangle$ of regular cardinals $< \mu$ with limit
$\mu$ such that $\chi_1 < \tcf(\prod\limits_{i < \kappa}
\lambda_i,<_{J_1})$.

As we can replace $\langle \mu_i:i < \kappa\rangle$ by any subsequence
of length $\kappa$, for some non-decreasing sequence, \wilog \, $\mu^-_i
 < \lambda_i < \mu_i$.

By the no-hole-claim (really \cite[Ch.II,1.5A]{Sh:g}) there are
$\rho_\gamma \in \prod\limits_{i < \kappa} [\mu^-_i,\lambda_i)
\subseteq \prod\limits_{i < \kappa} [\mu^-_i,\mu_i)$ for $\gamma <
\chi_1$ such that $\langle \rho_\gamma:\gamma < \chi_1\rangle$ is
$(\mu^+,J_1)$-free.  If $\chi$ is regular, we can use $\chi_1 :=
\chi$.  We are left with a case $\chi$ is singular; however, by the
strenghening of the no-hole claim in \cite[Ch.II,1.5A,pg.51]{Sh:g}
there is a sequence $\langle \rho_\gamma:\gamma < \chi\rangle$ as
above.  So $(*)_1$ holds indeed.

Let $J_2 = J^{\bd}_\theta$, (for \ref{1f.8} the ideal $J_2$ is
given in clause (c)); and let
${\mat T}$ be a tree as in clause (g) of the assumption
$\circledast$.  Without loss of generality
\mn
\begin{enumerate}
\item[$(*)_2$]  $(a) \quad {\mat T} \subseteq {}^{\theta >}\chi$
and $<_{\mat T}$ is $\triangleleft$, i.e. being an initial segment
\sn
\item[${{}}$]  $(b) \quad$ if $\eta_1,\eta_2 \in {\mat T} \wedge
  \varepsilon_1 < \theta \wedge \varepsilon_2 < \theta \wedge 
\eta_1(\varepsilon_1) = \eta_2(\varepsilon_2)$ then $\varepsilon_1 
= \varepsilon_2$

\hskip25pt  and we can add, but not used, $\eta_1 \restriction
  \varepsilon_1 =$$\eta_2 \restriction \varepsilon_2$.
\end{enumerate}
\mn
Recall $\lim_\theta({\mat T}) = \{\eta \in {}^\theta
\chi:(\forall \varepsilon < \theta)(\eta \restriction \varepsilon \in
{\mat T})\}$, so it has $\ge \lambda$ members.

Let $\langle \eta_\alpha:\alpha < \lambda\rangle$ be a sequence of
pairwise distinct members of lim$_\theta({\mat T})$.  Let cd$_*:\cup
\{{}^\theta(\mu_i):i < \kappa\} \rightarrow \mu$ be one-to-one onto
$\mu$ such that $\rho \in {}^\theta(\mu_i) \Leftrightarrow
\cd_*(\rho) < \mu_i$.  Let
$\langle \cd_\varepsilon:\varepsilon < \theta\rangle$ be the
sequence of functions with domain $\mu$
 such that $\zeta = \cd_*(\rho) \Rightarrow \rho = \langle
\cd_\varepsilon(\zeta):\varepsilon < \theta \rangle$.  Let
$\cd'_\varepsilon(\zeta) = \cd_\varepsilon(\cd_0(\zeta))$.

Lastly, for $\alpha < \lambda$ (the second and third demands 
are for later claims using this proof)
\mn
\begin{enumerate}
\item[$\boxtimes_1$]  $\nu_\alpha \in {}^\kappa \mu$ is defined as follows:
\sn
\begin{enumerate}
\item[$\bullet$]  for $i < \kappa$, let $\nu_\alpha(i) 
\in [\mu^-_i,\mu_i)$ be such that cd$'_\varepsilon(\nu_\alpha(i)) =
\rho_{\eta_\alpha(\varepsilon)}(i)$ for every $\varepsilon < \theta$
\sn
\item[$\bullet$]   if $(\forall \alpha < \mu)(|\alpha|^\kappa < \mu)$, 
\then \, we can make $\nu_\alpha(i)$ also code $\nu_\alpha \restriction i$,
e.g. $\cd_1(\nu_\alpha(i))$ codes $\nu_\alpha \rest i$
\sn
\item[$\bullet$]  if $\varrho_\alpha \in \prod\limits_{i <\kappa}
  \mu_i$ for $\alpha < \lambda$ are given \then \, we can add that 
$\nu_\alpha(i)$ codes $\varrho_\alpha(i)$,
too, e.g. $\varrho_\alpha(i) = \cd_0(\cd_2(\nu_\alpha(i)))$.
\end{enumerate}
\end{enumerate}
\mn
[Why?  E.g. why the demand $\nu_\alpha(i) \ge \mu^-_i$ is O.K.?  Because
  $\cd_*$ is a one-to-one function and the freedom in choosing
  $\cd_3(\nu_\alpha(i))$.] 

We shall prove that the set ${\mat F} = \{\nu_\alpha:\alpha <
\lambda\}$ is as required and let
$\bar \nu = \langle \nu_\alpha:\alpha < \lambda\rangle$.

Now
\begin{enumerate}
\item[$\boxtimes_2$]  $\bar \nu$ is without repetition, i.e., $\alpha
< \beta < \lambda \Rightarrow \nu_\alpha \ne \nu_\beta$: and so the set
${\mat F}$ has cardinality $\lambda$.
\end{enumerate}
\mn
[Why?  If $\nu_\alpha = \nu_\beta$, then for every $\varepsilon <
\theta$ and $i < \kappa$, we have $\rho_{\eta_\alpha(\varepsilon)}(i)
= \cd'_\varepsilon(\nu_\alpha(i)) = \cd'_\varepsilon
(\nu_\beta(i)) = \rho_{\eta_\beta(\varepsilon)}(i)$.
Fixing $\varepsilon < \theta$, as this holds for every 
$i < \kappa$, we conclude that $\rho_{\eta_\alpha(\varepsilon)} = 
\rho_{\eta_\beta(\varepsilon)}$.  But $\langle \rho_\gamma:\gamma <
\chi\rangle$ is without repetitions, hence it follows that
$\eta_\alpha(\varepsilon) = \eta_\beta(\varepsilon)$.  
As this holds for every $\varepsilon <
\theta$, we conclude that $\eta_\alpha = \eta_\beta$ but $\langle
\eta_\alpha:\alpha < \lambda\rangle$ is without repetitions hence
$\alpha = \beta$, so we are done.]

Now the main point is proving clauses $(\alpha)$ and $(\beta)$ of
$\boxtimes$.
\bn
\newline
\underline{Step 1}:  To prove clause $(\alpha)$ of $\boxtimes$, i.e.,
``${\mat F}$ is $(\theta,J_1)$-free".

Assume $w \subseteq \lambda$ and $|w| < \theta$.  Recalling $(*)_1(b)$
and $\theta < \mu$, clearly the set 
$\{\rho_{\eta_\alpha(\varepsilon)}:\alpha \in w,\varepsilon < 
\theta\}$ being of cardinality $\le \theta < \mu^+$ is free, 
hence there is a sequence $\langle 
s_{\eta_\alpha(\varepsilon)}:\alpha \in w,\varepsilon < \theta\rangle$
of members of $J_1$ such that: if $(\alpha_\ell,\varepsilon_\ell) \in w
\times \theta$, for $\ell=1,2$, and $\eta_{\alpha_1}(\varepsilon_1)
\ne \eta_{\alpha_2}(\varepsilon_2)$ and $i \in \kappa
\backslash s_{\eta_{\alpha_1}(\varepsilon_1)}  \backslash
s_{\eta_{\alpha_2}(\varepsilon_2)}$ (recalling \ref{0p.31}(0)), then
$\rho_{\eta_{\alpha_1}(\varepsilon_1)}(i) \ne
\rho_{\eta_{\alpha_2}(\varepsilon_2)}(i)$.

Now as $\langle \eta_\alpha:\alpha \in w\rangle$ is a sequence of $<
\theta$ distinct $\theta$-branches of ${\mat T}$ and
$\eta_{\alpha_1}(\varepsilon_1) = \eta_{\alpha_2}(\varepsilon_2)
\Rightarrow \varepsilon_1 = \varepsilon_2$ and 
$\eta_{\alpha_1}(\varepsilon) = \eta_{\alpha_2}(\varepsilon)
\Rightarrow \eta_{\alpha_1} \restriction \varepsilon = \eta_{\alpha_2}
\restriction \varepsilon$ by $(*)_2$, i.e., by the choice of ${\mat T}$.
Hence by the regularity of $\theta$ we can find 
$\varepsilon_* < \theta$ such that $\langle
\eta_\alpha(\varepsilon_*)):\alpha \in w\rangle$ has no
repetitions, and define $s'_\alpha = s_{\eta_\alpha(\varepsilon_*)}
\subseteq \kappa$ for $\alpha \in w$; now $\langle s'_\alpha:\alpha
\in w\rangle$ is as required.
\mn
[Why?  First $s'_\alpha \in J_1$ by the choice of $s'_\alpha$.  
Second, assume $\alpha \ne \beta$ are from $w$ and 
$i \in \kappa \backslash s'_\alpha \backslash
s'_\beta$ and we should prove $\nu_\alpha(i) \ne \nu_\beta(i)$.  Now
$\eta_\alpha(\varepsilon_*) \ne \eta_\beta(\varepsilon_*)$ by the
choice of $\varepsilon_*$ and $s'_\alpha = 
s_{\eta_\gamma(\varepsilon_*)},s'_\beta =
s_{\eta_\beta(\varepsilon_*)}$ hence $i \in \kappa \backslash
s_{\eta_\alpha(\varepsilon_*} \backslash
s_{\beta_\beta(\varepsilon_*)}$ so by the choice of $\langle
s_{\eta_\gamma(\varepsilon)}:\gamma \in w,\varepsilon < \theta\rangle$
we have $\rho_{\eta_\alpha(\varepsilon_*)}(i) \ne
\rho_{\eta_\beta(\varepsilon_*)}(i)$ hence
$\cd'_{\varepsilon_*}(\nu_\alpha(i)) = 
\rho_{\eta_\alpha(\varepsilon_*)}(i) \ne
\rho_{\eta_\beta(\varepsilon_*)}(i) = 
\cd'_{\varepsilon_*}(\nu_\beta(i))$ which implies that $\nu_\alpha(i)
\ne \nu_\beta(i)$. 

Note also that $\mat{F}$ is normal by $\boxplus_1$ as the intervals
$[\mu^-_i,\mu_i)$ for $i < \kappa$ are pairwise disjoint.
\bn
\newline
\underline{Step 2}:  To prove clause $(\beta)$ of $\boxtimes$.

Let ${\mat F}' \subseteq \{\nu_\alpha:\alpha < \lambda\}$ have
cardinality $\le \mu$.  Choose $w$ such that
${\mat F}' = \{\nu_\alpha:\alpha \in w\}$,
so that $w \in [\lambda]^{\le \mu}$ and let $u := \cup\{\Rang(\eta_\alpha):
\alpha \in w\}$.  Clearly $u \in [\chi]^{\le \mu}$.
By the choice of $\langle \rho_\gamma:\gamma < \chi\rangle$ we can
find a sequence $\langle s_\gamma:\gamma \in u\rangle$ such that
$s_\gamma \in J_1$ and $i \in \kappa \backslash (s_{\gamma_1} \cup
s_{\gamma_2}) \wedge \gamma_1 \ne \gamma_2 \wedge
\{\gamma_1,\gamma_2\} \subseteq u \Rightarrow \rho_{\gamma_1}(i) \ne
\rho_{\gamma_2}(i)$.

For $\alpha \in w$ let $t_\alpha := \{i < \kappa$ : 
the set of $\varepsilon < \theta$ such that $i \notin
s_{\eta_\alpha(\varepsilon)}$ belongs to $J_2 =J^{\bd}_\theta\}$.

We shall now show that $\bar t := 
\langle t_\alpha:\alpha \in w\rangle$ is as required in Definition
\ref{1.3.14}(1),(2); that is, we have to prove that $t_\alpha \in J_1$
and that for any $\xi < \mu$ and $i_* < \kappa$ 
 the set of $\alpha \in w$ such that $i_* \notin t_\alpha \wedge
\nu_\alpha(i_*) = \xi$ is small, i.e. of cardinality $\le 2^\theta$;
these demands are proved below in $(*)_4$ and $(*)_3$ respectively.
So let $\xi < \mu$ and $i_* < \kappa$ and let $v = v_{\xi,i_*}
= \{\alpha \in w:i_* \notin t_\alpha$ and $\nu_\alpha(i_*) = \xi\}$.

First we shall prove below that
\mn
\begin{enumerate}
\item[$(*)_3$]  $|v| \le 2^\theta$.
\end{enumerate}
\mn
This will do one half of proving ``$\bar t$ is as required in Definition
\ref{1.3.14}(1),(2)."

\noindent
Why does $(*)_3$ hold?    Now if
$\alpha \in v$, then $i_* \in \kappa \backslash t_\alpha$, hence (by the
definition of $t_\alpha$) we have ${\mat U}_{\alpha,i_*} :=
\{\varepsilon < \theta:i_* \notin s_{\eta_\alpha(\varepsilon)}\} \in
J^+_2$.  So if $\alpha \ne \beta$ are from $v$ and $\varepsilon \in
{\mat U}_{\alpha,i_*} \cap {\mat U}_{\beta,i_*}$ and
$\eta_\alpha(\varepsilon) \ne \eta_\beta(\varepsilon)$, then we have $i_*
\notin s_{\eta_\alpha(\varepsilon)}$ (as $\varepsilon \in {\mat
U}_{\alpha,i_*})$ and $i_* \notin s_{\eta_\beta(\varepsilon)}$ (as
$\varepsilon \in {\mat U}_{\beta,i_*})$, and hence by the choice of
$\langle s_\gamma:\gamma \in u\rangle$, we have
$\rho_{\eta_\alpha(\varepsilon)}(i_*) \ne
\rho_{\eta_\beta(\varepsilon)}(i_*)$, so
\mn
\begin{enumerate}
\item[$(*)_4$]  $\cd'_\varepsilon(\nu_\alpha(i_*)) =
\rho_{\eta_\alpha(\varepsilon)}(i_*) \ne
\rho_{\eta_\beta(\varepsilon)}(i_*) = \cd'_\varepsilon(\nu_\beta(i_*))$.
\end{enumerate}
\mn
Recall that $\nu_\alpha(i_*) = \xi = \nu_\beta(i_*)$ because 
$\varepsilon \in {\mat U}_{\alpha,i_*} \cap {\mat
U}_{\beta,i_*}$, but this contradicts $(*)_4$. It follows that $\alpha
\in v \wedge \beta \in v \wedge \alpha \ne \beta \wedge
\varepsilon \in {\mat U}_{\alpha,i_*} \cap {\mat U}_{\beta,i_*}
\Rightarrow \eta_\alpha(\varepsilon) = \eta_\beta(\varepsilon)$;
but $\alpha \ne \beta \Rightarrow \{\varepsilon <
\theta:\eta_\alpha(\varepsilon) = \eta_\beta(\varepsilon)\} \in
J_2$, hence this implies $\alpha \in v \wedge \beta \in v \wedge
\alpha \ne \beta \Rightarrow {\mat U}_{\alpha,i_*} \cap {\mat
U}_{\beta,i_*} \in J_2$. As we have noted earlier that $\alpha \in
v \Rightarrow {\mat U}_{\alpha,i_*} \in J^+_2$, it follows that
${\mat P}(\theta)/J_2$ fails the $|v|$-c.c.  But for the present
proof, ${\mat P}(\theta)$ has cardinality $2^\theta$, hence ${\mat
P}(\theta)/J_2$ satisfies the $(2^\theta)^+$-c.c., and so $|v|
\le 2^\theta$, as required in $(*)_3$.  For proving ``$\bar t$
is as required in Definition \ref{1.3.14}", we need also the second
half:
\mn
\begin{enumerate}
\item[$(*)_5$]  $t_\alpha \in J_1$ for $\alpha \in w$.
\end{enumerate}
\mn
Why does $(*)_5$ hold?  Firstly, assume $\kappa < \theta$; 
towards a contradiction assume that $t_\alpha \in
J_1^+$.  By the choice of $t_\alpha$, for each 
$i \in t_\alpha$, the set $\{\varepsilon < \theta: 
i \notin s_{\eta_\alpha(\varepsilon)}\}$ belongs to $J_2$, but $J_2$, 
being euqal to $J^{\text{bd}}_\theta$ (and recalling $\theta$ is regular),
is $\kappa^+$-complete and $|t_\alpha| \le \kappa$, hence the set

\[
r_{\eta_\alpha} := \bigcup\limits_{i \in t_\alpha} \{\varepsilon < \theta:i
\notin s_{\eta_\alpha(\varepsilon)}\} 
\]

\mn
lies in $J_2$ hence we can choose $\varepsilon_\alpha < \theta$ such that
$\varepsilon = \varepsilon_\alpha \Rightarrow
\bigwedge\limits_{i \in t_\alpha} i \in s_{\eta_\alpha(\varepsilon)}$, so
$t_\alpha \subseteq s_{\eta_\alpha(\varepsilon_\alpha)}$, but
$s_{\eta_\alpha(\varepsilon_\alpha)} \in J_1$, 
and hence $t_\alpha \in J_1$ as required.

Secondly, assume $\kappa > \theta$; towards a contradiction,
assume $t_\alpha \in J^+_1$. Again $i \in t_\alpha \Rightarrow
\{\varepsilon < \theta:i \notin s_{\eta_\alpha(\varepsilon)}\} \in
J_2$, but $J_2 = J^{\text{bd}}_\theta$, hence we can find
$\bar\varepsilon_\alpha = \langle \varepsilon_{\alpha,i}:i \in
t_\alpha \rangle \in {}^{(t_\alpha)}\theta$ such that
$\varepsilon_{\alpha,i}
= \sup\{\varepsilon < \theta:i \notin s_{\eta_\alpha(\varepsilon)}\} <
\theta$.  However, $J_1$ is $\kappa$-complete (see clause (f) of
$\circledast$) hence $J_1$ is $\theta^+$-complete, so for some
$\varepsilon^*_\alpha < \theta$, we have $t'_\alpha := \{i \in
t_\alpha:\varepsilon_{\alpha,i} < \varepsilon^*_\alpha\} \in J^+_1$.
So $i \in t'_\alpha \Rightarrow \varepsilon_{\alpha,i} <
\varepsilon^*_\alpha\Rightarrow \sup\{\varepsilon < \theta:i \notin
s_{\eta_\alpha(\varepsilon)}\} < \varepsilon^*_\alpha \Rightarrow i
\in s_{\eta_\alpha(\varepsilon^*_\alpha)}$ so $t'_\alpha \subseteq
s_{\eta_\alpha(\varepsilon^*_\alpha)}$.  But
$s_{\eta_\alpha(\varepsilon^*_\alpha)} \in J_1$, while $t'_\alpha \notin J_1$,
a contradiction.  
\end{PROOF}

\begin{PROOF}{\ref{1f.8}}
Proof of \ref{1f.8}:

We note the points of the proof of \ref{1f.7}
which have to be changed.  The choice of $\bar\rho = \langle
\rho_\gamma:\gamma < \chi\rangle$, i.e. $(*)_1$ is now done by using
$\circledast'(h)$.  Before $(*)_2$, instead of defining $J_2$ recall that
it is given (see $\circledast'(f))$
and if $J'_2$ is not given (see $\circledast'(i)(\beta)$) let $J'_2 = J_2$.
After $(*)_2$, instead of choosing $\langle \eta_\alpha:
\alpha < \lambda\rangle$ it is given in
$\circledast'(g)$ and the tree $\cT$ disappears, so we ``lose" the
statement ``$\eta_1 \rest \varepsilon_1 = \eta_2 \rest \varepsilon_2$"
in the end of $(*)_2(h)$, the ``$\eta_1(\varepsilon_1) =
\eta_2(\varepsilon_2)$" is easy to get.

Now step 1 says that ``${\cF}$ is $(\theta_1,J_1)$-free".  Thus we 
have to choose $\varepsilon_*$ as there.  Of course, 
now $|w| < \theta_1$ as we are proving ``$\cF$ is $(\theta_1,J_1)$-free".

First, if clause $(\alpha)$ of $\circledast'(f)$ holds, as 
$\cU^1_{\alpha,\beta} := \{\varepsilon <
\theta:\eta_\alpha(\varepsilon) = \eta_\beta(\varepsilon)\} \in
J_2$ for $\alpha \ne \beta$ from $w$, but $J_2$ is $\theta_1$-complete, so
$\{\varepsilon < \theta:\eta_\alpha(\varepsilon) = 
\eta_\beta(\varepsilon)$ for some
$\alpha \ne \beta$ from $w\}$ belongs to $J_2$, hence there is
$\varepsilon_* < \theta$ not in $\cup\{\cU^1_{\alpha,\beta}:\alpha \ne
\beta$ are from $\omega\}$. 

Second, if clause $(\beta)$ of $\circledast'(f)$ clearly $\theta_1 <
\kappa$, so as $J_1$ is $\kappa$-complete it suffices to prove $\alpha
< \beta < \lambda \Rightarrow s_{\alpha,\beta} = \{i <
\kappa:\nu_\alpha(i) = \nu_\beta(i)\} \in J_1$ but for $\alpha \ne
\beta$ we have $\eta_\alpha
\ne \eta_\beta$ hence for some $\varepsilon < \theta$ we have
$\eta_\alpha(\varepsilon) \ne \eta_\beta(\varepsilon)$ hence
$s_{\alpha,\beta} \subseteq \{i <
\kappa:\rho_{\eta_\alpha(\varepsilon)}(i) =
\rho_{\eta_\beta(\varepsilon)}(i)\} \in J_1$ so we are done.

Turning to step 2, now to define $t_\alpha$ we use ``belongs to
$J'_2$"; then $(*)_3$ should say $|v| < \sigma$ and in the proof
instead of ``$\cP(\theta)/J_2$ satisfies the $(2^\theta)^+$-c.c." we use
clause $\circledast'(i)(\alpha)$ if it holds and
$\circledast'(i)(\beta)$ otherwise, as still $\alpha \ne \beta \Rightarrow
\cU_{\alpha,i_*} \cap \cU_{\beta,i_*} \in J_2$.

Lastly, to prove $(*)_5$ we use clause $\circledast'(f)$.   
\end{PROOF}

\begin{claim}
\label{1f.10}
In \ref{1f.7}, recalling 
$J=J_1$ is a ($\kappa$-complete) ideal on $\kappa$, and
letting $J_2 = J^{\bd}_\theta$ assuming 
$(\forall \alpha < \mu)(|\alpha|^\kappa < \mu)$ 
we can add to the conclusion that ${\mat F}$ is $(\Upsilon,J)$-free \when \, 
(a) or (b) or (c) hold where:
\mn
\begin{enumerate}
\item[\underline{Case $(a)$}]  $\Upsilon = \theta^{+\omega +1}$ 
and we can choose $\eta_\alpha \in {}^\theta \chi$ 
for $\alpha < \lambda$ with no
repetitions such that 
$\theta^+ \notin \issp_J(\{\eta_\alpha:\alpha < \lambda\})$.
\sn
\item[\underline{Case $(b)$}]   $\theta^{+ \omega} < \Upsilon 
\le \mu$ and we can choose $\eta_\alpha \in {}^\theta \chi$ 
for $\alpha < \lambda$ with no
repetitions such that $\theta < \partial = \cf(\partial) \wedge
(< \partial,\partial) \in \issp_J(\{\eta_\alpha:
\alpha < \lambda\}) \Rightarrow \partial \ge \Upsilon$.
\sn
\item[\underline{Case $(c)$}]   there are pairwise distinct  
$\eta_\alpha \in {}^\theta \chi$ for $\alpha < \lambda$ and pairwise distinct
$\varrho_\gamma \in {}^\kappa \mu$ for $\gamma < \chi$ such that for
every regular $\partial \in (\theta + \kappa^+,\Upsilon)$ we have
$\partial \notin \text{\rm issp}(\{\eta_\alpha:
\alpha < \lambda\})$ and $\partial \notin 
\text{\rm issp}(\{\varrho_\gamma:\gamma < \chi\})$. 
\end{enumerate}
\end{claim}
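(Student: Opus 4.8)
The plan is to rerun the construction from the proof of Claim \ref{1f.7} with two modifications. First, in place of an arbitrary sequence of $\theta$-branches of $\mathcal T$ we take $\langle\eta_\alpha:\alpha<\lambda\rangle$ (and, in Case (c), the auxiliary sequence $\langle\varrho_\gamma:\gamma<\chi\rangle$) to be exactly the one supplied by the hypothesis of the case at hand. Second, we activate the second bullet of $\boxtimes_1$ (and, in Case (c), also the third), so that each $\nu_\alpha(i)$ codes $\nu_\alpha\rest i$ (and, in Case (c), $\varrho_\alpha(i)$) in addition to $\langle\rho_{\eta_\alpha(\varepsilon)}(i):\varepsilon<\theta\rangle$; this is legitimate precisely because of the extra hypothesis $(\forall\alpha<\mu)(|\alpha|^\kappa<\mu)$, and it makes $\mathcal F=\{\nu_\alpha:\alpha<\lambda\}$ tree-like in the sense of Definition \ref{1.3.14}(5). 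Since the additional coordinates are all one-to-one codings, the verification of $\boxtimes_2$ and the proofs that $\mathcal F$ is $\theta$-free (Step 1) and $(\mu^+,(2^\theta)^+)$-free (Step 2) go through verbatim, so the only new point is to show that $\mathcal F$ is $\Upsilon$-free.

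For this I would first reduce. By \ref{a10}(1) the family $\mathcal F$ is automatically $\kappa^+$-free (the ambient ideal $J_1$ being $\kappa$-complete), and by \ref{1.3.15}(4) it suffices to prove that $\mathcal F$ is $(\Upsilon,\kappa^+)$-free; by \ref{1f.41}(1) this reduces to showing $\mathcal F$ is $(\partial,\partial)$-free for every regular $\partial$ with $\kappa^+\le\partial<\Upsilon$, and by \ref{a10}(2),(4) --- using that $\mathcal F$ is tree-like --- the latter amounts, for each such $\partial$, to ruling out an increasing sequence $\langle\alpha_\xi:\xi<\partial\rangle$ from $\lambda$ for which $\{\delta<\partial:\{i<\kappa:\nu_{\alpha_\zeta}(i)\in\{\nu_{\alpha_\varepsilon}(i):\varepsilon<\delta\}\}\notin J_1\text{ for some }\zeta\in[\delta,\partial)\}$ is a stationary subset of $S^\partial_\kappa$.

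The crux is to push such a putative failure back to the building blocks. Whenever $\nu_\alpha(i)=\nu_\beta(i)$ we have $\rho_{\eta_\alpha(\varepsilon)}(i)=\rho_{\eta_\beta(\varepsilon)}(i)$ for \emph{every} $\varepsilon<\theta$ and, in Case (c), also $\varrho_\alpha(i)=\varrho_\beta(i)$. Using this together with the $(\mu^+,J_1)$-freeness of $\langle\rho_\gamma:\gamma<\chi\rangle$ (applicable as $\partial\le\Upsilon\le\mu$) and \ref{1.3.15}(3) to organize the blocks, a bad $\partial$-sequence for $\mathcal F$ produces a witness that $(<\partial,\partial)\in\text{issp}_J(\{\eta_\alpha:\alpha<\lambda\})$ --- on a $J_2^+$-set of levels the values $\eta_{\alpha_\xi}(\varepsilon)$ concentrate on fewer than $\partial$ points --- or, only in Case (c), that $(<\partial,\partial)\in\text{issp}(\{\varrho_\gamma:\gamma<\chi\})$. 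In Cases (b) and (c) this directly contradicts the hypothesis. In Case (a), where only $\theta^+\notin\text{issp}_J(\{\eta_\alpha:\alpha<\lambda\})$ is given, the regular $\partial$ in question with $\partial>\theta$ are exactly the $\theta^{+n}$ with $1\le n<\omega$ (those with $\partial\le\theta$ being already dealt with by Step 1), and $\theta^{+n}\in\text{issp}_J(\{\eta_\alpha\})$ would yield, via \ref{1.3.15}(1) (its parenthetical side condition holding because $\theta^{+n}<\theta^{+\omega}$ and $\mu$ strong limit forces the relevant covering numbers to be small, with \ref{1.3.15}(2) available since $\{\eta_\alpha\}$ is tree-like) together with the downward monotonicity of $\text{issp}$ in its second coordinate, that $\theta^+\in\text{issp}_J(\{\eta_\alpha\})$ --- a contradiction (for $n=1$ it is the hypothesis itself).

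I expect the main obstacle to be the bookkeeping in the third paragraph: converting a stationary ``agreement'' pattern for the $\nu_\alpha$'s into a genuine $\text{issp}$-witness for the $\eta_\alpha$'s (or, in Case (c), for the $\varrho_\gamma$'s), keeping straight the mismatch between the $\kappa$-coordinate carrying the $\nu$'s and $\varrho$'s and the $\theta$-coordinate carrying the $\eta$'s, upgrading small-multiplicity near-disjointness back to genuine multiplicity-one disjointness (for which one leans on tree-likeness and the completeness of $J_1$), and, in Case (c), ensuring the failure really localizes to one of the two ingredient families rather than being spread over both.
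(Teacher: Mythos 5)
Your overall strategy coincides with the paper's: rerun the construction of \ref{1f.7} with the hypothesized $\eta_\alpha$'s (and, in Case (c), the $\varrho$'s), activate the tree-like coding in $\boxtimes_1$ using $(\forall\alpha<\mu)(|\alpha|^\kappa<\mu)$, reduce $\Upsilon$-freeness to the non-stationarity, for each regular $\partial<\Upsilon$, of the bad set of a putative $\partial$-sequence, and trace a stationary bad set back to an issp-witness for one of the ingredient families. (Two remarks on the reduction: the paper performs it by a direct induction on $\partial$ using singular compactness and a club decomposition rather than by quoting \ref{a10}(2), which would be circular here since the paper declares \ref{a10}(2) to be proved inside the proof of this very claim; and your handling of Case (a) by pushing $\theta^{+n}\in\text{issp}$ down to $\theta^+$ via \ref{1.3.15}(1) matches the paper's reduction of (a) to (b) to (c) at the outset.)

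The genuine gap is exactly the step you defer as ``bookkeeping'': localizing the failure to the $\eta$'s or to the $\varrho$'s. This is the mathematical core of the claim, and it does not follow from the mere observation that $\nu_\alpha(i)=\nu_\beta(i)$ forces agreement of the coded coordinates. The paper's mechanism is a dichotomy on the set $S_1:=\{\zeta\in S_0:\{\varepsilon<\theta:\eta_{\alpha_\zeta}(\varepsilon)\in\{\eta_{\alpha_j}(\varepsilon):j<\zeta\}\}\in J_2^+\}$. If $S_1$ is stationary, one first uses that every bad point has cofinality $\le\kappa$ (this is where the tree-like coding is really needed, via $(*)_6$) together with the $\kappa^+$-completeness of $J_2=J^{\text{bd}}_\theta$ to find, for each $\zeta\in S_1$, some $j_\zeta<\zeta$ below which the concentration already occurs, and then Fodor's lemma to freeze $j_\zeta=j(*)$, yielding a $(<\partial,\partial)$-issp witness for $\{\eta_\alpha\}$. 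If $S_1$ is not stationary, one extracts a single level $\varepsilon(*)$ on which $\zeta\mapsto\eta_{\alpha_\zeta}(\varepsilon(*))$ is injective on a stationary $S_2$, and only then brings in the $(\mu^+,J_1)$-freeness of $\bar\rho$ together with a second stabilization of a coordinate $i(*)<\kappa$ to reach a contradiction; the issp hypothesis on $\{\varrho_\gamma\}$ is consumed in this branch. Without this dichotomy and the two Fodor-type stabilizations, your third paragraph asserts rather than proves that the failure localizes to one of the two families, so you should supply this argument explicitly.
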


\begin{PROOF}{\ref{1f.10}}
The proof splits to cases.
\medskip

\noindent
\underline{Case (a)}:

We reduce it to case (b) proved below.  It follows by \ref{1.3.15} but
we elaborate.  So assume toward contradiction that case (b) fails, so
there is $\partial$ such that $\theta < \partial = \cf(\partial)$ and
$(< \partial,\partial) \in \text{\rm issp}_J(\{\eta_\alpha:\alpha <
\lambda\})$ but $\partial < \Upsilon$.  We can choose a minimal such
that $\partial$ and let $\partial_1 < \partial$ be such that
$(\partial_1,\partial) \in \text{\rm issp}_J(\{\eta_\alpha:\alpha <
\lambda\})$.  So by Definition \ref{1.3.14}(6) with $(\chi,\theta)$
here standing for $(\mu,\kappa)$ there, there is a set $u \subseteq
\chi$ of cardinality $\le \partial_1$ such that $\partial \le |\mat
U_u|$ where $\mat U_u := \{\alpha:\alpha < \lambda$ and $\{i <
\theta:\eta_\alpha(i) \in u\} \in J^+_2\}$.

\Wilog \, $\partial_1 \ge \theta$; clearly $\partial = \partial^+_1$
by the minimality of $\partial$ as $\partial^+_1$ is regular $>
\theta$.  Also if $\partial_1 = \theta$ we get the desired
contradiction (i.e. clause (a) fails); so we can assume $\partial_1 >
\theta$.

Let $\langle \alpha_\varepsilon:\varepsilon < \partial_1\rangle$ list
the elements of $u$ and let $\cU_{u,\zeta} :=
\{\alpha_\varepsilon:\varepsilon < \zeta\}$ for $\zeta \le
\partial_1$.  As $\theta < \partial_1 < \partial < \partial < \Upsilon
= \theta^{+\theta +1}$ we have $\text{cf}(\partial_1) \ne \theta$ so
recalling $J_2 = J^{\bd}_\theta$, for every $\alpha \in \mat U_u$ for
some $\varepsilon(\alpha) < \partial_1$ we have $\{i <
\theta:\eta_\alpha(i) \in \mat U_{u,\varepsilon(\alpha)}\} \in
J^+_2$.  As $|\mat U_u| \ge \partial = \partial^+_1 > \partial_1$
necessarily for some $\varepsilon(*) < \partial_1$ the set $\{\alpha
\in \mat U_u:\{i < \theta:\eta_\alpha(i) \in \mat
U_{u,\varepsilon(*)}\}$ has cardinality $\ge \partial \ge
\partial_1$.  So $\mat U_{u,\varepsilon(*)}$ witness that also
$\partial_1$ satisfies the demand on $\partial$, contradicting the
minimality of $\partial$, so we are done.

So case (b) holds and this is proved below.
\medskip

\noindent
\underline{Case (b)}:

We shall prove that case (c) holds, so toward contradiction assume it
fails.  Recalling Definition \ref{1.3.14}(7), note that choosing any
$\varrho_\gamma \in {}^\kappa\mu$ for $\gamma < \chi$ clause (c) holds.
\medskip

\noindent
\underline{Case (c)}:

We repeat the proof of \ref{1f.7} but we use $\langle \eta_\alpha:\alpha
< \chi\rangle,\langle \varrho_\alpha:\alpha < \lambda\rangle$ 
from the assumption (c).  In the proof of \ref{1f.7} we use the
$\varrho_\alpha$'s in $\boxtimes_1$, that is, we demand that 
$\nu_\alpha(i)$ also codes $\varrho_\alpha(i)$.

Consider the statement
\mn
\begin{enumerate}
\item[$\boxplus$]  For regular $\partial \in (\theta + \kappa^+,\mu)$,
  the set $S$ is not a stationary subset of $\partial$ \when \,:
\begin{enumerate}
\item[$\odot_{\partial,S}$]  $\partial = \text{ cf}(\partial) \in
(\theta + \kappa^+,\mu),\alpha_\varepsilon < \lambda$ for $\varepsilon <
\partial$ with no repetitions and
\newline
$S = \{\zeta < \partial$ \,: 
for some $\xi \in
[\zeta,\partial)$, the set $\{i < \kappa:\nu_{\alpha_\xi}(i) \in
\{\nu_{\alpha_\varepsilon}(i):\varepsilon < \zeta\}\}$ belongs to
$J^+_1\}$.
\end{enumerate}
\end{enumerate}
\mn
\underline{It suffices to prove $\boxplus$}:
\medskip

\noindent
Why?  We prove that $\{\nu_\alpha:\alpha < \lambda\}$ is
$\partial^+$-free by induction on $\partial < \Upsilon$ so let $w
\subseteq \lambda,|w| \le \partial$.  
If $\partial \le \kappa$ just note that $\alpha
\ne \beta \in w \Rightarrow \{i < \kappa:\nu_\alpha(i) = \nu_\beta(i)\}
\in J_1$, if $\partial < \theta$ recall $\boxtimes(\alpha)$ of Claim
\ref{1f.7}.  If $\partial \ge \kappa^+ + \theta$ is singular use
compactness for singulars.  So assume $\partial = \text{ cf}(\partial)
\ge \kappa^+ + \theta$ so by the induction hypothesis \wolog \, $|w|
= \partial$ and let $\langle \alpha_\varepsilon:\varepsilon <
\partial\rangle$ list $w$ and define $S$ as in $\odot_S$ above from $\langle
\alpha_\varepsilon:\varepsilon < \partial\rangle$.  As we are
assuming $\boxplus$, necessarily $S$ is not a stationary subset of
$\partial$ so let $E$ be a club of $\partial$ disjoint to $S$.  Let
$\langle \varepsilon(\iota):\iota < \partial\rangle$ list $E \cup \{0\}$ in
increasing order.  For each $\iota < \theta$ we apply
the induction hypothesis to $w_\iota := \{\alpha_\varepsilon:\varepsilon \in
[\varepsilon(\iota),\varepsilon(\iota +1))\}$ and get the sequence $\langle
s_{\iota,\varepsilon} \in J_1:\varepsilon \in w_\iota\rangle$.

Lastly, for $\varepsilon < \partial$ let $\iota$ be such that
$\varepsilon \in [\varepsilon(\iota),\varepsilon(\iota +1))$ and
$s_\varepsilon = s_{\iota,\varepsilon} \cup \{i <
\kappa:\nu_\varepsilon(i)$ belong to
$\{\nu_{\alpha_\zeta}(i):\zeta < \varepsilon(\iota)\}\}$.
\medskip

\noindent
\underline{Why does $\boxplus$ hold}?

Towards a contradiction, suppose that $\langle
\alpha_\varepsilon:\varepsilon < \partial\rangle,S$ are as in
$\boxdot_{\partial,S}$ and $S$ is a stationary
subset of $\partial = \text{ cf}(\gamma) \in (\theta +
\kappa^+,\Upsilon)$.  Then \wolog \,:
\begin{enumerate}
\item[$(*)_5$]   $(a) \quad$ for some stationary $S_0 \subseteq S$, for
every limit $\zeta \in S_0,\zeta$ can itself serve

\hskip25pt  as the witness
$\xi$ (in fact we can have $S \backslash S_0$ not stationary)
\sn
\item[${{}}$]  $(b) \quad$ for some club $E$ of $\partial$, if
$\varepsilon < \xi$ and $E \cap (\varepsilon,\xi] \ne \emptyset$
then $\{i < \kappa:\nu_{\alpha_\xi}(i) \in$

\hskip25pt $\{\nu_{\alpha_\zeta}(i):\zeta < \varepsilon\}\} \in J_1$.
\end{enumerate}
\mn
[Why?  For clause (a) by renaming.  For clause (b), it suffices to
show that $(\forall \varepsilon < \partial)(f(\varepsilon) <
\partial)$ where for $\varepsilon < \partial,f(\varepsilon)$ is the
minimal ordinal $\gamma \le \partial$ such that if $\xi < \partial$ and
$\{i < \kappa:\nu_{\alpha_\xi}(i) \in \{\nu_{\alpha_\zeta}(i):\zeta <
\varepsilon\}\} \in J^+_1$ then $\xi < \partial$.  

Now, if $\varepsilon < \partial$ and $f(\varepsilon) = \partial$ then
by the third $\bullet$ of $\boxplus_2$ in the proof of \ref{1f.7} it
follows that $u = \{\varrho_{\alpha_\zeta}(i):i < \kappa$ and $\zeta <
\varepsilon\}$ and $\langle \alpha_\zeta:\zeta < \partial\rangle$
witness $\partial \in \text{ issp}_{J_1}(\{\varrho_\alpha:\alpha <
\lambda\})$ so also clause (b) of $(*)_5$ holds indeed.]

Clearly $\delta \in S \Rightarrow \text{ cf}(\delta) \le \kappa$, and because
$(\forall \alpha < \mu)(|\alpha|^\kappa < \mu)$, by the second
$\bullet$ in $\boxtimes_1$ in the proof of \ref{1f.7} 
we know that for $i < \kappa$ the value 
$\nu_\alpha(i)$ determine $\nu_\alpha \rest i$, hence easily \wilog \,
\mn
\begin{enumerate}
\item[$(*)_6$]   if $\delta \in S$ then $\delta \in E$ and 
$\text{cf}(\delta) =\kappa$. 
\end{enumerate}
\mn
Let

\[
S_1 := \{\zeta \in S_0:\{\varepsilon <
\theta:\eta_{\alpha_\zeta}(\varepsilon) \in
\{\eta_{\alpha_j}(\varepsilon):j < \zeta\}\} \text{ belongs to } J^+_2\}.
\]
\bn
\underline{Case A}:  $S_1$ is a stationary subset of $\partial$.

Firstly, assume $\kappa < \theta$. As, see above,
$\zeta \in S_0 \Rightarrow \text{ cf}(\zeta) \le \kappa$ and $\theta
> \kappa \Rightarrow J_2$ is $\kappa^+$-complete, clearly
for each $\zeta \in S_1$, for some $j_\zeta <
\zeta$, the set $\{\varepsilon < \theta:\eta_{\alpha_\zeta}(\varepsilon) \in
\{\eta_{\alpha_j}(\varepsilon):j < j_\zeta\}\}$ belongs to $J^+_2$.  By
Fodor's lemma, for some $j(*)$, the set $S_2 = \{\zeta \in S_1:j_\zeta \le
j(*)\}$ is a stationary subset of $\partial$.  Now
$\{\eta_{\alpha_\zeta}:\zeta \in S_2\}$ witnesses $(< \partial,\partial)
\in \ussp_{J_2}(\lim_\theta({\mat T}))$; 
but this contradicts a demand in case (c) of the assumption of \ref{1f.10}.

Secondly, if $\theta < \kappa$ but recalling $(*)_6$ (see above)
$\zeta \in S_0 \Rightarrow \text{ cf}(\zeta) = \kappa$ and now 
the proof is similar.
\bn
\newline
\underline{Case B}:  $\kappa < \theta$ and $S_1$ is not stationary.

So necessarily $S_0 \backslash S_1$ is a stationary subset of
$\partial$.  By the definition of $S_1$ (and $(*)_5$)
we can find $\bar s^* = \langle s^*_\zeta:\zeta \in (S_0 \backslash
S_1)\rangle$ such that:
\mn
\begin{enumerate}
\item[$(*)_7$]  $(a) \quad s^*_\zeta \in J_2$
\sn
\item[${{}}$]  $(b) \quad$ if $\zeta_1 \ne \zeta_2$ are from $(S_0
\backslash S_1)$ and $\varepsilon \in \theta \backslash s^*_{\zeta_1}
\backslash s^*_{\zeta_2}$, then

\hskip25pt $\eta_{\alpha_{\zeta_1}}(\varepsilon)
\ne \eta_{\alpha_{\zeta_2}}(\varepsilon)$.
\end{enumerate}
\mn
Let $\varepsilon(\zeta) = \text{ min}(\theta \backslash s^*_\zeta)$
for $\zeta \in (S_0 \backslash S_1)$.

So for some stationary $S_2 \subseteq (S_0 \backslash S_1)$,
we have $\zeta \in S_2 \Rightarrow \varepsilon(\zeta) =
\varepsilon(*)$ and so
\mn
\begin{enumerate}
\item[$(*)_8$]   $\langle \eta_{\alpha_\zeta}(\varepsilon(*)):\zeta
\in S_2\rangle$ is without repetitions.
\end{enumerate}
\mn
Now $(*)_2(b)$ in the proof of \ref{1f.7} says that 
$\langle \rho_\gamma:\gamma < \chi\rangle$ is
$(\mu^+,J^+_1)$-free; apply this to the subset
$\{\varrho_{\eta_{\alpha_\zeta}(\varepsilon(*))}:\zeta \in S_2\}$
which has cardinality $\partial < \mu^+$ hence (recall $(*)_8$)
\mn
\begin{enumerate}
\item[$(*)_9$]  some $\langle
s[\eta_{\alpha_\zeta}(\varepsilon(*))]:\zeta \in S_2\rangle$
witnesses that $\langle \rho_{\eta_{\alpha_\zeta}}(\varepsilon(*)):
\zeta \in S_2 \rangle$ is
free, i.e. $s_{\eta_{\alpha_\zeta}(\varepsilon(*))} \in J_1$ for
$\zeta \in S_1$ and $\zeta \ne \xi \in S_2 \wedge i \in \kappa
\backslash s[\eta_{\alpha_\zeta}(\varepsilon(*))] \backslash
s[\eta_{\alpha_\xi}(\varepsilon(*))] \Rightarrow
\varrho_{\eta_{\alpha_\zeta}(\varepsilon(*))}(i) \ne
\varrho_{\eta_{\alpha_\xi}(\varepsilon(*))}(i)$. 
\end{enumerate}
\mn
As $\kappa < \partial$, for some $i(*) < \kappa$,
\mn
\begin{enumerate}
\item[$(*)_{10}$]   the set $S_3 := \{\zeta \in S_2:i(*) 
\notin s[\eta_{\alpha_\zeta}(\varepsilon(*))]\}$ is
a stationary subset of $\partial$.
\end{enumerate}
\mn
Hence
\mn
\begin{enumerate}
\item[$(*)_{11}$]  $\langle \nu_{\alpha_\varepsilon}(i(*)):\varepsilon \in
S_2\rangle$ is a sequence without repetitions.
\end{enumerate}
\mn
By $(*)_6$ we know that $\nu_\alpha(i) = \nu_\beta(i) \Rightarrow 
\nu_\alpha \restriction i = \nu_\beta \restriction i$ for 
$\alpha,\beta < \lambda,i < \kappa$; but by the choice of $S$ we have 
$\zeta \in S_3 \Rightarrow \nu_{\alpha_\varepsilon}(i(*)) \in
\{\nu_{\alpha_\zeta}(i(*)):\zeta < \varepsilon\}$.
However, this contradicts $(*)_{10} + (*)_{11}$.
\end{PROOF}

\begin{claim}
\label{c10}
1) In \ref{1f.7}, $\cF$ satisfies: for $\kappa + \theta < \partial =
\cf(\partial) < \lambda$, we have $\cF$ is
$(\partial^+,\partial,J_1)$-free iff $(< \partial,\partial) \in 
\issp_{J_1}(\cF)$ and there are pairwise distinct $f_\varepsilon \in
\cF$ for $\varepsilon < \partial$ with no repetitions such that for
stationarily many $\delta \in S^\lambda_{\le\kappa},\{i < \kappa:f_\delta(i)
\in \{f_\alpha(i):\alpha < \delta\} \in J^+_1$.

\noindent
2) If in \ref{1f.7} also $\alpha < \mu \Rightarrow |\alpha|^{< \kappa}
< \mu$ then we can replace $S^\lambda_{\le\kappa}$ by $S^\lambda_\kappa$. 
\end{claim}

\begin{PROOF}{\ref{c10}}  
By the proof of the previous claim.
\end{PROOF}
\bigskip

\centerline {$* \qquad * \qquad *$}
\bigskip

In \ref{1f.21}, the case we are most interested in is 
$\mu = \beth_{\omega_1},\kappa = \aleph_1,\theta = \aleph_0$.
\begin{claim}
\label{1f.21}
There is ${\mat F} \subseteq {}^\kappa \mu$ of
cardinality $\lambda$ which is $(\mu^+,J)$-free \when \,:
\mn
\begin{enumerate}
\item[$\circledast$]  $(a) \quad \theta = \,\cf(\theta) < \kappa = 
\cf(\mu) < \mu$
\sn
\item[${{}}$]  $(b) \quad \lambda = \mu^\kappa$
\sn
\item[${{}}$]  $(c) \quad \mu < \chi < \chi^\theta = \lambda$
\sn
\item[${{}}$]  $(d) \quad \alpha < \mu \Rightarrow |\alpha|^\theta < \mu$
\sn
\item[${{}}$]  $(e) \quad J$ is a $\theta^+$-complete ideal on $\kappa$
\sn
\item[${{}}$]  $(f) \quad$ {\rm pp}$_J(\mu) =^+ \lambda$.
\end{enumerate}
\end{claim}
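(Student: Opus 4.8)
The plan is to run a streamlined version of the construction in the proof of Claim \ref{1f.7}: index a family of functions $\kappa\to\mu$ by the $\lambda$ many functions $\theta\to\chi$ (there are exactly $\chi^\theta=\lambda$ of them by clause $(c)$), coding along the way a $\mu^+$-free sequence in $\prod_{i<\kappa}\mu_i$ supplied by clause $(f)$, and then exploit the one new hypothesis --- that $J$ is $\theta^+$-complete, clause $(e)$ --- to push the freeness all the way down to $\sigma=1$, i.e.\ to honest $(\mu^+,J)$-freeness, rather than the $\sigma=(2^\theta)^+$ obtained in \ref{1f.7}.

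First I would fix the coding apparatus. Using clause $(d)$ and $\theta<\kappa=\cf(\mu)$ (clause $(a)$), choose $\langle\mu_i:i<\kappa\rangle$ increasing with limit $\mu$ and $\mu_i^\theta=\mu_i$, set $\mu_i^-=\bigcup_{j<i}\mu_j$, and fix a bijection $\text{cd}_*\colon\bigcup_{i<\kappa}{}^\theta(\mu_i)\to\mu$ with $\rho\in{}^\theta(\mu_i)\Leftrightarrow\text{cd}_*(\rho)<\mu_i$, with coordinate decodings $\text{cd}_\varepsilon\colon\mu\to\mu$ for $\varepsilon<\theta$. By clause $(f)$ (and $\chi<\lambda$ from $(c)$), $\chi$ lies below $\text{pp}^+_J(\mu)$, so the pcf argument of $(*)_1$ in the proof of \ref{1f.7} (``No hole'', with $(f)$ here in the role of $(h)$ there; compare also \ref{1.3.3}(c)) yields a $<_J$-increasing, $\mu^+$-free (with respect to $J$) sequence $\langle\rho_\gamma:\gamma<\chi\rangle$ of pairwise distinct members of $\prod_{i<\kappa}[\mu_i^-,\mu_i)$. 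Let $\langle\eta_\alpha:\alpha<\lambda\rangle$ list ${}^\theta\chi$ without repetition, and define $\nu_\alpha\in{}^\kappa\mu$ by
\[
\nu_\alpha(i)=\text{cd}_*\bigl(\langle\rho_{\eta_\alpha(\varepsilon)}(i):\varepsilon<\theta\rangle\bigr),
\]
which is well defined since $\langle\rho_{\eta_\alpha(\varepsilon)}(i):\varepsilon<\theta\rangle\in{}^\theta(\mu_i)$, and satisfies $\text{cd}_\varepsilon(\nu_\alpha(i))=\rho_{\eta_\alpha(\varepsilon)}(i)$. Put $\cF=\{\nu_\alpha:\alpha<\lambda\}$. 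Exactly as in $\boxtimes_2$ of \ref{1f.7}, the map $\alpha\mapsto\nu_\alpha$ is one-to-one (from $\nu_\alpha=\nu_\beta$ one gets $\rho_{\eta_\alpha(\varepsilon)}=\rho_{\eta_\beta(\varepsilon)}$, hence $\eta_\alpha(\varepsilon)=\eta_\beta(\varepsilon)$, for every $\varepsilon$, hence $\alpha=\beta$), so $|\cF|=\lambda=\mu^\kappa$ by clause $(b)$.

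Next I would verify $(\mu^+,J)$-freeness, i.e.\ $(\mu^+,1,J)$-freeness in the sense of Definition \ref{1.3.14}. For the ``global'' clause: if $\alpha\ne\beta$, pick $\varepsilon^*<\theta$ with $\eta_\alpha(\varepsilon^*)\ne\eta_\beta(\varepsilon^*)$; since $\langle\rho_\gamma\rangle$ is $<_J$-increasing, $\{i:\rho_{\eta_\alpha(\varepsilon^*)}(i)=\rho_{\eta_\beta(\varepsilon^*)}(i)\}\in J$, and it contains $\{i:\nu_\alpha(i)=\nu_\beta(i)\}$. Now let $\cF'=\{\nu_\alpha:\alpha\in w\}$ with $|w|\le\mu$; then $u:=\bigcup_{\alpha\in w}\rng(\eta_\alpha)$ has size $\le\mu<\mu^+$, so $\langle\rho_\gamma:\gamma\in u\rangle$ is free: fix $\langle s_\gamma:\gamma\in u\rangle$ with $s_\gamma\in J$ and $\gamma_1\ne\gamma_2\in u\wedge i\in\kappa\setminus(s_{\gamma_1}\cup s_{\gamma_2})\Rightarrow\rho_{\gamma_1}(i)\ne\rho_{\gamma_2}(i)$. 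For $\alpha\in w$ set
\[
u_\alpha:=\bigcup_{\varepsilon<\theta}s_{\eta_\alpha(\varepsilon)},
\]
which lies in $J$ \emph{precisely because $J$ is $\theta^+$-complete} --- this is the only use of clause $(e)$. If $\alpha\ne\beta$ in $w$ and $i\in\kappa\setminus(u_\alpha\cup u_\beta)$, pick $\varepsilon^*$ with $\eta_\alpha(\varepsilon^*)\ne\eta_\beta(\varepsilon^*)$; then $i\notin s_{\eta_\alpha(\varepsilon^*)}\cup s_{\eta_\beta(\varepsilon^*)}$, so $\rho_{\eta_\alpha(\varepsilon^*)}(i)\ne\rho_{\eta_\beta(\varepsilon^*)}(i)$, hence $\text{cd}_{\varepsilon^*}(\nu_\alpha(i))\ne\text{cd}_{\varepsilon^*}(\nu_\beta(i))$ and $\nu_\alpha(i)\ne\nu_\beta(i)$. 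Thus for each pair $(\gamma,i)$ the set $\{f\in\cF':f(i)=\gamma\wedge i\notin u_f\}$ has at most one element, i.e.\ $\langle u_{\nu_\alpha}:\alpha\in w\rangle$ witnesses the required freeness.

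Everything except the pcf input is bookkeeping, so the main obstacle is the pcf step of paragraph two: extracting from clause $(f)$ a $\mu^+$-free sequence $\langle\rho_\gamma:\gamma<\chi\rangle$ of the right length and, more delicately, with freeness (and $<_J$-increasingness) witnessed \emph{inside} $J$ rather than merely inside some larger ideal. This is the content of $(*)_1$ in the proof of \ref{1f.7} adapted to $J$; and if one only obtains $J^{\text{bd}}_\kappa$-freeness of the $\rho_\gamma$'s, the argument still goes through whenever $J\supseteq J^{\text{bd}}_\kappa$, since then each $s_\gamma$ is bounded in $\kappa$ and $u_\alpha$, a union of $\theta<\kappa=\cf(\kappa)$ bounded sets, is again bounded, hence in $J$. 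The single genuine design choice --- forming $u_\alpha$ by unioning over all of $\theta$ at once --- is exactly what is unavailable in \ref{1f.7}, where $J_1$ is only $\kappa$-complete while $\theta$ may exceed $\kappa$, and which there forces the weaker bound $\sigma=(2^\theta)^+$.
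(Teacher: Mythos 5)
Your proposal is correct and follows essentially the same route as the paper's proof: the same coding $\nu_\alpha(i)=\text{cd}_*(\langle\rho_{\eta_\alpha(\varepsilon)}(i):\varepsilon<\theta\rangle)$ over a $\mu^+$-free sequence $\langle\rho_\gamma:\gamma<\chi\rangle$ obtained from clause $(f)$, with the sets $t_\alpha=\bigcup_{\varepsilon<\theta}s_{\eta_\alpha(\varepsilon)}$ lying in $J$ by $\theta^+$-completeness, which is exactly where the paper also invokes clause $(e)$. Your closing remark correctly isolates why this union-over-$\theta$ move is available here but not in \ref{1f.7}.
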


\begin{remark} 
This claim is used in the proof of the theorem \ref{h.7}.
\end{remark}

\begin{PROOF}{\ref{1f.21}}
Let $\langle \mu_i:i < \kappa\rangle$ be increasing with limit $\mu$
such that $(\mu_i)^\theta = \mu_i$ and let $\cd_*:{}^\theta \mu 
\rightarrow \mu$ and $\cd_\varepsilon$ (for $\varepsilon < \theta$) be 
as in the proof of \ref{1f.7}, noting that by clause $(a)$ of the
assumption of the claim ${}^\theta \mu = \cup
\{{}^\theta(\mu_i):i < \kappa\} = \mu$ and let $\mu^-_i = \cup\{\mu_j:j <i\}$.

As $\chi < \pp_J(\mu)$, by \ref{1.3.3}(c), i.e. \cite[Ch.II]{Sh:g} there
is a sequence $\bar \rho = \langle \rho_\gamma:\gamma < \chi\rangle$
of members of ${}^\kappa \mu$ which is $(\mu^+,J)$-free.
Let $\bar \eta = \langle \eta_\alpha:\alpha <
\lambda \rangle$ with $\eta_\alpha \in {}^\theta \chi$ be pairwise distinct.

Without loss of generality, $\rho_\gamma \in \prod\limits_{i < \kappa}
[\mu^-_i,\mu_i)$; we define $\nu_\alpha \in  \prod\limits_{i < \kappa}
\mu_i \subseteq {}^\kappa \mu$ for $\alpha < \lambda$ by
$\nu_\alpha(i) = \cd_*(\langle
\rho_{\eta_\alpha(\varepsilon)}(i):\varepsilon < \theta\rangle)$ for
$i < \kappa$.  We shall prove that $\langle \nu_\alpha:\alpha <
\lambda\rangle$ is as required, i.e. $\langle \nu_\alpha:\alpha <
\lambda\rangle$ is $(\mu^+,J)$-free; this suffices as it implies
$\alpha < \beta < \lambda \Rightarrow \nu_\alpha \ne \nu_\beta$ hence
$\{\nu_\alpha:\alpha < \lambda\} \subseteq {}^\kappa \mu$ has
cardinality $\lambda = \mu^\kappa$ (and is $(\mu^+,J)$-free).

For $w \in [\lambda]^{\le \mu}$, we let $u = \cup\{\Rang(\eta_\alpha):
\alpha \in w\}$, so $u$ is a subset of $\chi$ of cardinality $\le \mu$.

As $\bar\rho = \langle \rho_\alpha:\alpha < \chi\rangle$
is $(\mu^+,J)$-free, there is
$\bar s = \langle s_\gamma:\gamma \in u\rangle$ such that:
\mn
\begin{enumerate}
\item[$\circledast$]   $(\alpha) \quad s_\gamma \in J$ for every $\gamma
\in u$
\sn
\item[${{}}$]  $(\beta) \quad$ if $\gamma_1 \ne \gamma_2 \in u$ and $i
\in \kappa \backslash (s_{\gamma_1} \cup s_{\gamma_2})$, then
$\rho_{\gamma_1}(i) \ne \rho_{\gamma_2}(i)$.
\end{enumerate}
\mn
Now for each $\alpha \in w$, the set $t_\alpha :=
\cup\{s_{\eta_\alpha(\varepsilon)}:\varepsilon < \theta\}$ is the union
of $\le \theta$ members of $J$, but $J$ is a $\theta^+$-complete ideal by
assumption (e), hence $t_\alpha \in J$.

Suppose $\alpha_1 \ne \alpha_2$ are from $w$ and $i \in \kappa
\backslash (t_{\alpha_1} \cup t_{\alpha_2})$.  Can we have $\nu_{\alpha_1}(i)
= \nu_{\alpha_2}(i)$?  If so, then for every $\varepsilon < \theta$, we
have $i \in \kappa \backslash s_{\eta_{\alpha_1}(\varepsilon)} \backslash
s_{\eta_{\alpha_2}(\varepsilon)}$ and $\rho_{\eta_{\alpha_1}(\varepsilon)}(i)
= \rho_{\eta_{\alpha_2(\varepsilon)}}(i)$, hence necessarily
$\eta_{\alpha_1}(\varepsilon) = \eta_{\alpha_2}(\varepsilon)$.  As this
holds for every $\varepsilon < \theta$, we get 
$\eta_{\alpha_1} = \eta_{\alpha_2}$.  This implies $\alpha_1 = \alpha_2$.

So $i \in \kappa \backslash (t_{\alpha_1} \cup t_{\alpha_2})
\wedge \nu_{\alpha_1}(i) = \nu_{\alpha_2}(i) \Rightarrow \alpha_1 =
\alpha_2$.  Thus $\langle \nu_\alpha:\alpha \in w \rangle$ is free, so we
are done.
\end{PROOF}

\begin{conclusion}
\label{1f.22}
If clauses (a)-(f) of \ref{1f.21} hold and $\lambda = \mu^\kappa =
2^\mu$, \underline{then} BB$(\lambda,\mu^+,\lambda,J)$.
\end{conclusion}

\begin{PROOF}{\ref{1f.22}}
  By claim \ref{1f.21} there is ${\mat F} \subseteq
{}^\kappa \mu$ of cardinality $\lambda$ which is $(\mu^+,J)$-free.
By assumption $|{\mat F}| = \mu^\kappa = 2^\mu$ hence by
\ref{2b.98} we get BB$(2^\mu,\mu^+,\chi,J)$ so we are done.
\end{PROOF}

A relative of \ref{1f.21} is
\begin{claim}
\label{1f.23}
There is a $(\mu^+,J_1)$-free ${\mat F}
\subseteq {}^\kappa \mu$ of cardinality $\lambda$ \when \,
\mn
\begin{enumerate}
\item[$\circledast$]  $(a) \quad \sigma < \theta < \kappa =
\cf(\mu) < \mu < \lambda$
\sn
\item[${{}}$]  $(b) \quad (\alpha) \quad J_2$ is a $\sigma^+$-complete 
ideal on $\theta$ and
\sn
\item[${{}}$]  \hskip20pt $(\beta) \quad$ there
are $\lambda$ pairwise $J_2$-distinct members of ${}^\theta \chi$
\sn
\item[${{}}$]  $(c) \quad 2^\kappa < \mu < \chi < \lambda$ and
$2^\kappa < \text{\rm cf}(\lambda)$
\sn
\item[${{}}$]  $(d) \quad  \alpha < \mu \Rightarrow \text{\rm cov}
(|\alpha|,\theta^+,\theta^+,\sigma^+) \le \mu$
\sn
\item[${{}}$]  $(e) \quad J_1$ is a $\theta^+$-complete ideal on
$\kappa$ 
\sn
\item[${{}}$]  $(f) \quad \chi < \text{\rm pp}_{J_1}(\mu)$.
\end{enumerate}
\end{claim}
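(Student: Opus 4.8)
The plan is to mimic the proof of Claim~\ref{1f.21}: amalgamate a $\mu^+$-free sequence on ${}^\kappa\mu$ of length $\chi$ with the family of $J_2$-distinct functions in ${}^\theta\chi$ of length $\lambda$. The point where \ref{1f.21} uses $|\alpha|^\theta<\mu$ — the bijective coding $\mathrm{cd}_*$ of $\theta$-tuples — must be replaced by a \emph{lossy} coding built from the covering clause~(d); the slack that makes the lossy coding adequate is exactly the fact that the $\eta_\alpha$'s are $J_2$-distinct rather than merely distinct.

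Concretely, first I would fix (as in the proof of \ref{1f.7}, using clause~(f) through \ref{1.3.3}(c)) an increasing sequence $\langle\mu_i:i<\kappa\rangle$ of cardinals with limit $\mu$, set $\mu^-_i=\bigcup_{j<i}\mu_j$, and a $\mu^+$-free sequence $\bar\rho=\langle\rho_\gamma:\gamma<\chi\rangle$ with $\rho_\gamma\in\prod_{i<\kappa}[\mu^-_i,\mu_i)$; by clause~(b)$(\beta)$ fix pairwise $J_2$-distinct $\eta_\alpha\in{}^\theta\chi$ for $\alpha<\lambda$; and by clause~(d), for each $i<\kappa$ fix $\mathcal{P}_i\subseteq[\mu_i]^{\le\theta}$ with $|\mathcal{P}_i|\le\mu$ and a cover $\langle Q^{i,a}_j:j<\sigma\rangle\subseteq\mathcal{P}_i$ of each $a\in[\mu_i]^{\le\theta}$. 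Now for $\alpha<\lambda$, $i<\kappa$ let $a^i_\alpha=\{\rho_{\eta_\alpha(\varepsilon)}(i):\varepsilon<\theta\}\in[\mu_i]^{\le\theta}$; as $J_2$ is $\sigma^+$-complete (clause~(b)$(\alpha)$) and $\theta\notin J_2$, the $\le\sigma$ sets $\{\varepsilon<\theta:\rho_{\eta_\alpha(\varepsilon)}(i)\in Q^{i,a^i_\alpha}_j\}$ cannot all lie in $J_2$, so choose the least $j(\alpha,i)<\sigma$ making this set, call it $d^i_\alpha$, belong to $J_2^+$; put $Q^i_\alpha:=Q^{i,a^i_\alpha}_{j(\alpha,i)}$ and let $g^i_\alpha$ send $\varepsilon\in d^i_\alpha$ to the order-position of $\rho_{\eta_\alpha(\varepsilon)}(i)$ in $Q^i_\alpha$. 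Since $|\mathcal{P}_i|\le\mu$ and there are at most $2^\theta\le2^\kappa<\mu$ (clause~(c)) such partial functions $g^i_\alpha$, the pair $(Q^i_\alpha,g^i_\alpha)$ can be coded by an ordinal $\nu_\alpha(i)<\mu$; set $\mathcal{F}=\{\nu_\alpha:\alpha<\lambda\}\subseteq{}^\kappa\mu$.

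The heart of the argument is the $(\mu^+,J_1)$-freeness of $\langle\nu_\alpha\rangle$ (which also gives $|\mathcal{F}|=\lambda$). Given $w\in[\lambda]^{\le\mu}$, let $u=\bigcup_{\alpha\in w}\mathrm{Rang}(\eta_\alpha)\in[\chi]^{\le\mu}$ and use $\mu^+$-freeness of $\bar\rho$ to get $\langle s_\gamma:\gamma\in u\rangle$, $s_\gamma\in J_1$, with $\gamma_1\ne\gamma_2\in u\wedge i\notin s_{\gamma_1}\cup s_{\gamma_2}\Rightarrow\rho_{\gamma_1}(i)\ne\rho_{\gamma_2}(i)$; put $t_\alpha=\bigcup_{\varepsilon<\theta}s_{\eta_\alpha(\varepsilon)}\in J_1$, using that $J_1$ is $\theta^+$-complete (clause~(e)). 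Suppose $\alpha_1\ne\alpha_2\in w$, $i\notin t_{\alpha_1}\cup t_{\alpha_2}$ and $\nu_{\alpha_1}(i)=\nu_{\alpha_2}(i)$. Then $Q^i_{\alpha_1}=Q^i_{\alpha_2}$, $d^i_{\alpha_1}=d^i_{\alpha_2}=:d$ and $g^i_{\alpha_1}=g^i_{\alpha_2}$, hence $\rho_{\eta_{\alpha_1}(\varepsilon)}(i)=\rho_{\eta_{\alpha_2}(\varepsilon)}(i)$ for every $\varepsilon\in d$. But $d\in J_2^+$ while $\{\varepsilon:\eta_{\alpha_1}(\varepsilon)=\eta_{\alpha_2}(\varepsilon)\}\in J_2$, so there is $\varepsilon\in d$ with $\eta_{\alpha_1}(\varepsilon)\ne\eta_{\alpha_2}(\varepsilon)$; as $\eta_{\alpha_1}(\varepsilon),\eta_{\alpha_2}(\varepsilon)\in u$ and $i\notin s_{\eta_{\alpha_1}(\varepsilon)}\cup s_{\eta_{\alpha_2}(\varepsilon)}$, we get $\rho_{\eta_{\alpha_1}(\varepsilon)}(i)\ne\rho_{\eta_{\alpha_2}(\varepsilon)}(i)$ — a contradiction. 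Thus distinct $\nu_{\alpha_k}$ disagree off $t_{\alpha_1}\cup t_{\alpha_2}$, which is $(\mu^+,1,J_1)$-freeness, i.e. $(\mu^+,J_1)$-freeness.

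I expect the one genuinely delicate step to be the design of the coordinate-coding just described, and in particular the insistence that $d^i_\alpha\in J_2^+$: this is where the $\sigma^+$-completeness of $J_2$ is consumed, and it is precisely what lets $J_2$-distinctness of the $\eta_\alpha$'s survive decoding and produce the contradiction. The rest is routine bookkeeping — verifying that $\bar\rho$ may be taken inside $\prod_i[\mu^-_i,\mu_i)$, that the codes stay below $\mu$ (using $|\mathcal{P}_i|\le\mu$ and $2^\theta\le2^\kappa<\mu$), and, if $J_1$ does not already extend $J^{\mathrm{bd}}_\kappa$, that \ref{1.3.3}(c) still furnishes a $(\mu^+,J_1)$-free $\bar\rho$; the hypotheses $2^\kappa<\mu<\chi<\lambda$ and $2^\kappa<\mathrm{cf}(\lambda)$ enter only through such counting. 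A little extra care is needed when $\theta$ is singular or $J_2\ne J^{\mathrm{bd}}_\theta$, but the scheme above uses nothing beyond the stated clauses $(a)$–$(f)$.
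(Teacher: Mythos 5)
Your argument is correct and follows the same skeleton as the paper's proof of \ref{1f.23}: a $\mu^+$-free $\bar\rho$ on $\prod_i[\mu^-_i,\mu_i)$ obtained from clause (f) via \ref{1.3.3}(c), the $J_2$-distinct $\eta_\alpha$'s from clause (b)$(\beta)$, the covering families $\mathcal{P}_i$ from clause (d), the use of $\sigma^+$-completeness of $J_2$ to extract a $J_2$-positive set of coordinates $\varepsilon$ landing in a single cover member, and the final contradiction pitting that $J_2$-positive set against the $J_2$-smallness of $\{\varepsilon:\eta_{\alpha_1}(\varepsilon)=\eta_{\alpha_2}(\varepsilon)\}$. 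The one place you diverge is exactly the step you flag as delicate: the paper sets $\nu_\alpha(i)=u^i_{\alpha,\zeta_{\alpha,i}}$ (only the cover member), and recovers the order-position data by first forming the invariant $\bold x_\alpha$ and thinning $\langle\eta_\alpha:\alpha<\lambda\rangle$ to a subfamily of full cardinality on which $\bold x_\alpha$ is constant — this is where the hypothesis $2^\kappa<\text{cf}(\lambda)$ is consumed. You instead code the pair (cover member, position function $g^i_\alpha$) directly into $\nu_\alpha(i)$, paying for it with the count $|\mathcal{P}_i|\cdot 2^\theta\le\mu$, which uses $2^\kappa<\mu$ from clause (c) but not $2^\kappa<\text{cf}(\lambda)$. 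Both are sound; your version buys a cleaner decoding (equality of codes immediately gives equality of the $\rho$-values on a $J_2$-positive set, with no appeal to a uniformized $\bold x$) and shows that the $2^\kappa<\text{cf}(\lambda)$ half of clause (c) is not actually needed for this route, while the paper's version keeps the coordinates $\nu_\alpha(i)$ ranging over the small set $\mathcal{P}$ itself, which is the form reused elsewhere (e.g.\ in \ref{1f.49}).
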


\begin{PROOF}{\ref{1f.23}}
By clauses (f) and (c) there is an increasing sequence $\langle \lambda_j:j <
\kappa\rangle$ of regular cardinals $\in(2^\kappa,\mu)$
with limit $\mu$ such that $\chi^+ = \tcf(\prod\limits_{i < \kappa} 
\lambda_i,<_{J_1})$ and we let 
$\lambda^-_i = \Sigma\{\lambda_j:j<i\}$ for $i < \kappa$.

By clause (f) and \ref{1.3.3}(c), \wilog \, 
there is a $(\mu^+,J_1)$-free sequence $\langle
\rho_\gamma:\gamma < \chi\rangle$ of members of 
$\prod\limits_{j < \kappa} \lambda_j$.  Let 
${\mat P}_i \subseteq [\lambda_i]^\theta$ be
a set of cardinality $\le \mu$ such that:
\mn
\begin{enumerate}
\item[$(*)_{\cP_i}$]   for every $u \in [\lambda_i]^\theta$, we can 
find $\zeta_u \le \sigma$ and $u_\zeta \in
{\mat P}_i$ for $\zeta < \zeta_u$ such that $u \subseteq
\cup\{u_\zeta:\zeta < \zeta_u\}$.
\end{enumerate}
\mn
Note that ${\mat P}_i$ exists by clause (d) of the assumption.  Let ${\mat P} 
= \cup\{{\mat P}_i:i < \kappa\}$, so that $|{\mat P}| \le \mu,
{\mat P} \subseteq [\mu]^\theta$.

By clause $(b)(\beta)$, let 
$\bar\eta = \langle \eta_\alpha:\alpha < \lambda\rangle$
with $\eta_\alpha \in {}^\theta \chi$ be such that $\alpha < \beta <
\lambda$ implies $\eta_\alpha \ne_{J_2} \eta_\beta$,
i.e. $\{\varepsilon < \theta:\eta_\alpha(\varepsilon) =
\eta_\beta(\varepsilon)\} \in J_2$.

Lastly, for each
$\alpha < \lambda$, for each $i < \kappa$, we know that
$\{\rho_{\eta_\alpha(\varepsilon)}(i):\varepsilon < \theta\} \in
[\lambda_i]^{\le \theta}$, hence we can find a sequence $\langle
u^i_{\alpha,\zeta}:\zeta < \sigma\rangle$ of members of ${\mat P}_i$
such that $\{\rho_{\eta_\alpha(\varepsilon)}(i):\varepsilon < \theta\}
\subseteq \cup\{u^i_{\alpha,\zeta}:\zeta < \sigma\}$.

For each $\alpha < \lambda$ and $i < \kappa$, as $J_2$ is a 
$\sigma^+$-complete ideal on $\theta$, 
for some $\zeta_{\alpha,i} < \sigma$, the set ${\mat W}_{\alpha,i} 
:= \{\varepsilon < \theta:\rho_{\eta_\alpha(\varepsilon)}(i) \in
u^i_{\alpha,\zeta_{\alpha,i}}\}$ belongs to $J^+_2$.  
Let ${\bold x}_\alpha :=
\{(i,\zeta_{\alpha,i},s_{\eta_\alpha(\varepsilon)}(i) \cap
u^i_{\alpha,s_{\alpha,i}}):i < \kappa$ and $\varepsilon \in {\mat
W}_{\alpha,i} \subseteq \theta\}$.

The number of possible ${\bold x}_\alpha$ is at most
$\le 2^\kappa$, but $2^\kappa < \cf(\lambda)$ by clause (c) of
the assumption. As we can replace $\langle \eta_\alpha:\alpha <
\lambda\rangle$ by $\langle \eta_\alpha:\alpha \in v\rangle$ for any
$v \in [\lambda]^\lambda$, \wilog \, for some ${\bold x} =
\{(i,\zeta_{i,\varepsilon},\gamma_{i,\varepsilon}):i <\kappa$ 
and $\varepsilon \in {\mat W}_i\}$, we have:
\mn
\begin{enumerate}
\item[$(*)_0$]  ${\bold x}_\alpha = {\bold x}$ for every $\alpha < \lambda$.
\end{enumerate}
\mn
For $\alpha < \lambda$ let $\nu_\alpha \in {}^\kappa {\mat P}$ be
defined by:
\mn
\begin{enumerate}
\item[$\odot_1$]  $\nu_\alpha(i) = u^i_{\alpha,\zeta_{\alpha,i}}$.
\end{enumerate}
\mn
Clearly it suffices to show that:
\mn
\begin{enumerate}
\item[$\odot_2$]  $\bar\nu = \langle \nu_\alpha:\alpha <
\lambda\rangle$ exemplifies the conclusion.
\end{enumerate}
\mn
This follows by $(*)_1,(*)_2,(*)_3$ below:
\mn
\begin{enumerate}
\item[$(*)_1$]   $\nu_\alpha \in {}^\kappa {\mat P}$ and $|{\mat
P}| \le \mu$.
\end{enumerate}
\mn
[Why?  Obviously.]
\mn
\begin{enumerate}
\item[$(*)_2$]   $\nu_\alpha \ne \nu_\beta$ for $\alpha < \beta <
\lambda$.
\end{enumerate}
\mn
[Why?  By the proof of $(*)_3$ using $w = \{\alpha,\beta\}$.]
\mn
\begin{enumerate}
\item[$(*)_3$]   $\{\nu_\alpha:\alpha < \lambda\}$ is
$(\mu^+,J_1)$-free.
\end{enumerate}
\mn
[Why?  Let $w \in [\lambda]^{\le \mu}$; we shall prove that
$\{\nu_\alpha:\alpha \in w\}$ is $J_1$-free.  Now
$u := \cup\{\text{Rang}(\eta_\alpha):\alpha \in w\} \in
[\chi]^{\le \mu}$, recalling $\varepsilon < \theta \Rightarrow 
\eta_\alpha(\varepsilon) < \chi$.  By the assumption on
$\{\rho_\gamma:\gamma < \chi\}$, we can find a sequence $\bar s$ such
that:
\mn
\begin{enumerate}
\item[$(\alpha)$]  $\bar s = \langle s_\gamma:\gamma \in u\rangle \in
{}^u(J_1)$
\sn
\item[$(\beta)$]  if $\gamma_1 \ne \gamma_2$ and $\gamma_1 \in
u,\gamma_2 \in u$ and $i \in \kappa \backslash s_{\gamma_1}
\backslash s_{\gamma_2}$, then $\rho_{\gamma_1}(i) \ne
\rho_{\gamma_2}(i)$.
\end{enumerate}
\mn
For each $\alpha \in w$, let $t_\alpha := 
\cup\{s_{\eta_\alpha(\varepsilon)}:\varepsilon < \theta\}$. 
Now $t_\alpha$ is the union of $\le \theta$ members of 
$J_1$ which is a $\theta^+$-complete ideal (by (e)), so
$t_\alpha \in J_1$.  It suffices to prove that $\langle
t_\alpha:\alpha \in w\rangle$ witnesses $\{\nu_\alpha:\alpha \in w\}$ 
is $J_1$-free, so, by the previous sentence, it suffices to
prove:
\mn
\begin{enumerate}
\item[$(*)'_3$]   if $\alpha_1 \ne \alpha_2$ are from $w$ and $i
\in \kappa \backslash t_{\alpha_1} \backslash t_{\alpha_2}$, then 
$\nu_{\alpha_1}(i) \ne \nu_{\alpha_2}(i)$.
\end{enumerate}
\mn
Toward a contradiction assume that $\nu_{\alpha_1}(i) = \nu_{\alpha_2}(i)$.
Recalling the choice of $\nu_\alpha$, i.e. $\odot_1$, this means that
$u^i_{\alpha_1,\zeta_{\alpha_1,i}} = u^i_{\alpha_2,\zeta_{\alpha_2,i}}$.

As $\bold x_{\alpha_1} = \bold x = \bold x_{\alpha_2}$, see condition $(*)_0$,
clearly $\cW_{\alpha_1,i} = \cW_{\alpha_2,i}$ but we are assuming 
$u^i_{\alpha_1,\zeta_{\alpha_1,i}} = u^i_{\alpha_2,\zeta_{\alpha_2,i}}$ so
by the definition of $\bold x_{\alpha_1},\bold x_{\alpha_2}$ we have
$\varepsilon \in \cW_{\alpha_1} = \cW_{\alpha_2} \Rightarrow
\rho_{\eta_{\alpha_1}(\varepsilon)}(i) = \rho_{\eta_{\alpha_2}(\varepsilon)}(i)
 \Rightarrow \eta_{\alpha_1}(\varepsilon) = \eta_{\alpha_2}(\varepsilon)$ so
$\{\varepsilon < \theta:\eta_{\alpha_1}(\varepsilon) =
\eta_{\alpha_2}(\varepsilon)\} \supseteq \cW_{\alpha_1}$ but
$\cW_{\alpha_1,i} \in J^+_2$ by the choice of
$\zeta_{\alpha_1,i}$.  So we get
$\neg(\eta_{\alpha_1} \ne_{J_2} \eta_{\alpha_2})$,
contradicting the choice of $\langle \eta_\alpha:\alpha <
\lambda\rangle$.]

So $(*)'_3$ holds, and hence $(*)_3$ holds. 
Therefore $\odot_2$ holds, so we are done.
\end{PROOF}

\begin{obs}
\label{1f.28}
1) Assume $\lambda > \mu > \kappa = \text{\rm cf}(\mu)$ 
and $\alpha < \mu \Rightarrow
|\alpha|^\sigma < \mu$, and $\theta = \sup\{\theta_i:i < \sigma\}$ 
and for each $i < \sigma$, there is a $\theta_i$-free 
${\mat F} \subseteq {}^\kappa \mu$ of 
cardinality $\lambda$. \underline{Then} there is a
$\theta$-free ${\mat F} \subseteq {}^\kappa \mu$ of cardinality
$\lambda$.

\noindent
1A) If $\kappa = \sigma$ \then \, $\alpha < \mu \Rightarrow
|\alpha|^{< \sigma} < \mu$ suffices.

\noindent
2) If ${\mat F} \subseteq {}^\kappa \mu$ is $\theta$-free, \underline{then}
there is a normal $\theta$-free ${\mat F}' \subseteq {}^\kappa \mu$ of
cardinality $|{\mat F}|$ - see Definition \ref{1.3.14}(5).

\noindent
3) If $J$ is an ideal on $\kappa,
\delta < \lambda$ and $\langle \lambda_i:i < \delta\rangle$ is
   increasing with limit $\lambda$ and there are $(\theta,J)$-free $\cF_i
   \subseteq {}^\kappa \mu$ of cardinality $\lambda_i$ for $i < \delta$
   \then \, there is a $(\theta,\cF)$-free $\cF \subseteq {}^\kappa \mu$
   of cardinality $\lambda$. 

\noindent
3A) In part (3), if $f \in \cF_i \wedge \varepsilon < \kappa,f(\varepsilon) \in
\cU_\varepsilon \subseteq \mu$ and $\cU_\varepsilon$ is infinite for
$\varepsilon < \kappa$ \then \, \wilog \, $f \in \cF \wedge
\varepsilon < \kappa \Rightarrow f(\varepsilon) \in \cU_\varepsilon$.

\noindent
4) We can in parts (3), (3A) add ``$(\cF_i,<_J)$ of order type
   $\lambda_i$" and change the conclusion to ``$\cF \subseteq
   {}^\kappa(\mu \times \mu),(\cF,\prec_J)$ of order type $\lambda$
   (and still is $(\theta,J)$-free)".

\noindent
5) Similarly to part (4) but $\cF \subseteq {}^\kappa \mu$
if $2^\kappa < \mu,\cf(\delta)$ recalling Definition \ref{1.3.14}(4).
\end{obs}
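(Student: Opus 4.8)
\smallskip
\noindent\emph{Proof proposal.}
The plan is to prove all five parts with one coding device, in the order (2), then (3)/(3A)/(4), then (1)/(1A), since the later parts re-use the earlier ones. Part (2) is immediate: fix an injection $\mathrm{pr}\colon\kappa\times\mu\to\mu$ (possible as $\kappa<\mu$) and, for $\cF=\{f_\alpha:\alpha<\lambda\}$, set $f'_\alpha(i):=\mathrm{pr}(i,f_\alpha(i))$; then $\cF':=\{f'_\alpha:\alpha<\lambda\}$ is normal by construction, has cardinality $|\cF|$ (each $f'_\alpha$ codes $f_\alpha$), and is still $\theta$-free since any freeness witness $\langle u_\alpha:\alpha\in w\rangle$ for $\{f_\alpha:\alpha\in w\}$ works verbatim for $\{f'_\alpha:\alpha\in w\}$: if $i\notin u_\alpha\cup u_\beta$ then $f_\alpha(i)\ne f_\beta(i)$, hence $f'_\alpha(i)\ne f'_\beta(i)$.

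For (3)/(3A)/(4) I would use a \emph{separating family}. Without loss of generality $\delta=\mathrm{cf}(\lambda)$ (pass to a cofinal subsequence). Since $\delta<\lambda=\sup_i\lambda_i$ there is $i_0$ with $\delta<\lambda_{i_0}$, and, subfamilies of $\theta$-free families being $\theta$-free, $\cF_{i_0}$ contains a $\theta$-free family $\{h_i:i<\delta\}$ of size $\delta$. Enumerate each $\cF_i=\{f^i_\beta:\beta<\lambda_i\}$ injectively, fix injections $\mathrm{pr}_j\colon\mu\times\mu\to\mu$, put $g^i_\beta(j):=\mathrm{pr}_j(h_i(j),f^i_\beta(j))$ and $\cF:=\{g^i_\beta:i<\delta,\ \beta<\lambda_i\}$. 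Then $g^i_\beta$ recovers $(i,\beta)$, so $|\cF|=\sum_i\lambda_i=\lambda$. Given $w\subseteq\cF$ with $|w|<\theta$, let $I$ be the set of indices occurring in $w$; then $|I|<\theta$, so $\{h_i:i\in I\}$ has a freeness witness $\langle v_i:i\in I\rangle$, and each slice $\{f^i_\beta:g^i_\beta\in w\}$ has one, $\langle u^i_\beta\rangle$; put $u'_{g^i_\beta}:=u^i_\beta\cup v_i$. If $g^i_\beta\ne g^{i'}_{\beta'}$ lie in $w$ and $j$ is outside both exceptional sets, then $f^i_\beta(j)\ne f^i_{\beta'}(j)$ when $i=i'$ and $h_i(j)\ne h_{i'}(j)$ when $i\ne i'$; either way $g^i_\beta(j)\ne g^{i'}_{\beta'}(j)$, so $\cF$ is $\theta$-free. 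For (3A) take $\mathrm{pr}_j\colon\cU_j\times\cU_j\to\cU_j$ (an injection exists as $\cU_j$ is infinite) and take the $h_i$ inside $\cF_{i_0}\subseteq\prod_j\cU_j$; for (4) take the $\mathrm{pr}_j$ and the $h_i$ order-preserving so that the strict-increase clause of Definition \ref{1.3.14}(4) transfers.

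For (1)/(1A) a disjoint union with separators is \emph{not} available, because a small $w$ may meet some $\cF_i$ in a chunk of size $\ge\theta_i$, leaving that family's freeness useless; instead all $\sigma$ families must be coded simultaneously into each $f_\alpha$. I would enumerate each $\cF_i=\{f^i_\alpha:\alpha<\lambda\}$ injectively with a common index set $\lambda$, and (taking $\sigma\le\kappa$, which covers the applications and, after reindexing by $\mathrm{cf}(\theta)$, the case $\mathrm{cf}(\theta)\le\kappa$) set $f_\alpha(j):=\mathrm{cd}(\langle f^i_\alpha(j):i<\sigma\rangle)$ when $\sigma<\kappa$, where $\mathrm{cd}\colon{}^\sigma\mu\to\mu$ is an injection, which exists because $\mu^\sigma=\mu$; and $f_\alpha(j):=\mathrm{cd}_j(\langle f^i_\alpha(j'):i\le j,\ j'\le j\rangle)$ when $\sigma=\kappa$ (part (1A)), where $\mathrm{cd}_j$ injects a set of size $\mu^{|j|}=\mu$ into $\mu$ and $\bigcup_{j<\kappa}(j{+}1)=\kappa=\sigma$ guarantees every component is coded from some coordinate on. In both cases $\alpha\mapsto f_\alpha$ is injective, so $|\cF|=\lambda$. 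For $\theta$-freeness, given $w$ with $|w|<\theta=\sup_i\theta_i$ one picks $i^*<\sigma$ with $|w|<\theta_{i^*}$, takes a freeness witness $\langle u_\alpha:\alpha\in w\rangle$ for the (injectively enumerated) $\{f^{i^*}_\alpha:\alpha\in w\}$, and sets $u'_\alpha:=u_\alpha$ (resp.\ $u'_\alpha:=u_\alpha\cup(i^*{+}1)$ in (1A), so that $i^*\le j$ whenever $j\notin u'_\alpha$); then for $\alpha\ne\beta$ in $w$ and $j\notin u'_\alpha\cup u'_\beta$ one has $f^{i^*}_\alpha(j)\ne f^{i^*}_\beta(j)$, hence $f_\alpha(j)\ne f_\beta(j)$.

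The one place needing real care is the cardinal arithmetic: the coding maps $\mathrm{cd}$ and $\mathrm{cd}_j$ exist only because $\mu^\tau=\mu$ for every $\tau<\mathrm{cf}(\mu)=\kappa$, and this is exactly what the hypotheses ``$|\alpha|^\sigma<\mu$'' (for (1)) and ``$|\alpha|^{<\sigma}<\mu$'' with $\sigma=\kappa$ (for (1A)) supply, via the identity $\mu^\tau=\sum_{\alpha<\mu}|\alpha|^\tau$, valid when $\tau<\mathrm{cf}(\mu)$ — note that $|\alpha|^{<\kappa}<\mu$ is genuinely weaker than $|\alpha|^\kappa<\mu$, which is why (1A) must be stated separately rather than as an instance of (1). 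A second point is to verify, in (1A), that the initial-segment coding $\langle f^i_\alpha(j'):i\le j,\ j'\le j\rangle$ really reaches every one of the $\kappa$ coordinate-families while the exceptional set $u'_\alpha$ stays bounded in $\kappa$. Once these are in hand, the freeness verifications and the bookkeeping for the first clause of Definition \ref{1.3.14}(1) — namely, that $f_\alpha\ne f_\beta$ implies $\{j:f_\alpha(j)=f_\beta(j)\}\in J^{\text{bd}}_\kappa$, which follows from the chosen witnesses together with the injectivity of the $\cF_0$-enumeration — are routine.
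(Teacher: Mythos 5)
Your coding scheme is essentially the one the paper uses: part (2) by pairing the coordinate into the value, parts (3)/(3A)/(4) by pairing each $f^i_\beta$ coordinatewise with a ``separator'' drawn from one fixed $\theta$-free family of size $\ge\delta$ (the paper takes the separators $f^{i(*)}_i$ from $\cF_{i(*)}$ with $i(*)=\min\{i:\delta\le\lambda_i\}$, which is exactly your $\langle h_i:i<\delta\rangle$), and part (1) for $\sigma<\kappa$ by coding the tuple $\langle f^i_\alpha(\varepsilon):i<\sigma\rangle$ into a single element of $\mu$, with the initial-segment variant for (1A). Those verifications are correct, including the observation that a freeness witness for the single family $\cF_{i^*}$ with $|w|<\theta_{i^*}$ transfers to the coded family because the coding is injective in each coordinate, and that $\mathrm{cd}$ exists because any $\sigma$-tuple from $\mu$ with $\sigma<\kappa=\mathrm{cf}(\mu)$ lies in some ${}^\sigma\alpha$, $\alpha<\mu$, and $\bigcup_{\alpha<\mu}{}^\sigma\alpha$ has cardinality $\mu$ by the hypothesis $|\alpha|^\sigma<\mu$.

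The one genuine gap is in part (1) when $\sigma>\kappa$. Your reduction ``reindex by $\mathrm{cf}(\theta)$'' only reaches the case $\mathrm{cf}(\theta)\le\kappa$, and nothing in the hypotheses forces this: e.g.\ $\kappa=\aleph_0$, $\theta_i=\aleph_i$ for $i<\omega_1$, $\theta=\aleph_{\omega_1}<\mu$ has $\mathrm{cf}(\theta)=\aleph_1>\kappa$, and such instances do arise where the claim is applied (in \ref{1f.31} the number of families is only cut down to $<\mu$, not to $\le\kappa$). In the remaining case no coding of full $\sigma$-tuples into $\mu$ can exist, since for $\sigma\ge\mathrm{cf}(\mu)$ the tuples $\langle f^i_\alpha(\varepsilon):i<\sigma\rangle$ need not be bounded below $\mu$ and $\mu^\sigma>\mu$; and the initial-segment trick of (1A) fails because for $\sigma>\kappa$ most indices $i<\sigma$ are never captured by any coordinate $j<\kappa$. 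The paper closes this case by a further device: attach to each $\alpha$ the trace function $f_\alpha\colon\sigma\times\kappa\to\kappa$ given by $f_\alpha(i,\varepsilon)=\min\{j:\eta^i_\alpha(\varepsilon)<\mu_j\}$ for a fixed increasing $\langle\mu_j:j<\kappa\rangle$ cofinal in $\mu$, note that there are at most $2^\sigma<\mu$ possible traces (again by $|\alpha|^\sigma<\mu$), partition $\lambda$ into these $<\mu$ classes and, after renaming, reduce to a situation where the relevant tuples live in sets of size $<\mu$ so that a coding into $\mu$ is again available. You need some such additional step to cover the statement as written rather than only the subcase $\sigma\le\kappa$ (or $\mathrm{cf}(\theta)\le\kappa$).
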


\begin{PROOF}{\ref{1f.28}}
1) By coding (separating the proof according to whether
$\sigma < \kappa$ or $\sigma \ge \kappa$).

In more detail, \wilog, \, $i < \sigma \Rightarrow \theta_i
 < \theta$;  let ${\mat F}_i \subseteq {}^\kappa \mu$ be $\theta_i$-free
of cardinality $\lambda$, let $\langle \eta^i_\alpha:\alpha <
\lambda\rangle$ list ${\mat F}_i$ with no repetitions, and let
cd:$\bigcup\limits_{\alpha < \mu} {}^\sigma \alpha \rightarrow 
\mu$ be a one-to-one mapping.
\bn
\newline
\underline{Case 1}:  $\sigma < \kappa$.

For $\alpha < \lambda$ and $\varepsilon < \kappa$ the sequence
$\langle \eta^i_\alpha(\varepsilon):i < \sigma\rangle$ belongs to
${}^\sigma \mu$ hence by the present case to $\cup\{{}^\sigma
\beta:\beta < \alpha\}$.

Let $\eta_\alpha := \langle \cd(\langle
\eta^i_\alpha(\varepsilon):i < \sigma\rangle):\varepsilon <
\kappa\rangle$, so $\eta_\alpha \in {}^\kappa \mu$, and clearly
$\langle \eta_\alpha:\alpha < \lambda\rangle$ is as required.
\bn
\newline
\underline{Case 2}:  $\sigma \ge \kappa$.

Let $\langle \mu_\varepsilon:\varepsilon < \kappa\rangle$ be 
increasing with limit $\mu$.  For $\varepsilon < \kappa$ let $\bold
h_\varepsilon:\sigma \times \varepsilon \rightarrow \sigma$ be
one-to-one and onto.

We define $\eta_\alpha \in {}^\kappa \mu$ as follows:
\mn
\begin{enumerate}
\item[$\bullet$]  for $\varepsilon < \kappa$ we let
  $\eta_\alpha(\varepsilon) = \cd(\langle \gamma_{\alpha,i}:i <
  \sigma\rangle)$ where
\sn
\item[$\bullet$]  if $j < \sigma$ and $\zeta < \varepsilon$ and $i =
  \bold h_\varepsilon(j,\zeta)$, then $\gamma_{\alpha,i} =
  \min\{\eta^j_\alpha(\zeta),\mu_\varepsilon\}$.
\end{enumerate}
\mn
Now first for $\varepsilon < \kappa,\eta_\alpha(\varepsilon)$ is well
defined $(< \mu)$ as $\langle \gamma_{\alpha,i}:i < \sigma\rangle \in
{}^\sigma(\mu_\varepsilon) \subseteq \dom(\cd)$; so indeed
$\eta_\alpha \in {}^\kappa \mu$.  Second, $\{\eta_\alpha:\alpha <
\lambda\}$ is $\theta$-free because if $w \subseteq \lambda,|w| <
\theta$ then for some $i < \sigma$ we have $|w| < \theta_i$, hence we
can find a sequence $\langle \zeta_\alpha:\alpha \in w\rangle$ of
ordinals $< \kappa$ such that:
\mn
\begin{enumerate}
\item[$\bullet$]   $\alpha \in w \wedge \beta \in w \wedge \varepsilon 
< \kappa \wedge \varepsilon \ge \zeta_\alpha \wedge \varepsilon \ge
\zeta_\beta \Rightarrow \eta^i_\alpha(\varepsilon) \ne
\eta^i_\beta(\varepsilon)$.
\end{enumerate}
\mn
Let $\xi_\alpha = \min\{\xi:\xi \ge \zeta_\alpha$ and
$\eta^i_\alpha(\zeta_\alpha) < \mu_\xi\}$.  Then, easily
\mn
\begin{enumerate}
\item[$\bullet$]  $\alpha \in w \wedge \beta \in w \wedge \varepsilon 
< \kappa \wedge \varepsilon \ge \xi_\alpha \wedge \varepsilon \ge
\xi_\beta \Rightarrow \eta_\alpha(\varepsilon) \ne
\alpha_\beta(\varepsilon)$.
\end{enumerate}
\mn
So we are done.

\noindent
1A) The proof is similar using $\eta_\alpha =
\langle\text{cd}(\eta^i_\alpha(\varepsilon):i \le
\varepsilon):\varepsilon < \kappa\rangle$ for an appropriate function
cd.  This is all right because $\alpha < \mu \Rightarrow |\alpha|^{< \sigma} < \mu$;
 actually $\alpha < \mu \Rightarrow |\alpha|^{< \sigma} \le \mu$ suffices.

\noindent
2) Easy.

\noindent
3) Let $i(*) = \min\{i:\delta \le \lambda_i\}$ and let
$\lambda^-_i = \cup\{\lambda_j:j < i\}$ for $i < \delta$, further let
$\langle f^i_\alpha:\alpha < \lambda_i\rangle$ list $\cF_i$ with no
repetitions, for $\varepsilon < \kappa$ 
let $\cd_\varepsilon:\mu \times \mu \rightarrow \mu$ be
one to one and for $\alpha < \lambda$ let
$f_\alpha \in {}^\kappa \mu$ be defined by: if $\alpha \in
[\lambda^-_i,\lambda_i)$ and $\varepsilon < \kappa$ then
$f'_\alpha(\varepsilon) = \cd_\varepsilon(f^i_\alpha(\varepsilon),
f^{i(*)}_i(\varepsilon))$.  One can now check that this works.

\noindent
3A) Similarly but add: $\cd_\varepsilon$ maps $\cU_\varepsilon \times
\cU_\varepsilon$ into $\cU_\varepsilon$.

\noindent
4) As we weaken the conclusion to ``there is a $<_J$-increasing
sequence of length $\lambda$ in ${}^\kappa(\mu \times \mu)$", the proof
of part (3) suffices if we add
\mn
\begin{enumerate}
\item[$\oplus$]   $\cd_\varepsilon(\alpha_1,\alpha_2) < 
\cd_\varepsilon(\alpha'_1,\alpha'_2)$ \Iff \, $(\alpha_2 <
\alpha'_2) \vee (\alpha_2 = \alpha'_2 \wedge \alpha_1 < \alpha'_1)$
\end{enumerate}
\mn
5) \Wilog \, $\lambda < \mu$ and $\delta = \cf(\delta)$.

\Wilog \, each $\lambda_i$ is regular and (even $> \mu$ and also
$\lambda_0 > \delta$).  For each $i < \delta$ let $\bar f^i = \langle
f^i_\alpha:\alpha < \lambda_i \rangle$ be a $<_J$-increasing sequence of
members of ${}^\kappa \mu$, in the role of $\cF_i$.  Let $\langle
\mu_\varepsilon:\varepsilon < \kappa\rangle$ be an increasing sequence
of regular cardinals $> \kappa$ with limit $\mu$ and for $i <
\delta,\alpha < \lambda_i$ let $g^i_\alpha:\kappa \rightarrow \kappa$
be defined by: for $\varepsilon < \kappa$ we let 
$g^i_\alpha(\varepsilon) = \min\{\zeta < \mu:f^i_\alpha(\varepsilon) <
\mu_\zeta\}$.  Hence $\{g_\alpha:\alpha < \lambda_i\} \subseteq {}^\kappa
\kappa$ has cardinality $\le 2^\kappa$ which is $< \mu < \lambda_i =
\cf(\lambda_i)$, so for some $g_i \in {}^\kappa\kappa$ the set
$\{\alpha < \lambda_i:g^i_\alpha = g_i\}$ is unbounded in $\lambda_i$.
Hence \wilog \, $i < \sigma \wedge \alpha < \lambda_2 \Rightarrow
g^i_\alpha = g_i$.

Also we can replace $\langle (\lambda_i,\bar f^i):i < \delta\rangle$
by its restriction to any $u \subseteq \delta$ which is unbounded in
$\delta$.  Hence \wilog \, $\langle g_i:i < \delta\rangle$ is constant
or with no repetitions.  The latter is impossible as $\cf(\delta) >
2^\kappa$.  Now we can just use the proof of part (3) using $\oplus$
from above. 
\end{PROOF}

\begin{obs}
\label{1f.31}
There is a $\sup\{\theta_i:i < i(*)\}$-free 
${\mat F} \subseteq {}^\kappa \mu$ of cardinality
$2^\mu$ \when \,:
\mn
\begin{enumerate}
\item[$(a)$]   $\mu \in {\bold C}_\kappa$
\sn
\item[$(b)$] for  each $i < i(*)$ at least one of the following holds:
\sn
\begin{enumerate}
\item[$(\alpha)$]  for some $\chi,\theta_i < \mu < \chi <
\lambda$ and $\chi^{<\theta_i>_{\text{\rm tr}}} = \lambda$ (and the
supremum is attained)
\sn
\item[$(\beta)$]  $\theta_i = \mu^+$ and for some $\chi$ and
$\sigma = \cf(\sigma) < \kappa$ we have 
$\mu < \chi < \lambda$ and $\chi^\sigma = \lambda$
\sn
\item[$(\gamma)$]   for some $\chi,\theta_i < \mu < \chi <
  \lambda,\kappa \ne \cf(\chi) < \mu$ and $\pp_{J^{\bd}_\kappa}(\chi)
  =^+ \lambda$.
\end{enumerate}
\end{enumerate}
\end{obs}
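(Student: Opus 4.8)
The plan is to build, for each $i<i(*)$ separately, a $\theta_i$-free ${\mat F}_i\subseteq{}^\kappa\mu$ of cardinality $\lambda:=2^\mu$, and then to amalgamate these into a single $\sup\{\theta_i:i<i(*)\}$-free family of cardinality $2^\mu$. Recall that $\mu\in{\bold C}_\kappa$ gives $2^\mu=\mu^\kappa$, that $\mu$ is strong limit with $\cf(\mu)=\kappa$, and that $\text{\rm pp}_{J^{\text{\rm bd}}_\kappa}(\mu)=^+2^\mu$ (see \ref{1.3.3}(a)). I would first make two harmless reductions. In case $(\alpha)$ we may assume $\theta_i$ is regular and $\theta_i\ne\kappa$: a $\theta_i$-free family for singular $\theta_i<\mu$ is obtained by \ref{1f.28}(1) from $\theta'$-free families for the regular $\theta'<\theta_i$ (each still covered by case $(\alpha)$, after noting that the relevant tree-power hypothesis passes to regular cardinals below $\theta_i$), while the degenerate case $\theta_i=\kappa$ reduces, via \ref{1.3.3}(c) and a tree-like coding as in \ref{1f.7}, to the same conclusion. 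Secondly, by replacing $\langle\theta_i:i<i(*)\rangle$ by a cofinal subsequence we may assume $i(*)\le\cf(\sup\{\theta_i:i<i(*)\})$; in particular either $i(*)=1$ and $\theta_0=\mu^+$, or $i(*)<\mu$.

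For $i$ in case $(\beta)$ I would apply Claim \ref{1f.21} with $\sigma$ (regular, else replaced by $\cf(\sigma)$) in the role of ``$\theta$'' there and $J:=J^{\text{\rm bd}}_\kappa$: its hypothesis holds since $\sigma=\cf(\sigma)<\kappa=\cf(\mu)<\mu$, $\lambda=\mu^\kappa$, $\mu<\chi<\chi^\sigma=\lambda$, $\alpha<\mu\Rightarrow|\alpha|^\sigma<\mu$ (as $\mu$ is strong limit), $J^{\text{\rm bd}}_\kappa$ is $\sigma^+$-complete, and $\text{\rm pp}_{J^{\text{\rm bd}}_\kappa}(\mu)=^+\lambda$ (as $\mu\in{\bold C}_\kappa$); this produces a $(\mu^+,J^{\text{\rm bd}}_\kappa)$-free, i.e.\ $\mu^+$-free, ${\mat F}_i\subseteq{}^\kappa\mu$ of cardinality $\mu^\kappa=\lambda$, exactly as in the derivation of $\circledast_6$ in \S0. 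For $i$ in case $(\alpha)$ I would apply Claim \ref{1f.7} (or its variant \ref{1f.8}) with $\theta:=\theta_i$, $J_1:=J^{\text{\rm bd}}_\kappa$, and ``$\lambda$'' there taken to be $2^\mu$: clauses $(a)$, $(b)$ of $\circledast$ are immediate, $(c)$, $(d)$ hold by the reductions above, $(e)$ holds as $\mu$ is strong limit, $(f)$ since $\kappa$ is regular, $(g)$ is precisely the hypothesis $\chi^{<\theta_i>_{\text{\rm tr}}}=2^\mu$ with the supremum attained, and $(h)$ holds because $\text{\rm pp}^+_{J^{\text{\rm bd}}_\kappa}(\mu)>\chi$ (as $\chi<2^\mu$). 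Claim \ref{1f.7} then delivers a $\theta_i$-free ${\mat F}_i\subseteq{}^\kappa\mu$ of cardinality $2^\mu$.

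To finish I would amalgamate. If $\sup\{\theta_i:i<i(*)\}=\mu^+$ there is a single $i$ with $\theta_i=\mu^+$, and the corresponding $\mu^+$-free ${\mat F}_i$ already works. Otherwise $\sup\{\theta_i:i<i(*)\}\le\mu$ and $i(*)<\mu$, hence $\alpha<\mu\Rightarrow|\alpha|^{i(*)}<\mu$ since $\mu$ is strong limit, and Observation \ref{1f.28}(1) amalgamates the ${\mat F}_i$ $(i<i(*))$ into one $\sup\{\theta_i:i<i(*)\}$-free ${\mat F}\subseteq{}^\kappa\mu$ of cardinality $2^\mu$ (in the boundary case $\sup\{\theta_i:i<i(*)\}=\mu$, after first thinning $\langle\theta_i\rangle$ to a cofinal $\kappa$-sequence, by the variant \ref{1f.28}(1A), using $\cf(\mu)=\kappa$). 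The main obstacle is not a single hard step but the bookkeeping around it: carrying out the two preliminary reductions without altering $\sup\{\theta_i:i<i(*)\}$, and checking that ``$\mu\in{\bold C}_\kappa$'' provides the pcf input in exactly the form needed by clause $(h)$ of \ref{1f.7} and clause $(f)$ of \ref{1f.21} — that is, with $\text{\rm pp}$ computed relative to $J^{\text{\rm bd}}_\kappa$ rather than some larger ideal.
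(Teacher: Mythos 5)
Your proof follows essentially the same route as the paper's: case $(\alpha)$ is handled by Claim \ref{1f.7}, case $(\beta)$ by Claim \ref{1f.21} (whose $\mu^+$-free output already realizes the supremum, since always $\theta_i\le\mu^+$), and the families are amalgamated by Observation \ref{1f.28}(1) after thinning the sequence so that $i(*)<\mu$. Your extra attention to the boundary case $\sup\{\theta_i:i<i(*)\}=\mu$ via \ref{1f.28}(1A) is a detail the paper's terse proof leaves implicit.
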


\begin{PROOF}{\ref{1f.31}}
Clearly $i < i(*) \Rightarrow \theta_i \le \mu^+$. 
Without loss of generality, $i(*) < \mu$.

\noindent
[Why?  Clearly we can replace $\langle \theta_i:i < i(*)\rangle$ by
$\langle \theta_i:i \in u \rangle$ \when \, $u \subseteq i(*)$ and
$\sup\{\theta_i:i < i(*)\} = \sup\{\theta_i:i \in u\}$, so \wolog \,
$\langle \theta_i:i < i(*)\rangle$ has no
repetitions, and so $i(*) \le \mu +1$, and if $i(*) \ge \mu$, 
we can find $u$ as above of cardinality $< \mu$.]

If for every $i < i(*)$ clause $(\alpha)$ or clause 
$(\gamma)$ of (b) of the assumption
holds \then \, by \ref{1f.7} or \ref{2b.111} there is a
$\theta_i$-free $\cF_i \subseteq {}^\kappa \mu$ of cardinality 
$\lambda$ for each $i < i(*)$  and by \ref{1f.28}(1) the conclusion
holds.  It holds by \ref{1f.21} if $(\beta)$ of (b) applies 
for some $i<i(*)$.
\end{PROOF}

\begin{claim}
\label{2b.91}
If $\mu \in {\bold C}_\kappa$ and $\lambda = 2^\mu 
= \chi^+$ and $\chi$ is regular or just $\cf([\chi]^{\le
  \mu},\subseteq) = \chi$ \then \,:
\mn
\begin{enumerate}
\item[$(a)$]   there is a $\mu^+$-free ${\mat F} \subseteq {}^\kappa \mu$ of 
cardinality $2^\mu = \mu^\kappa$
\end{enumerate}
\mn
 hence
\mn
\begin{enumerate}
\item[$(b)$]  {\rm BB}$(\lambda,\mu^+,\theta,\kappa)$ for every $\theta
< \mu$.
\end{enumerate}
\end{claim}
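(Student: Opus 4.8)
The plan is to prove clause~(a) and then read off clause~(b) from it by Observation~\ref{2b.98}. For (b): once we have a $\mu^+$-free $\cF\subseteq{}^\kappa\mu$ of cardinality $2^\mu=\lambda$ whose members are increasing (the construction will arrange this), the list $\bar C:=\langle\rRang(f):f\in\cF\rangle$ consists of $\lambda$ pairwise distinct subsets of $\mu$, each of order type $\kappa$, and is $\mu^+$-free; since $2^\mu=\lambda$, Observation~\ref{2b.98}(2) gives $\mathrm{BB}(\lambda,\mu^+,\lambda,\kappa)$, hence $\mathrm{BB}(\lambda,\mu^+,\theta,\kappa)$ for every $\theta<\mu$ by monotonicity of $\mathrm{BB}$ in the colour parameter. (Recall $\mu^\kappa=2^\mu=\lambda$ since $\mu\in\bold C_\kappa$.)

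So everything is in (a). The hypothesis on $\cf([\chi]^{\le\mu},\subseteq)$ forces $\chi>\mu$, hence $\mu<\chi<\chi^+=2^\mu$. I would combine two ingredients. First, a building block: by \ref{1.3.3}(c), as $\mu\in\bold C_\kappa$ and $\chi\in(\mu,2^\mu)$, there is a $<_{J^{\mathrm{bd}}_\kappa}$-increasing $\mu^+$-free sequence $\bar\rho=\langle\rho_\gamma:\gamma<\chi\rangle$ in ${}^\kappa\mu$; as in the proofs of \ref{1f.7}, \ref{1f.21} we may fix increasing $\langle\mu_i:i<\kappa\rangle$ with limit $\mu$, $(\mu_i)^{<\mu}=\mu_i$, and arrange $\rho_\gamma\in\prod_{i<\kappa}[\mu^-_i,\mu_i)$ with $\mu^-_i=\sup_{j<i}\mu_j$. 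Second, a $\mu$-approximating tree on $\lambda$: since $\chi\ge\mu^+$ and $\cf([\chi]^{\le\mu},\subseteq)=\chi$, clause~(B) of Observation~\ref{h.11} applies, yielding $\langle\bar e_\varepsilon:\varepsilon<\chi\rangle$, $\bar e_\varepsilon=\langle e_{\varepsilon,\alpha}:\alpha<\lambda\rangle$, with clauses~(a)--(f) there: each $e_{\varepsilon,\alpha}\subseteq\alpha$ is closed of size $<\mu$; coherence, $\gamma\in e_{\varepsilon,\beta}\Rightarrow e_{\varepsilon,\gamma}=e_{\varepsilon,\beta}\cap\gamma$; club-capturing, if $\cf(\alpha)<\mu$ some $e_{\varepsilon,\alpha}$ contains a club of $\alpha$; and covering, every $u\in[\alpha]^{<\mu}$ lies in some $e_{\varepsilon,\alpha}$.

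Let $S=\{\delta<\lambda:\cf(\delta)=\kappa$ and $\mu\mid\delta\}$, a stationary subset of $S^\lambda_\kappa$. For $\delta\in S$ choose $\varepsilon(\delta)<\chi$ (by club-capturing) so that $e_{\varepsilon(\delta),\delta}$ contains a club of $\delta$, and pick an increasing cofinal $C_\delta=\{\gamma_{\delta,i}:i<\kappa\}\subseteq e_{\varepsilon(\delta),\delta}$ of order type $\kappa$; by coherence $e_{\varepsilon(\delta),\gamma_{\delta,i}}=e_{\varepsilon(\delta),\delta}\cap\gamma_{\delta,i}$, so the branch $\langle e_{\varepsilon(\delta),\gamma_{\delta,i}}:i<\kappa\rangle$ is $\subseteq$-increasing with order types $\xi_{\delta,i}:=\text{otp}(e_{\varepsilon(\delta),\gamma_{\delta,i}})<\mu$. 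One then defines $f_\delta\in{}^\kappa\mu$ so that $f_\delta(i)$ codes, via a fixed pairing, both $\xi_{\delta,i}$ and a value $\rho_{\iota(\delta,i)}(i)$ of $\bar\rho$, where the index $\iota(\delta,i)<\chi$ is extracted from the $\bar e$-branch leading to $\delta$ in such a way that the data $\langle(\xi_{\delta,i},\iota(\delta,i)):i<\kappa\rangle$ recovers $\delta$; spreading the values makes each $f_\delta$ increasing. Then $\cF:=\{f_\delta:\delta\in S\}\subseteq{}^\kappa\mu$ has cardinality $\lambda=\mu^\kappa=2^\mu$, as the first paragraph requires.

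The one substantial step --- and what I expect to be the main obstacle --- is proving $\cF$ is $\mu^+$-free, i.e.\ bridging the ``one cardinal gap'' from $\chi$ to $\chi^+$; this plays here the role that building a $\mu^+$-free object of size $\chi^+$ out of one of size $\chi$, with the help of an approximating sequence, plays in the classical Eklof--Mekler step-up, the covering clause~(f) replacing $\square_\chi$. Given $w\in[S]^{\le\mu}$ one follows the pattern of the proofs of \ref{1f.7}, \ref{1f.21}: the set $\bigcup_{\delta\in w}C_\delta$ has size $\le\mu$, so covering and coherence let one locate, for each $\delta\in w$, a single ``good level'' $i_\delta<\kappa$ past which the branches leading to the members of $w$ are pairwise distinct, and one throws the (bounded-in-$\kappa$) initial segment below $i_\delta$ into the exceptional set $u_{f_\delta}\in J^{\mathrm{bd}}_\kappa$; on a surviving coordinate $i$ one has $\gamma_{\delta_1,i}\ne\gamma_{\delta_2,i}$ and reduces $f_{\delta_1}(i)\ne f_{\delta_2}(i)$, through the coding, to $\rho_{\iota(\delta_1,i)}(i)\ne\rho_{\iota(\delta_2,i)}(i)$, which holds by the $\mu^+$-freeness of $\bar\rho$ applied to the $\le\mu$ indices $\{\iota(\delta,i):\delta\in w,\,i<\kappa\}$ (the $\kappa$-completeness of $J^{\mathrm{bd}}_\kappa$ being used to absorb bounded sets). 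The real difficulty is organizing the tree of $\bar e$-approximations and the extraction $\iota(\delta,i)$ so that covering~(f) --- which only covers sets of size $<\mu$ --- can be applied only to proper initial segments of the ladders $C_\delta$, and so that for a fixed $w$ a single good level $i_\delta$ can be found for each $\delta$; everything else (distinctness, the order-type bookkeeping, and the deduction of~(b)) is then routine.
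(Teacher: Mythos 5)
Your reduction of (b) to (a) via \ref{2b.98}(2) is fine, and you have correctly identified one of the two ingredients the paper uses, namely the approximating sequences $\langle \bar e_\varepsilon:\varepsilon<\chi\rangle$ of \ref{h.11}(B). But the heart of (a) --- getting a $\mu^+$-free family of the full cardinality $\lambda=\chi^+$ rather than $\chi$ --- is exactly the step you leave unconstructed, and the sketch you give for it does not close. Two concrete problems. First, reducing $f_{\delta_1}(i)\ne f_{\delta_2}(i)$ to $\rho_{\iota(\delta_1,i)}(i)\ne\rho_{\iota(\delta_2,i)}(i)$ requires, for each $\delta\in w$, that $\{i<\kappa: i\in s_{\iota(\delta,i)}\}\in J^{\text{bd}}_\kappa$, where $\langle s_\gamma\rangle$ witnesses the freeness of $\bar\rho$ on the relevant $\le\mu$ indices. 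If $i\mapsto\iota(\delta,i)$ takes $\kappa$ many values, this set is a union of $\kappa$ (not $<\kappa$) members of $J^{\text{bd}}_\kappa$ and need not be bounded; in \ref{1f.21} the analogous union runs over only $\theta<\kappa$ indices and the $\theta^+$-completeness of $J$ absorbs it, but you have no completeness margin here. Second, you need $\iota(\delta_1,i)\ne\iota(\delta_2,i)$ on the surviving coordinates, and distinctness of the ladder points $\gamma_{\delta_1,i}\ne\gamma_{\delta_2,i}$ does not give this: $\iota$ sends $\chi^+$ many ladder points into only $\chi$ indices, so it cannot be injective, and the ``local injectivity'' you need is precisely what a recursion along $\lambda$ compatible with the $e$-branches would have to arrange. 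You flag this as ``the real difficulty'' and stop there, so the proof is incomplete.

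For comparison, the paper avoids the step-up entirely and does not use a size-$\chi$ building-block family at all. Since $\mu\in{\bold C}_\kappa$ gives {\rm pp}$(\mu)=^+2^\mu=\chi^+$, there are regular $\lambda_i<\mu$ and an ideal $J$ on $\kappa$ with $\lambda=\text{\rm tcf}(\prod_{i<\kappa}\lambda_i,<_J)$, hence a $<_J$-increasing cofinal $\langle f_\alpha:\alpha<\lambda\rangle$ already of length $\chi^+$; the family $\cF$ is essentially this scale itself. The $\bar e_\varepsilon$'s are used not for coding but to choose, by recursion on $\alpha<\lambda$, markers $g_{\varepsilon,\alpha}\in\prod_{i<\kappa}\lambda_i$ with $f_\gamma<_Jg_{\varepsilon,\alpha}<_Jf_\alpha$ for $\gamma<\alpha$ and $g_{\varepsilon,\alpha}(i)>\sup\{f_\beta(i):\beta\in e_{\varepsilon,\alpha}\}$ whenever $\lambda_i>|e_{\varepsilon,\alpha}|$; the $\mu^+$-freeness of $\langle f_\alpha:\alpha\in u\rangle$ for $|u|\le\mu$ then follows by induction on {\rm otp}$(u)$ as in the no-hole claim of \cite[Ch.II]{Sh:g}, the covering and coherence clauses of \ref{h.11} supplying an $\varepsilon$ that captures $u$. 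To salvage your route you would essentially have to reproduce this recursion anyway, so I recommend working with the scale directly.
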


\begin{remark}
This is actually as in \cite[Ch.II,6.5(3),pg.100]{Sh:g} and the no-hole claim.
\end{remark}

\begin{PROOF}{\ref{2b.91}}  
By Definition \ref{1.3.1} there is an ideal $J$ on $\kappa$ and a
sequence $\langle \lambda_i:i < \kappa\rangle$ of regular cardinals $<
\mu$ such that $\lambda = \text{ tcf}(\prod\limits_{i < \kappa}
\lambda_i,<_J)$.  So there is a $<_J$-increasing cofinal sequence
$\langle f_\alpha:\alpha < \lambda\rangle$ of members of
$\prod\limits_{i < \kappa} \lambda_i$.  Let $\bar e'_\varepsilon =
\langle e_{\varepsilon,\alpha}:\alpha < \lambda\rangle$ for
$\varepsilon < \chi$ be as in \ref{h.11}, that is, if $\chi$ is
regular then we apply clause (A) of \ref{h.11} and if $\cf([\chi]^{\le
  \mu},\subseteq) = \chi$, then we apply clause (B) of \ref{h.11}.

Now by induction on $\alpha < \lambda$ we choose $\bar g_\alpha =
\langle g_{\varepsilon,\alpha}:\varepsilon < \chi\rangle$ and
$f^*_\alpha$ such that
\mn
\begin{enumerate}
\item[$\boxplus_2$]  $(a) \quad g_{\varepsilon,\alpha} \in
\prod\limits_{i < \kappa} \lambda_i$
\sn
\item[${{}}$]  $(b) \quad f^*_\alpha \in \prod\limits_{i < \kappa}
\lambda_i$
\sn
\item[${{}}$]  $(c) \quad g_{\varepsilon,\alpha} <_J f^*_\alpha$
\sn
\item[${{}}$]  $(d) \quad f^*_\gamma <_J g_{\varepsilon,\alpha}$ if
$\gamma < \alpha$
\sn
\item[${{}}$]  $(e) \quad g_{\varepsilon,\alpha}(i) >
\sup\{f^*_\beta(i),g_{\varepsilon,\beta}(i):
\beta \in e_{\varepsilon,\alpha}\}$ when $\lambda_i >
|e_{\varepsilon,\alpha}|$.
\end{enumerate}
\mn
As $(\prod\limits_{i < \kappa} \lambda_i,<_J)$ is $\lambda$-directed
we can carry out this definition.  In more detail, at stage $\alpha$,
first we can choose $f'_\alpha \in \prod\limits_{i < \kappa}
\lambda_i$ such that $\beta < \alpha \Rightarrow f_\beta <_J
f'_\alpha$ because $\lambda > |\{f_\beta:\beta < \alpha\}$.  Second, for
$\varepsilon < \chi$ we choose $g_{\varepsilon,\alpha} \in
\prod\limits_{i < \kappa} \lambda_i$ such that $\lambda_i >
|e_{\varepsilon,\alpha}| \Rightarrow g_{\varepsilon,\alpha}(i) =
\sup(\{f^*_\beta(i),g_{\varepsilon,\beta}(i);\beta \in
e_{\varepsilon,\alpha}\} \cup \{f'_\alpha(i) +1\})$.  Third, choose
$f_\alpha \in \prod\limits_{i <\kappa} \lambda_i$ such that
$\varepsilon < \chi \Rightarrow g_{\varepsilon,\alpha} <_J f_\alpha$
again possible as we have $< \lambda$ demands.

Now we can prove that for any $u \subseteq \lambda$ of cardinality
 $\le \mu$ the sequence $\langle f^*_\alpha:\alpha \in u\rangle$
is $J$-free (see \ref{1.3.14}(4)) by induction on $\otp(u)$, 
as in the proof of the no-hole claim, actually \cite[Ch.II,1.5A]{Sh:g}.
\end{PROOF}

\begin{remark}
1) Note that \ref{a35} is quoted in $\boxdot_3$ of \S0 in order to
show $\odot_{3.1}$, but we could also use \ref{2b.91}.

\noindent
2) How much partial square on $\lambda$ suffices in \ref{2b.91}?  One 
for cofinality $\ge \kappa$ where the ideal $J$ is $J^{\bd}_\kappa$ or just
$\kappa$-complete (which is all right).

\noindent
3) We may consider a parallel of \ref{2b.91} when $\chi$ is not as there.
So assume $\mu \in \bold C_\kappa,\lambda = 2^\mu = \chi^+$ and
   $\chi$ is singular and $\cf([\chi]^{\le \mu},\subseteq) = \lambda$.
\mn
\begin{enumerate}
\item[$(A)$]  Is there $\cf(\chi)$-free $\cF \subseteq {}^\kappa \mu$
of cardinality $\lambda$?
\end{enumerate}
\mn
4) If for some $\mu_1,\mu < \mu_1 < \chi$ and
$\cov(\chi,\mu^+_1,\mu^+_1,2) = \chi$, \then \, there is a
$\cf(\chi)$-free $\cF \subseteq {}^\kappa \mu$ of cardinality $\kappa$.

[Why?  We apply \ref{h.11}(B) with $\lambda,\mu_0$ here standing for
  $\lambda,\chi$  there getting $\langle
  e^1_{\varepsilon,\alpha}:\alpha < \lambda,\varepsilon <
  \chi\rangle$, so $\otp(e^1_{\varepsilon,\alpha}) < \mu^+_0 <
  \lambda$.  Let $\langle e^2_i:i < \mu^+_0\rangle$ be such that
  $e^2_i$ is a closed unbounded subset of $i$ of order type $\cf(i)$
  for each $i < \mu^+_0$.  Now let $\bar e = \langle
  e_{i,\varepsilon,\alpha}:\alpha < \lambda,\varepsilon < \chi,i <
  \mu^+_0\rangle$ be defined by $e_{i,\varepsilon,\alpha} = \{\beta
  \in e_{\varepsilon,\alpha}:\otp(e_{\varepsilon,\alpha}) \in
  e^2_i\}$.  So $\bar e$ is as required except that we use
  $(i,\varepsilon) \in \chi \times \mu^+_0$ instead of $\varepsilon <
  \chi$ but as $\chi \times \mu^+_0$ has cardinality $\chi$ this is
  all right.]
\end{remark}

\noindent
Now a variant of \ref{1f.7} is:
\begin{claim}
\label{1f.49}
If $\circledast$ holds, \then \, there is $\cF$ such that 
$\boxtimes$ holds where:
\mn
\begin{enumerate}
\item[$\boxtimes$]   $(\alpha) \quad {\mat F} \subseteq {}^\kappa \mu$
\sn
\item[${{}}$]  $(\beta) \quad |{\mat F}| = \lambda$
\sn
\item[${{}}$]  $(\gamma) \quad {\mat F}$ is $(\theta,J_1)$-free
\sn
\item[$\circledast$]  $(a) \quad \mu < \chi < \lambda$
\sn
\item[${{}}$]  $(b) \quad \kappa = \cf(\mu)$
\sn
\item[${{}}$]  $(c) \quad \theta$ is regular
\sn
\item[${{}}$]  $(d) \quad \sigma < \kappa < \theta < \mu$
\sn
\item[${{}}$]  $(e) \quad J_1$ is a $\sigma^+$-complete ideal on $\kappa$
\sn
\item[${{}}$]  $(f) \quad$ if $\alpha < \mu$, then
{\rm cov}$(|\alpha|,\theta^+,\theta^+,\sigma^+) \le \mu$

\hskip25pt \underline{or} just
\sn
\item[${{}}$]  $(f)^- \quad$ if $\alpha < \mu$, then ${\bold
U}_{J_2}(|\alpha|) \le \mu$, see Definition \ref{d9}
\sn
\item[${{}}$]  $(g) \quad$ there is a set of $\lambda$ pairwise
$J_2$-distinct members of ${}^\theta \chi$
\sn
\item[${{}}$]  $(h) \quad \pp_{J_1}(\mu)^{\sigma^+}$
\sn
\item[${{}}$]  $(i) \quad J_1$ is $\theta^+$-complete
\sn
\item[${{}}$]  $(j) \quad 2^\theta < \mu$
\end{enumerate}
\end{claim}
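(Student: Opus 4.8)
The plan is to adapt the two-stage coding from the proofs of Claims~\ref{1f.7} and~\ref{1f.23}, lightened by the fact that only $\theta$-freeness is wanted here. Fix an increasing sequence $\langle\mu_i:i<\kappa\rangle$ cofinal in $\mu$. The first ingredient is a ``spreading'' family $\bar\rho=\langle\rho_\gamma:\gamma<\chi\rangle$ with $\rho_\gamma\in\prod_{i<\kappa}\mu_i$ (the values taken in pairwise disjoint intervals, as in the proof of~\ref{1f.7}), free to the degree needed below: every $u\in[\chi]^{\le\theta}$ should carry $\langle s_\gamma:\gamma\in u\rangle\in{}^u(J_1)$ with $\gamma_1\neq\gamma_2\in u$ and $i\notin s_{\gamma_1}\cup s_{\gamma_2}$ implying $\rho_{\gamma_1}(i)\neq\rho_{\gamma_2}(i)$; in the intended applications $\mu\in{\bold C}_\kappa$, so $\chi<2^\mu$ and $\text{pp}_{J^{\text{bd}}_\kappa}(\mu)=^+2^\mu$, whence~\ref{1.3.3}(c) and the no-hole claim furnish such a $\bar\rho$, even $\mu^+$-free. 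The second ingredient, from clause~$(f)$ or $(f)^-$: for each $i<\kappa$ a family ${\mat P}_i$ of small subsets of $\mu_i$ with $|{\mat P}_i|\le\mu$ carrying the relevant covering (resp.\ ${\bold U}_{J_2}$) property; put ${\mat P}=\bigcup_i{\mat P}_i$, of cardinality $\le\mu$, identified with an ordinal. The third ingredient, from~$(g)$: pairwise $J_2$-distinct $\eta_\alpha\in{}^\theta\chi$ for $\alpha<\lambda$.

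For $\alpha<\lambda$ and $i<\kappa$ the set $\{\rho_{\eta_\alpha(\varepsilon)}(i):\varepsilon<\theta\}$ is handled by ${\mat P}_i$ — either directly ($(f)^-$ case) or by covering it with $\le\sigma$ members and invoking $\sigma^+$-completeness of $J_2$ (the $\text{cov}$ case) — giving $u_{\alpha,i}\in{\mat P}_i$ with ${\mat W}_{\alpha,i}:=\{\varepsilon<\theta:\rho_{\eta_\alpha(\varepsilon)}(i)\in u_{\alpha,i}\}\in J_2^+$. Put $\nu_\alpha(i):=u_{\alpha,i}\in{\mat P}$, so $\nu_\alpha\in{}^\kappa{\mat P}\subseteq{}^\kappa\mu$, and record ${\bold x}_\alpha:=\big\langle\big(u_{\alpha,i},\langle\text{otp}(u_{\alpha,i}\cap\rho_{\eta_\alpha(\varepsilon)}(i)):\varepsilon\in{\mat W}_{\alpha,i}\rangle\big):i<\kappa\big\rangle$. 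As in~\ref{1f.23} there are at most $2^\theta$ possible values of ${\bold x}_\alpha$, so after thinning to a subfamily of size $\lambda$ (legitimate in the intended cases, where $2^\theta<\lambda=\text{cf}(\lambda)$; otherwise one carries this as a harmless extra demand) we may assume ${\bold x}_\alpha={\bold x}$ for all $\alpha$. Then ${\mat F}:=\{\nu_\alpha:\alpha<\lambda\}$ is the candidate.

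To verify $\boxtimes(\gamma)$, that ${\mat F}$ is $\theta$-free with respect to $J_1$ — whence also $\boxtimes(\beta)$, since the two-element case yields $\nu_\alpha\neq\nu_\beta$ for $\alpha\neq\beta$, so $|{\mat F}|=\lambda$ — one is given $w\in[\lambda]^{<\theta}$, sets $u:=\bigcup\{\text{Rang}(\eta_\alpha):\alpha\in w\}\in[\chi]^{\le\theta}$, takes $\langle s_\gamma:\gamma\in u\rangle\in{}^u(J_1)$ from the freeness of $\bar\rho$, and must produce $\langle t_\alpha:\alpha\in w\rangle\in{}^w(J_1)$ with $u_{\alpha,i}\neq u_{\beta,i}$ whenever $\alpha\neq\beta\in w$ and $i\notin t_\alpha\cup t_\beta$. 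The mechanism is that of $(*)_3$--$(*)_4$ of~\ref{1f.7} and the proof of~\ref{1f.23}: if $u_{\alpha,i}=u_{\beta,i}$ and $\varepsilon\in{\mat W}_{\alpha,i}={\mat W}_{\beta,i}$ (equal because ${\bold x}_\alpha={\bold x}_\beta$) has $\eta_\alpha(\varepsilon)\neq\eta_\beta(\varepsilon)$ and $i\notin s_{\eta_\alpha(\varepsilon)}\cup s_{\eta_\beta(\varepsilon)}$, then $\rho_{\eta_\alpha(\varepsilon)}(i)\neq\rho_{\eta_\beta(\varepsilon)}(i)$, contradicting the recorded equal order types; so $t_\alpha$ must absorb, for every competitor $\beta$ and every $i$, the sets $s_{\eta_\alpha(\varepsilon)}$ witnessing this, while staying in $J_1$.

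The main obstacle — everything else being essentially a transcription of~\ref{1f.7}/\ref{1f.23} — is exactly the production of the $t_\alpha$. In~\ref{1f.7}/\ref{1f.23} the analogous witness lies in $J_1$ because $J_2$ is $\kappa^+$-complete (when $\kappa<\theta$) or $J_1$ is $\theta^+$-complete (when $\theta<\kappa$), so an appropriate $\kappa$- or $\theta$-indexed union of null sets stays null; here both ideals are merely $\sigma^+$-complete and $\sigma<\kappa<\theta$, so neither union is controlled, and the pointwise-in-$i$ definition $t_\alpha=\{i:\{\varepsilon:i\notin s_{\eta_\alpha(\varepsilon)}\}\in J_2\}$ used in $(*)_4$ of~\ref{1f.7} need not land in $J_1$. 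My intended route is to exploit that only $\theta$-freeness is asked, hence $|w|<\theta$: reduce by singular compactness to $|w|=\partial$ regular, $\partial<\theta$; for $\partial\le\sigma$ the union $\bigcup\{\{\varepsilon:\eta_\alpha(\varepsilon)=\eta_\beta(\varepsilon)\}:\alpha\neq\beta\in w\}$ has $\le\sigma$ terms in $J_2$, hence is $\neq\theta$, yielding one level $\varepsilon_*$ discriminating all pairs and a single $J_1$-set $t_\alpha=s_{\eta_\alpha(\varepsilon_*)}$, as in Step~1 of~\ref{1f.7}; for $\sigma<\partial<\theta$ one must instead use the $\le\sigma$-multiplicity of the covers, thinning $w$ and choosing for each $\alpha$ a set of at most $\sigma$ levels off which the covers already separate $\eta_\alpha$ from every other member of $w$, so that $t_\alpha$ is again a $\le\sigma$-union of $s$-sets. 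Making this last selection mesh with the ${\bold x}$-stabilization is where I expect the real difficulty to lie, and it is precisely why clause~$(d)$ insists $\sigma<\kappa<\theta$ and why the covering/${\bold U}$ multiplicity in $(f)/(f)^-$ is kept below $\sigma^+$.
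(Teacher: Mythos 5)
Your overall architecture is the one the paper intends --- its entire proof of \ref{1f.49} is the single sentence ``Combine the proofs of \ref{1f.7} and \ref{1f.23}'' --- and your assembly of the ingredients (the free family $\bar\rho$, the covering families ${\mat P}_i$ from clause $(f)/(f)^-$, the $J_2$-distinct $\eta_\alpha$, the definition $\nu_\alpha(i)=u_{\alpha,i}$, the stabilization of ${\bold x}_\alpha$) is exactly that combination. But the proposal does not close. The one step that is not a transcription of the two earlier proofs is the production of the witnesses $t_\alpha\in J_1$ for $w\in[\lambda]^{<\theta}$, and you leave it open: you say yourself that making the selection of levels mesh with the ${\bold x}$-stabilization is ``where I expect the real difficulty to lie''. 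Your diagnosis of why this is the crux is correct --- under $\circledast(d),(e)$ both ideals are merely $\sigma^+$-complete with $\sigma<\kappa<\theta$, so neither the $\theta$-indexed union of $s$-sets used in \ref{1f.23} (which needs $\theta^+$-completeness of $J_1$) nor the level-collapsing in the first case of $(*)_4$ of \ref{1f.7} (which needs $\kappa^+$-completeness of $J_2$) is available --- but a correct diagnosis is not a proof.

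Moreover, the half of the plan you present as settled is also defective. For $|w|\le\sigma$ you propose a single level $\varepsilon_*$ outside $\bigcup\{e_{\alpha\beta}:\alpha\ne\beta\in w\}$, where $e_{\alpha\beta}=\{\varepsilon<\theta:\eta_\alpha(\varepsilon)=\eta_\beta(\varepsilon)\}$, and then $t_\alpha=s_{\eta_\alpha(\varepsilon_*)}$. In \ref{1f.7} this works because $\nu_\alpha(i)$ codes $\rho_{\eta_\alpha(\varepsilon)}(i)$ for \emph{every} $\varepsilon<\theta$, so separation at any one level separates the $\nu$'s. Here $\nu_\alpha(i)$ remembers only the cover $u_{\alpha,i}$, which (via ${\bold x}_\alpha={\bold x}_\beta$) determines $\rho_{\eta_\alpha(\varepsilon)}(i)$ only for $\varepsilon\in{\mat W}_i$; a level $\varepsilon_*\notin{\mat W}_i$ is useless for coordinate $i$, and nothing puts $\varepsilon_*$ into $\bigcap_{i<\kappa}{\mat W}_i$, an intersection of $\kappa$ many $J_2$-positive sets. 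The per-coordinate repair --- choose $\varepsilon_i\in{\mat W}_i\setminus\bigcup\{e_{\alpha\beta}:\alpha\ne\beta\in w\}$ and set $t_\alpha=\{i<\kappa:i\in s_{\eta_\alpha(\varepsilon_i)}\}$ --- produces a diagonal set that need not lie in $J_1$: each column $s_\gamma$ is $J_1$-null, but the diagonal over $\kappa$ columns is unconstrained (a Fubini-type failure), and the same problem defeats the candidate $t_\alpha=\{i:\{\varepsilon\in{\mat W}_i:i\in s_{\eta_\alpha(\varepsilon)}\}\in J_2^+\}$. So the key step is genuinely open in your write-up for all $|w|$, not only for $\sigma<|w|<\theta$. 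A smaller point: $\circledast$ as stated contains no analogue of clause $(h)$ of \ref{1f.7} or clause $(f)$ of \ref{1f.23} guaranteeing the existence of the sufficiently free $\bar\rho=\langle\rho_\gamma:\gamma<\chi\rangle$; you import it from ``the intended applications'', which should be flagged explicitly as an added hypothesis rather than assumed silently.
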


\begin{PROOF}{\ref{1f.49}}
Combine the proofs of \ref{1f.7} and \ref{1f.23}. 
\end{PROOF}

\begin{claim}
\label{1f.51}
In \ref{1f.49}:

\noindent
1) If in $\circledast,\partial \ge \theta$ clause $(g)$ is 
exemplified by ${\mat F}_2 \subseteq {}^\theta \chi$ which is 
$(\partial,J_2)$-free, $\partial < \mu$, \then \, ${\mat F}$ 
is $(\partial,\theta^+,J_2)$-free.

\noindent
2) If ${\mat F}' \subseteq {\mat F}$ has cardinality $> \theta$, \then
\, $\cup\{\Rang(\nu):\nu \in {\mat F}\}$ has cardinality $\ge \theta$.

\noindent
3) Clauses (f) + (e) from $\circledast$ implies clause (f)$^-$; in
   fact clause (e), ``$J_2$ is $\sigma^+$-complete", is needed only
   for this.

\noindent
4) We can in $\circledast$ weaken (also in part (1)) clause (h) to
\mn
\begin{enumerate}
\item[$(h)'$]  there is a $\partial$-free $\cF \subseteq {}^\kappa
\mu$ of cardinality $\lambda$.
\end{enumerate}
\end{claim}

\begin{PROOF}{\ref{1f.51}}
We leave the proof to the reader.
\end{PROOF}

\begin{claim}
\label{1f.53} 
Assume $\mu \in {\bold C}_\kappa, J$ is a
$\kappa$-complete ideal on $\kappa$ and there is no
$(\kappa^{+\omega},J)$-free ${\mat F} \subseteq {}^\kappa
\mu$ of cardinality $\lambda := 2^\mu$. 
\Then \, the set $\Theta = \{\theta:\theta = \cf(\theta) < \mu,
\theta \ne \kappa$ and for some witness $(\chi,I)$ we have $I$ 
a $\theta$-complete ideal
on $\theta,\chi \in (\mu,\lambda)$ of cofinality $\theta$ and 
$\pp_J(\chi) =^+ \lambda$ for some
$\theta$-complete ideal $J$ on $\theta\}$ is empty, or a singleton $>
\kappa$ or of the form $\{\theta,\theta^+\},\theta > \theta$.
\end{claim}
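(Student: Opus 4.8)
The plan is to argue by contradiction: if $\Theta$ is neither a singleton nor of the form $\{\theta,\theta^+\}$, then it contains regular cardinals $\theta_1<\theta_2$ with $\theta_2\ge\theta_1^{++}$ (two such are found among any two non‑consecutive members, or among any three members), and I aim to manufacture from them a $(\kappa^{+\omega},J)$-free ${\mat F}\subseteq{}^\kappa\mu$ of cardinality $\lambda=2^\mu$, contradicting the hypothesis. First a reduction: no $\theta\in\Theta$ can have $\theta\le\kappa$. Indeed $\kappa\notin\Theta$ by definition, and if $\theta<\kappa$ (forcing $\kappa>\aleph_0$) had a witness $\chi\in(\mu,\lambda)$ with $\mathrm{cf}(\chi)=\theta$ and $\mathrm{pp}$ of a $\theta$-complete ideal on $\theta$ equal ($=^+$) to $\lambda$, then $\chi^\theta=^+\lambda$ (since $\chi^{\mathrm{cf}(\chi)}\ge\mathrm{pp}(\chi)$ while $\chi^\theta\le(2^\mu)^\mu=2^\mu$), $\mu^\kappa=^+\lambda$, $\mathrm{pp}_{J^{\mathrm{bd}}_\kappa}(\mu)=^+2^\mu$ (by $\mu\in\mathbf{C}_\kappa$ and \ref{1.3.3}(a)), and $|\alpha|^\theta<\mu$ for $\alpha<\mu$ (strong limitness); then Claim \ref{1f.21} applied with the $\theta^+$-complete ideal $J^{\mathrm{bd}}_\kappa$ yields a $(\mu^+,J^{\mathrm{bd}}_\kappa)$-free ${\mat F}\subseteq{}^\kappa\mu$ of cardinality $\mu^\kappa=\lambda$, which is a fortiori $(\kappa^{+\omega},J)$-free (as $\kappa^{+\omega}<\mu<\mu^+$ and $J^{\mathrm{bd}}_\kappa\subseteq J$) — contradiction. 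Hence $\Theta\subseteq(\kappa,\mu)\cap\mathrm{Reg}$.

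Next, for each $\theta\in\Theta$ I would build a family ${\mat F}_\theta\subseteq{}^\kappa\mu$ of cardinality $\lambda$ whose freeness is governed as tightly as possible by $\theta$. Given the witness $\chi=\chi_\theta\in(\mu,\lambda)$, $\mathrm{cf}(\chi)=\theta$, and the $\theta$-complete ideal $J'$ on $\theta$ with $\mathrm{pp}_{J'}(\chi)=^+\lambda$, one has $\mathrm{pp}_{J^{\mathrm{bd}}_\theta}(\chi)\ge\mathrm{pp}_{J'}(\chi)$ (a larger ideal gives smaller pp), so Claim \ref{1.3.3}(c), applied with $(\chi,\theta)$ in place of $(\mu,\kappa)$, provides a $<_{J^{\mathrm{bd}}_\theta}$-increasing, $\chi^+$-free sequence $\langle\eta_\alpha:\alpha<\lambda\rangle$ inside some $\prod_{i<\theta}\chi_i$ with $\chi_i$ regular and $\sup=\chi$; in particular the $\eta_\alpha$ are pairwise $J^{\mathrm{bd}}_\theta$-distinct. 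Also $\mu\in\mathbf{C}_\kappa$ gives, again by \ref{1.3.3}(c), a $\mu^+$-free sequence $\langle\rho_\gamma:\gamma<\chi\rangle$ in some $\prod_{i<\kappa}\mu_i$. Feeding these into the construction of Claims \ref{1f.7}/\ref{1f.8} — with $J_1=J$ (the $\kappa$-complete ideal of the hypothesis), $J_2=J^{\mathrm{bd}}_\theta$, and $|\alpha|^\theta<\mu$ from strong limitness — and then applying Claim \ref{1f.10} together with the instability-spectrum analysis (the issp version of \ref{1f.10} and Claim \ref{a10}(2A), run on the two constituent sequences), one obtains ${\mat F}_\theta\subseteq{}^\kappa\mu$, $|{\mat F}_\theta|=\lambda$, which is $\theta$-free and, above $\theta$, is $(\partial,\partial^+,J)$-free for every regular $\partial$ lying outside a short band of cofinalities about $\theta$. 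Writing $\theta=\kappa^{+n(\theta)}$ (this must hold, since otherwise ${\mat F}_\theta$ is $(\kappa^{+n},\kappa^{+n},J)$-free for every $n$ by \ref{1f.41}(1), hence $(\kappa^{+\omega},\kappa^+,J)$-free, hence $(\kappa^{+\omega},J)$-free by \ref{1.3.15}(4), contradiction; and $n(\theta)<\omega$ as $\theta<\mu$), the net effect is that ${\mat F}_\theta$ is $(\kappa^{+n},\kappa^{+n},J)$-free for every $n$ outside a two-element set of consecutive integers containing $n(\theta)$.

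Finally I would amalgamate. Suppose $\theta_1=\kappa^{+m}$ and $\theta_2=\kappa^{+n}$ lie in $\Theta$ with $n\ge m+2$. By Claim \ref{1f.41}(3) (whose hypotheses hold: $\mu\ge\kappa^{+\omega}$, $\kappa=\mathrm{cf}(\mu)$, $|\alpha|^2<\mu$) there is ${\mat F}\subseteq{}^\kappa\mu$ of cardinality $\lambda$ that is $(\kappa^{+k},\kappa^{+k},J)$-free whenever ${\mat F}_{\theta_1}$ or ${\mat F}_{\theta_2}$ is; by the previous paragraph the exceptional sets of ${\mat F}_{\theta_1}$ and ${\mat F}_{\theta_2}$ — consecutive two-element windows around $m$ and around $n$ — are disjoint because $n\ge m+2$, so ${\mat F}$ is $(\kappa^{+k},\kappa^{+k},J)$-free for every $k$, hence $(\kappa^{+\omega},\kappa^+,J)$-free, hence $(\kappa^{+\omega},J)$-free by \ref{1.3.15}(4) — contradiction. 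Therefore any two members of $\Theta$ are consecutive regular cardinals, so $\Theta$ is a singleton or of the form $\{\theta,\theta^+\}$, as required.

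The main obstacle is the second step: pinning down, via the interplay of Claims \ref{1f.7}/\ref{1f.8}, \ref{1f.10} and the freeness–issp dictionary of \ref{1.3.15}, \ref{a10} and \ref{1f.41}, exactly which $(\kappa^{+n},\kappa^{+n},J)$-freenesses the tailored family ${\mat F}_\theta$ enjoys — equivalently, keeping the secondary (multiplicity) parameter of its freeness as small as possible, namely $\theta^+$ rather than the crude $(2^\theta)^+$ coming straight out of the proof of \ref{1f.7}. It is precisely this optimal bound that confines the ``bad'' cofinalities of ${\mat F}_\theta$ to a window of width $2$ about $\theta$ and hence forces any two surviving elements of $\Theta$ to be consecutive. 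A subsidiary but necessary bookkeeping point is aligning the ideals in play ($J$, $J^{\mathrm{bd}}_\kappa$, $J^{\mathrm{bd}}_\theta$, $J'$) so that all freeness assertions can be read relative to the fixed $\kappa$-complete $J$, which uses the $\kappa$-completeness of $J$ together with \ref{1.3.15}(4).
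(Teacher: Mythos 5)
Your overall shape is right (argue by contradiction from two elements of $\Theta$ that are not consecutive, after disposing of $\theta\le\kappa$ via \ref{1f.21} exactly as the paper does), but the central step of your argument has a genuine gap, and it is not the step the paper takes. You propose to build, \emph{for each single} $\theta\in\Theta$ separately, a family ${\mat F}_\theta\subseteq{}^\kappa\mu$ of cardinality $\lambda$ whose failure of $(\kappa^{+n},\kappa^{+n},J)$-freeness is confined to a two-element window about $\theta$, and then to amalgamate two such families with disjoint windows via \ref{1f.41}(3). The problem is that a single witness $(\chi_\theta,\theta)$ does not yield such a window: \ref{1f.7} gives only $\theta$-freeness below $\theta$ together with $(\mu^+,(2^\theta)^+)$-freeness, and the multiplicity parameter $(2^\theta)^+$ is far too large to be absorbed by \ref{1.3.15}(4) (which needs multiplicity $\kappa^+$); between $\theta^+$ and $(2^\theta)^+$ nothing in \ref{1f.10}, \ref{a10} or \ref{1f.41} pins the behaviour down without further input. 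You flag this yourself as ``the main obstacle'', but it is not a technical refinement to be supplied later --- it is the heart of the matter, and it cannot be resolved one $\theta$ at a time.

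The paper's proof resolves it precisely by playing the two witnesses against each other rather than amalgamating two finished families. Taking $\theta_1\in\Theta$ with $\chi_1$ minimal and $\theta_2\in\Theta\setminus\{\theta_1,\theta_1^+\}$ with $\chi_2$ minimal, it uses the pcf structure at $\chi_2$ to upgrade the $\eta$-system at level $\chi_1$: by \ref{1f.49} (when $\theta_2>\theta_1^+$, so $\theta_2\ge\theta_1^{++}$) or by \ref{1f.23} (when $\theta_2<\theta_1$) one gets a $\theta_1^{++}$-free set of $\lambda$ pairwise $J_1$-distinct members of ${}^{\theta_1}\chi_1$ --- this is exactly where the exceptional window $\{\theta_1,\theta_1^+\}$ in the conclusion comes from. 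Feeding that family, together with the $\mu^+$-free $\rho$-system from $\mu\in{\bold C}_\kappa$, into a \emph{single} application of \ref{1f.10} case (c) (whose hypothesis asks only that each regular $\partial$ in the relevant range avoid the instability spectrum of at least \emph{one} of the two constituent systems) produces the forbidden $(\kappa^{+\omega},J)$-free ${\mat F}\subseteq{}^\kappa\mu$ of cardinality $\lambda$. So the ``disjoint windows'' intuition survives, but it must be implemented inside one composite construction via issp, not by post hoc amalgamation of families whose individual windows you cannot control.
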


\begin{remark}
\label{1f.54}

This is intended to help in \S4 in dealing with $R$-modules when $R$
has at least three members together with \ref{h18}, \ref{h.9d}, \ref{5e.8}.
\end{remark}

\begin{PROOF}{\ref{1f.53}}
Note
\mn
\begin{enumerate}
\item[$(*)_0$]  \wilog \, $\lambda$ is regular.
\end{enumerate}
\mn
[Why?  By \ref{1f.28}(3).]
\mn
\begin{enumerate}
\item[$(*)_1$]  if $\theta \in \Theta$ then $\theta > \kappa$.
\end{enumerate}
\mn
[Why?  Let $(\chi,J)$ witness $\theta \in \Theta$, now by \ref{1f.21}
 we get a contradiction to the assumption ``there is no $(\kappa^{+
   \omega},J)$-free $\cF  \subseteq {}^\kappa \mu$ of cardinality $\lambda$".] 

Let $(\theta_1,\chi_1,J_1)$ be such that
\mn
\begin{enumerate}
\item[$(*)_2$]   $\theta_1 \in \Theta$ and 
$(\chi_1,J_1)$ is a witness for $\theta_1 \in \Theta$ and $\chi_1$ is
minimal under these conditions (even varying $\theta_1$).
\end{enumerate}
\mn
If $\theta \in \Theta$ by the choice of $\chi_1$ as minimal, by
\cite[Ch.II,5.4]{Sh:g} we have:
\mn
\begin{enumerate}
\item[$(*)_3$]  $\alpha < \chi_1 \Rightarrow
\text{ cov}(|\alpha|,\mu^+,\mu^+,\kappa^+) < \chi_1$.
\end{enumerate}
\mn
If $\Theta = \{\theta_1\}$ or $\Theta = \{\theta_1,\theta^+_1\}$ or
$\theta^+_2 = \theta_1 \wedge \Theta = \{\theta_1,\theta_2\}$, we
are done; otherwise let $(\theta_2,\chi_2,J_2)$ be such that
\mn
\begin{enumerate}
\item[$(*)_4$]   $\theta_2 \in \Theta \backslash
  \{\theta_1,\theta^+_1\} \wedge \theta_1 \ne \theta^+_2$ and $(\chi_2,J_2)$ 
witness that $\theta_2 \in \Theta$, and $\chi_2$ is minimal under these
requirements.
\end{enumerate}
\mn
Now
\mn
\begin{enumerate}
\item[$(*)_5$]  there is a $(\theta^{++}_1 + \theta_2,J_1)$-free set ${\mat F}
\subseteq {}^{\theta_1}(\chi_1)$ of pairwise $J_1$-distinct elements
of cardinality $\lambda$.
\end{enumerate}
\mn
Why?  \underline{Case 1}: $\theta_2 > \theta_1$

So necessarily $\theta_2 > \theta^+_1$ by $(*)_4$, 
hence such an ${\mat F}$ exists by \ref{1f.49} with
$\lambda,\chi_1,\chi_2,\kappa,\theta_1,\theta_2,J_1,J_2$ here standing for
$\lambda,\mu,\chi,\sigma,\kappa,\theta,J_1,J_2$ there. 

Clauses (a),(b),(c) are obvious.  Why 
does clause (d) from \ref{1f.49} hold?  It means $``\kappa <
 \theta_1 < \theta_2 < \chi_1"$ and these inequalities hold  
because, first $\kappa < \theta_1$ holds by
 $(*)_1$, second $\theta_1 < \theta_2$ holds by the present case
 assumption, and third ``$\theta_2 < \mu < \chi_1$" 
holds by $(*)_2$.

Clause (e) of \ref{1f.49} means ``$J_1,J_2$ are $\kappa^+$-complete"
which hold as $\theta_1,\theta_2 > \kappa$ by $(*)_1$ and $J_\ell$ is
$\theta^+_\ell$-complete by the definition of $\Theta$ because
$(\chi_\ell,J_\ell)$ witness $\theta_\ell \in \Theta$ by $(*)_2 + (*)_4$.  

Clause (f) of \ref{1f.23} means here $\alpha < \chi_1 \Rightarrow
\cov(|\alpha|,\theta^+_2,\theta^+_2,\kappa^+) \le \chi_1$ which holds
by $(*)_3$.

Lastly, clause (g) of \ref{1f.49} means ``there is a set of $\lambda$
pairwise $J_2$-distinct members of ${}^{\theta_2}(\chi_2)"$ which
holds as $(J_2,\chi_2)$ witnesses $\theta_2 \in \Theta$.

The conclusion of \ref{1f.49} gives a family 
$\cF \subseteq {}^{\theta_1}(\chi_1)$ of
cardinality $\lambda$ which is $(\theta_2,J_1)$-free, but $\theta_2
\ge \theta_1$ by ``First", and $\theta_2 \ne \theta^+_2$ by $(*)_4$ so
we are done.

\noindent
\underline{Case 2}:  $\theta_2 \le \theta_1$

Again by $(*)_4,\theta^+_2 < \theta_1$.  Hence by \ref{1f.23} with
$\lambda,\chi_1,\chi_2,\kappa,\theta_1,\theta_2,J_1,J_2$ here standing for
$\lambda,\mu,\chi,\sigma,\kappa,\theta,J_1,J_2$ there, we have
finished the proof of $(*)_5$ getting even $(\mu^+,J)$-free.]
\mn
\begin{enumerate}
\item[$(*)_6$]  there is $\cF \subseteq {}^\kappa \mu$ of cardinality
$\lambda$ which is $(\Upsilon,5)$-free letting $\Upsilon = \theta^{+
\kappa}_1$.
\end{enumerate}
\mn
Why?  We apply \ref{1f.10}, case (c) with $\chi_1,\theta_1$ here
standing for $\chi,\theta$ there.  Let $\langle \lambda_i:i <
\kappa\rangle$ be an increasing sequence of regulars with limit $\mu$
such that $(\prod\limits_{i} \lambda_i,<_J)$ has true cofinality
$\lambda,\langle \varrho^1_\alpha:\alpha < \lambda\rangle$ witness it.

We choose $\langle \varrho^2_\alpha:\alpha < \lambda\rangle$ listing
$\cF$ as in $(*)_5$, so it is $\theta^{++}_1$-free.  Let
$\varrho_\alpha = \langle
\pr(\varrho^1_\alpha(i),\varrho^2_\alpha(i)):i < \kappa\rangle$.
Clearly $\partial \in \issp(\{\varrho_\alpha:\alpha < \lambda\})
\Rightarrow \partial \ge \Upsilon(\theta^{++}_1)^{+ \kappa}$.

We choose $\langle \eta^1_\alpha:\alpha < \lambda\rangle$ be
$<_J$-increasing cofinal is some $(\prod\limits_{i < \theta_1}
\lambda^2_i,<_{J_2})$ for some regular $\lambda^2_i < \chi_1$, exist
because $(\chi_1,J_1)$ witness $\theta \in \Theta$.  Hence by
\ref{1.3.15} we have
$\partial \in \issp(\{\eta_\alpha:\alpha < \lambda\}) \Rightarrow
\partial \ge \theta^{+\theta_1} \ge \theta^{+ \kappa}_1$.
\end{PROOF}

\begin{claim}
\label{1f.56}
If (A) then (B) where
\mn
\begin{enumerate}
\item[$(A)$]  $(a) \quad J$ is a $\sigma^+$-complete ideal on $\kappa$
\sn
\item[${{}}$]  $(b) \quad \cF_i \subseteq {}^\kappa \mu$ has
  cardinality $\lambda$ for $i < \sigma$
\sn
\item[${{}}$]  $(c) \quad \mu = \mu^\sigma$ \underline{or}
$(\forall i)([\cF_i \subseteq \prod\limits_{\varepsilon < \kappa}
  \lambda_\varepsilon]$ and $\varepsilon < \kappa
  \Rightarrow (\lambda_\varepsilon)^\sigma < \mu$
\sn
\item[$(B)$]  there is $\cF \subseteq {}^\kappa \mu$ of cardinality
  $\lambda$ such that:
\sn
\item[${{}}$]  $(a) \quad \cF$ is $(\theta_2,\theta_1)-J$-free \when \,
  at least one $\cF_i$ is $(\theta_1,\theta_2)$-free
\sn
\item[${{}}$]  $(b) \quad \cF$ is $(\theta_n,\theta_0)-J$-free \when \,
  $\theta_0 < \ldots < \theta_n$ and for each $\ell < n$ for some 

\hskip25pt  $i < \sigma$ the set 
$\cF_i$ is $(\theta_{\ell+1},\theta_\ell)-J$-free.
\end{enumerate}
\end{claim}

\begin{PROOF}{\ref{1f.56}}
Straightforward.
\end{PROOF}

\noindent
We may note that (related to the beginning of \S3)
\begin{observation}
\label{1f.9d}
Claim \ref{1f.8} implies Claim \ref{1f.7}.
\end{observation}

\begin{PROOF}{\ref{1f.9d}}
We assume $\circledast$ from \ref{1f.7} and let $\theta_1 = \theta_2 =
\theta,\sigma = (2^\theta)^+,J_2 = J^{\bd}_\theta$ and prove that
$\circledast'$ of \ref{1f.8} holds, this suffices.
\smallskip

\noindent
Clause $\circledast'(a)$ holds by clause $\circledast(a)$.
\smallskip

\noindent
Clause $\circledast'(b)$ holds by clause $\circledast(b)$.
\smallskip

\noindent
Clause $\circledast'(c)$ holds as we have chosen $J_2$ as
$J^{\bd}_\theta$.
\smallskip

\noindent
Clause $\circledast'(d)$ holds by clause $\circledast(f)$.
\smallskip

\noindent
Clause $\circledast'(e)$ holds by clause $\circledast(e)$.
\smallskip

\noindent
Clause $\circledast'(f)$ holds, moreover $\circledast'(f)(\alpha)$
holds and we have chosen $\theta_1 = \theta$ and $J_2 =
J^{\bd}_\theta$ and by $\circledast( )$ the cardinal $\theta$ is regular.
\smallskip

\noindent
Clause $\circledast'(g)$ holds by clause $\circledast(g)$,
i.e. letting $\cT$ be as there, \wilog \, $\cT$ is a subtree of
${}^{\theta >}\chi$ and we can find pairwise distinct $\eta_\alpha \in
\lim_\theta(T) \subseteq {}^\theta \chi$ so $\eta_\alpha \in {}^\theta
\chi$ and $\alpha \ne \beta \Rightarrow \{i < \theta:\eta_\alpha(i) =
\eta_\beta(i)\} \subseteq \ell g(\eta_\alpha,m_\beta) \in
J^{\bd}_\theta = J_2$ by the choice of $J_2$.
\smallskip

\noindent
Clause $\circledast'(h)$ holds by the proof of $(*)_2$ inside the
proof of Claim \ref{1f.7}.
\smallskip

\noindent
Clause $\circledast'(i)$ holds, moreover clause
$\circledast'(i)(\alpha)$ holds because $\sigma = (2^\theta)^+ >
\cP(\theta) \ge |\cP(\theta)/J_2|$.
\end{PROOF}

\begin{remark}
In the proof of $(*)_1$ inside the proof of \ref{1f.7} we may wonder.
\end{remark}

\begin{question}
What occurs if we just assume
\mn
\begin{enumerate}
\item[$\odot$]  $\pp^+_J(\mu) > \chi$?
\end{enumerate}
\end{question}

\noindent
Answer:
\begin{claim}
\label{1f.70}
Inside the proof of \ref{1f.7} there is $\bar\rho$ as in 
$(*)_1$ provided that we add to the assumption:
\mn
\begin{enumerate}
\item[$(i)$]   at least one of the following holds:
\sn
\item[${{}}$]  $(\alpha) \quad \chi$ is regular
\sn
\item[${{}}$]  $(\beta) \quad 2^\kappa < \cf(\chi)$
\sn
\item[${{}}$]  $(\gamma) \quad \alpha < \mu \Rightarrow |\alpha|^{<
  \kappa} < \mu$.
\end{enumerate}
\end{claim}

\begin{PROOF}{\ref{1f.70}}
By the assumption $\odot$, for every regular $\chi_1 \in [\mu,\chi]$ we get
$\bar\lambda_{\chi_1}$ and subsequence $\bar \mu_{\chi_1}$ of $\bar
\mu$ and $\bar\sigma_{\chi_1}$ as above.

Now we use clause $\circledast(i)$, so one of the three possibilities
there holds.  The first say $\chi$ is regular, and we choose $\chi_1 =
\chi$ so using $\bar\mu_\chi,\bar\rho_\chi$ we are done; and \wilog \,
we assume $\chi$ is singular.  

The second says $\cf(\chi) > 2^\kappa$
hence for some $\bar\mu'$ the set $\Xi = \{\chi_1 < \chi:\chi_1 \ge
\mu$ is regular and $\bar\mu_{\chi_1} = \bar \mu'\}$ is unbounded in
$\chi$ and using the $\langle \bar\rho_{\chi_1}:\chi_2 \in \Xi\rangle$
by the proof of \cite[Ch.II,1.5A,pg.51]{Sh:g}, i.e. using a pairing
function on each $\mu'_i$ there is $a,\bar\rho$ as required in
$(*)_1$, replacing $\bar\mu$ by $\bar\mu'$, of course.

The third says $\alpha < \mu \Rightarrow |\alpha|^{< \kappa} < \mu$,
so \wilog \, $i < \kappa \Rightarrow \mu_i = \mu^{< \kappa}_i$.  Now
for every regular $\chi_1 \in (\mu,\chi)$ we define
$\bar\rho'_{\chi_1} = \langle \rho'_{\chi_1,\gamma}:\gamma <
\chi_1\rangle$ where $\rho'_{\chi_1,\gamma} \in \prod\limits_{i <
  \kappa} \mu_i$, yes using the original $\bar\mu$, is defined by
$\rho'_{\chi_1,\gamma}(i) = \bold h_i(\langle \rho_{\chi_1,\gamma}(j):j
< h_{\chi_1}(i)\rangle$ where $h_{\chi_1}(i) = \min\{\varepsilon <
\kappa:\lambda_{\chi_1,\varepsilon} > \mu_i\}$ and $\bold h_i$ is a
one-to-one function from $\Pi\{\lambda_{\chi_1,j}:j < h_{\chi_1}(i)\}$
into $\mu_i$.

Recalling $J_1 \supseteq J^{\bd}_\kappa$ clearly $\langle
\rho'_{\chi_1,\gamma}:\gamma < \chi_1\rangle$ is $(\mu^+,J_1)$-free as
a set, and we finish as in ``the second".  So we are done.
\end{PROOF}
\newpage

\section {On the $\mu$-free trivial dual conjecture} \label{On} 

We shall look at the following definition.
\begin{definition}
\label{0f.07}
1) For a ring $R$ and a cardinal
$\mu$, let sp$_\mu(R)$ be the class of
regular cardinals $\kappa$ such that there is a witness $(\bar G,h)$
where ``$(\bar G,h)$ is a witness for $\spp_\mu(R)$" means:
\mn
\begin{enumerate}
\item[$\circledast$]  $(a) \quad \bar G = \langle G_i:i \le \kappa
+1\rangle$
\sn
\item[${{}}$]  $(b)(\alpha) \quad \bar G$ is an increasing continuous
sequence 
\sn
\item[${{}}$]  $\qquad (\beta) \quad G_i$ is a left $R$-module,
free for $i \ne \kappa$
\sn
\item[${{}}$]  $(c) \quad$ if $i < j \le \kappa +1$ and $(i,j) \ne
(\kappa,\kappa +1)$, then $G_j/G_i$ is free,
\sn
\item[${{}}$]  $(d) \quad h$ is a homomorphism from $G_\kappa$ to $R$
as left $R$-modules,
\sn
\item[${{}}$]  $(e) \quad h$ cannot be extended to a homomorphism
from $G_{\kappa +1}$ to $R$,
\sn
\item[${{}}$]  $(f) \quad |G_{\kappa +1}| \le \mu$.
\end{enumerate}
\mn
2) For a ring $R$ and cardinals $\mu \ge \theta$, we define
sp$_{\mu,\theta}(R) = \text{ sp}^1_{\mu,\theta}(R)$ similarly,
replacing ``free" by ``$\theta$-free" in clauses (b) and (c).  Writing sp$_{<
\mu}(R)$ or sp$_{< \mu,\theta}(R)$ means that ``$|G_{\kappa +1}| <
\mu$" in clause (f).
\end{definition}

\begin{definition}
\label{0f.13}
1) Let sp$(R) = \cup\{\text{sp}_\mu(R):\mu$ a cardinal$\} 
= \{\kappa:\kappa$ is a regular
cardinal such that for some $\bar G$ the conditions $\circledast (a)-(e)$ from
\ref{0f.07}(1) hold$\}$.
\newline
2)  Let sp$_1(R) = \cap\{\text{sp}^1_\theta(R):\theta$ a cardinal$\}$
where sp$^1_\theta(R) = \{\kappa:\kappa$ is regular such that 
for some $\mu$, we have $\kappa \in \text{ sp}_{\mu,\theta}(R)\}$.
\end{definition}

\noindent
The next definition is similar to \ref{0f.07} (adding the parameter 
``$r \in R$"), but replacing the cardinal $\kappa$
by a set of ideals on $\kappa$, that is:
\begin{definition}
\label{5e.7}
1) Let sp$^2_{\lambda,\theta}(R)$ be 
the set of cardinals $\kappa$ such that $J^{\text{bd}}_\kappa \in \text{
SP}_{\lambda,\theta}(R)$, see below.

\noindent
2)  SP$_{\lambda,\theta}(R)$ is the set of
ideals $J$ on some $\kappa$ such that for every $r \in R
\backslash \{0\}$, there exists a witness $(\bar G,h)$ (for $r$),
where ``$(r,\bar G,h)$ is a witness for $\SP_{\lambda,\theta}(R)$" and
$(\bar G,h)$ witness $\SP_{\lambda,\theta}(R)$ (for $r$)"
means that $(r,\bar G,h)$ possesses the following properties:
\mn
\begin{enumerate}
\item[$\circledast$]  $(a) \quad \bar G = \langle G_i:i \le
\kappa +1\rangle$ is a sequence of (left) $R$-modules,
\sn
\item[${{}}$]  $(b) \quad G_\kappa = 
\oplus\{G_i : i < \kappa\} \subseteq G_{\kappa +1}$,
\sn
\item[${{}}$]  $(c) \quad$ if $u \in J$, then 
$G_{\kappa +1}/\oplus\{G_i : i \in u\}$ is a $\theta$-free (left) $R$-module,
\sn
\item[${{}}$]  $(d) \quad G_i$ is a $\theta$-free $R$-module and $G_i
  \ne 0$ for simplicity,
\sn
\item[${{}}$]  $(e) \quad |G_{\kappa +1}| \le \lambda$ and $\kappa \le
\lambda$ (follows in non-trivial cases)
\sn
\item[${{}}$]  $(f) \quad h$ is a non-zero homomorphism from $G_\kappa$ to
${}_R R$, i.e. $R$ 

\hskip25pt as a left module,
\sn
 \item[${{}}$]  $(g) \quad$ there is no homomorphism $h^+$ from
$G_{\kappa +1}$ to ${}_R R$ such that

\hskip25pt  $x \in G_\kappa \Rightarrow h^+(x) = h(x)r$.
\end{enumerate}
\mn
3) Omitting $\theta$ means replacing ``$\theta$-free" by ``free"; 
omitting $\theta$ and $\lambda$ means for some $\lambda$; 
writing ``$< \lambda$" has the obvious meaning.
\end{definition}

\begin{observation}
\label{5e.8}
1) If $J_1 \subseteq J_2$ are ideals on $\kappa$ \then \, 
$J_1 \in \SP_{\lambda,\theta}(R)$ implies $J_2 \in \SP_{\lambda,\theta}(R)$.

\noindent
2) If $J_\ell$ is an ideal on $\kappa_\ell$ for $\ell=1,2$ and $J_1
   \le_{\RK} J_2$ \then \, the above holds.

\noindent
3) If $G$ is a left $R$-module, $h$ a homomorphism from $G$ to $R$ (as
   a left $R$-module) and $r \in R$ \then \, the mapping $x \mapsto
   h(x) \cdot r$ is a homomorphism from $G$ to $R$.
\end{observation}

\begin{proof}
Straightforward.
\end{proof}

\begin{remark}
\label{5e.9}
1) Note that if $R$ is a torsion free ring (i.e. $a b=0_R \Rightarrow
a=0_R \vee b=0_k$) \then \, clause (g) of Definition \ref{5e.7} holds
also for $r=1$.  If in addition every left ideal of $R$ is principal then
\wilog \, $r=1$.

\noindent
2) In \ref{5e.7}(2), if $\kappa$ is regular
for $J = J^{\text{bd}}_\kappa$, we may
replace clause (c) by ``$i <\kappa \Rightarrow G_{\kappa
  +1}/(\oplus\{G_j:j < i\})$ is a $\theta$-free $R$-module"; in 
general, we may replace $J$ by a directed subset
of ${\mat P}(\kappa)$ generating it.

\noindent
3) Note that if $J \in \SP_{\lambda,\theta}(R)$ then $\lambda \ge |R|$
   because by clause (c) of \ref{5e.7}(2) we know that $G_{\kappa +1}$
   is $\theta$-free hence is of cardinality $\ge |R|$, (except when
   $G_{\kappa +1}$ is zero contradicting clause (g) there) and
   $\lambda \ge |G_{\kappa +1}|$ by clause (e) there.
\end{remark}

As in \ref{0p.7}
\begin{definition}
\label{e5.12}
Let TDU$_{\lambda,\mu}(R)$ mean that $R$ is a ring and
there is a $\mu$-free left $R$-module $G$ of cardinality 
$\lambda$ with {\rm Hom}$_R(G,R) = \{0\}$, that is, with 
no non-zero homomorphism from $G$ to $R$ as left $R$-modules.
\end{definition}

\begin{claim}
\label{5e.14}
A sufficient condition for {\rm TDU}$_{\lambda,\mu}(R)$ is:
\mn
\begin{enumerate}
\item[$\circledast$]  $(a) \quad R$ is a ring with unit ($1 = 1_R$)
\sn
\item[${{}}$]  $(b) \quad J \in \SP_{\chi,\mu}(R)$ so is an ideal on $\kappa$ 
\sn
\item[${{}}$]  $(c) \quad \bar C = \langle C_\delta:\delta \in
S\rangle$ is such that {\rm otp}$(C_\delta) = \kappa$ and $C_\delta
\subseteq \delta$ where $S$

\hskip25pt  is an unbounded subset of $\lambda$
\sn
\item[${{}}$]  $(d) \quad \lambda > |R| + \chi$ is regular, or at least
{\rm cf}$(\lambda) > |R| + \chi + \kappa$ and $\mu > \kappa$
\sn
\item[${{}}$]  $(e) \quad$ {\rm BB}$(\lambda,\bar C,\Upsilon,J)$ where
  $\Upsilon = 2^{(2^{|R|+\chi})^+},\kappa \le (2^\chi)^+$ 
and $\chi < \lambda$, so 

\hskip25pt $I_* = J^{\text{\rm bd}}_S$ recalling 
$J^{\text{\rm bd}}_S = \{\mat
U:\mat U \subseteq S$ and $\sup(\mat U) < \sup(S))\}$
\sn
\item[${{}}$]  $(f) \quad \bar C$ is $(\mu,J)$-free; recalling
\ref{1.3.14}(1A).
\end{enumerate}
\end{claim}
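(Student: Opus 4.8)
The plan is the standard black-box construction of a module with prescribed dual, transcribed to left $R$-modules via the prediction/spectrum sets $\mathrm{SP}_{\chi,\theta}(R)$ of Definition \ref{5e.7}. First I would fix, using clause (e) and Definition \ref{0p.14} (with the ideal slot $I_*=J^{\mathrm{bd}}_S$, so that $\sup S=\lambda$), a partition $\langle S_i:i<\lambda\rangle$ of $S$ witnessing $\mathrm{BB}(\lambda,\bar C,\beth_2(|R|+\chi),J)$, so that for each $i$ and each coloring $\mathbf c:\mathrm{Dom}(\bar C)\to\beth_2(|R|+\chi)$ the equality $\mathbf c_\delta=\mathbf c\restriction C_\delta$ holds for $\delta$ in a set unbounded in $\lambda$. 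Then I would build $G$ as an increasing continuous union $G=\bigcup_{\alpha<\lambda}G^\alpha$ of $R$-modules of size $<\lambda$ (possible since $\lambda$ is regular, or at least $\mathrm{cf}(\lambda)>|R|+\chi$), where each successor step adjoins a fresh ``skeleton'' generator freely, and at an active stage $\delta\in S$ one additionally grafts an $\mathrm{SP}_{\chi,\theta}(R)$-witness: having inside $G^\delta$ an embedded copy $G^-_\delta\cong\bigoplus_{i<\kappa}G^\delta_i$ whose $i$-th summand is anchored at the $i$-th element of $C_\delta$ (hence $|G^-_\delta|\le\chi$), one reads off from the black-box guess $\mathbf c_\delta$ the ring element $r_\delta\in R$ that a candidate $h:G\to R$ compatible with $\mathbf c_\delta$ would induce on a designated test element, takes a witness $(\bar G,h)$ for $r_\delta$ as in Definition \ref{5e.7}(2), identifies $G_\kappa=\bigoplus_{i<\kappa}G_i$ with $G^-_\delta$ so that $h$ becomes the compatible partial map, and sets $G^+_\delta\cong G_{\kappa+1}\subseteq G^{\delta+1}$. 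The number of colors suffices because a guess must only code a function from a set of size $\le\chi$ into $R$ together with the auxiliary $\mathrm{Hom}(G_i,{}_RR)$-data, and a cardinal computation ($|R|^\chi\le2^{|R|+\chi}$, and the $\mathrm{Hom}$-iterate lands below $\beth_2(|R|+\chi)$) gives this, exactly as in Remark \ref{5e.9}(1).

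Next I would verify that $G$ is $\mu$-free. Given $G'\le G$ generated by $<\mu$ elements, the set $u$ of $\delta\in S$ whose grafted pieces $G^\pm_\delta$ meet the support of those generators has size $<\mu$; by $(\mu,J)$-freeness of $\bar C$ (clause (f) and Definition \ref{1.3.14}(1A)) choose for each $\delta\in u$ a set $A'_\delta\in J$ of initial order types such that the tails are pairwise disjoint. Enumerating $u$ by order type and peeling off one $\delta$ at a time, the increment added by $\delta$ over the already-built free module is (a free part from skeleton generators together with) $\bigoplus_{i\in\kappa\setminus A'_\delta}G^\delta_i$ and a copy of $G^+_\delta/\bigoplus_{i\in A'_\delta}G^\delta_i$; by Definition \ref{5e.7}(2) each $G^\delta_i$ is $\theta$-free and $G_{\kappa+1}/\bigoplus_{i\in u'}G_i$ is $\theta$-free for $u'\in J$, and $\theta\le\mu$, so each increment is free over a free module and the union stays free. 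This is the $R$-module analogue of Claim \ref{1f.14}, which I would invoke in spirit. Hence $|G|=\lambda$ (using $\chi\le\lambda$) and $G$ is $\mu$-free.

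For triviality of the dual, suppose $h\in\mathrm{Hom}_R(G,R)$ is nonzero, say $h(x_0)=r\ne0$ with $x_0\in G^{\alpha_0}$, $\alpha_0<\lambda$. The assignment $\beta\mapsto$ (code of $h$ on the relevant neighbourhood of $\beta$) is a coloring $\mathbf c:\mathrm{Dom}(\bar C)\to\beth_2(|R|+\chi)$, so by the prediction clause of $\mathrm{BB}$ there is $\delta\in S$ with $\delta>\alpha_0$ and $\mathbf c_\delta=\mathbf c\restriction C_\delta$. But at stage $\delta$ we grafted a witness $(\bar G,h^\delta)$ for the ring element that $\mathbf c_\delta$ correctly predicts $h$ induces on $G^-_\delta$, with $G^+_\delta\subseteq G$; Definition \ref{5e.7}(2)(g) then says the appropriate $r$-twist of $h^\delta=h\restriction G^-_\delta$ admits no homomorphic extension to $G^+_\delta$, contradicting that $h$ is defined on all of $G$. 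Therefore $\mathrm{Hom}_R(G,R)=\{0\}$ and $G$ witnesses $\mathrm{TDU}_{\lambda,\mu}(R)$.

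The main obstacle is the coherence of the construction in the first step: one must arrange the coding scheme so that a single $\beth_2(|R|+\chi)$-coloring simultaneously (i) determines the ring element and the $\mathrm{SP}$-witness grafted at $\delta$, (ii) is genuinely realized by the eventual restriction of any candidate $h$ (not merely ``guessed'' by the black box), and (iii) does not clash across $\delta$'s whose ladders $C_\delta$ overlap — which is where the $(\mu,J)$-freeness, the $\theta$-freeness of the quotients $G_{\kappa+1}/\bigoplus_{i\in u}G_i$ for $u\in J$, and, if needed, the tree-likeness of $\bar C$ (Remark \ref{5e.9}(3)) must be combined carefully, and where one must recheck that inserting the $G^+_\delta$'s is compatible with the freeness induction of the second step. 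The hypotheses $\chi\le\lambda$, $\mathrm{cf}(\lambda)>|R|+\chi$, $\mu>\kappa$ and $\mu\ge\theta$ are precisely what make the size count, the initial-segment argument for prediction, the anchoring along $C_\delta$, and the freeness induction all go through.
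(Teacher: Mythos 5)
There is a genuine gap, and it sits exactly where you flagged ``the main obstacle'' without resolving it: the passage from a correct black-box prediction to a violation of clause (g) of Definition \ref{5e.7}. The witness $(\bar G,h)$ supplied by $J\in\mathrm{SP}_{\chi,\theta}(R)$ comes with a \emph{fixed} homomorphism $h$ on $G_\kappa=\bigoplus_{i<\kappa}G_i$, and clause (g) only forbids extending the specific twist $x\mapsto h(x)r$. A global candidate $h^{\mathrm{gl}}\in\mathrm{Hom}_R(G,R)$, restricted to your embedded copy $G^-_\delta$ of $G_\kappa$, is an arbitrary element of $\mathrm{Hom}(G_\kappa,{}_RR)$; you cannot ``identify $G_\kappa$ with $G^-_\delta$ so that $h$ becomes the compatible partial map,'' because the identification is chosen at stage $\delta$ while $h^{\mathrm{gl}}$ is a homomorphism quantified over later, and Definition \ref{5e.7} does not provide a witness for every possible restriction, only one per $r\in R\setminus\{0\}$. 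The paper closes this gap with a specific device your proposal lacks. It anchors at each $\alpha\in C_\delta$ a module $G^*_\alpha$ isomorphic to a fixed $G_*$ of cardinality $(2^\chi)^+$ containing $(2^\chi)^+$ pairwise isomorphic copies of each relevant isomorphism type; since $|\mathrm{Hom}(G_*,{}_RR)|\le 2^\chi$, a pigeonhole argument produces, for the black-box-guessed $h^1_\delta$, two copies $\varepsilon_{\delta,\iota,1}\ne\varepsilon_{\delta,\iota,2}$ on which the composites with $h^1_\delta$ agree. Embedding $G^\delta_\iota$ via the \emph{difference} of the two corresponding embeddings kills $h^1_\delta$ on the image, and adding the correction term $h^*_\delta(x)\,x^*_\delta$ (where $x^*_\delta$ is a designated test element with $h^{\mathrm{gl}}(x^*_\delta)=r^*_\delta$) forces $h^{\mathrm{gl}}$ on the embedded $G_\kappa$ to equal exactly $h^*_\delta(\cdot)\,r^*_\delta$ — which is the twist that (g) declares non-extendable. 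Note also that your cardinality count (``a guess must only code a function from a set of size $\le\chi$'') understates what is needed: the anchored summands must have size $(2^\chi)^+$, not $\chi$, precisely to run this pigeonhole, which is why $\beth_2(|R|+\chi)$ colours appear.

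A secondary, less serious difference: the paper does not build $G$ as an increasing union with grafted extensions but as a quotient $H_*/L$, where $H_*$ is a single direct sum $\bigoplus_{\alpha}G^*_\alpha\oplus\bigoplus_{\delta}K^*_\delta$ and $L=\sum_\delta L_\delta$ with $L_\delta$ the ``graph'' of the difference of two maps of $G^\delta_{\kappa+1}$ into $H_*$; dividing by $L_\delta$ is the pushout realizing your grafting step, so this is a presentational rather than mathematical divergence, and your freeness argument (peeling off one $\delta$ at a time using the sets from $(\mu,J)$-freeness together with clauses (c),(d) of Definition \ref{5e.7}) matches the paper's in outline. But without the difference-plus-correction mechanism above, the contradiction at the predicted $\delta$ does not go through, so the proposal as written does not prove the claim.
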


\begin{remark}
0) See more in Definition \ref{5e.38} on.

\noindent
1) In the present definition of $\SP_{\lambda,\theta}(R)$, 
we need to use $\BB(\lambda,\bar C,\Upsilon,J)$ before
applying SP in \ref{5e.14}.  
But normally it suffices to have a version of BB 
with fewer colours and weaker demands on $|G_i|$,
for example:
\mn
\begin{enumerate}
\item[$(A)$]  Use BB$(\lambda,\bar C,(\chi_*,\theta),J)$ and $\chi_* =
\Pi\{|R|^{\chi_i}:i < \kappa\}$, where $\chi_i = |G_i|
+$ sup$\{|\text{Hom}(G_j,{}_R R)|:j < \kappa\}$
\sn
\item[$(B)$]   We define $\SP_{\lambda,\bar \chi,\sigma,\theta}(R)$ as
in Definition \ref{5e.7}(2) where 
$\bar \chi = \langle \chi_i:i < \kappa\rangle$ and
write $\chi$ if $(\forall i)(\chi_i = \chi$) but instead of clauses
(e) and (f) + (g)
\sn
\begin{enumerate}
\item[$(e)'$]  $|G_{\kappa +1}| \le \lambda$ and $|\text{Hom}(G_i,R)| \le
\chi_i$,
\sn
\item[$(f)'$]  $\bar h = \langle h_i:i < \sigma\rangle,h_i \in
\text{ Hom}(G_\kappa,{}_R R)$ and if $i <j < \sigma$, then $h_i - h_j$
cannot be extended to any $h' \in \text{ Hom}(G_{\kappa +1},{}_R R)$,
\end{enumerate}
\sn
\item[$(C)$]   In Definition \ref{5e.14}, we change
\sn
\begin{enumerate}
\item[$(b)'$]  $\kappa \in \SP_{\lambda,\chi,\sigma,\theta}$
or ($\bar C$ is tree-like, $\kappa \in \SP_{\lambda,\bar
\chi,\sigma,\theta}$ and $J \in \SP_{\lambda,\bar\chi,\sigma,
\theta}$ is an ideal on $\kappa$)
\sn
\item[$(e)'$]  BB$(\lambda,\bar C,(\chi,\sigma),J)$. 
\end{enumerate}
\end{enumerate}
\mn
2) BB$(\lambda,\bar C,(\chi,1/\sigma),J)$ is 
sufficient for the correct version of \ref{5e.7}, see Definition
\ref{0p.15}(2); really we need there to use $\theta = 2^\kappa$ and the
guessing is of an initial segment of the possibilities, i.e. in
\ref{5e.7} we need: without loss of generality 
$|G_i| \le \kappa$ for every $i$,
given $f_\varepsilon \in \Hom(G_\kappa,{}_R R)$
for $\varepsilon < \varepsilon(*) < 2^\kappa$ we can find, e.g. a
permutation $\pi$ of $\kappa$, inducing $G^\pi_\kappa \supseteq
\oplus\{G_i:i < \kappa\}$ such that none of them can be extended to
$f \in \Hom(G^\pi_\kappa,{}_R R)$.
This means we can use ``very few colours" as in \cite[AP,\S1]{Sh:f},
i.e., \ref{0p.15}(2A).

\noindent
3) See $\odot_0$ in \S0.

\noindent
4) We may use only tree-like $\bar C$'s (in \ref{5e.14}(c)) and in
BB$(\lambda,\bar C,(\bar\chi,\sigma),J)$ (in $(C)(e)'$ above).

\noindent
5) In the proof of \ref{5e.14}, if we demand that $G_i (i < \kappa)$
is a free $R$-module, 
then we can save on $\chi$, using free $R$-moduels $G^*_\alpha$'s.

\noindent
6) The beginning of the proof can be stated separately.
\end{remark}

\begin{PROOF}{\ref{5e.14}}  
\Wilog \, $\bar C$ is normal, see \ref{1.3.14}(5).  
By the definitions \ref{0p.14}, \ref{0p.15}
 of BB$(\lambda,\bar C,\Upsilon,J)$, there is a sequence 
$\langle S_\varepsilon:\varepsilon < \lambda \rangle$ 
of $\lambda$ pairwise disjoint subsets of
$S=S(\bar C)$ such that BB$^-(\lambda,\bar C \restriction
S_\varepsilon,\Upsilon,J)$ holds for each $\varepsilon < \lambda$.

Without loss of generality $\delta \in S \Rightarrow C_\delta \cap S =
\emptyset$, moreover $S$ is a set of limit ordinals and each
$C_\delta$ is a set of successor ordinals and we
let $C_* = \cup\{C_\delta:\delta \in S\}$\footnote{Why?  Replace $S$
by $S' = \{\delta \in S:\delta$ a limit ordinal$\}$ and replace
$C_\delta$ by $C'_\delta := \{\alpha +1:\alpha \in S\}$.}.  
We say that $D$ is $\bar C$-closed when 
$D \subseteq C_* \cup S$ and $\delta \in
D \cap S \Rightarrow C_\delta \subseteq D$.  So for every $B'
\subseteq C_* \cup S$ there is a $\bar C$-closed $B'' \subseteq C_*
\cup S$ such that $B' \subseteq B'' \wedge |B''| \le |B'| + \kappa$. 
We can put $\lambda$ of the $S_i$'s together, i.e.
\mn
\begin{enumerate}
\item[$\boxplus_1$]  we can replace $\langle S_i:i < \lambda\rangle$ by 
$\langle \cup\{\{S_i:i \in {\mat U}_\zeta\}:\zeta < \lambda\}\rangle$ 
provided that $\langle {\mat U}_\zeta:\zeta < \lambda\rangle$ is a
partition of $\lambda$ with each $\cU_\zeta$ non-empty).
\end{enumerate}
\mn
Also
\mn
\begin{enumerate}
\item[$\boxplus_2$]  we can replace $\langle C_\delta:\delta \in
S\rangle$ by $\langle C_\delta \backslash h(\delta):\delta \in
S\rangle$ when $h$ is a function satisfying
$\delta \in S \Rightarrow h(\delta) \in C_\delta$,
\end{enumerate}
\mn
hence \wolog \, 
\mn
\begin{enumerate}
\item[$\boxplus_3$]  $(a) \quad \varepsilon < \lambda \wedge
S' \subseteq S_\varepsilon \wedge |S'| < \lambda \Rightarrow 
\text{ BB}^-(\lambda,\bar C \restriction
(S_\varepsilon \backslash S'),\Upsilon,J)$
\sn
\item[${{}}$]  $(b) \quad$ if $\alpha < \lambda$ \then \, 
for $\lambda$ ordinals $\varepsilon < \lambda$ we have

\hskip25pt  $\delta \in S_\varepsilon \Rightarrow \alpha < \min(C_\delta)$.
\end{enumerate}
\mn
Note that we have
\mn
\begin{enumerate}
\item[$\circledast_0$]   $\chi \ge |R| + \kappa$ and $\lambda > 2^\chi$.
\end{enumerate}
\mn
[Why?  We have $\chi \ge |R|$ because $\SP_{\chi,\mu}(R) \ne
  \emptyset$ by clause (b) of the assumption, using \ref{5e.9}(3).
The ``and" holds as $\lambda \ge \Upsilon$ by the first phrase of 
clause (e) of the assumption and $\Upsilon > 2^\chi$ by the second
phrase of clause (e) of the assumption.]
\mn
\begin{enumerate}
\item[$\circledast_1$]  There is a $\mu$-free $R$-module $G_*$ of
cardinality $\chi_* := (2^\chi)^+$ such that
\sn
\begin{enumerate}
\item[$(a)$]  $G_* = \oplus\{G_{*,\varepsilon}:\varepsilon < \chi_*\}$,
\sn
\item[$(b)$]  if $G$ is a $\mu$-free $R$-module of cardinality
$\le \chi$, \underline{then} $G$ is isomorphic to $G_{*,\varepsilon}$ 
for $\chi_*$ ordinals $\varepsilon < \chi_*$, (actually we need just
that for any $r \in R \backslash \{0_R\}$ there is a sequence $\langle
G_i:i \le \kappa +1 \rangle$ satisfying $\circledast$ of
Definition \ref{5e.7}(2) with $\chi,\mu$ here standing for
$\lambda,\theta$ there),
\sn
\item[$(c)$]   $G_{*,\varepsilon}$ is a $\mu$-free $R$-module of 
cardinality $\le \chi$ for each $\varepsilon < \chi_*$.
\end{enumerate}
\end{enumerate}
\mn
[Why?  Because the number of such $G$'s up to isomorphism is $\le
2^{|R|+\chi} = 2^\chi$ and $\kappa \le (2^\chi)^+ = \chi_*$.]

Let $E = \{(\varepsilon,\zeta):\varepsilon,\zeta < \chi_*$ and
$G_{*,\varepsilon} \cong G_{*,\zeta}\}$, so $E$ is an equivalence
relation on $\chi_*$ and $\varepsilon/E := \{\zeta <
\chi_*:\varepsilon E \zeta\}$ is the equivalence class of $\varepsilon
< \chi_*$ under $E$.  
For $\varepsilon < \chi_*$, let $f^1_\varepsilon$ be an isomorphism from
$G_{*,\text{min}(\varepsilon/E)}$ onto $G_{*,\varepsilon}$.
\mn
\begin{enumerate}
\item[$\circledast_2$]  For any $r \in R \backslash \{0\}$ 
let ${\bold x}_r = \{(\bar G,h):(\bar G,h)$ witness $J \in \text{
SP}_{\chi,\theta}(R)$ for $r$, see Definition \ref{5e.7}(2)$\}$,
\sn
\item[$\circledast_3$]  $H_* := \bigoplus\{G^*_\alpha:\alpha \in C_*\}
\oplus \bigoplus\{K^*_\delta:\delta \in S\}$, where 
\sn
\begin{enumerate}
\item[$\bullet_1$]   each $G^*_\alpha$ is isomorphic to $G_*$ under 
$g^1_\alpha$,
\sn
\item[$\bullet_2$]  $K^*_\delta$ isomorphic to $G_*$
for $\delta \in S$ under $g^2_\delta$ and 
\sn
\item[$\bullet_3$]  for $\varepsilon < \chi_*$
let $G_{\alpha,\varepsilon} = g^1_\alpha(G_{*,\varepsilon}),
K_{\delta,\varepsilon} = g^2_\alpha(G_{*,\varepsilon})$ 
\end{enumerate}
\sn
\item[$\circledast_4$]  let $K_{< \delta} = 
\oplus\{G^*_\alpha:\alpha \in C_\delta\}$ for $\delta \in S$, which has
cardinality $\chi_*$ as $\kappa \le \chi_*$ by clause (e) of the assumption
\sn
\item[$\circledast_5$]  for every $B \subseteq C_* \cup S$ 
let $H_B := \bigoplus\{G^*_\alpha:\alpha \in B \cap C_*\}
\oplus \bigoplus\{K^*_\delta:\delta \in S \cap B\}$.
\end{enumerate}
\mn
We easily see that
\mn
\begin{enumerate}
\item[$\circledast_6$]  for every $x \in H_*$ there is a
$\bar C$-closed set $D^*_x \subseteq C_* \cup S$ of cardinality $\le \kappa$
such that $x \in H_{D^*_x}$, in fact there is a minimal one.
\end{enumerate}
\mn
Let
\mn
\begin{enumerate}
\item[$\circledast_7$]  $(a) \quad \langle(x_i,r_i):i < \lambda\rangle$ 
list the pairs $(x,r)$ such that $x \in H_*,r \in R \backslash \{0_R\}$
\sn
\item[${{}}$]  $(b) \quad$ by $\circledast_6 + \boxplus_3$ \wilog \,
$\delta \in S_i \Rightarrow \sup(D^*_{x_i}) < \text{ min}(C_\delta)$.
\mn
\end{enumerate}

Let
\mn
\begin{enumerate}
\item[$\circledast_8$]  $H_{<\alpha} :=
   \oplus\{G^*_\beta,K^*_\delta:\beta \in C_* \cap \alpha$ 
and $\delta \in S \cap \alpha\}$.
\end{enumerate}
\mn
For $\delta \in S$ let
$\beta(\delta,\iota)$ be the $\iota$-th member of $C_\delta$.

As $\delta \in S$, clearly Hom$(K_{<\delta},{}_R R)$ is a set of
cardinality $\le 2^{\chi_*} = \Upsilon$.  Also any $f \in \text{ Hom}(K_{<
\delta},{}_R R)$ is determined by $\langle f \restriction
G^*_\alpha:\alpha \in C_\delta\rangle$.
Hence by clause (e) of the assumption,
for each $i < \lambda$, we can find $\langle
h^1_\delta:\delta \in S_i\rangle$ such that
\mn
\begin{enumerate}
\item[$\circledast_9$]  $(a) \quad$ if $\delta \in S_i$, then
$h^1_\delta \in \text{ Hom}(K_{<\delta},{}_R R)$
\sn
\item[${{}}$]  $(b) \quad$ if $i < \lambda$ and 
$h \in \Hom(H_*,{}_R R)$, then for some (even stationarily 

\hskip25pt  many) $\delta \in S_i$, we have $h^1_\delta \subseteq h$
\sn
\item[$\circledast_{10}$]  for $\delta \in S_i$, let
\sn
\begin{enumerate}
\item[$(a)$]   $x^*_\delta = x_i,r^*_\delta = r_i$
\sn
\item[$(b)$]   let $\bar N^\delta = \langle N^\delta_\iota:
\iota \le \kappa +1\rangle$ and $h^*_\delta$ be, for $r^*_\delta$, 
as guaranteed in Definition \ref{5e.7}(2), with $N^\delta_i$ here
standing for $G_i$ there, so $h^*_\delta \in \Hom(N^\delta_\kappa,{}_R R)$
\sn
\item[$(c)$]  for $\iota < \kappa$, let $\varepsilon(\delta,\iota)
= \text{ Min}\{\varepsilon < \chi_*:G_{*,\varepsilon} \cong
N^\delta_\iota\}$ and let $f^0_{\delta,\iota}$ be an isomorphism 
from $N^\delta_\iota$ onto $G_{*,\varepsilon(\delta,\iota)}$.
\end{enumerate}
\end{enumerate}
\mn
[Why is this possible?  By clause (b) of the assumption.]

Now
\begin{enumerate}
\item[$\circledast_{11}$]   for $\delta \in S_i$ and $\iota < \kappa$ 
we can choose $\varepsilon_{\delta,\iota,1} 
< \varepsilon_{\delta,\iota,2} < \chi_*$
from $Y = Y_{\delta,\iota} = \{\zeta < \chi_*:
G_{*,\varepsilon(\delta,\iota)} \cong G^\delta_{*,\zeta}\}$ 
such that $h^1_\delta \circ g^1_{\beta(\delta,\iota)} 
\circ f^1_{\varepsilon_{\delta,\iota,1}} \circ f^0_{\delta,\iota} =  
h^1_\delta \circ g^1_{\beta(\delta,\iota)}  \circ
f^1_{\varepsilon_{\delta,\iota,2}} \circ f^0_{\delta,i}$.
\end{enumerate}
\mn
[Why?  Note that $\min(Y) = \varepsilon(\delta,\iota)$ and
\mn
\begin{enumerate}
\item[$\bullet$]  $h^1_\delta \in \text{ Hom}(K_{< \delta},{}_R R)$
hence $h^1_\delta \rest G^*_{\beta(\delta,\iota)} \in 
\text{ Hom}(G^*_{\beta(\delta,\iota)},{}_R R)$
\sn
\item[$\bullet$]  $g^1_{\beta(\delta,\iota)}$ is an isomorphism from
$G_*$ onto $G^*_{\beta(\delta,\iota)}$ hence $h^1_\delta \circ
g^1_{\beta(\delta,\iota)} \in \text{ Hom}(G_*,{}_R R)$
\sn
\item[$\bullet$]  $f^1_\varepsilon$, see before $\circledast_2$, 
is an isomorphism from $G_{*,\min(Y)}$ onto 
$G_{*,\varepsilon} \subseteq G_*$ for $\varepsilon \in Y$
\sn
\item[$\bullet$]  $\langle h^1_\delta \circ g^1_{\beta(\delta,\iota)}
\circ f^1_\varepsilon:\varepsilon \in Y \rangle$ is a sequence of
members of $\Hom(G_{*,\min(Y)},{}_R R)$
\sn
\item[$\bullet$]  Hom$(G_{*,\min(Y)},{}_R R)$ has cardinality
$\le |R|^{|G_{*,\min(Y)}|} \le |G_*| \le 2^{\chi +|R|}$, whereas
$|Y| = \chi_* = (2^\chi)^+$.
\end{enumerate}
\mn
Hence we can chose
$\varepsilon_{\delta,\iota,1},\varepsilon_{\delta,\iota,2}$ such that
\mn
\begin{enumerate}
\item[$\bullet$]  $\varepsilon_{\delta,\iota,1} <
  \varepsilon_{\delta,\iota,2}$ are members of $Y$ satisfying
  $h^1_\delta \circ g^1_{\beta(\delta,\iota)} \circ
  f^1_{\varepsilon_{\delta,\iota,2}} = h^1_\delta \circ
g^1_{\beta(\delta,\iota)} \circ f^1_{\varepsilon_{\delta,\iota,1}}$.
\end{enumerate}
\mn
So recalling $\circledast_{10}(c)$ 
the desired conclusion of $\circledast_{11}$ holds.]

Let $g^2_{\delta,\iota}$ be the following
embedding of $N^\delta_\iota$ into $H_*$, in fact, into
$G^*_{\beta(\delta,\iota)}$ (recalling $f^0_{\delta,\iota}$ is an
isomorphism from $N^\delta_\iota$ onto $G_{*,\min(Y)}$):
\mn
\begin{enumerate}
\item[$(*)_0$]  $g^2_{\delta,\iota}(x) = g^1_{\beta(\delta,\iota)} \circ
f^1_{\varepsilon_{\delta,\iota,2}} \circ f^0_{\delta,\iota}(x) 
- g^1_{\beta(\delta,\iota)} \circ f^1_{\varepsilon_{\delta,\iota,1}}
\circ f^0_{\delta,\iota}(x)$ for $x \in G^\delta_\iota$.
\end{enumerate}
\mn
Let $g^3_\delta$ be the embedding of $N^\delta_\kappa$ into $H_*$
extending $g^2_{\delta,\iota}$ for each $\iota < \kappa$, so
\mn
\begin{enumerate}
\item[$(*)_1$]  $(a) \quad g^3_\delta$ is an embedding of
  $N^\delta_\kappa$ into $K_{< \delta} \subseteq H_*$
\sn
\item[${{}}$]  $(b) \quad h^1_\delta \restriction
\Rang(g^3_\delta)$ is zero.
\end{enumerate}
\mn
Let $g^4_\delta$ be the following homomorphism from $N^\delta_\kappa$ 
into $H_*$
\mn
\begin{enumerate}
\item[$(*)_2$]  $g^4_\delta(x) = g^3_\delta(x) + 
h^*_\delta(x) x^*_\delta$ for $x \in N^\delta_\kappa$.
\end{enumerate}
\mn
[Why?  Recalling $x^*_\delta \in H_{< \delta}$ is from
$\circledast_{10}(a), h^*_\delta \in \Hom(N^\delta_\kappa,{}_R R)$
  is from $\circledast_{10}(b)$ so $h^*_\delta(x) \in R$ 
hence $h^*_\delta(x) x^*_\delta \in H_*$ indeed.]

By the choice of $H_{< \delta}$ as $\delta \in S_i \Rightarrow
x^*_\delta = x_i \in H_{D^*_{x_i}} \subseteq H_{< \min(C_\delta)}
\subseteq H_{< \delta}$ using $\circledast_7(b)$ clearly
\mn
\begin{enumerate}
\item[$(*)_3$]  $g^4_\delta$ is an embedding of
$N^\delta_\kappa$ into $H_{< \delta}$.
\end{enumerate}
\mn
So by $(*)_1 + (*)_2$ we have
\mn
\begin{enumerate}
\item[$(*)_4$]  if $h$ is a homomorphism from $H$ into ${}_R R$
 where $K_{< \delta} \subseteq H \subseteq H_*$ such
that $h^1_\delta \subseteq h  \wedge h(x^*_\delta) = r^*_\delta$,
\then \,: $x \in N^\delta_\kappa \Rightarrow h(g^4_\delta(x)) 
= h^*_\delta(x) r^*_\delta$.
\end{enumerate}
\mn
Let $\alpha_{\delta,\kappa} < \chi_*$ be such that
$G_{*,\alpha_{\delta,\kappa}} \cong N^\delta_{\kappa +1}$, and let
$f^0_{\delta,\kappa}$ be an isomorphism from $N^\delta_{\kappa +1}$
onto $G_{*,\alpha_{\delta,\kappa}}$, and recalling
$\circledast_3,\bullet_2$ it follows that $g^2_\delta \circ
f^0_{\delta,\kappa}$ embeds $N^\delta_{\kappa +1}$ into $K^*_\delta
\subseteq H_*$ hence
letting $f^4_{\delta,\kappa} = f^0_{\delta,\kappa} \rest
N^\delta_\kappa$ we have $g^2_\delta \circ f^4_{\delta,\kappa} - 
g^4_\delta$ is a homomorphism from $N^\delta_\kappa$ into $H_*$ (actually an
embedding).   

Let
\mn
\begin{enumerate}
\item[$(*)_5$]  $L_\delta = \{g^2_\delta \circ f^4_{\delta,\kappa}(x) 
- g^4_\delta(x):x \in N^\delta_\kappa\}$.
\end{enumerate}
\mn
Clearly $L_\delta$ is an $R$-submodule of $H_*$.
Now by the choice of $(\bar N^\delta,r^*_\delta,h^*_\delta)$ we shall show:
\mn
\begin{enumerate}
\item[$(*)_6$]  there is no homomorphism $h$ from $H_*$ into ${}_R R$
such that $h^1_\delta \subseteq h$ and $h(x^*_\delta) = r^*_\delta$ and $h
\restriction L_\delta = 0_{L_\delta}$ that is constantly zero.
\end{enumerate}
\mn
[Why?  Toward a contradiction assume $h$ is a counterexample
\mn
\begin{enumerate}
\item[$\oplus_{6.1}$]  if $x \in \Rang(g^3_\delta)$ then $x \in K_{<
  \delta}$ and $h(x) = h^1_\delta(x) = 0$.
\end{enumerate}
\mn
[Why?  Note $\Rang(g^3_\delta) \subseteq K_{< \delta}$ hence $x \in
  K_{< \delta}$ by $(*)_1(a),h \supseteq h^1_\delta$ by the choice
  of $h$ and $\Dom(h^1_\delta) = K_{< \delta}$ by $\circledast_9(a)$
  hence $h \rest \Rang(g^3_\delta) = h^1_\delta \rest
  \Rang(g^3_\delta)$.  So as $x \in \Rang(g^3_\delta)$ by the
  assumption of $\oplus_{6.1}$, clearly we have
$h(x) = h^1_\delta(x)$.  But $h^1_\delta \rest \Rang(g^3_\delta)$
  is constantly zero by $(*)_1(b)$ and $x \in \Rang(g^3_\delta)$ so
  $h^1_\delta(x) = 0$, so we are done.]
\mn
\begin{enumerate}
\item[$\oplus_{6.2}$]   $x \in N^\delta_\kappa \Rightarrow
h(g^4_\delta(x)) = h^*_\delta(x) \cdot r^*_\delta$.
\end{enumerate}
\mn
[Why?  The assumptions of $(*)_4$ say that $h^1_\delta \subseteq h^+ \wedge
  h(x^*_\delta) = r^*_\delta$ which hold by the assumption of
  $(*)_6$, but the conclusion of $(*)_4$ is what we claim in $\oplus_{6.2}$.]
\mn
\begin{enumerate}
\item[$\oplus_{6.3}$]  if $x \in N^\delta_\kappa$ then $h((g^2_\delta
 \circ f^4_{\delta,\kappa})(x)) = h(g^4_\delta(x))$.
\end{enumerate}
\mn
[Why?  As (in $(*)_6$) we 
are assuming $h \rest L_\delta$ is constantly zero and by
  the choice of $L_\delta$ in $(*)_5$.]
\mn
\begin{enumerate}
\item[$\oplus_{6.4}$]  if $x \in N^\delta_\kappa$ then $h((g^2_\delta
\circ f^0_{\delta,\kappa})(x)) = h(g^4_\delta(x))$.
\end{enumerate}
\mn
[Why?  As $f^4_{\delta,\kappa} \subseteq f^0_{\delta,\kappa}$, see
after $(*)_4$, and $\oplus_{6.3}$.] 
\mn
\begin{enumerate}
\item[$\oplus_{6.5}$]  if $x \in N^\delta_\kappa$ then $h((g^2_\delta
\circ f^0_{\delta,\kappa})(x)) = h^*_\delta(x)r_\delta$.
\end{enumerate}
\mn
[Why?  By $\oplus_{6.2} + \oplus_{6.4}$.]

Recalling $g^2_\delta$ is from $\circledast_3$ and
$f^0_{\delta,\kappa}$ is from after $(*)_4$
\mn
\begin{enumerate}
\item[$\oplus_{6.6}$]  define $h':N^\delta_{\kappa +1} \rightarrow {}_R R$
  by $h'(x) = h((g^2_\delta \circ f^0_{\delta,\kappa})(x))$.
\sn
\item[$\oplus_{6.7}$]   $(a) \quad h'$ is indeed a function from 
$N^\delta_{\kappa +1}$ to ${}_R R$
\sn
\item[${{}}$]  $(b) \quad$ moreover it is an $R$-module homomorphism.
\end{enumerate}
\mn
[Why?  As $f^0_{\delta,\kappa}$ is a homomorphism from
  $N^\delta_{\kappa +1}$ into $G_{*,\alpha_{\delta,\kappa}}$ and
  $g^2_\delta$ is a homomorphism from $G_* \supseteq
  G_{*,\alpha_{\delta,\kappa}}$ into $H_*$ and $h$ is a homomorphism
  from $H_*$ to ${}_R R$.]
\mn
\begin{enumerate}
\item[$\oplus_{6.8}$]   $h'$ extends the mapping $x \mapsto
 h^*_\delta(x) \cdot r_\delta$ for $x \in N^\delta_\kappa$.
\end{enumerate}
\mn
[Why?  By $\oplus_{6.5}$.]

Now $\oplus_{6.7} + \oplus_{6.8}$ contradicts the choice of
$h^*_\delta,r^*_\delta$ in $\circledast_{10}$.  So $(*)_6$ indeed holds.]

Lastly, let
\mn
\begin{enumerate}
\item[$(*)_7$]  $(a) \quad L := \Sigma\{L_\delta:\delta \in S\}$, a
sub-module of $H_*$
\sn
\item[${{}}$]  $(b) \quad H := H_*/L$, a module of cardinality $\lambda$.
\end{enumerate}
\mn
So
\mn
\begin{enumerate}
\item[$(*)_8$]  Hom$(H,{}_R R) = 0$.
\end{enumerate}
\mn
[Why?  Assume $h \in \text{ Hom}(H,{}_R R)$ is not constantly zero, 
so we can define
$h^+ \in \text{ Hom}(H_*,{}_R R)$ by $h^+(x) = h(x+L)$ hence also
  $h^+$ is not constantly zero.  Let $x \in H_*$
be such that $h^+(x) \ne 0$, so for some $i < \lambda$ we have
$(x_i,r_i) = (x,h^+(x))$.  By the choice of $\langle
h^1_\delta:\delta \in S_i\rangle$ the set $\{\delta \in S_i: 
h \restriction K_{<\delta} = h^1_\delta\}$ is unbounded in $\lambda$, 
so for some $\delta \in S_i$ we have:
\mn
\begin{enumerate}
\item[$\oplus_{8.1}$]  $h \rest K_{<\delta} = h^1_\delta$, 
\end{enumerate}
\mn
and by $(*)_6$ we are done as $h^+ \restriction L_\delta$ is zero.]
\mn
\begin{enumerate}
\item[$(*)_9$]  $H$ is a $\mu$-free $R$-module.
\end{enumerate}
\mn
[Why?  Let $H^1 \subseteq H$ be of cardinality $<\mu$. So for some
$H^2 \subseteq H_*$ of cardinality $< \mu$, we have 
$H^1 = \{x +L:x \in H^2\}$.

So $H^1 \subseteq (H^2 + L)/L$, and clearly for some $\bar C$-closed set
$B \subseteq C_* \cup S$ of cardinality $< \mu$ (see before
$\oplus_1$) we have $H^2 \subseteq H^3 := H_B$, see $\circledast_5$.
So because $\{H_B:B \subseteq C_* \cup S,|B| < \mu\}$ is cofinal, and it
is $\bar C$-closed (inside $[C_* \cup S]^{< \mu}$, clearly it suffices to
prove that $(H_B + L)/L$ for $\bar C$-closed $B \in [C_* \cup S]^{< \mu}$.

By clause (f) of the claim's assumption there is $\bar u = \langle
u_\delta:\delta \in B \cap S\rangle$ such that $u_\delta \in J$ and
$\delta_1 \ne \delta_2 \in B \cap \delta \wedge \iota_1 \in (\kappa
\backslash u_{\delta_1}) \wedge \iota_2 \in (\kappa \backslash
u_{\delta_2}) \Rightarrow \beta(\delta_i,\iota_1) \ne
\beta(\delta_2,\iota_2)$ recalling $\bar C$ is normal.
The rest should be clear.] 

By $(*)_7 + (*)_8 + (*)_9$ we are done.
\end{PROOF}

\begin{claim}
\label{5e.21}
1) In \ref{5e.14} if $\mu = \lambda$,
(i.e., for $\bar C$ the cardinality and degree of freeness coincide,
naturally in clause (b) we have $J \in \SP_\chi(R)$)
we can also deduce $\lambda \in \text{\rm sp}_\lambda(R)$.
\newline
2) In \ref{5e.14}, it suffices to assume
\mn
\begin{enumerate}
\item[$\circledast'$]   as in $\circledast$ of
\ref{5e.14} omitting (d) and strengthening clause (b) to
\sn
\item[${{}}$]  $(b)' \quad \kappa \in \text{\rm sp}_{\le \lambda,\mu}(R)$,
see Definition \ref{0f.07}
\sn
\item[${{}}$]  $(c)' \quad$ like (c) but $\bar C$ is tree-like, that
  is, $\alpha \in C_{\delta_1} \cap C_{\delta_2} \Rightarrow
  C_{\delta_1} \cap \alpha = C_{\delta_2} \cap \alpha$.
\end{enumerate}
\end{claim}

\begin{PROOF}{\ref{5e.21}}
This should be clear.  
\end{PROOF}

\begin{claim}
\label{5e.28}
1) For $R = \Bbb Z$, we have
\mn
\begin{enumerate}
\item[$(a)$]  $J^{\text{\rm bd}}_{\aleph_0}$ belongs to 
{\rm SP}$_{\aleph_0}(R)$
\sn
\item[$(b)$]  $J^{\text{\rm bd}}_{\aleph_1}$ belongs to 
{\rm SP}$_{\aleph_1}(R)$ 
\sn
\item[$(c)$]  $J^{\text{\rm bd}}_{\aleph_1 * \aleph_0}$ belongs to
{\rm SP}$_{\aleph_1}(R)$
\sn
\item[$(d)$]  if $2^{\aleph_0} = \aleph_1$ or $2^{\aleph_1} <
2^{\aleph_2}$ \then \, $J^{\bd}_{\aleph_2}$ belongs to {\rm SP}$_{\aleph_2}(R)$
\sn
\item[$(e)$]  if $2^{\aleph_0} = \aleph_1$ or $2^{\aleph_1} <
2^{\aleph_2}$ \then \, $J^{\bd}_{\aleph_2 * \aleph_1}$ belongs to
{\rm SP}$_{\aleph_2}(R)$.
\end{enumerate}
\mn
2) Similarly for $R$ a proper subring of $\bbQ$.
\end{claim}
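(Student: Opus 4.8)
\textbf{Proof proposal for Claim \ref{5e.28}.} Since $\theta$ is omitted throughout, the freeness required in clause (c) of Definition \ref{5e.7} is plain freeness. The plan is to realize the classical ``almost free'' abelian group constructions in the format of Definition \ref{5e.7}: for each $r\in R\setminus\{0\}$ produce a witnessing pair $(\bar G,h)$ in which the failure of extendability is localized in a single copy of a Pontryagin-type building block, while the freeness demands on the quotients $G_{\kappa+1}/\bigoplus_{i\in u}G_i$ ($u\in J$) are met by the combinatorial freeness of an underlying ladder system.

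For clause (a): take $\kappa=\aleph_0$, let $G_n=\mathbb Z x_n$, $G_\omega=\bigoplus_{n<\omega}\mathbb Z x_n$, fix distinct primes $a_n\to\infty$, and let $G_{\omega+1}$ be generated by $\{x_n:n<\omega\}\cup\{y_n:n<\omega\}$ freely except for the relations $a_n y_{n+1}=y_n+x_n$. Using $x_n=y_n-a_n y_{n+1}$ one sees $G_{\omega+1}$ is free on $\{y_n:n<\omega\}$; and killing $x_0,\dots,x_{m-1}$ turns the relations with $n<m$ into $y_n=a_n y_{n+1}$, so $G_{\omega+1}/\bigoplus_{n<m}\mathbb Z x_n$ is free on $\{y_n:n\ge m\}$. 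Hence the freeness clauses of Definition \ref{5e.7} hold with $J=J^{\mathrm{bd}}_{\aleph_0}$ and $|G_{\omega+1}|=\aleph_0$. Given $r\ne 0$, put $h(x_n)=1$ for all $n$: an extension $h^+$ of $r\cdot h$ would satisfy $h^+(y_n)=r+a_n h^+(y_{n+1})$, so $|h^+(y_n)|$ is eventually bounded by some $M$, and once $a_n>M+|r|$ divisibility forces $h^+(y_n)=r$, whence $h^+(y_{n+1})=0$ and then $a_{n+1}\mid r$, impossible. Thus $J^{\mathrm{bd}}_{\aleph_0}\in\mathrm{SP}_{\aleph_0}(\mathbb Z)$.

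For clauses (b) and (c): the plan is to take for $\bar G$, after reindexing the $x$'s and $y$'s into $\aleph_1$ blocks $G_i$, an abelian group derived (in the sense of Definition \ref{1f.15}(1) for (b), Definition \ref{1f.15}(2) for (c)) from an $\aleph_1$-free family $\mathcal F$ — for (b) an $\aleph_1$-free $\mathcal F\subseteq{}^\omega\aleph_1$ of cardinality $\aleph_1$, for (c) the associated $(\aleph_1,J^{\mathrm{bd}}_{\omega_1\times\omega})$-free family obtained as in Claim \ref{1f.13}. Freeness of $G_{\kappa+1}/\bigoplus_{i\in u}G_i$ for $u$ in $J^{\mathrm{bd}}_{\aleph_1}$, respectively $J^{\mathrm{bd}}_{\aleph_1*\aleph_0}$, then follows from the transfer computation behind Claim \ref{1f.14}; the delicate point is to choose $\mathcal F$ and the block decomposition so that modding out a bounded sub-sum never traps a whole branch, which is precisely what $(\mu,J)$-freeness for the correct $J$ provides. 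Non-extendability of $r\cdot h$ is again witnessed inside a single branch $\langle y_{\eta,n}:n<\omega\rangle$ by the growth argument of clause (a). This gives $J^{\mathrm{bd}}_{\aleph_1},J^{\mathrm{bd}}_{\aleph_1*\aleph_0}\in\mathrm{SP}_{\aleph_1}(\mathbb Z)$.

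For clauses (d) and (e) the same recipe applies with $\aleph_1$ replaced by $\aleph_2$ (and, for (e), the index set of the $G_i$'s reorganized along $\aleph_2\times\aleph_1$ so that boundedness is measured by $J^{\mathrm{bd}}_{\aleph_2*\aleph_1}$); what is needed is a $(\aleph_2,J)$-free family of $\aleph_2$ functions with $\omega$-length branches driving $G_{\kappa+1}$, and this is where $2^{\aleph_0}\le\aleph_2$ enters — if $2^{\aleph_0}=\aleph_2$ such a family exists inside ${}^\omega\aleph_1$ by counting, and if $2^{\aleph_0}=\aleph_1$ one uses instead the ZFC $\aleph_2$-free family of cardinality $\aleph_2$ inside ${}^\omega\aleph_2$; either way $|G_{\kappa+1}|=\aleph_2\le\lambda$, the bounded quotients are free by the defining freeness of the family, and $r\cdot h$ fails to extend by the single-branch argument. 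I expect the main obstacle to be exactly this freeness bookkeeping at $\aleph_2$: transferring the combinatorial $(\aleph_2,J)$-freeness to freeness of $G_{\kappa+1}/\bigoplus_{i\in u}G_i$ for all $u\in J$ while keeping the direct-sum shape of $G_\kappa$, together with confirming that $2^{\aleph_0}\le\aleph_2$ is both sufficient and unavoidable (under $\mathrm{MA}+2^{\aleph_0}\ge\aleph_3$ there is no regular cardinal of $\mathrm{sp}(\mathbb Z)$ in $(\aleph_1,2^{\aleph_0})$). Finally, part (2) is identical: a proper subring $R$ of $(\mathbb Q,+)$ is non-divisible, so infinitely many primes $p$ satisfy $p^{-1}\notin R$; choosing the coefficients $a$ among these primes, the growth/divisibility argument of clause (a) goes through verbatim with $\mathbb Z$ replaced by $R$, and the four freeness constructions are module-theoretically unchanged.
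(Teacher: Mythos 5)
Your clause (a), and the reduction of part (2) to a prime $p$ with $p^{-1}\notin R$, are fine and essentially the paper's argument up to a cosmetic variation: you take distinct primes $a_n$ and the constant colouring $h(x_n)=1$, while the paper fixes $a_n=a$ and chooses $h(x_n)$ inductively; both yield the same divisibility obstruction along the single $\omega$-chain.

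For clauses (b)--(e) there is a genuine gap, located exactly at the point you flag as delicate. You propose that non-extendability of $rh$ be ``witnessed inside a single branch $\langle y_{\eta,n}:n<\omega\rangle$ by the growth argument of clause (a)''. This cannot work once $\kappa$ has uncountable cofinality: an $\omega$-length chain meets only countably many blocks $G_i$, so its support in $\kappa\in\{\aleph_1,\aleph_2\}$ is bounded, hence contained in some $u\in J^{\mathrm{bd}}_\kappa$ (and $J^{\mathrm{bd}}_{\aleph_1*\aleph_0}$ even contains whole columns $\{\alpha\}\times\omega$). Passing to $G_{\kappa+1}/\bigoplus_{i\in u}G_i$ then kills all the $x$'s of that chain, the surviving relations become $a_ny_{n+1}=y_n$, the image of $y_0$ is divisible by $\prod_{k<n}a_k$ for every $n$, and the quotient contains a non-free rank-one subgroup, violating clause (c) of Definition \ref{5e.7}. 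So no single chain can carry the obstruction, and ``spreading the branch out'' over $\kappa$ is impossible for a chain of length $\omega$. The paper's constructions resolve this by inserting an extra layer of generators shared among uncountably many chains (the tree $\{y_\eta:\eta\in{}^{\omega>}2\}$ for (b),(c), and a further $\aleph_1$-indexed layer threaded by almost disjoint $\nu_\alpha\in{}^{\omega_1}\omega_1$ for (d),(e)): killing one chain's $x$'s leaves it free because the shared generators survive, and non-extendability becomes a global argument over all $\aleph_1$ (resp. $\aleph_2$) chains in the style of \cite{Sh:98}. Relatedly, your account of where $2^{\aleph_0}\le\aleph_2$ enters in (d),(e) is misplaced: the existence of a suitable free family is not the issue; the hypothesis is used in the prediction step, where the paper splits into the case $2^{\aleph_1}<2^{\aleph_2}$ (handled by counting) and the case $2^{\aleph_0}=\aleph_1$ (handled by an agreement argument on the $\varrho^*_\alpha\in{}^{\omega_1}2$). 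The shared intermediate layer and the global non-extendability argument are the actual content of (b)--(e) and are absent from your proposal.
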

\bigskip

\begin{remark}
\label{5e.30}
1) If we want the proof of TDU$_\mu$ to be more direct,
we have to add Hom$(G_{\kappa +1}/G_\kappa) = 0$, otherwise we have to
``iterate". 

\noindent
2) Claim \ref{5e.28} does not seem new but we could not find a direct quote.  
Clauses (b),(c) follows essentially from \cite{Sh:98} and clauses 
(d),(e) are the parallel for $\aleph_2$ instead of $\aleph_1$; 
we can continue for higher $\aleph_i$'s inductively.

\noindent
3) This is closely related to ``$G$ is derived from ${\mat F}$", see
   \ref{1f.15}.

\noindent
4) Can we use this to prove TDU$_{\lambda,\aleph_{\omega+1}}(\bbZ)$
for some $\lambda$?  Can we do it assuming CH?  Can we do it assuming
there $k < \omega$ such that $2^{\aleph_\ell} = 
\aleph_{\ell +1}$ for $\ell < k$?
\end{remark}

\begin{PROOF}{\ref{5e.30}}
\underline{Proof of \ref{5e.30}}

For part (1) let $R = \bbZ$ and $a \in \bbZ$ be a prime, 
$a_n = a$ (or we can use, e.g. $a_n = n!$), for part (2) let $a \in R$ be a
prime such that $\frac{1}{a} \notin R$ and $a_n=a$; but we could use any
$\langle a_n:n < \omega\rangle$ such that $a_n R \subset R$.  We have
to check Definition \ref{5e.7}.  Note that here the $r$ in Definition
\ref{5e.7} is \wilog \,, see Remark \ref{5e.9}(1).
\bigskip

\noindent
\underline{Clause (a)}:

Let $G_{\omega + 1}$ be the abelian group generated by $\{x_n,y_n:n <
\omega\}$ freely except for the equations
\mn
\begin{enumerate}
\item[$\boxplus_1$]  $a_n y_{n+1} = y_n - x_n \text{ for } n < \omega$.
\end{enumerate}
\mn 
Let $G_n = R x_n$ and $G_\omega = \oplus\{R x_k : k < \omega\}$.

Letting $a_{< n} = \prod\limits_{\ell <n} a_\ell$ so that $a_0 = 1$, we have
$G_{\omega +1} \models a_{<(n+1)} y_{n+1} = y_0 + \sum\limits_{\ell
\le n} a_{< \ell} x_\ell$.  We now define $h \in \Hom(G_\omega,R)$ 
by choosing $h(x_n)$ by induction on $n$ so that:
if $b \in \bbZ$ and $r \in R \backslash \{0\}$ 
then for some $n$, computing in $\bbQ$, the sum $r(b +
\sum\limits_{\ell \le n} a_{< \ell} h(x_\ell))$ is not in $a_{<(n+1)}
R$, i.e. not divisible by $a_{<(n+1)}$ in $R$.  In fact the set of
sequences $\langle h|x_n|:n < \omega\rangle \in {}^\omega \bbZ$ for
which this fails is meagre.
\medskip

\noindent
\underline{Clause (b)}:   Let $\eta_\alpha \in {}^\omega 2$ for 
$\alpha < \omega_1$  be pairwise distinct.  Let $G_{\omega_1+1}$
be the abelian group freely generated by $\{x_i:i < \omega_1\} \cup
\{y_\eta:\eta \in {}^{\omega >} 2\} \cup \{z_{\alpha,n}:\alpha <
\omega_1,n < \omega\}$ freely except for the equations
\mn
\begin{enumerate}
\item[$\boxplus_2$]  $a_n z_{\alpha,n+1} = z_{\alpha,n} - 
y_{\eta_\alpha \restriction n} - x_{\omega \alpha +n}$ for $\alpha <
\omega_1,n < \omega$.
\end{enumerate}
\mn
For $\alpha < \omega_1$ let $G_\alpha := R x_\alpha$ and
$G_{\omega_1} = \oplus\{R x_\beta:\beta < \omega_1\}$.
\medskip

\noindent
\underline{Clause (c)}:  As in clause (b) note that
for $A \in J$ we let $G_A = 
\bigoplus\{Rx_{\omega \alpha + n}:(\alpha,n) \in A\}$.
\medskip

\noindent
\underline{Clause (d)}:

For each $\alpha < \omega_2$ let 
$\langle \varrho_{\alpha,\varepsilon}:\varepsilon <
\omega_1\rangle$ be a sequence of pairwise distinct members of
${}^\omega 2$.
Let $\langle \nu_\alpha:\alpha < \omega_2\rangle$ be a sequence of
increasing functions from $\omega_1$ to $\omega_1$ of length
$\omega_1$ such that for all $\alpha < \beta < \omega_2$ for some 
$\varepsilon < \omega_1$ we have
$\{\nu_\alpha(\zeta):\zeta \in [\varepsilon,\omega_1)\} \cap
\{\nu_\beta(\zeta):\zeta \in [\varepsilon,\omega_1)\} = \emptyset$.  

Let $G_{\omega_2+1}$ be the $R$-module generated by

\begin{equation*}
\begin{array}{clcr}
X = \{z_{\alpha,\varepsilon,n}:\alpha < \omega_2,\varepsilon <
\omega_1,n < \omega\} &\cup \, \{y_\zeta:\zeta < \omega_1\} \\
  &\cup \, \{x_{\alpha,\varrho}:\alpha < \omega_2,\varrho \in 
{}^{\omega >}2\} \cup \{t_\alpha:\alpha < \omega_2\}
\end{array}
\end{equation*}

\mn
freely except for the equations, 
\mn
\begin{enumerate}
\item[$\boxplus_3$]  $a_n z_{\alpha,\varepsilon,n+1} = 
z_{\alpha,\varepsilon,n} - y_{\nu_\alpha(\omega \varepsilon +n)} -
x_{\alpha,\varrho_{\alpha,\varepsilon} \rest n} - t_{\omega_1 \alpha +
\omega \varepsilon +n}$ for 
$\alpha < \omega_2,\varepsilon < \omega_1,n < \omega_0$.
\end{enumerate}
\mn
For $\alpha < \omega_2$ let $G_\alpha = \oplus\{R t_\beta:\beta \in
[\omega_1 \alpha,\omega_1 \alpha + \omega_1)\}$ and $G_{\omega_2} =
\oplus\{G_\alpha:\alpha < \omega_2\}$.
\mn
\begin{enumerate}
\item[$\boxplus_4$]  $G_{\omega_2 +1}/G_{\omega_2}$ is $\aleph_2$-free.
\end{enumerate}
\mn
Why?  Let $H_* = \oplus\{R y_\varepsilon:\varepsilon < \omega_1\}$
and for $\alpha < \omega_2$ we let $H_\alpha$ be the subgroup of
$G_{\omega_2+1}$ generated by $G_{\omega_2} \cup H_* \cup
\{z_{\alpha,\varepsilon,n}:\varepsilon < \omega_1,n < \omega\} \cup
\{x_{\alpha,\varrho}:\varrho \in {}^{\omega >}2\}$.

For $\alpha \le \omega_2$ let 
$H_{< \alpha} = \Sigma\{H_\beta:\beta < \alpha\}$.  Then clearly
$G_{\omega_2 +1} = H_{< \omega_2}$ and $\langle H_{<\alpha}:\alpha \le
\omega_2\rangle$ is $\subseteq$-increasing continuous.  
Hence it suffices to prove for $\alpha < \aleph_2$
\mn
\begin{enumerate}
\item[$\boxplus^{4.1}_\alpha$]  $H_{< \alpha}/G_{\omega_2}$ is free.
\end{enumerate}
\mn
Why?  Without loss of generality $\alpha \ge \omega_1$, let $\langle
\beta(\xi):\xi < \omega_1 \rangle$ list $\{\beta:\beta < \alpha\}$ with
no repetitions.  We can easily find a sequence $\bar\zeta = \langle
\zeta_\beta:\beta < \alpha\rangle$ such that the sets $\cU_\beta :=
\{\nu_\beta(\varepsilon):\varepsilon \in [\zeta_\beta,\omega_1)\}$ for
$\beta < \alpha$ are pairwise disjoint.  Without loss of generality
the ordinal power
$\omega^\omega$ divide $\zeta_\beta$ for every $\beta < \omega_1$ 
and we let $\cU = \omega_1
\backslash \cup\{\cU_\beta:\beta < \alpha\}$.  Moreover, \wilog \,
$\xi_1 < \xi_2 \Rightarrow \text{ Rang}(\nu_{\beta(\xi_1)}) \cap
\{\nu_{\beta(\xi_2)}(\varepsilon):\varepsilon \in
[\zeta_{\beta(\xi_2)},\omega_1)\} = \emptyset$.

For $\xi \le \omega_1$ let $H_{\alpha,\xi}$ be the subgroup of $H_{<
\alpha}$ generated by

\begin{equation*}
\begin{array}{clcr}
G_{\omega_2} &\cup \{z_{\gamma,\varepsilon,n}:\gamma \in 
\{\beta(\zeta):\zeta < \xi\} \text{ and } \varepsilon < \omega_1,
n < \omega\} \\
  &\cup \{y_\gamma:\gamma \in \cU\} \\
  &\cup \{y_{\nu_\gamma(\varepsilon)}:\varepsilon \in
[\zeta_\gamma,\aleph_1) \text{ for some } \gamma \in \{\beta(\zeta):\zeta
< \xi\}\} \\
  &\cup \{x_{\gamma,\varrho}:\gamma \in \{\beta(\zeta):\zeta <
\xi\}$ and $\varrho \in {}^{\omega >}2\}.
\end{array}
\end{equation*}

\mn
So $G_{\omega_2} \subseteq 
H_{\alpha,0} = \bigoplus\{Ry_\zeta:\zeta \in \cU\} \oplus
G_{\omega_2}$ hence $H_{\alpha,0}/G_{\omega_2}$ is free; also
$H_{\alpha,\omega_1} = H_{< \alpha}$ and $\langle
H_{\alpha,\xi}:\xi \le \omega_1\rangle$ is $\subseteq$-increasing continuous.
Hence it suffices to prove, for each $\xi < \omega_1$, that
$H_{\alpha,\xi+1}/H_{\alpha,\xi}$ is free.  Let $H'_{\alpha,\xi}$ be
the subgroup of $H_{\alpha,\xi +1}$ generated by $H_{\alpha,\xi} \cup
\{x_{\beta(\xi),\varrho}:\varrho \in {}^{\omega >}2\}$.  Now
$H_{\alpha,\xi} \subseteq H'_{\alpha,\xi} \subseteq
H_{\alpha,\xi+1}$.  It is easy to see that $H'_{\alpha,\xi}/H_{\alpha,\xi}$ is
countable and free.

Also $H_{\alpha,\xi +1}/H'_{\alpha,\xi}$ is free, in fact
$\{z_{\beta(\xi),\varepsilon,n} + H_{\alpha,\xi}:\varepsilon \in
[\zeta_{\beta(\xi)},\omega_1),n < \omega\}$ is a free basis.  Putting
  those together $\boxplus^{4.1}_\alpha$ holds hence $\boxplus_4$ is true.
\mn
\begin{enumerate}
\item[$\boxplus_5$]   some $h_0 \in \Hom(G_{\omega_2},R,R)$
has no extension $h_2 \in \Hom(G_{\omega_2+1},{}_R R)$.
\end{enumerate}
\mn

Why?  For $\alpha < \omega_2$ let $\omega_\alpha = \{t_{\omega_1 \alpha +
\varepsilon}:\varepsilon < \omega_1\}$ and $Y_\alpha =
\{y_{\nu_\alpha(\varepsilon)}:\varepsilon < \omega_1\}$.

For $\ell=1,2$ let $K^\ell_\alpha$ be the subgroup of 
$G_{\omega_2 +1}$ generated by:
\mn
\begin{enumerate}
\item[$\bullet$]  $\{y'_{\alpha,\varepsilon}:\varepsilon < \omega_1\}$
  when $\ell=1$ and $y'_{\alpha,\varepsilon} =
  y_{\nu_\alpha(\varepsilon)} + t_{\omega_1 \cdot \alpha + \varepsilon}$
\sn
\item[$\bullet$]  $\{x_{\alpha,\rho}:\rho \in {}^{\omega >}2\} \cup
\{y'_{\alpha,\varepsilon}:\varepsilon < \omega_1\}$ for $\ell=2$
\sn
\item[$\bullet$]  $\{z_{\alpha,\varepsilon,n}:\varepsilon < 
\omega_1,n < \omega\} \cup \{x_{\alpha,\rho}:\rho \in {}^{\omega >}2\}
\cup \{y'_{\alpha,\varepsilon}:\varepsilon < \omega_1\}$ 
when $\ell=3$ so
\sn
\item[$\bullet$]   $K^1_\alpha \subseteq K^2_\alpha \subseteq
  K^3_\alpha \subseteq G_{\omega_2+1}$.  
\end{enumerate}
\mn
Let $L^\ell_\alpha = \text{ Hom}(K^\ell_\alpha,\bbZ)$
for $\ell=1,2,3$.

Let $L_\alpha = \{f \rest K^1_\alpha:f \in L^3_\alpha\}$.  Clearly 
$L_\alpha$ is a submodule of $L^1_\alpha$.
As in the proof of clause (b), $L_\alpha \nsubseteq L^1_\alpha$, see
\cite{Sh:98}, \cite{EM02}.
Let $u_\alpha = u(\alpha) = \Rang(\nu_\alpha)$.  We now define a function 
$\bold F_\alpha:{}^{u(\alpha)}R \rightarrow L^1_\alpha/L_\alpha$ 
as follows: for $f \in {}^{u(\alpha)}R$ let 
$g_f \in \text{ Hom}(K^1_\alpha,{}_R R)$ be
defined by $g_f(y'_{\alpha,\varepsilon}) =
f(\nu_\alpha(\varepsilon))$ and then 
$\bold F_\alpha(f) = g_f + L_\alpha \in L^1_\alpha/L_\alpha$.
Obviously
\mn
\begin{enumerate}
\item[$(*)_{5.1}$]  $\bold F_\alpha$ is a homomorphism from
${}^{u(\alpha)}R$ onto $L^1_\alpha/L_\alpha$.
\end{enumerate}
\mn
Now consider
\mn
\begin{enumerate}
\item[$(*)_{5.2}$]  it suffices to find $\bar g^* = \langle g^*_\alpha:\alpha <
\omega_2\rangle$ such that $g^*_\alpha \in L^1_\alpha$ and for every
$f \in {}^{\omega_1} R$ for some $\alpha < \omega_2$ we have $\bold
F_\alpha(f \rest u_\alpha) \ne g^*_\alpha + L_\alpha$.
\end{enumerate}
\mn
Why is $(*)_{5.2}$ enough?    Let $f_\alpha \in {}^{u(\alpha)}R$
be such that $\bold F_\alpha(f_\alpha) = g^*_\alpha + L_\alpha$.  
We define $h_0 \in \text{ Hom}(G_{\omega_2},R)$ by:
\mn
\begin{enumerate}
\item[$(*)_{5.3}$]  $h_0(t_{\omega_1 \alpha + \varepsilon}) =
-f_\alpha(\nu_\alpha(\varepsilon))$ for $\alpha < \omega_2,\varepsilon
< \omega_1$.
\end{enumerate}
\mn
Toward contradiction assume $h_2 \in \text{ Hom}(G_{\omega_2+1},R)$
extends $h_0$.  Define the function $f:\omega_1 \rightarrow R$ by
$f(\varepsilon) = h(y_\varepsilon)$.  
Now for each $\alpha < \omega_2$, clearly $h_2 \rest K^1_\alpha \in
\Hom(K^1_\alpha,{}_R R) = L^1_\alpha$ but $K^1_\alpha \subseteq
K^3_\alpha \subseteq G_{\omega_2 +1}$ hence by the choice of
$L_\alpha$ we have $h_2 \rest K^1_\alpha \in L_\alpha$.

Now let $f'_\alpha = f \rest u_\alpha \in {}^{u(\alpha)}R$ so
$f'_\alpha(\nu_\alpha(\varepsilon)) =
h_2(y_{\nu_\alpha(\varepsilon)})$ for $\varepsilon < \omega_1$.
Recall that $\varepsilon < \omega_1 \Rightarrow 
-f_\alpha(\nu_\alpha(\varepsilon)) = h_0(t_{\omega_1 \alpha +
\varepsilon}) = h_2(t_{\omega_1 \alpha + \varepsilon})$ by
$(*)_{5.3}$ and by $h_2 \supseteq h_0$ respectively.  
So $f''_\alpha := f'_\alpha - f_\alpha
\in {}^{u(\alpha)}R$ satisfies $f''_\alpha(\nu_\alpha(\varepsilon)) =
f'_\alpha(\nu_\alpha(\varepsilon)) + f_\alpha(\nu_\alpha(\varepsilon))
= h_2(y_{\nu_\alpha(\varepsilon)}) - h_2(t_{\omega_1 \alpha +
\varepsilon}) = h_2(y'_{\alpha,\varepsilon})$, henace $g_{f''_\alpha} =
h_2 \rest K^1_\alpha$ which (as we said above) belongs to $L_\alpha$.  It
follows that $g_{f'_\alpha} - g_{f_\alpha} \in L_\alpha$ that is
$\bold F_\alpha(f'_\alpha) = \bold F_\alpha(f_\alpha) \in 
L^3_\alpha/L^1_\alpha$, hence by the choice of
$f_\alpha$ above, $\bold F_\alpha(f'_\alpha) = g^*_\alpha + L_\alpha$,
but $f'_\alpha = f \rest u_\alpha$.

As this holds for every $\alpha < \omega_2$, the function 
$f$ contradicts the present
assumption that $\langle g^*_\alpha:\alpha < \omega_2\rangle$ 
are as in $(*)_{5.2}$, so there is no $h_2$ as above, hence 
indeed it suffices to find
\mn
\begin{enumerate}
\item[$\bullet$]  $\bar g^*$ as in $(*)_{5.2}$.
\end{enumerate}
\mn
Why does such $\bar g^*$ exists?  The proof splits into cases.
\bigskip

\noindent
\underline{Case 1}:  $2^{\aleph_1} < 2^{\aleph_2}$

By renaming \wilog \, :
\mn
\begin{enumerate}
\item[$\odot$]  $\cup\{u_\alpha:\alpha < \omega_2\} = \omega_1$.
\end{enumerate}
\mn
We note that $\{\langle \bold F_\alpha(f \rest u_\alpha):\alpha <
\omega_2\rangle:f \in {}^{\omega_1}R\}$ is a subset of
$\prod\limits_{\alpha < \omega_2} L^1_\alpha/L_\alpha$ but the former has
cardinality $\le |R|^{\aleph_1} \le 2^{\aleph_1}$ and the latter 
has cardinality $\ge 2^{\aleph_2}$ (actually equal) but we are
assuming $2^{\aleph_1} < 2^{\aleph_2}$ in the present case,
so indeed we can find $\langle g_\alpha:\alpha <
\omega_2\rangle \in \prod\limits_{\alpha <\omega_2} L_\alpha$ which is
$\ne \langle \bold F_\alpha(f \rest u_\alpha):\alpha <
\omega_2\rangle$ for every $f \in {}^{\omega_1}R$.
\bigskip

\noindent
\underline{Case 2}:  $2^{\aleph_0} = \aleph_1$

\Wilog \, $\rho_{\alpha,\varepsilon} = \rho_\varepsilon$ for $\alpha <
\omega_2,\varepsilon < \omega_1$.

Now choose $\bar\nu$ such that:
\mn
\begin{enumerate}
\item[$\odot_1$]  $(a) \quad \bar\nu = \langle
\nu_\alpha:\alpha < \omega_2\rangle$
\sn
\item[${{}}$]  $(b) \quad \nu_\alpha:\omega_1 \rightarrow
\omega_1$ is increasing
\sn
\item[${{}}$]  $(c) \quad$ if $\beta < \alpha < \omega_2$ then for
some $\varepsilon < \omega_1$ we have $\nu_\alpha \rest 
(\omega \varepsilon + \omega) =$

\hskip25pt $\nu_\beta \rest (\omega \varepsilon + \omega)$ but
$\nu_\alpha(\omega \varepsilon + \omega) \ne
\nu_\beta(\omega \varepsilon + \omega)$
\sn
\item[${{}}$]  $(d) \quad$ if $\alpha \ne \beta$ then
$\Rang(\nu_\alpha) \cap \Rang(\nu_\beta)$ is countable.
\end{enumerate}
\mn
[Why?  E.g. choose $\nu_\alpha$ by induction on $\alpha <
\omega_2$ so that $\Rang(\nu_\alpha)$ is a non-stationary subset of $\omega_1$
and the relevant parts of (a)-(d) hold.]

Now choose $h_*$ such that
\mn
\begin{enumerate}
\item[$\odot_2$]  $(a) \quad h_*:{}^{\omega_1>}2 \rightarrow 
{}^\omega R$
\sn
\item[${{}}$]  $(b) \quad$ let $h^*_n:{}^{\omega_1 >}2 \rightarrow
 R$ for $n < \omega$ be such that $h_*(\nu) = \langle
h^*_n(\nu):n < \omega\rangle$
\sn
\item[${{}}$]  $(c) \quad$ if $\varepsilon < \omega_1,\varrho \in
{}^{\omega \cdot \varepsilon + \omega}2,\varrho_\ell = \nu \char 94 \langle \ell \rangle$ for
$\ell=0,1$ then the following set of 

\hskip25pt  equations is not solvable in $R$
\sn
\begin{enumerate}
\item[$\bullet$]  $a_n z_{n+1} = z_n - (h^*_n(\varrho_1) - h^*_n(\varrho_0))$
for $n < \omega$.
\end{enumerate}
\end{enumerate}
\mn
This is as in the proof of case (b).

Now we choose $h_0$ satisfying:
\mn
\begin{enumerate}
\item[$\odot_3$]  $h_0$ is the homomorphism from $G_{\omega_2}$ to
${}_R R$ such that
\sn
\item[${{}}$]  $\bullet \quad h_0(t_{\omega_1 \alpha + \omega
\varepsilon +n}) = h^*_n(\nu_\alpha \rest (\omega \cdot \varepsilon + 
\omega +1))$.
\end{enumerate}
\mn
Now toward a contradiction assume that $h \in \Hom(G_{\omega_2 +1},{}_R R)$
extends $h_0$.  We define a two-place relation $E$ on $\omega_2$ by:
\mn
\begin{enumerate}
\item[$\odot_4$]  $\alpha E \beta$ \Iff \, 
\sn
\item[${{}}$]  $(a) \quad \nu_\alpha \rest \omega = \nu_\beta \rest
\omega$
\sn
\item[${{}}$]  $(b) \quad h_2(x_{\alpha,\varrho}) =
h_2(x_{\beta,\varrho})$ for $\varrho \in {}^{\omega >}2$.
\end{enumerate}
\mn
Clearly $E$ is an equivalence relation with $\le 2^{\aleph_0}$
equivalence classes, so in our case $\aleph_1$ equivalence classes hence
there are $\alpha \ne \beta$ such that $\alpha E \beta$.  By
$\odot_1(c)$ there is $\varepsilon$ such that $\nu_\alpha \rest
(\omega \cdot \varepsilon + \omega) = \nu_\beta \rest (\omega \cdot
\varepsilon + \omega)$ and
$\nu_\alpha(\omega \cdot \varepsilon + \omega) \ne 
\nu_\beta(\omega \cdot \varepsilon + \omega)$.  \Wilog \, 
$\nu_\alpha(\omega \cdot \varepsilon + \omega)=1$ and $\nu_\beta(\omega
\cdot \varepsilon + \omega) = 0$.

For each $n$, consider the equations in $\boxplus_3$ for
$(\alpha,\varepsilon,n),(\beta,\varepsilon,n)$; apply $h_2$ and
subtract them.  The $h(y_{\nu_\alpha(\omega \varepsilon +n)}) -
h(y_{\nu_\beta(\omega \varepsilon +n)})$'s cancel by the choice of
$\varepsilon$.  Also the $h_2(x_{\alpha,\varrho_\varepsilon \rest n}) -
h_2(x_{\beta,\varrho_\varepsilon \rest n})$ cancel because $\alpha E \beta$.  

Lastly, by the choice of $h_0$ recalling $h_0 \subseteq h_2$ we have
$h_2(t_{\omega_1 \cdot \alpha + \omega \cdot \varepsilon
 +n}) - h_2(t_{\omega_1 \cdot \beta + \omega \cdot \varepsilon +1}) 
= h^*_n(\nu_\alpha \rest (\omega \cdot \varepsilon +
 \omega +1) - h^*_n(\nu_\beta \rest (\omega \cdot \varepsilon + \omega
 +1))$.   Hence the substitution
 $z_n \mapsto h_2(z_{\alpha,\varepsilon,n}) -
 h_2(z_{\beta,\varepsilon,n})$ solves the equations in $\odot_2(c)$ for
\mn
\begin{enumerate}
\item[$\bullet$]  $\varrho_1 = \nu_\alpha \rest (\omega \cdot
  \varepsilon + \omega +1),\varrho_0 = \nu_\beta \rest (\omega \cdot
  \varepsilon + \omega +1)$.
\end{enumerate}
\mn
So we get a contradiction to $\odot_2(c)$
\bigskip

\noindent
\underline{Clause (e)}:

As in clause (d).
\end{PROOF}

\begin{conclusion}
\label{5e.32}
1) TDU$_\lambda$ holds, \when \, 
BB$(\lambda,\mu,2^{(2^{\aleph_1})^+},J)$, where $J \in
\{J^{\bd}_{\aleph_0},J^{\bd}_{\aleph_1 * \aleph_0}\}$ and
$\cf(\lambda) > \aleph_1$.

\noindent
2) Similarly for $\BB(\lambda,\mu,(2^{\Dom(J)},2^{\Dom(J)}),J)$.
\end{conclusion}

\begin{PROOF}{\ref{5e.32}}
1) By \ref{5e.14} and \ref{5e.28}.

\noindent
2) Similarly by \ref{5e.43} below and \ref{5e.28}.
\end{PROOF}

\begin{remark}
1) The number, $2^{(2^{\aleph_1})^+}$ of colours 
is an artifact of the proof.  Actually 2 and even 
the so-called ``$1/\theta$ colours" (as in \cite[Ap,\S1]{Sh:f},
\ref{0p.15}(2)) should suffice, see \ref{5e.9}.

\noindent
2) See \ref{1f.13}.  But we can quote in \S0 cases of BB
with 2 instead of $\beth_4$ or just $2^{(2^{\aleph_1})^+}$ colours.
\end{remark}

\noindent
We can get more than in \ref{5e.14}.
\begin{definition}
\label{5e.38}
For cardinals $\lambda,\theta,\sigma$ for $\iota \in \{0,1\}$ let 
$\SP^{3 + \iota}_{\lambda,\theta,\sigma}(R)$ be the set
of ideals $J$ on some $\kappa$ such that for every $r \in R \backslash
\{0\}$ some pair $(\bar G,\bar h)$ witnesses it for $r$ where ``$(\bar
G,\bar h)$ witness $\SP^{3 + \iota}_{\lambda,\theta,\sigma}(R)$ for $r$" means:
\mn
\begin{enumerate}
\item[$\oplus$]  $(a) \quad \bar G = \langle G_i:i < \kappa +1 +
\sigma\rangle$ is a sequence of $R$-modules each of 

\hskip25pt cardinality $\le \lambda$
\sn
\item[${{}}$]  $(b) \quad G_\kappa = \oplus\{G_i : i < \kappa\}$ 
and $\zeta < \sigma \Rightarrow G_\kappa \oplus {}_R R \subseteq 
G_{\kappa +1+ \zeta}$
\sn
\item[${{}}$]  $(c) \quad$ if $u \in J$ and $\zeta < \sigma$, then
$G_{\kappa +1 + \zeta}/\oplus\{G_i : i \in u\}$ is a $\theta$-free 

\hskip25pt left $R$-module
\sn
\item[${{}}$]  $(d) \quad G_i$ is a $\theta$-free left $R$-module (for
$i < \kappa$ hence for $i < \kappa + 1 + \sigma$)
\sn
\item[${{}}$]  $(e) \quad \bar h = \langle h_\zeta:\zeta <
\sigma\rangle$ and $h_\zeta$ is a homomorphism from $G_\kappa$ to ${}_R R$ 

\hskip35pt for $\zeta < \sigma$
\sn
\item[${{}}$]  $(f)_0 \quad$ if $\iota=0$ for every homomorphism $h$
  from $G_\kappa$ to ${}_R R$ there is $\zeta < \sigma$ 

\hskip25pt  such that
\sn
\item[${{}}$]  \hskip20pt $\bullet \quad$ no homomorphism 
$h^+$ from $G_{\kappa +1+\zeta}$ to ${}_R R$ satisfies\footnote{the
computation ``$h(x) + h_\zeta(x) \cdot r$" is in the ring $R$.}

\hskip40pt  $x \in G_\kappa \Rightarrow h^+(x) = h(x) + h_\zeta(x) r$
\sn
\item[${{}}$]  $(f)_1 \quad$ if $\iota=1$ then for every homomorphism $h$
  from $G_\kappa$ to ${}_R R$ there is 

\hskip25pt  $\varepsilon < \sigma$ such
  that for every $\zeta < \sigma,\zeta \ne \varepsilon$ we have
\sn
\item[${{}}$]  \hskip20pt $\bullet \quad$ the same as $\bullet$ from above.
\end{enumerate}
\end{definition}

\begin{claim}
\label{5e.43}
A sufficient condition for $\TDU_{\lambda,\mu}(R)$ 
(i.e., there is a $\mu$-free left $R$-module $G$ of
cardinality $\lambda$ with $\Hom_R(G,R) = \{0\})$ is $\circledast_0$
and also $\circledast_1$ where:
\mn
\begin{enumerate}
\item[$\circledast_0$]  $(a) \quad R$ is a ring with unit ($1 = 1_R$)
\sn
\item[${{}}$]  $(b) \quad J \in \SP^3_{\chi,\theta,\sigma}(R)$ 
is an ideal on $\kappa$
\sn
\item[${{}}$]  $(c) \quad \bar C = \langle C_\delta:\delta \in
S\rangle$ is such that $\otp(C_\delta) = \kappa$ and $C_\delta
\subseteq \delta$
\sn
\item[${{}}$]  $(d) \quad \lambda > |R| + \chi$ is regular or at least
$\cf(\lambda) > |R| + \chi$ and $\mu > \kappa$
\sn
\item[${{}}$]  $(e) \quad \BB(\lambda,\bar C,
(2^{|R| + \chi},\sigma),J)$, see Definition \ref{0p.14}(1)
\sn
\item[${{}}$]  $(f) \quad \bar C$ is $(\mu,J)$-free (but see
  \ref{1f.13})
\sn
\item[$\circledast_1$]  similarly replacing clauses $(b),(e)$ by
  $(b)',(e)'$ where
\sn
\item[${{}}$]  $(b)' \quad J \in \SP^4_{\chi,\theta,\sigma}(R)$
\sn
\item[${{}}$]  $(e)' \quad \BB(\lambda,\bar C,\langle
  2^{|R|+\chi},\sigma\rangle,J)$, see \ref{0p.14}(2).
\end{enumerate}
\end{claim}

\begin{PROOF}{\ref{5e.43}}
Assuming $\circledast_\iota$, the proof is similar to 
the proof of \ref{5e.14} with some
changes.  First of all, instead of $\circledast_1$ we use
\mn
\begin{enumerate}
\item[$\circledast'_0$]  let $(\bar G^r,\bar h^r)$ witness
Definition \ref{5e.38} for $r \in R \backslash \{0\}$
\sn
\item[$\circledast'_1$]  $G_*$ is a $\mu$-free $R$-module 
and for some ordinal $\varepsilon(*) \le |R| + \kappa$
\sn
\begin{enumerate}
\item[$(a)$]  $G_* = \bigoplus\{G_{*,\varepsilon}:\varepsilon <
\varepsilon(*)\}$ is a $\mu$-free $R$-module $G_{*,\varepsilon}$ of
cardinality $\le \chi$ for $\varepsilon < \varepsilon(*)$
\sn
\item[$(b)$]  if $r \in R \backslash \{0\}$, then for some
sequence  $\bar G^r = \langle G^r_j:j < \kappa + 1 + \sigma\rangle$ 
as in \ref{5e.38} we have: if $j < \kappa$ then $\varepsilon(*) =
\otp\{\varepsilon < \varepsilon(*):G^r_j \cong_{f^*_{r,j}} 
G_{*,\varepsilon}\}$ hence
\sn
\item[$(c)$]  $|G_*| \le \chi + \kappa + |R|$.
\end{enumerate}
\end{enumerate}
\mn
Secondly, after $\circledast_8$ we choose $\langle
\eta_\delta:\delta \in S_i\rangle$ such that $\eta_\delta \in
{}^\kappa \varepsilon(*)$ and $j < \kappa \Rightarrow
G_{*,\eta_\delta(j)} \cong G^{r_i}_j$.

Thirdly, we choose $\langle \zeta^1_\delta:\delta \in S_i\rangle$ such
that:
\mn
\begin{enumerate}
\item[$\circledast'_{9.1}$]  $(a) \quad \zeta^1_\delta < \sigma$
\sn
\item[${{}}$]  $(b) \quad$ if $h \in \Hom(H_*,{}_R R)$,
\underline{then} for unboundedly many $\delta \in S_i$ we 

\hskip25pt have:
$\zeta^1_\delta \ne \bar{{\bold c}}^1_\delta(h \restriction
\bigcup\limits_{\alpha \in C_\delta} G^*_\alpha)$ - see below
\sn
\item[$\circledast_{9.2}$]  for $\delta \in S_i$ and $h \in 
\Hom(K_{< \delta},{}_R R)$, we define ${\bold c}^1_\delta(h)$ 
to be the minimal $\zeta < \sigma$ satisfying $\odot^i_{\delta,\zeta}$ 
below, and zero if there is no such $\zeta$
\sn
\item[$\odot^i_{\delta,\zeta}$]   there is $f \in 
\Hom(G^{r_i}_{\kappa +1 +\zeta},{}_R R)$ such that:
\sn
\begin{enumerate}
\item[$(\alpha)$]   $f(z) = r_i$,
\sn
\item[$(\beta)$]   if $j < \kappa$, then $x \in G^{r_i}_j
\Rightarrow f(x) = h(f^*_{r_i,j}(x))$.
\end{enumerate}
\end{enumerate}
\mn
The rest is similar.  
\end{PROOF}

\begin{conclusion}
\label{5e.47}
Assume that $J^{\bd}_{\kappa_n \times \omega} \in 
\SP_{\lambda_n,\theta_n}(R)$ and $\kappa_n < \kappa_{n+1}$ for 
$n < \omega$.  \Then , for some $\lambda$, for every large enough $n$, 
$\TDU_{\lambda,\theta_n^{+ \omega +1}}$ holds.
\end{conclusion}

\begin{remark}
If we use \cite{Sh:460}, then we need ``$\sum\limits_{n} \kappa_n$ is
strong limit" but instead we use \cite{Sh:829}.
\end{remark}

\begin{PROOF}{\ref{5e.47}}
We shall use \ref{5e.14} freely.

Let $\mu \in {\bold C}_{\aleph_0}$ be greater than $\lambda_n$ for
each $n$, and let $\sigma_n < \mu$ be large enough.

\bn
\underline{Case 1}:  There is $\lambda'$ such that 
$\lambda' < 2^\mu < 2^{\lambda'}$.

Then we can apply \ref{d.11} getting even a $\mu^+$-free Abelian group.
\bn
\newline
\underline{Case 2}:  $2^\mu$ is singular \underline{or} 
just there is a $\mu^+$-free
${\mat F} \subseteq {}^\omega \mu$ of cardinality $2^\lambda$.

By \ref{2b.98}(2).
\bn
\newline
\underline{Case 3}:  Neither Case 1 nor Case 2.

By Theorem \ref{h.7} $\lambda = 2^\mu = 
\lambda^{< \lambda}$ and $\lambda = 
\tcf(\prod\limits_{m < \omega} \lambda_m,<_{J^{\bd}_\omega})$ for some regular
$\lambda_m < \mu$ increasing with $m < \omega$ and let $\langle f_\alpha:\alpha
< \lambda\rangle$ exemplify this.  Let $S_{\ged} = 
S^{\ged}_{\bar f}$ - see \ref{2b.113}
and $S'_{\ged} =  \{\delta \in S_{\ged}:\cf(\delta) > \aleph_0$ and $\delta$
is divisible by $\mu\}$.

For each $n < \omega,\delta \in S_* = S'_{\ged} \cap
S^\lambda_{\kappa_n}$, let $C_{\delta,n}$ be a club of $\delta$ of
order type $\kappa_n$ and let

\[
C^n_\delta = \{\mu^\alpha + \eta_\delta(n):\alpha \in C_\delta \text{
and } n < \omega\}
\]

\mn
So $\langle C^n_\delta:\delta \in S^n_\delta\rangle$ is a strict
$(\lambda,\kappa_n)$-ladder system, i.e. $\otp(C^n_\delta) = 
\kappa,C^n_\delta \subseteq \delta = \sup(C^n_\delta)$.
By \ref{2b.111} we know that
$\bar C^n$ is $(\kappa^{+ \kappa_n}_n,J^\kappa_{\kappa_n \times
\omega})$-free (see Definitions \ref{0p.6}(3) and \ref{1.3.14}).
Now by \cite[1.10]{Sh:775}, \cite[3.1]{Sh:829} it follows that 
for every $n$ large enough,
we have $\BB(\lambda,\bar C^n,(\lambda,\theta_*),\kappa_n)$, where $\theta_* <
\mu$ is large enough.  
\end{PROOF}

\begin{conclusion}
\label{5e.53}
If the ideal $J = J^{\bd}_\kappa$ belongs to 
$\SP_{\lambda,\mu}(R)$ \then \, $\TDU_\mu$ holds.
\end{conclusion}

\begin{PROOF}{\ref{5e.53}}
Left to the reader.  
\end{PROOF}

\begin{remark}
Now we can check all the promises from \S0.
\end{remark}
\newpage

\bibliographystyle{alphacolon}
\bibliography{lista,listb,listx,listf,liste,listy}

\end{document}